\title{Coinvariants of Lie algebras of vector fields on algebraic
varieties
%and associated
%  $D$-modules
} 
\numberwithin{equation}{section}
\theoremstyle{definition}
\newtheorem{theorem}[equation]{Theorem}
\newtheorem{lemma}[equation]{Lemma}
\newtheorem{proposition}[equation]{Proposition}
\newtheorem{corollary}[equation]{Corollary}
\newtheorem{definition}[equation]{Definition}
\newtheorem{notation}[equation]{Notation}
\newtheorem{example}[equation]{Example}
\newtheorem{question}[equation]{Question}
\newtheorem{remark}[equation]{Remark}
\newcommand{\an}{\operatorname{an}}
\newcommand{\PDiff}{\operatorname{PDiff}}
\newcommand{\IC}{\operatorname{IC}}
\newcommand{\IH}{\operatorname{IH}}
\newcommand{\red}{\text{red}}
\newcommand{\even}{\text{even}}
\newcommand{\Sh}{\operatorname{Sh}}
\newcommand{\codim}{\operatorname{codim}}
\newcommand{\Eu}{\operatorname{Eu}}
\newcommand{\Par}{\operatorname{Par}}
\newcommand{\Parpt}{\operatorname{Parpt}}
\newcommand{\Vect}{\operatorname{Vect}}
\newcommand{\vol}{\mathsf{vol}}
\newcommand{\bk}{\mathbf{k}}
\newcommand{\Id}{\operatorname{Id}}
\newcommand{\Stab}{\operatorname{Stab}}
\newcommand{\symm}{\text{symm}}
\newcommand{\pt}{\text{pt}}
\newcommand{\HP}{\mathsf{HP}}
\newcommand{\bZ}{\mathbf{Z}}
\newcommand{\bP}{\mathbf{P}}
\newcommand{\iso}{{\;\stackrel{_\sim}{\to}\;}}
\newcommand{\bC}{\mathbf{C}}
\newcommand{\bG}{\mathbf{G}}
\newcommand{\caH}{\mathcal{H}}
\newcommand{\cLH}{\mathcal{LH}}
\newcommand{\cO}{\mathcal{O}}
\newcommand{\bH}{\mathbf{H}}
\newcommand{\caD}{\mathcal{D}}
\newcommand{\cT}{\mathcal{T}}
\newcommand{\mfv}{\mathfrak{v}}
\newcommand{\Hom}{\operatorname{Hom}}
\newcommand{\sign}{\operatorname{sign}}
\newcommand{\ctr}{\operatorname{ctr}}
\newcommand{\tpl}{\operatorname{top}}
\newcommand{\End}{\operatorname{End}}
\newcommand{\Ind}{\operatorname{Ind}}
\newcommand{\Ext}{\operatorname{Ext}}
\newcommand{\pr}{\operatorname{pr}}
\newcommand{\mfg}{\mathfrak{g}}
\newcommand{\SL}{\mathsf{SL}}
\newcommand{\GL}{\mathsf{GL}}
\newcommand{\Sp}{\mathsf{Sp}}
\newcommand{\gr}{\operatorname{\mathsf{gr}}}
\newcommand{\Spec}{\operatorname{\mathsf{Spec}}}
\newcommand{\onto}{\twoheadrightarrow}
\newcommand{\into}{\hookrightarrow}
\newcommand{\Sym}{\operatorname{\mathsf{Sym}}}
\newcommand{\bA}{\mathbf{A}}
\begin{document}
\date{2012}

\author{Pavel Etingof}
\address{MIT Dept of Math, Room 2-176, 77 Massachusetts Ave, Cambridge, MA 02139, USA}
\email{etingof@math.mit.edu}

\author{Travis Schedler}
\address{U. Texas at Austin, Math Dept, RLM 8.100, 2515 Speedway Stop C1200, Austin, TX 78712, USA}
% MIT Department of Mathematics, Room 2-172,
% 77 Massachusetts Avenue
% Cambridge, MA 02139-4307
\email{trasched@gmail.com}

\subjclass[2010]{17B66, 14R99, 17B63} 

\keywords{Vector fields, Lie algebras, D-modules, Poisson homology,
  Poisson varieties, Calabi-Yau varieties, Jacobi varieties}

\begin{abstract}
  We prove that the space of coinvariants of functions on an affine
  variety by a Lie algebra of vector fields whose flow generates
  finitely many leaves is finite-dimensional.
% In the case the Lie
%   algebra is semisimple and the variety is connected, we conclude that
%   the quotient is at most one-dimensional. In particular, one recovers
%   the well-known result that a connected affine homogeneous space for
%   a reductive group has one-dimensional top cohomology.  Other 
  Cases of the theorem include Poisson (or more generally Jacobi)
  varieties with finitely many symplectic leaves under Hamiltonian
  flow, complete intersections in Calabi-Yau varieties with isolated
  singularities under the flow of incompressible vector fields,
  quotients of Calabi-Yau varieties by finite volume-preserving groups
  under the incompressible vector fields, and arbitrary varieties with
  isolated singularities under the flow of all vector fields.  We
  compute this quotient explicitly in many of these cases. The proofs
  involve constructing a natural $\caD$-module representing the
  invariants under the flow of the vector fields, which we prove is
  holonomic if it has finitely many leaves (and whose holonomicity we
  study in more detail).  We give many counterexamples to naive
  generalizations of our results. These examples have been a source of
  motivation for us.
\end{abstract}
\maketitle
\tableofcontents
\section{Introduction}
\subsection{Vector fields on affine schemes}
Let $\bk$ be an algebraically closed field of characteristic zero, and
let $X = \Spec \cO_X$ be an affine scheme of finite type over $\bk$
(we will generalize this to nonaffine schemes in \S \ref{ss:nonaffine}
below).  Our examples will be varieties, so the reader interested only
in these (rather than the general theory, which profits from
restriction to nonreduced subschemes) can freely make this assumption.

We also remark that our results can be generalized to the analytic
setting using the theory of analytic $\caD$-modules, except that in
these cases, the coinvariants need no longer be finite-dimensional,
since analytic varieties can have infinite-dimensional cohomology in
general (e.g., a surface with infinitely many punctures).  But we will
not discuss this here.

When we say $x \in X$, we mean a closed point, which is the same as a
point of the reduced subvariety $X_{\red}$.  Note that (since $\bk$
has characteristic zero) it is well-known that all vector fields on
$X$ (which by definition means derivations of $\cO_X$) are parallel to
$X_\red$ (dating to at least \cite[Theorem 1]{Sei-dirfgt}).  Hence,
there is a restriction map $\Vect(X) \to \Vect(X_{\red})$, although
this is not an isomorphism unless $X=X_\red$.  In particular, for all
global vector fields $\xi \in \Vect(X)$ and all $x \in X$, $\xi|_x \in
T_x X_\red$.
% All results trivially generalize to schemes of finite type,
% by restricting the action of vector fields to the reduced subvariety
% $X_{\text{red}}$; the statements for $X_{\text{red}}$ will then imply
% the same for $X$.

Let $\mfv \subseteq \Vect(X)$ be a Lie subalgebra of the Lie algebra
of vector fields (which is allowed to be all vector fields).  We are
interested in the coinvariant space,
\[
(\cO_X)_{\mfv} := \cO_X / \mfv(\cO_X).
\]
(We remark that one could more generally consider an arbitrary set of
vector fields, but the coinvariants coincide with those of the Lie
algebra generated by that set. One could also consider more generally
differential operators of order $\leq 1$: see Remark
\ref{r:diffop-leq1}.)

Our main results show that, under nice geometric conditions, this
coinvariant space is finite-dimensional, and in fact that the
corresponding $\caD$-module generated by $\mfv$ is holonomic. This
specializes to the finite-dimensionality theorems \cite[Theorem
4]{BEG} and \cite[Theorem 3.1]{ESdm} in the case of Poisson
varieties. It also generalizes a standard result about coinvariants
under the action of a reductive algebraic group (see Remarks
\ref{r:red} and \ref{r:nonfinite} below).

Our first main result can be stated as follows.
\begin{theorem}\label{t:main1}
  Suppose that, for all $i \geq 0$, the locus of $x \in X$ where the
  evaluation $\mfv|_x$ has dimension $\leq i$ has dimension
  at most $i$. Then the coinvariant space $(\cO_X)_{\mfv}$ is
  finite-dimensional.
\end{theorem}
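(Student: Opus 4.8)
The plan is to realize $(\cO_X)_{\mfv}$ as a cohomology group of a holonomic $\caD$-module on a smooth ambient variety, and then deduce finite-dimensionality from the fact that $\caD$-module pushforward preserves holonomicity. Fix a closed embedding $X\hookrightarrow V$ into a smooth affine variety $V$ of dimension $n$ (say $V=\bA^n$), with ideal $\mathcal{I}\subseteq\cO_V$. Every $\xi\in\mfv\subseteq\Vect(X)$ lifts to a derivation $\tilde\xi$ of $\cO_V$ preserving $\mathcal{I}$, uniquely up to $\mathcal{I}\cdot\Vect(V)$. I would attach to $\mfv$ the right $\caD_V$-module
\[
M:=\caD_V\Big/\Big(\mathcal{I}\,\caD_V+\textstyle\sum_{\xi\in\mfv}\tilde\xi\,\caD_V\Big),
\]
which does not depend on the choice of lifts, is supported on $X$ (as $\mathcal{I}$ annihilates $M$), and is coherent: the right ideal is finitely generated even if $\mfv$ is not, since $\caD_V$ is Noetherian.

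First I would check that $(\cO_X)_{\mfv}\cong H^0(\pi_\bullet M)$, where $\pi\colon V\to\pt$. Since $V$ is affine, $\pi_\bullet M$ is computed by the de Rham complex $[\,M\otimes_{\cO_V}\wedge^{n}T_V\to\cdots\to M\otimes_{\cO_V}T_V\to M\,]$ placed in degrees $-n,\dots,0$, so $H^0(\pi_\bullet M)=M/M\cdot\Vect(V)$. Using the identification $\caD_V/\caD_V\cdot\Vect(V)\cong\cO_V$ (via the order-zero part), the image of $\mathcal{I}\caD_V$ becomes $\mathcal{I}$ and the image of $\tilde\xi\caD_V$ becomes $\tilde\xi(\cO_V)$; reducing modulo $\mathcal{I}$ turns $\tilde\xi(\cO_V)$ into $\xi(\cO_X)$, so $H^0(\pi_\bullet M)=\cO_X/\mfv(\cO_X)=(\cO_X)_{\mfv}$.

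The crux, and the step I expect to be hardest, is to prove $M$ holonomic under the dimension hypothesis. As $M$ is coherent, by Bernstein's inequality it is enough to show $\dim \mathrm{SS}(M)\le n$. Passing to principal symbols, the graded ideal cutting out $\mathrm{SS}(M)$ contains $\mathcal{I}\cdot\cO_{T^*V}$ together with the functions $\sigma_1(\tilde\xi)\colon(x,p)\mapsto\langle\xi|_x,p\rangle$ on $T^*V$; here $\tilde\xi|_x=\xi|_x$, and since global vector fields on $X$ are parallel to $X_{\red}$ (as recalled above) this lies in $T_xX_{\red}\subseteq T_xV$. Hence
\[
\mathrm{SS}(M)\ \subseteq\ Z:=\{(x,p)\in T^*V:\ x\in X_{\red},\ \ p|_{\mfv|_x}=0\}.
\]
Stratifying $Z$ by the value $d(x):=\dim\mfv|_x$: over $\{x\in X_{\red}:d(x)=i\}$ the set $Z$ restricts to the total space of a rank-$(n-i)$ subbundle of the cotangent bundle, so that piece has dimension $\dim\{x:d(x)=i\}+(n-i)\le\dim\{x:d(x)\le i\}+(n-i)\le i+(n-i)=n$, using the hypothesis. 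Taking the maximum over the finitely many values of $i$ gives $\dim \mathrm{SS}(M)\le\dim Z\le n$, so $M$ is holonomic.

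Finally, since $\caD$-module pushforward along a morphism of smooth varieties preserves holonomicity, $\pi_\bullet M$ is a holonomic complex on a point, i.e.\ its cohomology is finite-dimensional in each degree; in particular $(\cO_X)_{\mfv}=H^0(\pi_\bullet M)$ is finite-dimensional. Besides the routine verification that $M$ is well defined and coherent, the two substantive points are the symbol computation giving $\mathrm{SS}(M)\subseteq Z$ and — the real heart of the matter — the stratified dimension count, which is exactly where the assumption on the loci $\{x\in X:\dim\mfv|_x\le i\}$ enters.
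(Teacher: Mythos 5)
Your proof is correct and follows essentially the same route as the paper: you construct the right $\caD_V$-module $(I_X+\widetilde\mfv)\caD_V\setminus\caD_V$ representing $\mfv$-invariants (the paper's $M(X,\mfv)$), identify its underived pushforward to a point with $(\cO_X)_\mfv$, bound its characteristic variety by $\{(x,p): x\in X_\red,\ p|_{\mfv|_x}=0\}$ via symbols, and conclude holonomicity from the stratified dimension count. The only cosmetic difference is that the paper first reformulates the hypothesis as "finitely many leaves" and phrases the count in terms of conormal bundles of leaves, whereas you apply the dimension bound on the loci $\{\dim\mfv|_x\le i\}$ directly; the underlying estimate is identical.
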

In particular, the hypothesis implies that, on an open dense
subvariety of $X_\red$, $\mfv$ generates the tangent bundle; as we will explain
below, the hypothesis is equivalent to the statement that $X_\red$ is
stratified by locally closed subvarieties with this property.

\subsection{Goals and outline of the paper}
First, in \S \ref{s:gen}, we reformulate Theorem \ref{t:main1}
geometrically and prove it, along with more general
finite-dimensionality and holonomicity theorems.  The main tool
involves the definition of a right $\caD$-module, $M(X,\mfv)$,
generalizing \cite{ESdm}, such that $\Hom(M(X,\mfv), N) \cong N^\mfv$
for all $\caD$-modules $N$, i.e., the $\caD$-module which represents
invariants under the flow of $\mfv$.  Then the theorem above is proved
by studying when this $\caD$-module is holonomic.

The next goal, in \S \ref{s:Csla}, is to study examples related to
Cartan's classification of simple infinite-dimensional transitive Lie
algebras of vector fields on a formal polydisc which are complete with
respect to the jet filtration.  Namely, according to Cartan's
classification \cite{Car-gtcis,GQS-ic}, there are four such Lie
algebras, as follows. For $\xi \in \Vect(X)$, let $L_\xi$ denote the
Lie derivative by $\xi$.  Let $\hat \bA^n$ be the formal neighborhood
of the origin in $\bA^n$, which is a formal polydisc of dimension n.
Then, Cartan's classification consists of:
\begin{enumerate}
\item[(a)] The Lie algebra $\Vect(\hat \bA^n)$ of all vector fields on
  $\hat \bA^n$;
\item[(b)] The Lie algebra $H(\hat \bA^{2n},\omega)$ of all
  Hamiltonian vector fields on $\hat \bA^{2n}$, i.e., preserving the
  standard symplectic form $\omega = \sum_i dx_i \wedge dy_i$;
  explicitly, $\xi$ such that $L_\xi \omega = 0$;
\item[(c)] The Lie algebra $H(\hat \bA^{2n+1},\alpha)$ of all contact
  vector fields on an odd-dimensional formal polydisc, with respect to
  the standard contact structure $\alpha = dt + \sum_i x_i dy_i$,
  i.e., those vector fields satisfying $L_\xi \alpha \in \cO_X \cdot
  \alpha$;
\item[(d)] The Lie algebra $H(\hat \bA^n,\vol)$ of all
  volume-preserving vector fields on $\hat \bA^n$ equipped with the
  standard volume form $\vol = dx_1 \wedge \cdots \wedge dx_n$, i.e.,
  vector fields $\xi$ such that $L_\xi \vol = 0$.
\end{enumerate}
In \S \ref{s:Csla}, we define generalizations of each of these
examples to the global (but still affine), singular, degenerate
situation.  For example, (a) becomes vector fields on arbitrary
schemes of finite type. For (b)--(d), we define generalizations of the
structure on the variety, which in case (b) yields Poisson
varieties. Then, there are essentially two different choices of the
Lie algebra of vector fields. In case (b), these are Hamiltonian
vector fields or Poisson vector fields. We recall that Hamiltonian
vector fields are of the form $\{f,-\}$ for $f \in \cO_X$, and Poisson
vector fields are all vector fields which preserve the Poisson
bracket, i.e., such that $\xi\{f,g\} = \{\xi(f),g\}+\{f,\xi(g)\}$;
this includes all Hamiltonian vector fields.

In each of the cases (a)--(d), we study the leaves under the flow of
$\mfv$ and the condition for the associated $\caD$-module to be
holonomic (and hence for $(\cO_X)_\mfv$ to be finite-dimensional).

In \S \ref{s:ex-dloc} we discuss the globalization of these examples
to the nonaffine setting, which turns out to be straightforward for
Hamiltonian vector fields and all vector fields, but quite nontrivial
for Poisson vector fields (and hence their generalizations). We do not
need this material for the remainder of the paper.

In the remainder of the paper we study in detail three specific
examples for which the $\caD$-module has an interesting and nontrivial
structure which reflects the geometry. In these examples, we explicitly
compute the $\caD$-module and the coinvariants $(\cO_X)_\mfv$.

In \S \ref{s:cy-cplte-int-is}, we consider the case of divergence-free
vector fields on complete intersections in Calabi-Yau
varieties. Holonomicity turns out to be equivalent to having isolated
singularities, and we restrict to this case. Then, the structure of
the $\caD$-module and the coinvariant functions $(\cO_X)_{\mfv}$ is
governed by the Milnor number and link of the isolated singularities.
This is because there is a close relationship between these
coinvariants and the de Rham cohomology of the formal (or analytic)
neighborhood of the singularity, which was studied in, e.g.,
\cite{Gre-GMZ, HLY-plhdRc, Yau-vniis} (which were a source of
motivation for us).

In \S \ref{s:fqcyvar}, we consider quotients of Calabi-Yau varieties
by finite groups of volume-preserving automorphisms. In this case, it
turns out that the $\caD$-module associated to volume-preserving
vector fields is governed by the most singular points, where the
stabilizer is larger than that of any point in some neighborhood.
More generally, rather than working on the quotient $X/G$ where $X$ is
Calabi-Yau and $G$ is a group of volume-preserving automorphisms, we
study the Lie algebra of $G$-invariant volume-preserving vector fields
on $X$ itself.
% (and we generalize this to the setting where $G$ preserves volume up
% to scaling).

Finally, in \S \ref{s:sym}, we consider symmetric powers $(S^n X,
\mfv)$ of smooth varieties $(X,\mfv)$ on which $\mfv$ generates the
tangent space everywhere (which we call \emph{transitive}). This
includes the symplectic, locally conformally symplectic, contact, and
Calabi-Yau cases.  In these situations, we explicitly compute the
$\caD$-module and the coinvariant functions.  Dually, the main result
says that the invariant functionals on $\cO_{S^n X}$ form a polynomial
algebra whose generators are the functionals on diagonal embeddings
$X^i \to S^n X$ obtained by pulling back to $X^i$ and taking a
products of invariant functionals on each factor of $X$. For the
$\caD$-module, this expresses $M(S^n X, \mfv)$ as a direct sum of
external tensor products of copies of $M(X,\mfv)$ along each diagonal
embedding.
   
% In \S \ref{s:div}, we reformulate the geometric condition of
% incompressibility, i.e., volume preservation, in terms of connections
% on the canonical bundle, or more generally divergence functions.
% % , which
% % have the advantage of being global constructions not requiring formal
% % localization. 
% This also provides a generalization of the main new example of this
% paper, that of varieties with top polyvector fields, to the
% degenerate, multivalued situation.  

\subsection{Acknowledgements}
The first author's work was
partially supported by the NSF grant DMS-1000113. The second author is
a five-year fellow of the American Institute of Mathematics, and was
partially supported by the ARRA-funded NSF grant DMS-0900233.

\section{General theory}\label{s:gen}
Let $\Omega^\bullet_X := \wedge_{\cO_X}^\bullet \Omega^1_X$ be the
algebraic de Rham complex, where $\Omega^1_X$ is the sheaf of K\"ahler
differentials on $X$.  We will frequently use the de Rham complex
modulo torsion, $\tilde \Omega^\bullet_X := \Omega^\bullet_X /
\text{torsion}$.

By \emph{polyvector fields} of degree $m$ on $X$, we mean
skew-symmetric multiderivations $\wedge^m_{\bk} \cO_X \to \cO_X$. Let
$T^m_X$ be the sheaf of such multiderivations. Equivalently, $T^m_X =
\Hom_{\cO_X}(\Omega^m_X, \cO_X)$, where $\xi \in T^m_X$ is identified
with the homomorphism sending $df_1 \wedge \cdots \wedge df_m$ to
$\xi(f_1 \wedge \cdots \wedge f_m)$. This also coincides with
$\Hom_{\cO_X}(\tilde \Omega^m_X, \cO_X)$.

When $X$ is smooth, then $\tilde \Omega^\bullet_X = \Omega^\bullet_X$,
and its hypercohomology (which, for $X$ affine, is the same as the
cohomology of its complex of global sections) is called the algebraic
de Rham cohomology of $X$.  Over $\bk=\bC$, this cohomology coincides
with the topological cohomology of $X$ under the complex topology, by
a well-known theorem of Grothendieck.  For arbitrary $X$, we will
denote the cohomology of the space of global sections, $\Gamma(\tilde
\Omega^\bullet_X)$, by $H_{DR}^\bullet(X)$, and the hypercohomology of
the complex of sheaves $\tilde \Omega^\bullet_X$ by
$\bH_{DR}^\bullet(X)$ (very often we will use these when $X$ is smooth
and affine, where they both coincide with topological cohomology).

% It follows immediately from the above definitions that $T^d_X =
% \Hom_{\cO_X}(\Omega^d_X, \cO_X) = \Hom_{\cO_X}(\tilde \Omega^d_X,
% \cO_X)$.

We caution that, when $X$ is smooth, $\Omega_X$ (without a
superscript) will denote the canonical right $\caD_X$-module of volume
forms, which as a $\cO_X$-module coincides with $\Omega_X^{\dim X}$
under the above definition, when $X$ has pure dimension.

By a \emph{local system} on a variety, we mean an $\cO$-coherent right
$\caD$-module on the variety. Moreover, from now on, when we say
$\caD$-module, we always will mean a right $\caD$-module.

\subsection{Reformulation of Theorem \ref{t:main1} in terms of leaves}
Recall that $(X, \mfv)$ is a pair of an affine scheme $X$ of finite
type and a Lie algebra $\mfv \subseteq \Vect(X)$ of vector fields on
$X$.  We will give a more geometric formulation of Theorem
\ref{t:main1} in terms of \emph{leaves} of $X$ under $\mfv$, followed
by a strengthened version in these terms.
\begin{definition}
  An \emph{invariant subscheme} is a locally closed subscheme $Z
  \subseteq X$ preserved by $\mfv$; set-theoretically, this says that,
  at every point $z \in Z$, the evaluation $\mfv|_z$ lies in the
  tangent space $T_z Z_\red$.  A \emph{leaf} is a connected invariant
  (reduced) subvariety $Z$ such that, at every point $z$, in fact
  $\mfv|_z = T_z Z$.  A \emph{degenerate invariant subscheme} is an
  invariant subscheme $Z$ such that, at every point $z \in Z$,
  $\mfv|_z \subsetneq T_z Z_\red$.
\end{definition}
When an invariant subscheme is reduced, we call it an invariant
subvariety. An invariant subscheme $Z \subseteq X$ is degenerate if
and only if the invariant subvariety $Z_\red$ is degenerate.  Note
that the closure of any degenerate invariant subscheme is also such.
Also, leaves are necessarily smooth. Although the same is clearly not
true of degenerate invariant subschemes, we can restrict our attention
to those with smooth reduction by first stratifying $X_\red$ by its
(set-theoretic) singular loci, in view of the classical result:
\begin{theorem}\cite[Corollary to Theorem
  12]{Sei-dirfgt}\label{t:sing-pres}
  The set-theoretic singular locus of $X_\red$ is preserved by all
  vector fields on $X$.
\end{theorem}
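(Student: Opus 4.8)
The plan is to reduce to the case of an integral affine variety, identify its singular locus with the vanishing locus of a Fitting ideal of K\"ahler differentials, and then show by a short functoriality argument that this ideal---hence its radical, which is the ideal of the singular locus---is stabilized by any vector field. First I would reduce. By the restriction map $\Vect(X)\to\Vect(X_\red)$ recalled above, it suffices to take $\xi\in\Vect(Y)$ with $Y:=X_\red$ a reduced affine variety. Then $\xi$ stabilizes every minimal prime $\mathfrak p$ of $\cO_Y$: if $f\in\mathfrak p$, choose (by prime avoidance, using minimality) an element $g$ lying in every minimal prime other than $\mathfrak p$ but not in $\mathfrak p$; then $fg=0$ in the reduced ring $\cO_Y$, so $0=\xi(fg)=\xi(f)g+f\xi(g)$, and since $f\xi(g)\in\mathfrak p$ and $g\notin\mathfrak p$ we get $\xi(f)\in\mathfrak p$. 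Consequently $\xi$ stabilizes each irreducible component $Y_j=V(\mathfrak p_j)$ and each intersection $Y_j\cap Y_k$; since $Y^{\mathrm{sing}}=\bigcup_j Y_j^{\mathrm{sing}}\cup\bigcup_{j\neq k}(Y_j\cap Y_k)$, it is enough to prove the statement for the restriction of $\xi$ to each $Y_j$. So from now on $Y$ is integral, of pure dimension $n$.

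For such $Y$ over the algebraically closed field $\bk$, the singular locus is cut out set-theoretically by the ideal $J:=\mathrm{Fitt}_n(\Omega^1_{Y/\bk})\subseteq\cO_Y$, the $n$-th Fitting ideal of the module of K\"ahler differentials; indeed, at a closed point $y$ one has $\mathfrak m_y/\mathfrak m_y^2\cong\Omega^1_{Y/\bk}\otimes\kappa(y)$, so $y$ is singular (that is, $\dim_\bk\mathfrak m_y/\mathfrak m_y^2>n=\dim\cO_{Y,y}$) exactly when $J_y\neq\cO_{Y,y}$. Concretely, for a presentation $\cO_Y=\bk[x_1,\dots,x_N]/(g_1,\dots,g_r)$, the ideal $J$ is generated by the $(N-n)\times(N-n)$ minors of the Jacobian matrix $(\partial g_k/\partial x_i)$. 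It therefore suffices to prove $\xi(J)\subseteq J$: this implies that $\xi$ also stabilizes $\sqrt{J}=I(Y^{\mathrm{sing}})$, by the classical fact that in characteristic zero a derivation stabilizing an ideal stabilizes its radical---for instance because the formal flow $\exp(t\xi)$ stabilizes $J\cdot\cO_Y[[t]]$ and hence its radical $(\sqrt{J})\cdot\cO_Y[[t]]$, and one reads off the coefficient of $t$. This passage to the radical is precisely where characteristic zero is essential: in characteristic $p$ the conclusion can fail---e.g.\ $\partial_x$ defines a vector field on the integral plane curve $\{y^2=x^p\}$ ($p$ odd) which does not preserve its singular point.

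To prove $\xi(J)\subseteq J$, I would pass to the trivial first-order deformation: set $B:=\bk[\epsilon]/(\epsilon^2)$ and $R:=\cO_Y\otimes_\bk B=\cO_Y[\epsilon]/(\epsilon^2)$, the coordinate ring of $Y_\epsilon:=Y\times_\bk\Spec B$. The $B$-algebra map $\phi:=\Id+\epsilon\,\xi\colon R\to R$ is an automorphism (its inverse is $\Id-\epsilon\,\xi$, using $\epsilon^2=0$) reducing modulo $\epsilon$ to the identity of $\cO_Y$. Base change for K\"ahler differentials gives $\Omega^1_{R/B}\cong\Omega^1_{Y/\bk}\otimes_{\cO_Y}R$, so base change for Fitting ideals gives $\mathrm{Fitt}_n(\Omega^1_{R/B})=J\cdot R=J\oplus\epsilon J$ inside $\cO_Y\oplus\epsilon\cO_Y=R$. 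Because $\phi$ is an automorphism over $\Spec B$, it carries the intrinsic ideal $\mathrm{Fitt}_n(\Omega^1_{R/B})$ to itself; applying $\phi$ to an element $f\in J$ then yields $f+\epsilon\,\xi(f)\in J\oplus\epsilon J$, whence $\xi(f)\in J$. Combined with the previous paragraph, this finishes the proof.

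The step I expect to require the most care is the claim that $\phi$ fixes the relative Fitting ideal $J\cdot R$. Making it precise means checking that the $\phi$-semilinear automorphism of $\Omega^1_{R/B}$ induced by $\phi$---which on $\Omega^1_{Y/\bk}$ is the operator $\Id+\epsilon L_\xi$---carries one finite presentation of this module to another, so that the two presentation matrices have equal Fitting ideals. One warning: it does not suffice to differentiate the Jacobian minors directly by means of the ``Lax-type'' identity $\xi(A)=AC-BA$ satisfied by a presentation matrix $A$ of $\Omega^1_{Y/\bk}$; that alone only gives $\xi(\mathrm{Fitt}_n)\subseteq\mathrm{Fitt}_{n+1}$, which is strictly too weak---one really must use that the whole family of matrices $A+\epsilon\,\xi(A)$ presents a single module. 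Alternatively, the same argument can be run with the full formal flow $\exp(t\xi)$ on $\cO_Y[[t]]$, which has the advantage of simultaneously absorbing the passage to the radical, at the price of the usual technicalities with completed tensor products (such as the lemma $I\cdot\cO_Y[[t]]=I[[t]]$).
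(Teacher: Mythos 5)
Your proof is correct, but it is implemented differently from the one in the paper, which derives Theorem \ref{t:sing-pres} as a byproduct of the proof of Proposition \ref{p:decomp}: there, one integrates $\xi$ to the full formal flow $e^{t\xi}$ on $\cO_X[\![t]\!]$ and tracks the rank of the cotangent space $\mathfrak m/\mathfrak m^2$ at a moving $S$-point, concluding that the locus $\{x : \dim T_xX_\red \geq \dim X+1\}$ is invariant and then inducting on dimension; characteristic zero enters only through the existence of $e^{t\xi}$. You instead work ideal-theoretically with the global Jacobian/Fitting ideal: the square-zero extension $\Id+\epsilon\xi$ over $\bk[\epsilon]/(\epsilon^2)$, together with base change and intrinsicness of Fitting ideals, shows $\xi(J)\subseteq J$ in \emph{any} characteristic (this is exactly the content of Remark \ref{r:sch-sing}, where the paper cites \cite{Har-dcr} for it), and the characteristic-zero hypothesis is isolated in the separate passage to the radical $\sqrt J$. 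Each approach has its advantages: yours cleanly separates the characteristic-free statement from the characteristic-zero one and needs only a first-order deformation rather than the full flow, at the cost of first reducing to integral pure-dimensional components (your prime-avoidance argument for stabilizing minimal primes, and the decomposition of $Y^{\mathrm{sing}}$ into component singular loci and pairwise intersections, are both correct and necessary here) and of invoking the radical lemma; the paper's pointwise argument avoids both of those steps and generalizes immediately to the invariance of the strata $X_{i,k}$, which is what Proposition \ref{p:decomp} actually requires.
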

We give a proof of a more general assertion in the proof of
Proposition \ref{p:decomp} below.
\begin{remark}\label{r:sch-sing}
  Note that, for the set-theoretic singular locus to be preserved by
  all vector fields, we need to use that the characteristic of $\bk$
  is zero; otherwise the singular locus is not preserved by all vector
  fields: e.g., in characteristic $p > 0$, one has the derivation
  $\partial_x$ of $\bk[x,y]/(y^2-x^p)$, which does not vanish at the
  singular point at the origin.

  On the other hand, in arbitrary characteristic, the scheme-theoretic
  singular locus of a variety of pure dimension $k \geq 0$ is
  preserved, where we define this by the Jacobian ideal: for a variety
  cut out by equations $f_i$ in affine space, this is the ideal
  generated by determinants of $(k \times k)$-minors of the Jacobian
  matrix $(\frac{\partial f_i}{\partial x_j})$ (this is preserved by
  \cite{Har-dcr}, where it is shown that it coincides with the
  smallest nonzero Fitting ideal of the module of K\"ahler
  differentials).  In the above example it would be defined by the
  ideal $(y)$ when $p > 2$.  This is evidently preserved by all vector
  fields, which are all multiples of $\partial_x$.  Note, however,
  that we will not make use of the scheme-theoretic singular locus in
  this paper (except in \S \ref{s:cy-cplte-int-is}, where we will
  explicitly define it), nor will we consider the case of positive
  characteristic.
\end{remark}
% We will generally only be interested in smooth leaves.  Degenerate
% leaves will play an important role later on.
\begin{definition} Say that $(X, \mfv)$ has \emph{finitely many
    leaves} if $X_\red$ is a (disjoint) union of finitely many leaves.
\end{definition}
For example, when $X$ is a Poisson variety and $\mfv$ is the Lie algebra of
Hamiltonian vector fields, then this condition says that $X$ has
finitely many symplectic leaves.

We caution that, when $(X, \mfv)$ does not have finitely many leaves,
it does \emph{not} follow that there are infinitely many algebraic
leaves, or any at all:
\begin{example}\label{ex:noleaves}
  Consider the two-dimensional torus $X = (\bA^1\setminus \{0\})^2$, and
  let $\mfv = \langle \xi \rangle$ for some global vector field $\xi$
  which is not algebraically integrable, e.g., $x\partial_x - c
  y\partial_y$ where $c$ is irrational. The analytic leaves of this
  are the level sets of $x^cy$, which are not algebraic.  There are in
  fact \emph{no} algebraic leaves at all.

  However, it is always true that, in the formal neighborhood $\hat
  X_x$ of every point $x \in X$, there exists a formal leaf of $X$
  through $x$: this is the orbit of the formal group obtained by
  integrating $\mfv$.  In the above example, this says that the level
  sets of $x^cy$ do make sense in the formal neighborhood of every
  point $(x,y) \in X$.
\end{example}

The condition of having finitely many leaves is well-behaved:
\begin{proposition}\label{p:decomp}
  Let $X_i := \{x \in X \mid \dim \mfv|_x = i\} \subseteq
  X_\red$. Then $X_i$ is an invariant
  locally closed subvariety.  If $X$ has finitely many leaves, then
  the connected components of the $X_i$ are all leaves.
\end{proposition}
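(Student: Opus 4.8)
The plan is to prove the two assertions of Proposition~\ref{p:decomp} in turn. First I would establish that each $X_i = \{x \in X \mid \dim \mfv|_x = i\}$ is locally closed: the function $x \mapsto \dim \mfv|_x$ is the rank of the evaluation map $\mfv \otimes \cO_X \to T X_\red$ (equivalently, of the dual map to $\tilde\Omega^1_X$), so $\{x \mid \dim \mfv|_x \le i\}$ is closed (rank $\le i$ is cut out by vanishing of $(i{+}1)\times(i{+}1)$ minors of any local matrix representing the evaluation, using a finite generating set of $\mfv$ as an $\cO_X$-module, which exists since $X$ is Noetherian) and hence $X_i$, as the difference of two such closed sets, is locally closed. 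Next I would show $X_i$ is invariant, i.e. preserved by $\mfv$. This is essentially Seidenberg's argument (Theorems~\ref{t:sing-pres} and the cited \cite[Corollary to Theorem 12]{Sei-dirfgt}): a vector field $\xi \in \mfv$ generates a formal one-parameter flow $\exp(t\xi)$ acting on $\hat X_x$ for each $x$, and the function $y \mapsto \dim \mfv|_y$ is constant along this flow because $\mfv$ is a Lie algebra (so the flow conjugates $\mfv$ into itself, hence carries $\mfv|_y$ isomorphically onto $\mfv|_{\exp(t\xi)(y)}$). Therefore $\xi|_x$ is tangent to the level set of $y \mapsto \dim\mfv|_y$ at $x$, which set-theoretically is $(X_i)_\red$ near $x$; this is exactly the condition for $X_i$ to be $\mfv$-invariant. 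I expect the cleanest way to present this is to just prove the slightly more general statement: \emph{any set-theoretically defined subvariety of $X_\red$ that is invariant under all formal flows of elements of $\mfv$ (such as the singular locus, giving Theorem~\ref{t:sing-pres}, or the $X_i$) is $\mfv$-invariant}, which the text already promises.

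For the second assertion, suppose $(X,\mfv)$ has finitely many leaves, so $X_\red = \bigsqcup_{k=1}^N L_k$ with each $L_k$ a leaf. Fix a connected component $C$ of some $X_i$. I would argue as follows. Each leaf $L_k$ has $\dim \mfv|_z = \dim T_z L_k = \dim L_k$ at every $z \in L_k$, so $L_k \subseteq X_{\dim L_k}$; thus $X_i = \bigsqcup_{\dim L_k = i} L_k$, a finite disjoint union of leaves. Since leaves are connected and locally closed, and $C$ is a connected component of this union, $C$ is a union of some of these leaves. It remains to see $C$ is a \emph{single} leaf, i.e. that two distinct leaves appearing in $C$ cannot be "glued" into one connected component. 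Here the point is that $C$ is locally closed and connected, and a finite disjoint union of locally closed sets $\bigsqcup_\alpha L_\alpha$ is connected only if... well, that is false in general (two disjoint lines meeting the boundary), so I must use more. The honest statement is: the leaves of a given dimension $i$ are precisely the connected components of $X_i$. Since leaves are smooth connected locally closed subvarieties all of dimension $i$ and contained in $X_i$, and $X_i$ is the disjoint union of them, and each leaf is a union of connected components of $X_i$ (being locally closed and connected, hence open in its closure in $X_i$... ), I would instead argue that each $L_k$ with $\dim L_k = i$ is \emph{both open and closed} in $X_i$: closed because $\overline{L_k} \cap X_i$ cannot contain points of any other leaf $L_{k'}$ with $\dim L_{k'} = i$ (a leaf in the closure of another of the same dimension would force equality of dimensions of their closures but the boundary $\overline{L_k} \setminus L_k$ has dimension $< i$, contradiction), and open because $L_k$ is locally closed in $X$ hence open in $\overline{L_k} \cap X_i$, which as just shown equals $L_k \sqcup (\text{lower-dim stuff})$, so $L_k$ is open in $X_i$. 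Being both open and closed in $X_i$ and connected, each such $L_k$ is a connected component of $X_i$, and conversely every connected component is such an $L_k$.

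I would organize the write-up as: (1) prove the more general invariance lemma via formal flows, deducing both local closedness (by the minor/rank argument) and invariance of $X_i$ (and en passant Theorem~\ref{t:sing-pres}); (2) observe $L_k \subseteq X_{\dim L_k}$ and hence $X_i = \bigsqcup_{\dim L_k = i} L_k$; (3) show each such $L_k$ is open and closed in $X_i$ using that $\dim(\overline{L_k} \setminus L_k) < \dim L_k = i$, hence $X_i$'s connected components are exactly the leaves of dimension $i$.

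The main obstacle I anticipate is step (3): making rigorous that no two leaves of the same dimension can lie in a common connected component of $X_i$. The subtlety is entirely contained in the dimension count on boundaries of leaves — one needs that if a leaf $L_{k'}$ were contained in $\overline{L_k}$ it would have to be inside the boundary $\overline{L_k}\setminus L_k$, a closed set of dimension strictly less than $\dim L_k$, forcing $\dim L_{k'} < i$ and so $L_{k'} \not\subseteq X_i$ — but one must be careful that "connected component of a disjoint union of finitely many locally closed sets" behaves as expected, which is where the open-and-closed argument does the real work. Everything else (the rank/minor description of $X_i$, the formal-flow invariance, $L_k \subseteq X_{\dim L_k}$) is routine.
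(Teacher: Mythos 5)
Your parts (1) and (2) follow essentially the same route as the paper. For local closedness the paper writes $\mfv$ as a union of its finite-dimensional subspaces and intersects the corresponding closed loci, while you pass to a coherent sheaf and use minors; both work, but note that $\mfv$ itself is only a $\bk$-subspace of $\Vect(X)$, so the ``finite generating set of $\mfv$ as an $\cO_X$-module'' you invoke should be a generating set of $\cO_X\cdot\mfv$, whose fiberwise evaluations agree with those of $\mfv$. For invariance, your formal-flow argument is exactly the paper's mechanism: the automorphism $e^{t\xi}$ of $\cO_X[\![t]\!]$ preserves $\mfv[\![t]\!]$ because $\mfv$ is a Lie algebra, so the rank of the evaluation is constant along the resulting $\Spec \bk[\![t]\!]$-point. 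The paper additionally stratifies by $\dim T_xX=k$ so that the relevant cotangent module is free over $\bk[\![t]\!]$ and the rank bookkeeping at the generic point of the formal curve is clean; this is a detail you elide, and it is what actually justifies the phrase ``the flow carries $\mfv|_y$ isomorphically onto $\mfv|_{\exp(t\xi)(y)}$.''

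The genuine gap is in your step (3), exactly where you suspected. Having shown $X_i=\bigsqcup_{\dim L_k=i}L_k$, you want each such $L_k$ closed in $X_i$, and you argue that a point $z\in L_{k'}\cap(\overline{L_k}\setminus L_k)$ is impossible because $\dim(\overline{L_k}\setminus L_k)<i$. But your parenthetical silently upgrades ``$L_{k'}$ meets $\overline{L_k}\setminus L_k$'' to ``$L_{k'}$ is contained in $\overline{L_k}\setminus L_k$''; a single point of an $i$-dimensional leaf sitting inside a lower-dimensional boundary set is not, by dimension count alone, a contradiction. To close this you must use that $\overline{L_k}\setminus L_k$ is a closed \emph{invariant} subvariety, so that the formal orbit of $\mfv$ through $z$ --- which is $i$-dimensional, being the formal germ of the leaf $L_{k'}$ at $z$ --- lies in its formal neighborhood, forcing $\dim_z(\overline{L_k}\setminus L_k)\geq i$; that is the actual contradiction. (Also, ``$L_k$ is open in $\overline{L_k}\cap X_i$'' does not give openness in $X_i$; the correct deduction is that once every dimension-$i$ leaf is shown closed in $X_i$, each is open as the complement of the finitely many others.) The paper sidesteps all of this by a different route: it observes that a finite union of leaves admits no degenerate invariant subvariety (some leaf would have to be dense in it, contradicting degeneracy at its points), so each connected component of $X_i$, being invariant with $\dim\mfv|_x\equiv i$, is forced to have dimension exactly $i$ and hence to be a leaf. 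That degeneracy dichotomy replaces your point-set argument and is the cleaner way to finish; if you keep your decomposition $X_i=\bigsqcup_{\dim L_k=i}L_k$ (which is correct and appears parenthetically in the proof of Corollary \ref{c:finleaves}), you still need the invariance-plus-formal-orbit input to separate leaves of equal dimension.
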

We prove this below. We first note the consequence:
\begin{corollary}\label{c:finleaves}
  There can be at most one decomposition of $X_\red$ into finitely many
  leaves.  The following are equivalent:
\begin{itemize}
\item[(i)] $X$ has finitely many leaves;
\item[(ii)] $X$ contains no  degenerate invariant subvariety;
\item[(iii)] For all $i$, the dimension of $X_i$ is at most $i$.
\end{itemize}
\end{corollary}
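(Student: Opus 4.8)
The plan is to deduce the uniqueness statement from Proposition~\ref{p:decomp} and Example~\ref{ex:noleaves}, and to prove the equivalences through the cycle (i)$\Rightarrow$(ii)$\Rightarrow$(iii)$\Rightarrow$(i). First note that (iii) is literally the hypothesis of Theorem~\ref{t:main1}: the locus $\{x \in X : \dim \mfv|_x \le i\}$ equals $\bigcup_{j \le i} X_j$ and is closed (it is where all $(i+1)$-fold products of elements of $\mfv$ vanish in $\wedge^{i+1} T_x X$), so ``$\dim X_j \le j$ for all $j$'' is equivalent to ``$\dim\{x : \dim \mfv|_x \le i\} \le i$ for all $i$''. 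For uniqueness: given a decomposition $X_\red = \bigsqcup_j L_j$ into finitely many leaves, each $L_j$ lies in $X_{\dim L_j}$ and, being connected, in a single connected component of it; conversely Proposition~\ref{p:decomp} shows each connected component $C$ of each $X_i$ is a leaf, and if $x \in C \cap L_j$ then $\hat C_x$ and $\widehat{(L_j)}_x$ both coincide with the unique formal leaf through $x$, hence $C$ and $L_j$ agree Zariski-locally near $x$; a connectedness argument then forces $C = L_j$. Thus the leaves in any such decomposition are precisely the connected components of the $X_i$, a description that does not depend on the decomposition.

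For (iii)$\Rightarrow$(i), I fix $i$ with $X_i \neq \emptyset$ and, using that $X_i$ is an invariant subvariety with finitely many connected components, show each connected component $C$ is a leaf. At a smooth point $x$ of $C$ lying on a single top-dimensional irreducible component one has $\dim C = \dim_x C = \dim T_x C \ge \dim \mfv|_x = i$, while (iii) gives $\dim C \le i$; repeating the estimate at a generic point of an arbitrary irreducible component shows $C$ is pure of dimension $i$ and that $\mfv|_x = T_x C$ on the smooth locus (since $x \in C \subseteq X_i$ means $\dim \mfv|_x = i$ everywhere). The crux is smoothness of $C$: otherwise its singular locus $\Sigma$ is a nonempty proper closed subvariety, hence of dimension $\le i-1$, and $\Sigma$ is invariant --- $\mfv$ restricts to vector fields on the invariant subvariety $C$ and these preserve its singular locus (the generalization of Theorem~\ref{t:sing-pres} proved within the proof of Proposition~\ref{p:decomp}) --- so at a smooth point $x$ of $\Sigma$ we would get $\dim T_x \Sigma \ge \dim \mfv|_x = i > \dim \Sigma \ge \dim T_x \Sigma$, absurd. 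Hence $C$ is smooth and connected, so irreducible, with $\mfv|_x = T_x C$ everywhere: a leaf. Therefore $X_\red = \bigsqcup_i X_i$ is a union of finitely many leaves, proving (i).

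For (i)$\Rightarrow$(ii), let $X_\red = \bigsqcup_j L_j$ with finitely many leaves and let $Z$ be a nonempty degenerate invariant subvariety, which I may take closed since closures of degenerate invariant subschemes are such. For each $j$, $\{z \in L_j : z \in Z\}$ is closed in $L_j$ (as $Z$ is closed) and open in $L_j$ (invariance of $Z$ forces $\widehat{(L_j)}_z$, the formal leaf, into $\hat Z_z$, hence $L_j \subseteq Z$ near such $z$), so $Z = \bigsqcup_{j \in S} L_j$ for some nonempty $S$. Choosing $j \in S$ with $\dim L_j$ maximal, $\overline{L_j}$ is an irreducible component of $Z$, it is invariant (being the closure of the invariant set $L_j$), and $L_j$ is dense and open in it; so at a point $z \in L_j$ lying on no other component of $Z$ --- a dense set in $\overline{L_j}$ --- we have $T_z Z_\red = T_z \overline{L_j} = T_z L_j = \mfv|_z$, contradicting degeneracy of $Z$, which requires $\mfv|_z \subsetneq T_z Z_\red$. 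For (ii)$\Rightarrow$(iii), argue by contraposition: if $\dim X_i > i$, then $\overline{X_i}$ is invariant and has an irreducible component $Y$ with $\dim Y > i$; $Y$ is invariant by a standard argument (a vector field on $X$ tangent to an irreducible closed subvariety along a dense set of points is tangent to it everywhere, since the relevant functions then vanish on a dense subset of an irreducible variety), and as $\overline{X_i}$ lies in the closed locus $\{x : \dim \mfv|_x \le i\}$, every $z \in Y$ satisfies $\dim \mfv|_z \le i < \dim Y = \dim_z Y \le \dim T_z Y_\red$ with $\mfv|_z \subseteq T_z Y_\red$; so $Y$ is a degenerate invariant subvariety, contradicting (ii).

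The main obstacle is the smoothness step in (iii)$\Rightarrow$(i): it rests on the characteristic-zero fact --- established as part of the proof of Proposition~\ref{p:decomp} --- that the set-theoretic singular locus of an invariant subvariety is again an invariant subvariety. The remaining points are bookkeeping: extracting the inequalities $\dim T_x \ge \dim_x$, purity of dimension, and $\dim \operatorname{Sing} < \dim$ at suitably generic closed points and propagating them; using lower semicontinuity of $x \mapsto \dim \mfv|_x$ (so that the $X_i$ are locally closed and generic fibre-dimensions are attained on dense open sets); and reading ``degenerate invariant subvariety'' as tacitly nonempty.
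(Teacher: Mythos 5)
Your proof is correct, and it follows the paper's overall skeleton: the same cycle (i)$\Rightarrow$(ii)$\Rightarrow$(iii)$\Rightarrow$(i), with Proposition~\ref{p:decomp} and the invariance of singular loci (Theorem~\ref{t:sing-pres}) as the key inputs. Two steps are handled by genuinely different arguments, and one step is done in more detail than the paper. For uniqueness, the paper simply intersects two decompositions $X_\red = \sqcup_i Z_i = \sqcup_j Z_j'$, observes that each nonempty $Z_i \cap Z_j'$ is a leaf of the same dimension as $Z_i$, and concludes by connectedness; your route through formal leaves and the identification of every leaf with a connected component of some $X_i$ also works, but is heavier (it needs faithful flatness of completion to pass from formal agreement to Zariski-local agreement), whereas the paper's argument is purely dimension-theoretic. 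For (i)$\Rightarrow$(ii), the paper reduces to $Z$ irreducible and derives a contradiction by comparing the generic rank of $\mfv$ on $Z$ (which degeneracy forces to be $<\dim Z$) with $\dim X_i = i \geq \dim Z$; you instead decompose $Z$ into whole leaves and exhibit a point where $\mfv|_z = T_z Z_\red$ --- both are fine, and yours has the small advantage of not needing to justify that irreducible components of a degenerate invariant subvariety are again degenerate. Finally, for (iii)$\Rightarrow$(i) the paper just asserts that the decomposition into the $X_i$ must be into leaves; your argument that each connected component of $X_i$ is smooth --- because its singular locus would be an invariant subvariety of dimension $< i$ carrying an $i$-dimensional distribution --- is exactly the missing justification, and is a worthwhile addition.
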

\begin{proof}
  For the first statement, suppose that $X = \sqcup_i Z_i = \sqcup_i
  Z'_i$ are two decompositions into leaves.  Then each nonempty
  pairwise intersection $Z_i \cap Z_j'$ is evidently a leaf. Now, for
  each $i$, $Z_i = \sqcup_j (Z_i \cap Z_j')$ is a decomposition of
  $Z_i$ as a disjoint union of locally closed subvarieties of the same
  dimension as $Z_i$.  Since $Z_i$ is connected, this implies that
  this decomposition is trivial, i.e., $Z_j' = Z_i$ for some $j$.

  For the equivalence, first we show that (i) implies (ii). Indeed, if
  $X$ were a union of finitely many leaves and also $X$ contained a
  degenerate invariant subvariety $Z$, we could assume $Z$ is
  irreducible.  Then there would be some $X_i$ such that $X_i \cap Z$
  is open and dense in $Z$.  But then the rank of $\mfv$ along $X_i
  \cap Z$ would be less than the dimension of $Z$, and hence less than
  the dimension of $X_i$, a contradiction.  To show (ii) implies
  (iii), note that, if $\dim X_i > i$, then any open subset of $X_i$
  of pure maximal dimension is degenerate. To show (iii) implies (i),
  note that the decomposition of Proposition \ref{p:decomp} must be
  into leaves if $\dim X_i \leq i$ for all $i$ (in fact, in this case,
  each $X_i$ is a (possibly empty) finite union of leaves of dimension $i$).
\end{proof}
\begin{proof}[Proof of Proposition \ref{p:decomp}]
  % This proposition follows because, since $\mfv$ is a Lie
  % algebra, infinitesimally it acts by automorphisms of the system
  % $(X, \mfv)$, and therefore its flow must preserve $\dim \mfv|_x$.
  % We make this precise in the remark below.  ; we also give an
  % algebraic proof in \S \ref{s:decomp}.
  First, to see that the $X_i$ are locally closed, it suffices to show
  that $Y_j := \bigsqcup_{i \leq j} X_i$ is closed for all $j$.  This
  statement would be clear if $\mfv$ were finite-dimensional; for
  general $\mfv$ we can write $\mfv$ as a union of its
  finite-dimensional subspaces, and $Y_j(\mfv)$ is the intersection of
  $Y_j(\mfv')$ over all finite-dimensional subspaces $\mfv' \subseteq
  \mfv$.

  Next, we claim that, for all $i \leq k$, the subvariety
  $X_{i,k} \subseteq X$ of points $x \in X_i$ at which $\dim T_x X = k$
  % and $\dim \mfv|_x = i$
  is preserved by all vector fields from $\mfv$.
  
  Let $S := \Spec \bk[\![t]\!]$ and $X_S := \Spec \cO_X[\![t]\!]$.
  For every $\xi \in \mfv$, consider the automorphism $e^{t \xi}$ of
  $\cO_{X_S}$.  For any point $x \in X_{i,k}$, consider the
  corresponding $S$-point $x_S \in X_S$, i.e., $\cO_S$-linear
  homomorphism $\cO_{X_S} \to \cO_S$.  Let $\mathfrak{m} =
  \mathfrak{m}_{x_S}$ be its kernel, i.e., $\mathfrak{m}_x[\![t]\!]$.
  Then, let $\tilde x_S = e^{t \xi} x_S$, another $S$-point of $X_S$,
  and let $\tilde {\mathfrak{m}} = \mathfrak{m}_{x_S}$ be the kernel
  of its associated homomorphism $\cO_{X_S} \to \cO_S$.

Let the cotangent space to $X_S$ at $x_S$ be defined as $T^*_{x_S} X_S = 
\mathfrak{m} / \mathfrak{m}^2$, and similarly $T^*_{x_S'} X_S =
\tilde {\mathfrak{m}}/\tilde {\mathfrak{m}}^2$.  Since $T^*_{x_S} X_S$ is a free
$\cO_S$-module of rank $k$, the same holds for $T^*_{\tilde x_S} X_S$. 

Moreover, we can view $\mfv[\![t]\!]$ as a space of vector fields on
$X_S$ over $S$, i.e., as a subspace of $\cO_S$-derivations $\cO_{X_S}
\to \cO_S$. Since $e^{t\xi}$ is an automorphism preserving
$\mfv[\![t]\!]$, it follows as for $x_S \in X_S$ that the image of
$\mfv[\![t]\!] \to \ \Hom_{\cO_S}(T^*_{\tilde x_S} X_S, \cO_S)$ is a
free $\cO_S$-module of rank $i$.  We conclude that $\tilde x_S \in
(X_{i,k})_S = \Spec \cO_{X_{i,k}}[\![t]\!] \subseteq X_S$.

We conclude from the preceding paragraphs that $\mfv$ is parallel to
$X_{i,k}$, as desired.

% The tangent space of $X_S$ at $x_s$ 
%   as well as of the specialization of $\mfv$ coincide
%   with those at $x \in X$.  Now, let $I_{i,k}$ be the ideal defining
%   the closure of $X_{i,k}$.  This is the intersection of the kernels
%   of all such $x$.  Therefore, $I_{i,k}[\![t]\!]$ is preserved by
%   $e^{t \xi}$, i.e., $I_{i,k}$ is preserved by $\xi$.  Note that we
%   used that the characteristic of $\bk$ was zero in order to define
%   $e^{t \xi}$.

This can also be used to prove Theorem \ref{t:sing-pres}: setting $i =
k = \dim X + 1$, we conclude that the intersection of the
(set-theoretic) singular locus with the union of irreducible
components of $X$ of top dimension is preserved by all vector fields;
one can then induct on dimension. Alternatively, one can apply the
above argument, replacing $X_{i,k}$ by the set-theoretic singular
locus of $X$.

  For the final statement of the proposition, note that, if $X$ has a
  degenerate invariant subvariety $Z \subseteq X$, then it cannot be a
  union of finitely many leaves, since one of them would have to be
  open in $Z$, which is impossible.
  % or, one can embed into $\bA^k$, formally localize at $x$, and
  % integrate the flow
  % of vector fields of $\mfv$ over the generic formal neighborhood
  % ($\Spec \hat \cK_{X,x}$, the fraction field of the completed local
  % ring $\hat \cO_{X,x}$).
\end{proof}
\begin{remark}
  In the case that $\bk=\bC$, we could prove the proposition by
  embedding $X$ into $\bC^k$ and
  locally analytically integrating the flow of vector fields of $\mfv$
  (which individually noncanonically lift to $\bC^k$), which must
  preserve the singular locus and the rank of $\mfv$.
\end{remark}
% This also implies
% \begin{corollary}\label{c:finleaves}
%   $X$ has finitely many leaves if and only if, for all $i \geq 0$, the
%   locus of $x \in X$ where the evaluation $\mfv|_x$ has dimension
%   $\leq i$ in $T_x X$ has dimension at most $i$. 
% \end{corollary}
% We remark furthermore that, in the case $X$ has finitely many leaves,
% the locus of $x \in X$ where $\mfv|_x$ has dimension exactly $i$ has
% dimension $i$ unless it is empty.
% % \begin{definition}
% %   A locally closed subvariety $Z \subseteq X$ is called
% %   \emph{irrelevant} if, at every point $z \in Z$, $\mfv|_z \not
% %   \subseteq T_z Z$.
% % \end{definition}
In view of Corollary \ref{c:finleaves}, Theorem \ref{t:main1} above
can be restated as:
\begin{theorem}\label{t:main1-alt} If $(X,\mfv)$ has finitely many
  leaves,
  then $(\cO_X)_{\mfv}$ is finite-dimensional.
\end{theorem}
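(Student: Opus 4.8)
The plan is to construct, for the pair $(X,\mfv)$, a right $\caD$-module $M(X,\mfv)$ on $X$ (a smooth ambient variety if $X$ is singular, or $X$ itself when smooth) with the universal property $\Hom_{\caD}(M(X,\mfv), N) \cong N^{\mfv}$ for every right $\caD$-module $N$, where $N^{\mfv}$ denotes the sections annihilated by the flow of $\mfv$ — concretely, $M(X,\mfv) = \Omega_X / \Omega_X \cdot \mfv \cdot \caD_X$ when $X$ is smooth, with the obvious modification (pushforward from $X_\red$, or a quotient on a smooth ambient space) in general. Applying the universal property to $N = \Omega_X$ recovers $H^{\tpl}_{DR}$-type data, and more relevantly, the underived pushforward of $M(X,\mfv)$ to a point computes $(\cO_X)_{\mfv}$ as (the dual of) its de Rham cohomology; so finite-dimensionality of $(\cO_X)_{\mfv}$ will follow once we know $M(X,\mfv)$ is holonomic, since holonomic $\caD$-modules have finite-dimensional de Rham cohomology.

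The heart of the argument is therefore: \emph{if $(X,\mfv)$ has finitely many leaves, then $M(X,\mfv)$ is holonomic.} I would prove this by estimating the singular support (characteristic variety) of $M(X,\mfv)$ inside $T^*X$. By construction, the characteristic variety is contained in the common zero locus of the symbols of the vector fields in $\mfv$, i.e.\ in the set of covectors $(x,\xi)$ with $\langle \xi, \mfv|_x\rangle = 0$ — the "conormal directions to the $\mfv$-flow." Now stratify $X_\red$ by the leaves: by Corollary \ref{c:finleaves}, the locus $X_i$ where $\dim\mfv|_x = i$ has dimension $\le i$, and its components are leaves, on which $\mfv$ spans the full tangent space. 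Over a single leaf $Z$ of dimension $i$, the conormal condition $\langle\xi,\mfv|_x\rangle=0$ forces $\xi$ to lie in the conormal bundle $N^*_Z X$, which has dimension $\dim X - i + i = \dim X$ (the fiber dimension $\dim X - i$ plus the base dimension $i = \dim Z$). Summing the finitely many Lagrangian pieces $\overline{N^*_{Z}X}$ over the finitely many leaves $Z$ shows the characteristic variety is contained in a finite union of (closures of) conormal bundles to smooth locally closed subvarieties, hence is of dimension $\le \dim X$; by Gabber's theorem it is in fact coisotropic, so pure of dimension $\dim X$, and $M(X,\mfv)$ is holonomic.

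The step I expect to be the main obstacle is making the singular-support estimate rigorous when $X$ is singular and $\mfv$ is infinite-dimensional. For the infinite-dimensionality of $\mfv$: $M(X,\mfv)$ is a quotient of $\Omega_X$ by the action of $\mfv\cdot\caD_X$, and one must check that the characteristic variety is cut out by the symbols even though we quotient by an infinite-dimensional space of operators — but this is handled exactly as in Proposition \ref{p:decomp}, by exhausting $\mfv$ by finite-dimensional subspaces $\mfv'$ and noting the characteristic variety is the intersection of the corresponding loci (and $M(X,\mfv) = \varinjlim$-type considerations, or rather that a cyclic module quotient stabilizes). For singular $X$: one works on a smooth ambient $Y \supseteq X$, lifts vector fields (not canonically, but the ideal $I_X\cdot\caD_Y$ plus the lifts of $\mfv$ generate the relevant submodule), uses Theorem \ref{t:sing-pres} to know the singular locus of $X_\red$ is itself invariant so the leaf stratification refines the singular stratification, and checks that the conormal estimate still bounds the characteristic variety by $\dim Y$. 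Assembling these, together with the identification of $(\cO_X)_\mfv$ with the degree-zero de Rham cohomology of the pushforward of $M(X,\mfv)$, completes the proof.
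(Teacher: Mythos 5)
Your proposal follows essentially the same route as the paper: define the right $\caD$-module representing $\mfv$-invariants, observe that its underived pushforward to a point is $(\cO_X)_{\mfv}$ (Proposition \ref{p:pushfwd}), and prove holonomicity by noting that the symbols of $\mfv$ annihilate the characteristic variety, so that over each leaf the singular support sits inside the conormal bundle and the whole characteristic variety lies in the finite Lagrangian union of conormals to the leaves (Theorem \ref{t:main2}); the reductions to finite-dimensional subspaces of $\mfv$ and to a smooth ambient variety are handled as in the paper. One slip worth correcting: the module with the universal property $\Hom(M,N)\cong N^{\mfv}$ is the quotient $\mfv\cdot\caD_X\setminus\caD_X$ of the canonical right module $\caD_X$ (equivalently $(I_X+\widetilde\mfv)\caD_V\setminus\caD_V$ on a smooth ambient $V$), not a quotient of $\Omega_X$; for the $\Omega_X$-quotient you write down, the underived pushforward to a point is not $(\cO_X)_{\mfv}$, so the final identification would fail if that formula were used literally. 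The invocation of Gabber's theorem is unnecessary: containment of the characteristic variety in a Lagrangian already gives holonomicity.
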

In the aforementioned Poisson variety case, the theorem is a special case of
\cite[Theorem 1.1]{ESdm}.  Note that the converse to the theorem does not
hold: see Remark \ref{r:hol-fd}.

\begin{remark}\label{r:red}
  Suppose that $X$ is irreducible and that $\mfv$ acts locally
  finitely and semisimply on $\cO_X$, e.g., if $\mfv$ is the Lie
  algebra of a reductive algebraic group acting on $X$.  If, moreover,
  $\mfv$ acts with finitely many leaves, then Theorem
  \ref{t:main1-alt} is elementary. In fact, it is enough to assume
  that $\mfv$ has a dense leaf.  Then, $\dim (\cO_X)_{\mfv}= 1$.  This
  is because, by local finiteness and semisimplicity, the canonical
  map $(\cO_X)^{\mfv} \to (\cO_X)_{\mfv}$ is an isomorphism, and the
  former has dimension one.
% This is because, if $U \subseteq \cO_X$ is any
%   finite-dimensional subspace which projects injectively to
%   $(\cO_X)_{\mfv}$, and $\tilde U$ is a finite-dimensional
%   sub-$\mfv$-representation of $\cO_X$ containing $U$, then $\tilde
%   U^{\mfv} \cong \tilde U_{\mfv}$, while $U \to \tilde U_{\mfv}$ is
%   injective, so we conclude that $\dim U \leq \dim \cO_X^\mfv$.
%   Hence, $\dim (\cO_X)_{\mfv} \leq \dim \cO_X^{\mfv}$. On the other
%   hand, since $\cO_X$ has a dense leaf, $\cO_X^{\mfv}$ consists of the
%   constant functions.  Hence, $\dim \cO_X^{\mfv} = 1$.
\end{remark}
\begin{remark}\label{r:nonfinite}
  %% On the other hand, if $\mfv$ acts locally finitely but not
  %% semisimply, then $\dim (\cO_X)_{\mfv}$ can be larger than
  %% $|\pi_0(X)|$. For example, if $X = \bA^2 = \Spec \bk[x,y]$ and
  %% $\mfv$ is two-dimensional with generators acting by $x^m
  %% \frac{\partial}{\partial y}$ and $x \frac{\partial}{\partial x}$,
  %% then $X$ is the union of two leaves: $\{0\}$ and $X \setminus
  %% \{0\}$, but $\dim (\cO_X)_\mfv = \min\{m,n\}$, which is greater
  %% than $1 = |\pi_0(X)|$ if $m,n \geq 2$.
  % On the other hand, if $\mfv$ does not act locally finitely, then
  % $\dim (\cO_X)_{\mfv}$ can be larger than $\pi_0(X)$. For example, if
  % $X = \bA^1 = \Spec \bk[x]$ and $\mfv$ is one-dimensional with a
  % generator acting by $x^m \frac{\partial}{\partial x}$, then $X$ is
  % the union of two leaves: $\{0\}$ and $X \setminus \{0\}$, but $\dim
  % (\cO_X)_\mfv = m$.
  One can obtain examples where $\dim (\cO_X)_{\mfv} > 1$ when $\mfv$
  is semisimple and transitive,
%has an open dense leaf on $X$, 
  but does not act locally finitely.  For example, let $X \subseteq
  \bA^2$ be any nonempty open affine subvariety such that $0 \notin
  X$. Let $\mathfrak{sl}_2$ act on $X$ by the restriction of its
  action on $\bA^2$. This is the Lie algebra of linear Hamiltonian
  vector fields with respect to the usual symplectic structure on
  $\bA^2$.
  % Let $\pi = \partial_x
  % \wedge \partial_y$ be the usual Poisson structure.
  Since $X$ is affine symplectic, if $H(X)$ denotes the Hamiltonian
  vector fields, $(\cO_X)_{H(X)} \cong H^{\dim X}(X) = H_{DR}^{2}(X)$,
  by the usual isomorphism $[f] \mapsto f \cdot
  \vol_X$.\footnote{Dually, in the complex case $\bk=\bC$, the second
    homology of $X$ as a topological space produces the functionals on
    $\cO_X$ invariant under $H(X)$ (and hence also those invariant
    under $\mathfrak{sl}_2$) by $C \in H_2(X) \mapsto \Phi_C$,
    $\Phi_C(f) = \int_C f \vol_X$.}  On the other hand,
  $\mathfrak{sl}_2 \subseteq H(X)$, so $\dim (\cO_X)_{\mathfrak{sl}_2}
  \geq \dim H^2(X)$ (in fact this is an equality since
  $\mathfrak{sl}_2 \cdot \cO_X = H(X) \cdot \cO_X$ inside $\caD_X$,
  as $\mathfrak{sl}_2$ is transitive and volume-preserving, cf.~Proposition
  \ref{p:inc-os} below).  There are many examples of such varieties $X$
  which have $\dim H^2(X) > 1$. For example, if $X$ is the complement
  of $n+1$ lines through the origin, then $\dim H^2(X) = n$: the Betti
  numbers of $X$ are $1, n+1$, and $n$, since the Euler characteristic
  is zero, each deleted line creates an independent class in first
  cohomology, and there can be no cohomology in degrees higher than
  two as $X$ is a two-dimensional affine variety. This produces an
  example as desired for $n \geq 2$.
\end{remark}

\subsection{The $\caD$-module defined by $\mfv$}\label{ss:ddv}
The proof of the theorems above is based on a stronger result
concerning the $\caD$-module whose solutions are invariants under the
flow of $\mfv$.  This construction generalizes $M(X)$ from
\cite{ESdm} in the case $X$ is Poisson and $\mfv$ is the Lie algebra of
Hamiltonian vector fields. Namely, we prove that this $\caD$-module is
holonomic when $X$ has finitely many leaves. We will explain a partial
converse in \S \ref{ss:inc}, and discuss holonomicity in more detail in
\S \ref{ss:hol-bicond} below.

Let $\caD_X$ be the canonical right $\caD$-module on $X$, which is
equipped with a left action by $\Vect(X)$.  Explicitly, under
Kashiwara's equivalence, if $X \into V$ is an embedding into a smooth
affine variety $V$, this corresponds to $I_X \cdot \caD_V \setminus \caD_V$
together with the left action by derivations which preserve $I_X$.

Define
\begin{equation}
  M(X,\mfv) := \mfv \cdot \caD_X \setminus \caD_X,
\end{equation}
where $\mfv \cdot \caD_X$ is the right submodule generated by the action of
$\mfv$ on $\caD_X$.  (We will also use the same definition when $X$ is
replaced by its completion $\hat X_x$ at points $x \in X$, even though
$\hat X_x$ does not have finite type.)

Explicitly, if $i: X \to V$ is an embedding into a smooth affine
variety $V$, let $\widetilde \mfv \subseteq \Vect(V)$ be the subspace
of vector fields which are parallel to $X$ and restrict on $X$ to
elements of $\mfv$. Then, the image of $M(X,\mfv)$ under Kashiwara's
equivalence is
\[
M(X,\mfv,i) = (I_X + \widetilde \mfv) \caD_V \setminus \caD_V.
\]

Let $\pi: X \to \Spec \bk$ be the projection to a point, and $\pi_0$
the functor of underived direct image from $\caD$-modules on $X$ to
those on $\bk$, i.e., $\bk$-vector spaces.
\begin{proposition}\label{p:pushfwd}
$\pi_0 M(X,\mfv) = (\cO_X)_{\mfv}$.
\end{proposition}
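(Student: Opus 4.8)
The plan is to unwind the definition of underived pushforward along $\pi: X \to \Spec \bk$ and identify it with the cokernel $\cO_X / \mfv(\cO_X)$. Recall that for a right $\caD_X$-module $N$, the underived pushforward $\pi_0 N$ is the zeroth cohomology of $N \otimes^{\mathbf{L}}_{\caD_X} \caD_{X \to \Spec \bk}$, which for affine $X$ is just $N \otimes_{\caD_X} (\caD_X / \caD_X \cdot \Vect(X)) = N / N \cdot \Vect(X)$, i.e.\ the quotient of $N$ by the action of all vector fields. Applying this to $N = \caD_X$ recovers $\pi_0 \caD_X = \cO_X$ (the de Rham pushforward of the canonical right module along the projection to a point gives $H^0_{DR}$ in the smooth case, and $\cO_X$ in general after using Kashiwara; more concretely $\caD_X / \caD_X \cdot \Vect(X) \cong \cO_X$ as the rightmost quotient). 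So the first step is to recall this computation of $\pi_0 \caD_X$.

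Next I would exploit right-exactness of $\pi_0$ on $\caD$-modules on affine $X$: since $\pi_0(-) = (-) \otimes_{\caD_X} \caD_{X \to \pt}$ is a right-exact functor, applying it to the short exact sequence $\mfv \cdot \caD_X \hookrightarrow \caD_X \twoheadrightarrow M(X,\mfv)$ yields an exact sequence $\pi_0(\mfv \cdot \caD_X) \to \pi_0 \caD_X \to \pi_0 M(X,\mfv) \to 0$. Thus $\pi_0 M(X,\mfv) = \operatorname{coker}\bigl(\pi_0(\mfv \cdot \caD_X) \to \cO_X\bigr)$, and it remains to show that the image of this map is exactly $\mfv(\cO_X) \subseteq \cO_X$.

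For that, I would trace through generators. Under the identification $\pi_0 \caD_X = \caD_X / \caD_X \cdot \Vect(X) = \cO_X$, an element $\xi \cdot P$ of $\mfv \cdot \caD_X$ (with $\xi \in \mfv$, $P \in \caD_X$) maps to the class of $\xi \cdot P$ modulo $\caD_X \cdot \Vect(X)$. Working in the embedded picture $M(X,\mfv,i) = (I_X + \widetilde\mfv)\caD_V \backslash \caD_V$ with $V$ smooth, $\caD_V \cdot \Vect(V) \backslash \caD_V \cong \Omega_V / \Omega_V \cdot \Vect(V)$-style computations reduce everything to: a left-multiplication operator $\widetilde\xi \cdot Q$ in $\caD_V$, read in $\caD_V / \caD_V \cdot \Vect(V) \cong \cO_V$ (via $P \mapsto$ image of $P$ acting on the generator, equivalently $Q \mapsto$ the function obtained by letting $Q$ act from the right on $1$ and then... ) — more cleanly: the canonical surjection $\caD_X \to \cO_X$ sending $P$ to the image of the generator is $\Vect(X)$-equivariant for the left action, so $\xi P \mapsto \xi \cdot (\text{image of } P)$, and as $P$ ranges over $\caD_X$ the images fill out $\cO_X$. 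Hence the image of $\pi_0(\mfv\cdot\caD_X) \to \cO_X$ is $\{\xi(f) : \xi \in \mfv, f \in \cO_X\} = \mfv(\cO_X)$, giving $\pi_0 M(X,\mfv) = \cO_X/\mfv(\cO_X) = (\cO_X)_\mfv$.

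The main obstacle is the bookkeeping in the last step: making precise the identification of the map $\pi_0(\mfv \cdot \caD_X) \to \pi_0 \caD_X = \cO_X$ with $f$-translates of the $\mfv$-action, particularly in the possibly non-reduced/singular case where one must pass through Kashiwara's equivalence and keep track of the ideal $I_X$ and the lift $\widetilde\mfv$. One must check that the left $\Vect(X)$-action on $\caD_X$ descends compatibly, that the quotient $\caD_X/\caD_X\cdot\Vect(X)$ really is $\cO_X$ canonically (not just abstractly), and that no torsion phenomena obstruct the surjectivity claim. Once these compatibilities are in place — and they are standard for the canonical right $\caD$-module — the result follows formally from right-exactness of $\pi_0$.
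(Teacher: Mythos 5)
Your argument is correct and follows essentially the same route as the paper: the paper's proof is precisely the one-line computation $\pi_0 M(X,\mfv) = (I_X + \widetilde{\mfv})\caD_V \setminus \caD_V \otimes_{\caD_V} \cO_V = (\cO_X)_{\mfv}$ for an affine embedding $X \into V$, and your use of right-exactness of $(-)\otimes_{\caD}\caD_{X\to\pt}$ together with the identification of the image of $\mfv\cdot\caD_X$ in $\cO_X$ with $\mfv(\cO_X)$ is just that same tensor-product computation spelled out. The compatibilities you flag at the end (the left $\Vect$-action descending, $\caD_V/\caD_V\cdot\Vect(V)\cong\cO_V$ canonically) do hold and are exactly what the paper's terse displayed equality silently uses.
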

\begin{proof}
  Fix an affine embedding $X \into V$. Then, the underived direct
  image is
\[
\pi_0 M(X,\mfv) = (I_X + \widetilde \mfv) \caD_V \setminus \caD_V \otimes_{\caD_V} \cO_V =
(\cO_X)_{\mfv}. \qedhere
\]
\end{proof}
If $Z \subseteq X$ is an invariant closed subscheme,
we will repeatedly use the following relationship between $M(X,\mfv)$ and
$M(Z, \mfv|_Z)$:
\begin{proposition}\label{p:inv-sub-qt}
  If $i: Z \to X$ is the tautological embedding of an invariant closed
  subscheme, then there is a canonical surjection $M(X,\mfv) \onto i_*
  M(Z, \mfv|_Z)$.
\end{proposition}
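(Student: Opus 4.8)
The plan is to work inside a fixed affine embedding, realize the desired map as an inclusion of submodules of $\caD_V$, and then check that it is independent of the embedding.

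First I would choose an embedding $X \into V$ into a smooth affine variety; composing with $i$ gives a closed embedding $Z \into V$, and the ideals satisfy $I_X \subseteq I_Z$ in $\cO_V$. By the explicit form of Kashiwara's equivalence recalled in \S\ref{ss:ddv}, $M(X,\mfv)$ corresponds to the $\caD_V$-module $(I_X + \widetilde\mfv)\caD_V \setminus \caD_V$, where $\widetilde\mfv$ is the space of vector fields on $V$ parallel to $X$ which restrict on $X$ to elements of $\mfv$; likewise $i_*M(Z,\mfv|_Z)$, pushed all the way to $V$, corresponds to $(I_Z + \widetilde{\mfv|_Z})\caD_V \setminus \caD_V$, with $\widetilde{\mfv|_Z}$ the analogous space for $Z \into V$.

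The crux is the inclusion $\widetilde\mfv \subseteq \widetilde{\mfv|_Z}$: any $\xi \in \Vect(V)$ parallel to $X$ with $\xi|_X \in \mfv$ is automatically parallel to $Z$ with $\xi|_Z \in \mfv|_Z$. This is exactly where invariance of $Z$ enters, since invariance of $Z$ in $X$ means precisely that every derivation in $\mfv$ preserves the ideal $I_Z/I_X \subseteq \cO_X$. Indeed, given $\xi \in \widetilde\mfv$ and $f \in I_Z$: since $\xi$ preserves $I_X$ it descends to $\bar\xi = \xi|_X \in \mfv$, and then $\xi(f) \bmod I_X = \bar\xi(f \bmod I_X) \in I_Z/I_X$; because $I_X \subseteq I_Z$, this forces $\xi(f) \in I_Z$, so $\xi$ is parallel to $Z$, and $\xi|_Z = (\xi|_X)|_Z \in \mfv|_Z$. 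With this in hand, $I_X \subseteq I_Z$ and $\widetilde\mfv \subseteq \widetilde{\mfv|_Z}$ give $(I_X + \widetilde\mfv)\caD_V \subseteq (I_Z + \widetilde{\mfv|_Z})\caD_V$, so the quotient map $(I_X + \widetilde\mfv)\caD_V \setminus \caD_V \onto (I_Z + \widetilde{\mfv|_Z})\caD_V \setminus \caD_V$ is a surjection, which under Kashiwara's equivalence is a surjection $M(X,\mfv) \onto i_*M(Z,\mfv|_Z)$ of $\caD_X$-modules.

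Finally I would record canonicity. The cleanest way is to observe that the quotient map $\caD_X \onto i_*\caD_Z$ realizing $i_*\caD_Z = I_Z\caD_V \setminus \caD_V$ as a further quotient of $\caD_X = I_X\caD_V \setminus \caD_V$ is intrinsic (independent of $V$), and that it intertwines the left $\mfv$-action on $\caD_X$ with the left $\mfv|_Z$-action on $i_*\caD_Z$ — legitimate because $\mfv$ is parallel to $Z$, by the crux above. Hence it carries $\mfv \cdot \caD_X$ into $i_*(\mfv|_Z \cdot \caD_Z)$ and, using exactness of $i_*$ for the closed embedding $i$, descends to the asserted map $M(X,\mfv) = \mfv\caD_X \setminus \caD_X \onto i_*\caD_Z / i_*(\mfv|_Z \caD_Z) = i_*M(Z,\mfv|_Z)$, which is surjective since $\caD_X \onto i_*\caD_Z$ is, and which agrees with the map built above in any chosen embedding. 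The only genuine obstacle is the crux inclusion $\widetilde\mfv \subseteq \widetilde{\mfv|_Z}$; everything else is bookkeeping with Kashiwara's equivalence and exactness of $i_*$.
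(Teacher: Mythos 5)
Your proof is correct and is essentially the paper's own argument, which simply records the identification $i_* M(Z,\mfv|_Z) = ((\mfv + I_Z)\cdot\caD_X)\setminus\caD_X$ and observes that this is manifestly a further quotient of $M(X,\mfv) = \mfv\cdot\caD_X\setminus\caD_X$. Your verification that invariance of $Z$ forces $\widetilde\mfv \subseteq \widetilde{\mfv|_Z}$ inside a chosen embedding is precisely the content left implicit in that one line, so the two arguments coincide.
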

\begin{proof}
  This follows because $i_* M(Z, \mfv|_Z) = ((\mfv +I_Z)\cdot
  \caD_X)\setminus \caD_X$, where $I_Z$ is the ideal of $Z$.
\end{proof}
\begin{remark}\label{r:vs}
  As pointed out in the previous subsection, one could more generally
  allow $\mfv$ to be an arbitrary subset of
  $\Vect(X)$. However, it is easy to see that the $\caD$-module is the
  same as for the Lie algebra generated by this subset. So, no
  generality is lost by assuming that $\mfv$ be a Lie algebra.
\end{remark}
\begin{notation}\label{ntn:la}
  By a Lie algebroid in $\Vect(X)$, we mean a Lie subalgebra which is
  also a coherent subsheaf.
\end{notation}
\begin{remark}\label{r:diffop-leq1}  One could more generally (although
  equivalently in a sense we will explain) allow $\mfv \subseteq
  \caD_X^{\leq 1}$ to be a space of differential operators of order
  $\leq 1$.  One then sets, as before, $M(X,\mfv) = \mfv \cdot \caD_X
  \setminus \caD_X$.  In this case, one obtains the same $\caD$-module
  not merely by passing to the Lie algebra generated by $\mfv$, but in
  fact one can also replace $\mfv$ by $\mfv \cdot \cO_X$.  Let
  $\sigma: \caD_X^{\leq 1} \to \Vect(X)$ denote the principal symbol.
  Then, we conclude that $\sigma(\mfv) \subseteq \Vect(X)$ is actually
  a Lie algebroid (cf.~Notation \ref{ntn:la}).

  This is actually equivalent to using only vector fields, in the
  following sense: Given any pair $(X, \mfv)$ with $\mfv \subseteq
  \caD_X^{\leq 1}$, one can consider the pair $(\bA^1 \times X, \hat
  \mfv)$ where, for $x$ the coordinate on $\bA^1$, $\hat \mfv$
  contains the vector field $\partial_x$ together with, for every
  differential operator $\theta \in \mfv$, $\sigma(\theta) -
  (\theta-\sigma(\theta)) x \partial_x$.  Since $(x \partial_x + 1)
  = \partial_x \cdot x \in (\partial_x \cdot \caD_{\bA^1})$, one
  easily sees that $M(\bA^1 \times X, \hat \mfv) \cong \Omega_{\bA^1}
  \boxtimes M(X,\mfv)$.  So, in this sense, one can reduce the study
  of pairs $(X, \mfv)$ to the study of affine schemes of finite type
  with Lie algebras of vector fields. In particular, our general
  results extend easily to the setting of differential operators of
  order $\leq 1$.
\end{remark}
\begin{remark}
  Similarly, one can reduce the study of pairs $(X, \mfv)$ to the case
  where $X$ is affine space.  Indeed, if $X \into \bA^n$ is any
  embedding, and $I_X$ is the ideal of $X$, we can consider the Lie algebroid
\[
I_X \cdot \caD_{\bA^n}^{\leq 1} + \mfv \subseteq \caD_{\bA^n}^{\leq 1}.
\]
This makes sense by lifting elements of $\mfv$ to vector fields on
$\bA^n$, and the result is independent of the choice.  We can then
apply the previous remark to reduce everything to Lie algebras of
vector fields on affine space.  (This is not really helpful, though:
in our examples, $\mfv$ is naturally associated with $X$ (e.g.,
Hamiltonian vector fields on $X$), so it is not natural to replace $X$
with an affine space.)
\end{remark}

\subsection{Holonomicity and proof of Theorems \ref{t:main1} and \ref{t:main1-alt}}
Recall that a nonzero $\caD$-module on $X$ is \emph{holonomic} if it
is finitely generated and its singular support is a Lagrangian
subvariety of $T^*X$ (i.e., its dimension equals that of $X$). We
always call the zero module holonomic.  (Derived) pushforwards of
holonomic $\caD$-modules are well-known to have holonomic
cohomology. Since a holonomic $\caD$-module on a point is
finite-dimensional, this implies that, if $M$ is holonomic and $\pi: X
\to \pt$ is the pushforward to a point, then $\pi_* M$ (by which we
mean the cohomology of the complex of vector spaces), and in
particular $\pi_0 M$, is finite-dimensional.  Therefore, if we can
show that $M(X, \mfv)$ is holonomic, this implies that $(\cO_X)_{\mfv}
= \pi_0 M(X, \mfv)$ is finite-dimensional, along with the full
pushforward $\pi_* M(X,\mfv)$. This reduces Theorem \ref{t:main1-alt}
and equivalently Theorem \ref{t:main1} to the statement:
\begin{theorem}\label{t:main2}
  If $(X,\mfv)$ has finitely many leaves, then $M(X,\mfv)$ is holonomic.
  In this case, the composition factors are intermediate extensions of
  local systems along the leaves.
\end{theorem}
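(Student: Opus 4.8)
The plan is to prove holonomicity by bounding the characteristic variety of $M(X,\mfv)$ from above: for a smooth affine variety $V$ containing $X$, I will show that the singular support $\operatorname{SS}$ of the corresponding $\caD_V$-module has dimension at most $\dim V$. By Gabber's involutivity theorem this forces it to be Lagrangian, hence $M(X,\mfv)$ to be holonomic; the description of the composition factors will then follow from the standard microlocal constructibility criterion applied to the stratification by leaves. (If $M(X,\mfv)=0$ there is nothing to prove.)

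First I would fix an embedding $X\into V$ with $V$ smooth affine, so that, under Kashiwara's equivalence, $M(X,\mfv)$ is the cyclic right $\caD_V$-module $N:=\caD_V/\mathfrak I$ with $\mathfrak I:=(I_X+\widetilde\mfv)\caD_V$, where $\widetilde\mfv\subseteq\Vect(V)$ is the space of vector fields parallel to $X$ and restricting on $X$ to $\mfv$ (see \S\ref{ss:ddv}). Being cyclic, $N$ is finitely generated. Equip it with the good filtration induced from the order filtration on $\caD_V$; then $\gr N=\cO_{T^*V}/\gr\mathfrak I$, where $\gr\mathfrak I$ is a graded ideal containing the principal symbol $\sigma(\theta)$ of every $\theta\in\mathfrak I$. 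In particular $\gr\mathfrak I$ contains $I_X$ (viewed as fibrewise-constant functions on $T^*V$) and, for every $\xi\in\widetilde\mfv$, the fibrewise-linear function $(x,p)\mapsto\langle p,\xi(x)\rangle$. Hence
\[
\operatorname{SS}(N)=\operatorname{supp}(\gr N)\ \subseteq\ Y:=\{(x,p)\in T^*V:\ x\in X_\red\ \text{ and }\ \langle p,\xi(x)\rangle=0\ \text{ for all }\xi\in\widetilde\mfv\}.
\]
Only this cheap inclusion is needed; we never compute $\operatorname{SS}(N)$ exactly.

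Next comes the dimension count, which is the only place the hypothesis enters. Since $V$ is smooth, every $\xi\in\mfv$ admits a lift in $\widetilde\mfv$ (derivations of $\cO_X$ extend to derivations of $\cO_V$ preserving $I_X$), so $\widetilde\mfv\to\mfv$ is surjective and therefore $\widetilde\mfv|_x=\mfv|_x$ for all $x\in X_\red$. Stratify $X_\red$ by the locally closed invariant subvarieties $X_i=\{x:\dim\mfv|_x=i\}$ of Proposition \ref{p:decomp}. Because $(X,\mfv)$ has finitely many leaves, Corollary \ref{c:finleaves} gives $\dim X_i\le i$, and each connected component of $X_i$ is a leaf, hence smooth with $T_xX_i=\mfv|_x$ at every point $x$. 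Over such an $x$, the fibre of $Y$ is $(\mfv|_x)^\perp\subseteq T^*_xV$, of dimension $\dim V-i$, and coincides with the conormal space $N^*_{X_i/V,x}$; thus $Y\cap(T^*V|_{X_i})\subseteq N^*_{X_i/V}$, of dimension at most $\dim X_i+(\dim V-i)\le\dim V$. Summing over the finitely many $i$, $\dim\operatorname{SS}(N)\le\dim Y\le\dim V$. By Gabber's theorem $\operatorname{SS}(N)$ is coisotropic, so each of its irreducible components has dimension $\ge\dim V$; hence $\operatorname{SS}(N)$ is Lagrangian and $N$, equivalently $M(X,\mfv)$, is holonomic.

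The same estimate yields $\operatorname{SS}(M(X,\mfv))\subseteq\bigcup_L N^*_{L/V}$, the union of the conormal bundles of the finitely many leaves $L$. These leaves form a stratification of $X_\red$ satisfying the frontier condition: the function $x\mapsto\dim\mfv|_x$ is lower semicontinuous, so $\bigsqcup_{k\le i}X_k$ is closed, whence $\overline{X_i}\subseteq\bigsqcup_{k\le i}X_k$, a union of leaves; and every leaf is smooth and connected. A holonomic $\caD$-module whose singular support is contained in the union of the conormal bundles of such a stratification is constructible with respect to it, and the simple objects of this Serre subcategory are exactly the intermediate extensions $(j_L)_{!*}\mathcal L$ of irreducible local systems $\mathcal L$ on the strata $L$; applied to $M(X,\mfv)$, this shows its composition factors are intermediate extensions of local systems along the leaves. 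The argument is short once the framework of \S\ref{ss:ddv} is in place, and its whole weight rests on the dimension count of the third paragraph, where ``finitely many leaves'' is used through the inequality $\dim X_i\le i$. The main thing one must check is that the crude containment $\operatorname{SS}(N)\subseteq Y$, combined with Gabber involutivity, already suffices --- so that no lower bound on, or exact computation of, the characteristic variety is required, and the passage from $\mfv$ to its lift $\widetilde\mfv$ costs nothing.
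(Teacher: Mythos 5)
Your proposal is correct and follows essentially the same route as the paper: both bound the singular support of $M(X,\mfv)$ by the zero locus of the symbols of $\mfv$ (equivalently, by the union of the conormal bundles to the finitely many leaves), conclude holonomicity from the resulting dimension bound, and read off the composition factors from this description of the singular support. Your version merely makes explicit the embedding into $V$, the lift $\widetilde\mfv$, the dimension count via Corollary \ref{c:finleaves}, and the appeal to Gabber/Bernstein, all of which the paper's terser proof leaves implicit.
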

The converse does not hold: see, e.g., Example \ref{ex:hol-nf}.
\begin{proof}
  The equations $\gr \mfv$ are satisfied by the singular support of
  $M(X,\mfv)$.  These equations say, at every point $x \in X$, that
  the restriction of the singular support of $M(X,\mfv)$ to $x$ lies
  in $(\mfv|_x)^\perp$.  Thus, if $Z \subseteq X$ is a leaf, then the
  restriction of the singular support of $M(X,\mfv)$ to $Z$ lies in
  the conormal bundle to $Z$, which is Lagrangian.  If $X$ is a finite
  union of leaves, it follows that the singular support of $M(X,\mfv)$
  is contained in the union of the conormal bundles to the leaves,
  which is Lagrangian.  The last statement immediately follows from
  this description of the singular support.
\end{proof}
We will be interested in the condition on $\mfv$ for $M(X,\mfv)$ to be
holonomic, which turns out to be subtle.
\begin{definition}
Call $(X,\mfv)$, or $\mfv$, holonomic if $M(X,\mfv)$ is.  
\end{definition}
We will often use the following immediate consequence:
\begin{proposition}\label{p:hol-fd}
If $\mfv$ is holonomic, then $\cO_{\mfv}$ is finite-dimensional.
\end{proposition}
\begin{remark} \label{r:hol-fd} The converse to Proposition
  \ref{p:hol-fd} does not hold in general (although we will have a
  couple of cases where it does: the Lie algebras of all vector fields
  (Proposition \ref{p:allvfds}) and of Hamiltonian vector fields
  preserving a top polyvector field (Corollary \ref{c:vtop-hol})).  A
  simple example where this converse does not hold is
  $(X,\mfv)=(\bA^2, \langle \partial_x \rangle)$ (where $x$ is one of
  the coordinates on $\bA^2$), where $M(X,\mfv) = \Omega_{\bA^1}
  \boxtimes \caD_{\bA^1}$ is not holonomic, but $\cO_{\mfv} = 0$. This
  example also has infinitely many leaves, namely all lines parallel
  to the $x$-axis.
\end{remark}

\subsection{Incompressibility and a weak converse}\label{ss:inc}
We say that a vector field $\xi$ preserves a differential form
$\omega$ if the Lie derivative $L_\xi$ annihilates $\omega$.
\begin{definition}
  Say that $\mfv$ flows \emph{incompressibly} along an irreducible
  invariant subvariety $Z$ if there exists a smooth point $z \in Z$
  and a volume form on the formal neighborhood of $Z$ at $z$ which is
  preserved by $\mfv$.  
\end{definition}
There is an alternative definition using divergence functions which does
not require formal localization, which we discuss in \S \ref{s:div};
see also Proposition \ref{p:inc-subvar}.(iii).
When $X$ is irreducible and $\mfv$ flows incompressibly on $X$, we
omit the $X$ and merely say that $\mfv$ flows incompressibly.  Note
that this is equivalent to flowing generically incompressibly.

In \S \ref{ss:inc-subvar-pf} we will prove
\begin{proposition}\label{p:inc-subvar}
  Let $X$ be an irreducible affine variety.
The following conditions are equivalent:
\begin{itemize}
\item[(i)] $\mfv$ flows incompressibly;
\item[(ii)]  $M(X,\mfv)$ is fully supported;
\item[(iii)] For all $\xi_i \in \mfv$ and $f_i \in \cO_X$ such that $\sum_i f_i \xi_i = 0$, one has $\sum_i \xi_i(f_i) = 0$.
\end{itemize}
Moreover, the equivalence (ii) $\Leftrightarrow$ (iii) holds when
$X$ is an arbitrary affine scheme of finite type, if one generalizes (ii)
to the condition: (ii') The annihilator of $M(X,\mfv)$ in $\cO_X$ is zero.
% Moreover, if $Z \subseteq X$ is an incompressible leaf, then
% $M(Z,\mfv|_Z)$ is a local system on $Z$.
\end{proposition}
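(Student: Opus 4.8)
The plan is to prove the equivalence of (i), (ii), and (iii) for irreducible $X$, and then observe that the argument for (ii$'$) $\Leftrightarrow$ (iii) works verbatim for arbitrary affine schemes of finite type. I would organize the proof around the implications (iii) $\Rightarrow$ (ii), (ii) $\Rightarrow$ (i), and (i) $\Rightarrow$ (iii), with (ii) $\Leftrightarrow$ (ii$'$) being trivial in the irreducible case since $\cO_X$ is then a domain.

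First I would set up the calculation underlying (ii) $\Leftrightarrow$ (iii). Fix an affine embedding $i \colon X \into V$ into smooth affine $V$, so that $M(X,\mfv)$ corresponds under Kashiwara to $(I_X + \widetilde{\mfv})\caD_V \setminus \caD_V$. The $\caD$-module $M(X,\mfv)$ fails to be supported on all of $X$ (i.e., it is killed by multiplication by some nonzero $f \in \cO_X$) precisely when $\ov{f} \in (I_X + \widetilde{\mfv})\caD_V$ for some lift $f$; equivalently, when there is a relation $f = \sum_j g_j \widetilde{\xi}_j + (\text{terms in } I_X \cdot \caD_V)$ of a specific low-order type. The cleanest way to see the connection to (iii) is to work with the associated graded / symbol picture: the annihilator of $M(X,\mfv)$ in $\cO_X$ consists of those $f$ for which $f \in \sigma_0\big((\mfv + I_X\caD_X^{\le 1})\caD_X\big)$ in order zero, and a short computation shows that the order-zero part of $\mfv \cdot \caD_X$ inside $\cO_X$ is exactly $\{\sum_i \xi_i(f_i) : \sum_i f_i \xi_i = 0 \text{ as a vector field}\}$ — the "unexpected" constant terms produced when a $\cO_X$-combination of the $\xi_i$ happens to vanish as a derivation but the operators $f_i \xi_i \in \caD_X^{\le 1}$ sum to the constant $\sum_i \xi_i(f_i)$. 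So (iii) says precisely that this order-zero image is zero, i.e. $M(X,\mfv)$ has zero annihilator in $\cO_X$, which in the irreducible (domain) case is the same as being fully supported. This gives (ii) $\Leftrightarrow$ (iii), and simultaneously (ii$'$) $\Leftrightarrow$ (iii) in general.

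Next, for (i) $\Leftrightarrow$ (ii): a volume form $\vol$ on the formal neighborhood $\hat{X}_z$ of a smooth point $z$ with $L_\xi \vol = 0$ for all $\xi \in \mfv$ gives, via the classical isomorphism $\Omega_{\hat X_z} \cong \cO_{\hat X_z}$ (contraction with $\vol$), a nonzero section of the dual of $M(\hat X_z, \mfv)$ — equivalently, $M(\hat X_z, \mfv) \onto \Omega_{\hat X_z}$ is forced once $\mfv$ preserves $\vol$, because the defining relations of $M$ act by Lie derivative on $\Omega$. Hence $M(X,\mfv)$ is nonzero at a smooth point of the dense open locus, so fully supported since $X$ is irreducible. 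Conversely, if $M(X,\mfv)$ is fully supported, restricting to a smooth dense open $U \subseteq X$ where moreover $\mfv$ is transitive (which holds on a dense open by Corollary \ref{c:finleaves}-type reasoning, or more elementarily since generic rank is maximal), $M(U,\mfv)$ is a nonzero quotient of $\caD_U$ by a left ideal generated by vector fields spanning $T_zU$ at each point — such a quotient, if nonzero, is an $\cO$-coherent $\caD$-module of rank $\le 1$, hence a rank-one local system, hence $\cong \Omega_U \otimes L$ for a flat line bundle; pulling back to the formal neighborhood of a point trivializes $L$ and the flat connection, producing the $\mfv$-invariant volume form. This is the step I expect to be the main obstacle: one must carefully check that "$M(X,\mfv)$ fully supported" really does cut down, on the transitive smooth locus, to a rank-one object and that the resulting flat connection is exactly the one whose horizontal sections are the $\mfv$-invariant volume forms — in particular identifying the connection's curvature/monodromy correctly and handling the passage between the global algebraic picture and the formal-local statement in the definition of incompressibility. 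The divergence-function reformulation promised in \S\ref{s:div} (and Proposition \ref{p:inc-subvar}.(iii) itself, which is (iii) here) is really the bookkeeping device that makes this precise, so in the write-up I would prove (iii) $\Leftrightarrow$ (ii) first and then deduce (i) $\Leftrightarrow$ (iii) by the formal-neighborhood argument, using that over $\hat X_z$ condition (iii) unwinds directly into the existence of a closed logarithmic-derivative-free divergence, i.e. an invariant volume form.
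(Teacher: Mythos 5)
Your overall architecture matches the paper's: (ii)$\Leftrightarrow$(iii) (and (ii$'$)$\Leftrightarrow$(iii) for schemes) via the observation that the order-zero part of $(\widetilde{\mfv}+I_X)\cdot\caD_V$ is computed by the order-zero part of $(\widetilde{\mfv}+I_X)\cdot\cO_V$, and (i)$\Leftrightarrow$(ii) via formal volume forms (the paper proves the latter separately as Corollary \ref{c:inc} and quotes it). But your argument for (ii)$\Rightarrow$(i) has a genuine gap: you restrict to a dense open $U$ ``where $\mfv$ is transitive,'' asserting this holds ``since generic rank is maximal.'' That is false in general: the generic rank $r$ of $\mfv$ is only locally constant on a dense open, and need not equal $\dim X$ --- e.g.\ $X=\bA^2$, $\mfv=\langle \partial_x\rangle$ is incompressible and fully supported but nowhere transitive. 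When $r<\dim X$, the restriction $M(U,\mfv)$ is not $\cO$-coherent and not a rank-one local system (it is generically $\Omega_V\boxtimes\caD_{V'}$), so your reduction to a flat line bundle collapses. The paper's proof of Corollary \ref{c:inc} handles exactly this case by formally integrating the flow at a generic point to split $\hat X_x\cong V\times V'$ with $\mfv$ spanning the $V$-directions, showing that full support forces a unique ``divergence'' $D(\xi)$ with $\xi+D(\xi)\in\mfv\cdot\hat\cO_{X,x}$ for every $\xi\in T_V$, whence $M(\hat X_x,\mfv)\cong\Omega_V\boxtimes\caD_{V'}$ and a preserved volume $\omega_V\boxtimes\omega_{V'}$. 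Your closing remark about deducing (i)$\Leftrightarrow$(iii) from a divergence function is the right idea (it is the paper's Remark \ref{r:inc-subvar-alt}, via Proposition \ref{p:reinc}), but as written it does not repair the transitivity assumption: extending the connection from $\cO\cdot\mfv$ to all of $T_{\hat X_x}$ is precisely the step you are missing.

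A secondary imprecision in your (i)$\Rightarrow$(ii): being nonzero at one smooth point does not imply full support on an irreducible variety (a delta module is nonzero at a point). What saves you is the stronger fact you actually have in hand: the canonical generator of $M(\hat X_z,\mfv)$ maps to the volume form $\omega$, which generates $\Omega_{\hat X_z}$ over $\cO$; so any $g\in\cO_X$ annihilating $M(X,\mfv)$ satisfies $g\omega=0$, hence $g|_{\hat X_z}=0$, hence $g=0$ by irreducibility and faithful flatness of completion. You should state the argument that way rather than via ``nonzero at a point.''
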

\begin{remark}\label{r:inc-subvar}
We can alternatively state (ii') and (iii) as follows, in terms of
global sections of $\mfv \cdot \caD_Z \subseteq \caD_Z$ (cf.~\S
\ref{ss:inc-subvar-pf} below): (ii') says that $(\mfv \cdot \caD_Z) \cap \cO_Z =
0$, and (iii) says that $(\mfv \cdot \cO_Z) \cap \cO_Z = 0$.
\end{remark}

Motivated by this proposition, we will generalize the notion of
incompressibility to the case of nonreduced subschemes in \S
\ref{ss:supp} below, to be defined by conditions (ii') or (iii) above.
\begin{example}\label{ex:poiss-inc}
  In the case that $X$ is a Poisson variety, $\mfv$ is the Lie algebra
  of Hamiltonian vector fields, and $Z \subseteq X$ is a symplectic
  leaf (i.e., a leaf of $\mfv$), then $\mfv$ flows incompressibly on
  $Z$, since it preserves the symplectic volume along $Z$.
\end{example}
\begin{definition}
  Say that $\mfv$ has \emph{finitely many incompressible leaves} if it
  has no degenerate invariant subvariety on which $\mfv$ flows
  incompressibly.
\end{definition}
As before, if $\mfv$ does not have finitely many incompressible
leaves, one does \emph{not} necessarily have infinitely many
incompressible leaves, or any at all (see Example \ref{ex:noleaves},
which does not have finitely many incompressible leaves, but has no
algebraic leaves).  

In \S \ref{ss:hol-finc-pf} below we will prove
\begin{theorem}\label{t:hol-finc}
\begin{itemize}
\item[(i)] For every incompressible leaf $Z \subseteq X$, letting $i:
  \bar Z \into X$ be the tautological embedding of its closure, the
  canonical quotient $M(X,\mfv) \onto i_* M(\bar Z, \mfv|_{\bar Z})$
  is an extension of a nonzero local system on $Z$ to $\bar Z$.
\item[(ii)] If $(X,\mfv)$ is holonomic, then it has finitely many
  incompressible leaves.
\end{itemize}
\end{theorem}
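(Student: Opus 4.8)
The plan is to prove part (i) directly from the structure of $M$ along a leaf, and to prove part (ii) by contraposition after unwinding the definition of ``finitely many incompressible leaves.''

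For part (i), let $Z$ be an incompressible leaf with closure $\bar Z$ and $i:\bar Z\into X$. Since $Z$ is a leaf we have $\mfv|_z = T_z Z$ for every $z\in Z$, so $(\mfv|_z)^\perp = 0$ inside $T^*_z Z$; by the description of the singular support used in the proof of Theorem \ref{t:main2}, the restriction $M(\bar Z,\mfv|_{\bar Z})|_Z = M(Z,\mfv|_Z)$ has singular support contained in the zero section, hence is $\cO$-coherent, i.e.\ a local system on $Z$. Incompressibility of $\mfv$ on $Z$, together with Proposition \ref{p:inc-subvar}, shows that $M(Z,\mfv|_Z)$ is fully supported, so this local system is nonzero. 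Thus the canonical surjection $M(X,\mfv)\onto i_*M(\bar Z,\mfv|_{\bar Z})$ of Proposition \ref{p:inv-sub-qt} exhibits $i_*M(\bar Z,\mfv|_{\bar Z})$ as the pushforward of an extension to $\bar Z$ of a nonzero local system on $Z$, as claimed.

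For part (ii), recall that by definition $(X,\mfv)$ has finitely many incompressible leaves precisely when there is \emph{no} degenerate invariant subvariety on which $\mfv$ flows incompressibly; I therefore prove the equivalent contrapositive statement, that if such a degenerate invariant subvariety $W$ exists then $M(X,\mfv)$ is not holonomic. We may take $W$ irreducible and, replacing it by its closure (still degenerate, invariant, and incompressible), closed; write $i:\bar W\into X$, $d=\dim\bar W$, and let $r$ be the generic rank of $\mfv$ on $\bar W$, so that $r<d$ by degeneracy. Suppose toward a contradiction that $M(X,\mfv)$ is holonomic. By Proposition \ref{p:inv-sub-qt} there is a surjection $M(X,\mfv)\onto i_*M(\bar W,\mfv|_{\bar W})$; since a quotient of a holonomic module is holonomic and $i_*$ reflects holonomicity along the closed embedding $i$ (Kashiwara), $M(\bar W,\mfv|_{\bar W})$ is holonomic.

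Now fix a generic smooth point $z\in\bar W$ at which $\mfv$ has constant rank $r$ and at which $M(\bar W,\mfv|_{\bar W})$ is lisse, and pass to the formal neighborhood $\hat{\bar W}_z$. Holonomicity makes the formal restriction a local system of finite rank, so $\Hom_{\caD}\bigl(M(\bar W,\mfv|_{\bar W})|_{\hat{\bar W}_z},\,\Omega_{\hat{\bar W}_z}\bigr)$ is finite-dimensional, being a $\Hom$ between two finite-rank local systems on a formal disc. On the other hand, by the defining property of $M$ this $\Hom$ is exactly the space of $\mfv$-invariant volume forms on $\hat{\bar W}_z$. Since flowing incompressibly is equivalent to flowing generically incompressibly, there is a nonzero invariant volume form $\mu$ on $\hat{\bar W}_z$, and for $g\in\cO_{\hat{\bar W}_z}$ the form $g\mu$ is invariant if and only if $\xi(g)=0$ for all $\xi\in\mfv$, i.e.\ $g\in\cO^\mfv_{\hat{\bar W}_z}$. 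As $\mfv$ is an involutive distribution of rank $r<d$ near $z$, the invariant functions $\cO^\mfv_{\hat{\bar W}_z}$ are the power series in the $d-r\geq 1$ transverse coordinates, hence infinite-dimensional; so the space of invariant volume forms is infinite-dimensional, contradicting the finite-dimensionality above. Hence $M(X,\mfv)$ is not holonomic, completing part (ii).

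The main obstacle is the local analysis at the generic point $z$: one must verify that $\mfv$ has locally constant rank and is involutive there, so that Frobenius yields genuine transverse coordinates and $\cO^\mfv_{\hat{\bar W}_z}$ is truly infinite-dimensional, and one must ensure that the invariant volume form supplied by incompressibility is available at such a generic $z$ rather than only at the single point appearing in the definition. This propagation is exactly what the divergence-function reformulation of incompressibility (\S\ref{s:div} and Proposition \ref{p:inc-subvar}(iii)) supplies; the remaining ingredient is the standard fact that a holonomic $\caD$-module has only finitely many flat sections, and hence finite-dimensional $\Hom$ into the rank-one local system $\Omega$, over a formal disc.
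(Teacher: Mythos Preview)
Your proof is correct. Part (i) matches the paper's almost exactly: the paper cites Proposition \ref{p:inv-sub-qt} and Corollary \ref{c:inc}, while you cite Proposition \ref{p:inv-sub-qt}, the singular-support computation from Theorem \ref{t:main2}, and Proposition \ref{p:inc-subvar}; these are equivalent.

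For part (ii) you take a genuinely different route. The paper's proof is a one-line appeal to Corollary \ref{c:inc}: on an incompressible degenerate $\bar W$ of dimension $d$ with generic rank $r<d$, the singular support of $M(\bar W,\mfv|_{\bar W})$ has generic dimension $d+(d-r)>d$, so this quotient of $M(X,\mfv)$ is already nonholonomic. You instead argue via representability: assuming holonomicity, $M(\bar W,\mfv|_{\bar W})$ is generically lisse, so $\Hom\bigl(M|_{\hat{\bar W}_z},\Omega_{\hat{\bar W}_z}\bigr)=(\Omega_{\hat{\bar W}_z})^{\mfv}$ is finite-dimensional; but incompressibility produces one invariant volume and the transverse $(d-r)$ coordinates furnish infinitely many more, a contradiction. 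Both arguments rest on the same formal local model $\hat{\bar W}_z\cong V\times V'$ (with $\mfv$ tangent to $V$), but the paper reads off the singular-support dimension of $\Omega_V\boxtimes\caD_{V'}$, whereas you read off the size of $\Hom(-,\Omega)$. The paper's version is quicker and avoids the care you had to take in propagating the invariant volume to a \emph{generic} point (via Proposition \ref{p:reinc}); your version has the virtue of making transparent exactly which feature of holonomicity fails, namely finite-dimensionality of solution spaces.
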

Note that the converse to (i) does not hold: see Example
\ref{ex:a3-inf-hol-qt}.  We will give a correct converse statement in
\S \ref{ss:hol-bicond} below.  Also, the converse to (ii) does not
hold, as we will demonstrate in Example \ref{ex:fin-infl}.

We conclude from the Theorems \ref{t:main2} and \ref{t:hol-finc} that
\begin{equation}\label{e:finleaf-hol-inc}
\text{finitely many leaves} \,\, \Rightarrow \,\, \text{holonomic}
 \,\, \Rightarrow \,\, \text{finitely many incompressible leaves},
\end{equation}
but neither converse direction holds, as mentioned
(see Examples \ref{ex:hol-nf} and
\ref{ex:fin-infl}, respectively).
However, we will see below that the second
implication is generically a biconditional for irreducible varieties
$X$, i.e., $X$ generically has finitely many incompressible leaves if
and only if $X$ is generically holonomic.
\begin{example}\label{ex:poiss-finsym}
When $X$ is Poisson and
$\mfv$ the Lie algebra of Hamiltonian vector fields, then Theorem
\ref{t:hol-finc} and Example \ref{ex:poiss-inc} imply that
$\mfv$ is holonomic \emph{if and only if} $X$ has finitely many
symplectic leaves. More precisely, if $Z \subseteq X$ is any
invariant subvariety, then in the formal neighborhood of a
generic point $z \in Z$, we can integrate the Hamiltonian flow and
write $\hat Z_z = V \times V'$ for formal polydiscs $V$ and $V'$,
where the Hamiltonian flow is along the $V$ direction, and transitive
along fibers of $(V \times V') \onto V'$. Then Hamiltonian flow
preserves the volume form $\omega_V \otimes \omega_{V'}$, where
$\omega_V$ is the canonical symplectic volume, and $\omega_{V'}$ is an
arbitrary volume form on $V'$.  Therefore, all $Z$ are incompressible.
(In particular, all leaves are incompressible, preserving the
canonical symplectic volume.) Then \eqref{e:finleaf-hol-inc} shows
that $H(X)$ is holonomic if and only if there are finitely many leaves.
\end{example}
\begin{example}\label{ex:fin-infl}
  We demonstrate that $(\cO_X)_{\mfv}$ need not be finite-dimensional
  if we only assume that $X$ has finitely many incompressible leaves.
  Therefore, $\mfv$ is not holonomic (although non-holonomicity also
  follows directly in this example). Let $X=\bA^2 \times (\bA
  \setminus \{0\}) \subseteq \bA^3$, with $\bA^2 = \Spec \bk[x,y]$ and
  $\bA \setminus \{0\} = \Spec \bk[z,z^{-1}]$.  Let $\mfv = \langle
  y^2 \partial_x, y \partial_y + z \partial_z, \partial_z\rangle$.
  Then this has an incompressible open leaf, $\{y \neq 0\}$,
  preserving the volume form $\frac{1}{y^2} dx \wedge dy \wedge
  dz$. The complement consists of the leaves $\{x=c, y=0\}$ for all $c
  \in \bk$, which are not incompressible since the restriction of
  $\mfv$ to each such leaf (or to their union, $\{y=0\}$) includes
  both $\partial_z$ and $z \partial_z$.

  We claim that the coinvariants $(\cO_X)_{\mfv}$ are
  infinite-dimensional, and isomorphic to $\bk[x] \cdot yz^{-1}$ via
  the quotient map $\cO_X \onto (\cO_X)_{\mfv}$.  Indeed, $
  y^2 \partial_x (\cO_X) = y^2 \cO_X$,
  $(y \partial_y + z \partial_z) \cO_X = \bk[x] \cdot \langle y^i z^j
  \mid i+j \neq 0 \rangle$, and $\partial_z (\cO_X) = \bk[x,y] \cdot
  \langle z^i \mid i \neq -1\rangle$.  The sum of these vector
  subspaces is the space spanned by all monomials in $x, y, z$, and $z^{-1}$
  except for $x^i yz^{-1}$ for all $i \geq 0$.
\end{example}
\begin{example}\label{ex:hol-nf}
  It is easy to give an example where $\mfv$ is holonomic but has
  infinitely many leaves: for $Y$ any positive-dimensional variety,
  consider $X=\bA^1 \times Y$, $\mfv := \langle \partial_x,
  x \partial_x \rangle$, where $x$ is the coordinate on $\bA^1$.  Then
  the leaves of $(X,\mfv)$ are of the form $\bA^1 \times \{y\}$ for $y
  \in Y$, but $M(X,\mfv)=0$, which is holonomic.
  % we can let $(X, \mfv)$ be any variety on
  % which $M(X,\mfv)$ is zero, e.g., $X = \bA^1$ and $\mfv=\Vect(X)$
  % (more generally, $X$ could be any smooth variety with
  % $\mfv=\Vect(X)$, cf.~Proposition \ref{p:tr} below.)  Consider $(X
  % \times Y, \mfv)$ with $Y$ positive-dimensional.  Then the leaves are
  % of the form $Z \times \{y\}$ for $Z$ a leaf of $X$. For example, we
  % could take $X = \bA^1$ and $\mfv=\Vect(\bA^1)$ (or $\mfv =
  % \langle \partial_x, x \partial x \rangle$).  But, $M(X \times Y,
  % \mfv)=0$, which is holonomic.  
\end{example}
\begin{example}\label{ex:a3-inf-hol-sub}
  For a less trivial example, which is a generically nonzero holonomic
  $\caD$-module without finitely many leaves, let $X=\bA^3$ with
  coordinates $x, y$, and $z$, and let $\mfv$ be the Lie algebra of
  all incompressible vector fields (with respect to the standard
  volume) which along the plane $x=0$ are parallel to the
  $y$-axis. Then we claim that the singular support of $M(X,\mfv)$ is
  the union of the zero section of $T^* X$ and the conormal bundle of
  the plane $x=0$, which is Lagrangian, even though there are not
  finitely many leaves.  Actually, from the computation below, we see
  that $M(X,\mfv)$ is isomorphic to $j_!\Omega_{\bA^1 \setminus \{0\}}
  \boxtimes \Omega_{\bA^2}$, where $j: \bA^1 \setminus \{0\} \into
  \bA^1$ is the inclusion (which is an affine open embedding, so $j_!$
  is an exact functor on holonomic $\caD$-modules).  This is an
  extension of $\Omega_{\bA^3}$ by $i_*\Omega_{\bA^2}$, where $i:
  \bA^2 = \{0\} \times \bA^2 \into \bA^3$ is the closed embedding,
  i.e., there is an exact sequence
\[
0 \to i_* \Omega_{\bA^2} \into M(X,\mfv) \onto \Omega_{\bA^3} \to 0.
\]
Thus, there is a single composition factor on the open leaf and a
single composition factor on the degenerate (but not incompressible)
invariant subvariety $\{x=0\}$.

  To see this, note first that $\partial_y \in \mfv$. We claim 
  that $1+x\partial_x$ and $\partial_z$ are in $\mfv \cdot \caD_X$:
\begin{gather*}
  \partial_y \cdot y - (y \partial_y - x \partial_x) = 1 + x \partial_x; \\
  (1 + x \partial_x) \cdot \partial_z - (x \partial_z)
  \cdot \partial_x = \partial_z.
\end{gather*}
Thus, $\langle 1+x\partial_x, \partial_y, \partial_z \rangle \subseteq
\mfv \cdot \caD_X$.  Conversely, we claim that $\mfv \subseteq \langle
1+x\partial_x, \partial_y, \partial_z \rangle \cdot \caD_X$.  Indeed,
given an incompressible vector field of the form $\xi = xf\partial_x +
g \partial_y + xh \partial_z \in \mfv$ for $f,g,h \in \cO_X$, we can
write
\[
\xi = (1 + x \partial_x) \cdot f + \partial_y \cdot g + x \partial_z \cdot h,
\]
where the RHS is a vector field (and not merely a differential
operator of order $\leq 1$) because $\xi$ is incompressible.
Explicitly, the condition for this RHS to be a vector field, and the
condition for $\xi$ to be incompressible, are both that
$\partial_x(xf) + \partial_y(g) + \partial_z(xh) = 0$.

We conclude that $\langle 1+x\partial_x, \partial_y, \partial_z
\rangle \cdot \caD_X = \mfv \cdot \caD_X$. Therefore, $M(X,\mfv) \cong
j_!  \Omega_{\bA^1 \setminus \{0\}} \boxtimes \Omega_{\bA^2}$, as
claimed.
\end{example}
\begin{example}\label{ex:a3-inf-hol-qt}
  We can slightly modify Example \ref{ex:a3-inf-hol-sub}, so that
  (again for $X:=\bA^3$ and $i: \{0\} \times \bA^2 \into \bA^3$), $i_*
  \Omega_{\bA^2}$ appears as a quotient of $M(X,\mfv)$ rather than as
  a submodule. More precisely, we will have $M(X,\mfv) \cong j_*
  \Omega_{\bA^1 \setminus \{0\}} \boxtimes \Omega_{\bA^2}$. To do so,
  let $\mfv$ be the Lie algebra of all incompressible vector fields
  preserving the volume form $\frac{1}{x^2} dx \wedge dy \wedge dz$
  (cf.~Example \ref{ex:fin-infl}), which again along the plane $x=0$
  are parallel to the $y$-axis.  Note also that, in this example, the
  subvariety $\{0\} \times \bA^2$ is still not incompressible (since
  $\partial_y$ and $y \partial_y$ are both in $\mfv|_{0 \times \bA^2}$, and these cannot
  both preserve the same volume form), even though this subvariety now
  supports a quotient $i_* \Omega_{\bA^2}$ of $M(X)$.

  To see this, we claim that $\mfv \cdot \caD_Z = \langle 1 -
  x \partial_x, \partial_y, \partial_z \rangle \cdot \caD_Z$.  For 
  the
  containment $\supseteq$, we show that $1 - x \partial_x$ and $\partial_z$ are in
  $\mfv \cdot \caD_Z$. This follows from
\begin{gather*}
  \partial_y \cdot y - (y \partial_y + x \partial_x) = 1 - x \partial_x; \\
  (1 - x \partial_x) \cdot \partial_z + (x \partial_z)
  \cdot \partial_x = \partial_z.
\end{gather*}
Then, as in Example \ref{ex:a3-inf-hol-sub}, 
if $\xi = xf \partial_x + g \partial_y + xh \partial_z$ preserves the volume
form $\frac{1}{x^2} dx \wedge dy \wedge dz$, then 
\[
\xi = -(1 - x \partial_x) \cdot f + \partial_y \cdot g + x \partial_z \cdot h.
\]
Therefore, we also have the opposite containment, $\mfv \cdot \caD_Z
\subseteq \langle 1 - x \partial_x, \partial_y, \partial_z \rangle
\cdot \caD_Z$.  As a consequence, $M(X,\mfv) \cong j_* \Omega_{\bA^1
  \setminus \{0\}} \boxtimes \Omega_{\bA^2}$.  We therefore have a
canonical exact sequence
\[
0 \to \Omega_{\bA^3} \into M(X,\mfv) \onto i_* \Omega_{\bA^2} \to 0.
\]

\end{example}

\subsection{The transitive case}\label{ss:transitive}
In this section we consider the simplest, but important, example of
$\mfv$ and the $\caD$-module $M(X,\mfv)$, namely when $\mfv$ has maximal rank
everywhere:
\begin{definition}
  A pair $(X, \mfv)$ is called \emph{transitive at $x$} if $\mfv|_x =
  T_x X$.  We call the pair  $(X, \mfv)$ transitive if it is so at
  all $x \in X$.
\end{definition}
In other words, the transitive case is the one where every connected
component of $X$ is a leaf.  Note that, in particular,
 $X$ must be a smooth variety.  Also, we remark that
$X$ is generically transitive if and only if it is not degenerate.
\begin{proposition}\label{p:tr}
  If $(X,\mfv)$ is transitive and connected, then $M(X,\mfv)$ is a
  rank-one local system if $\mfv$ flows incompressibly, and
  $M(X,\mfv)=0$ otherwise.
\end{proposition}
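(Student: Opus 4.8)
The plan is to analyze $M(X,\mfv) = \mfv \cdot \caD_X \setminus \caD_X$ directly, using transitivity to control the size of $\mfv \cdot \caD_X$. First I would observe that since $(X,\mfv)$ is transitive, $X$ is smooth and $\mfv$ surjects onto $\Vect(X)$ at every point; by coherence-type arguments (or simply by choosing, near each point, vector fields in $\mfv$ restricting to a basis of the tangent space and using Nakayama) one sees that $\mfv \cdot \cO_X = \Vect(X)$ as $\cO_X$-modules, i.e. $\mfv$ generates all vector fields over $\cO_X$. Consequently $\mfv \cdot \caD_X$ contains $\Vect(X) \cdot \caD_X$, so the quotient $M(X,\mfv)$ is a quotient of $\Vect(X) \cdot \caD_X \setminus \caD_X = \Omega_X$ (the canonical right $\caD$-module of volume forms), which is already a rank-one local system. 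Hence $M(X,\mfv)$ is either $0$ or all of $\Omega_X$; it remains to decide which.

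Next I would identify precisely when $M(X,\mfv) = \Omega_X$ versus $M(X,\mfv)=0$. Here the point is that the ``extra'' relations in $\mfv \cdot \caD_X$ beyond $\Vect(X) \cdot \caD_X$ come from $\cO_X$-linear relations among elements of $\mfv$: if $\sum_i f_i \xi_i = 0$ in $\Vect(X)$ with $\xi_i \in \mfv$ and $f_i \in \cO_X$, then $\sum_i \xi_i \cdot f_i = \sum_i f_i \xi_i + \sum_i \xi_i(f_i) = \sum_i \xi_i(f_i) \in \cO_X$ lies in $\mfv \cdot \caD_X$. So $\mfv \cdot \caD_X \cap \cO_X$ contains all such divergence-type expressions $\sum_i \xi_i(f_i)$. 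By Proposition \ref{p:inc-subvar} (applied to the irreducible, here smooth connected, variety $X$), condition (iii) — that all such sums vanish — is equivalent to $M(X,\mfv)$ being fully supported, i.e. nonzero, which since $M(X,\mfv)$ is a quotient of the irreducible $\Omega_X$ forces $M(X,\mfv) = \Omega_X$, a rank-one local system. Conversely, if (iii) fails then some nonzero function lies in $\mfv \cdot \caD_X \cap \cO_X$; since this is an ideal stable under $\mfv$ and $X$ is connected and transitive, its vanishing locus is an invariant proper closed subset, which must be empty, so $\mfv \cdot \caD_X \supseteq \cO_X$ and hence $M(X,\mfv) = 0$.

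Finally I would connect condition (iii) to incompressibility: by the definition of flowing incompressibly together with the equivalence (i) $\Leftrightarrow$ (iii) of Proposition \ref{p:inc-subvar}, condition (iii) holds exactly when $\mfv$ flows incompressibly on $X$. Combining the two cases gives the statement: $M(X,\mfv)$ is a rank-one local system when $\mfv$ flows incompressibly, and $M(X,\mfv) = 0$ otherwise. The main obstacle I anticipate is the first step — showing cleanly that $\mfv$ generates $\Vect(X)$ over $\cO_X$ so that $M(X,\mfv)$ is a quotient of $\Omega_X$; one must be slightly careful because $\mfv$ need not be a coherent sheaf, but transitivity means that locally finitely many elements of $\mfv$ already span the tangent bundle, and this suffices. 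Once that reduction is in place, everything else follows formally from Proposition \ref{p:inc-subvar} and the irreducibility of $\Omega_X$ as a $\caD_X$-module on a smooth connected variety.
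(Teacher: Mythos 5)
There is a genuine error in your first step, and it propagates. You claim that $\mfv \cdot \caD_X \supseteq \Vect(X)\cdot \caD_X$ and that $\Vect(X)\cdot\caD_X \setminus \caD_X = \Omega_X$. Both assertions are false. For the second: the right ideal generated by \emph{all} vector fields is not the right ideal generated by a commuting frame. Already on $\bA^1$ one has $1 = \partial_x\cdot x - x\partial_x \in \Vect(\bA^1)\cdot\caD_{\bA^1}$, so the quotient is zero; in general, by the paper's Proposition \ref{p:supp-allvfds}, $M(X,\Vect(X)) = \caD_Z$ for $Z$ the common zero locus of all vector fields, which is empty for smooth connected $X$ of positive dimension, so $\Vect(X)\cdot\caD_X\setminus\caD_X = 0$, not $\Omega_X$. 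For the first: transitivity gives $\cO_X\cdot\mfv = \Vect(X)$, but $f\xi = \xi\cdot f - \xi(f)$, so $\cO_X$-multiples of elements of $\mfv$ lie in $\mfv\cdot\caD_X$ only up to the divergence terms $\xi(f)$; if the containment you assert did hold, then $M(X,\mfv)$ would vanish identically by the previous sentence, contradicting the incompressible case. Relatedly, your final conclusion $M(X,\mfv) = \Omega_X$ is too strong: by Example \ref{ex:mvvol} the local system can be a \emph{nontrivial} rank-one local system when $\mfv$ preserves only a multivalued volume form, so $M(X,\mfv)$ need not be $\Omega_X$ and in particular need not be a quotient of it.

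The correct way to get the dichotomy ``rank-one local system or zero'' is not through $\Omega_X$ but through the singular support, which is what the paper does: the characteristic variety of $M(X,\mfv)$ is cut out by $\gr\mfv$, hence lies in $(\mfv|_x)^\perp$ over each $x$, which by transitivity is the zero section; so $M(X,\mfv)$ is $\cO$-coherent, and locally its associated graded is a quotient of $\cO_X$, giving rank at most one. Your second and third paragraphs (relating nonvanishing to condition (iii) of Proposition \ref{p:inc-subvar}, and the argument that a nonzero $\mfv$-stable ideal in $(\mfv\cdot\caD_X)\cap\cO_X$ must be the unit ideal by transitivity and connectedness) are essentially sound and close in spirit to the paper's use of a preserved formal volume form to produce a surjection $M(\hat X_x,\mfv|_{\hat X_x})\onto\Omega_{\hat X_x}$ in one direction and a trivializing formal flat section in the other; but they only settle nonvanishing versus vanishing, and without a correct replacement for your first step they do not show that the nonzero module is a local system of rank one.
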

\begin{proof}
  By taking associated graded of $M(X,\mfv)$, in the transitive
  connected case, one obtains either $\cO_X$ (where $X \subseteq T^*
  X$ is the zero section) or zero.  So $M(X, \mfv)$ is either a
  one-dimensional local system on $X$, or zero.  In the incompressible
  case, in a formal neighborhood of some $x \in X$, a volume form is
  preserved, so there is a surjection $M(\hat X_x, \mfv|_{\hat X_x})
  \onto \Omega_{\hat X_x}$, and hence in this case $M(X, \mfv)$ is a
  one-dimensional local system.  Conversely, if $M(X, \mfv)$ is a
  one-dimensional local system, then in a formal neighborhood of any
  point $x \in X$, it is a trivial local system, and hence it
  preserves a volume form there.
\end{proof}
\begin{example}\label{ex:cy}
  In the case when $X$ is connected and Calabi-Yau and $\mfv$
  preserves the global volume form (which includes the case where $X$
  is symplectic and $\mfv$ is the Lie algebra of Hamiltonian vector
  fields), then we conclude that $M(X,\mfv) \cong \Omega_X$. Thus, for
  $\pi: X \to \pt$ the projection to a point, $(\cO_X)_\mfv = \pi_0
  \Omega_X = H_{DR}^{\dim X}(X)$, the top de Rham cohomology. Taking
  the derived pushforward, we conclude that $\pi_* M(X,\mfv) = \pi_*
  \Omega_X = H_{DR}^{\dim X - *}(X)$.  In the Poisson case, where
  $(\cO_X)_{\mfv}$ is the zeroth Poisson homology, in \cite[Remark
  2.27]{ESdm} this motivated the term \emph{Poisson-de Rham homology},
  $HP^{DR}_*(X) = \pi_* M(X,\mfv)$, for the derived pushforward.  More
  generally, if $\mfv$ preserves a \emph{multivalued} volume form,
  then $M(X,\mfv)$ is a nontrivial rank-one local system and $\pi_*
  M(X,\mfv) = H_{DR}^{\dim X - *}(X, M(X,\mfv))$ is the cohomology of
  $X$ with coefficients in this local system (identifying $M(X,\mfv)$
  with its corresponding local system under the de Rham functor).  See
  the next example for more details on how to define such $\mfv$.
\end{example}
\begin{example}\label{ex:mvvol}
  The rank-one local system need not be trivial when $\mfv$ does not
  preserve a global volume form.  For example, let $X =
  (\bA^1\setminus\{0\}) \times \bA^1 = \Spec \bk[x,x^{-1},y]$.  Then
  we can let $\mfv$ be the Lie algebra of vector fields preserving the
  multivalued volume form $d(x^{r}) \wedge dy$ for $r \in \bk$.  It is
  easy to check that this makes sense and that the resulting Lie
  algebra $\mfv$ is transitive.  Then, $M(X,\mfv)$ is the rank-one
  local system whose homomorphisms to $\Omega_X$ correspond to this
  volume form, which is nontrivial (but with regular singularities)
  when $r$ is not an integer.  For $\bk=\bC$, the local system
  $M(X,\mfv)$ thus has monodromy $e^{-2 \pi i r}$.

  More generally, if $X$ is an arbitrary smooth variety of pure
  dimension at least two, and $\nabla$ is a flat connection on
  $\Omega_X$, we can think of the flat sections of $\nabla$ as giving
  multivalued volume forms, and define a corresponding Lie algebra
  $\mfv$ so that $\Hom_{\caD_X}(M(X,\mfv), \Omega_X)$ returns these
  forms on formal neighborhoods.  Precisely, we can let $\mfv$ be the
  Lie algebra of vector fields preserving formal flat sections of
  $\nabla$. We need to check that $\mfv$ is transitive, which is where
  we use the hypothesis that $X$ has pure dimension at least two: see
  \S \ref{ss:ham-df} and in particular Proposition
  \ref{p:div-enough-vfds} (alternatively, we could simply impose the
  condition that $\mfv$ be transitive, which is immediate to check in
  the example of the previous paragraph).  Then $M(X,\mfv) \cong
  (\Omega_X, \nabla)^* \otimes_{\cO_X} \Omega_X$, via the map sending
  the canonical generator $1 \in M(X,\mfv)$ to the identity element of
  $\End_{\cO_X}(\Omega_X)$.  Conversely, if $(X,\mfv)$ is transitive
  and $M(X,\mfv)$ is nonzero (hence a rank-one local system), then
  $\Hom_{\cO_X}(M(X,\mfv), \Omega_X)$ canonically has the structure of
  a local system on $\Omega_X$ with formal flat sections given by
  $\Hom_{\caD_X}(M(X,\mfv),\Omega_X)$, and one has a canonical
  isomorphism
\[
M(X,\mfv) \cong \Hom_{\cO_X}(M(X,\mfv), \Omega_X)^*
  \otimes_{\cO_X} \Omega_X.
\]

On the other hand, if $X$ is one-dimensional and $\mfv$ is transitive,
then $M(X,\mfv)$ cannot be a nontrivial local system, since there are
no vector fields defined in any Zariski open set preserving a
nontrivial local system.  More precisely, assuming $X$ is a connected
smooth curve, in order to be incompressible, $\mfv$ must be a
one-dimensional vector space. Then, if $\xi \in \mfv$ is nonzero, then
the inverse $\xi^{-1}$ defines the volume form preserved by $\mfv$.
\end{example}
\begin{corollary}\label{c:inc}
  If $(X,\mfv)$ is a variety, then $M(X,\mfv)$ is fully supported on
  $X$ if and only if $\mfv$ flows incompressibly on every irreducible
  component of $X$. In this case, the dimension of the singular
  support of $M(X,\mfv)$ on each irreducible component $Y \subseteq X$
  is generically $\dim Y + (\dim Y - r)$, where
  $r$ is the generic rank of $\mfv$ on $Y$.
\end{corollary}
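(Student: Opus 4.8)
The plan is to reduce the ``fully supported'' equivalence to the irreducible case treated in Proposition~\ref{p:inc-subvar}, and then to pin down the generic dimension of the singular support of $M(X,\mfv)$ over each component by squeezing it between the common zero locus $V(\gr\mfv)$ of the symbols of $\mfv$ (the equations that $M(X,\mfv)$ satisfies, as in the proof of Theorem~\ref{t:main2}) and the singular support of an explicit quotient of $M(X,\mfv)$ computed in a formal neighborhood of a generic point.

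For the first statement, recall that the support of a $\caD$-module is local on $X$, and that for any open affine $U \subseteq X$ one has $M(X,\mfv)|_U \cong M(U,\mfv|_U)$, where $\mfv|_U$ denotes the image of $\mfv$ in $\Vect(U)$. So it suffices to show, for each irreducible component $Y \subseteq X$, that $Y \subseteq \operatorname{Supp} M(X,\mfv)$ if and only if $\mfv$ flows incompressibly on $Y$. Choose a dense open affine $U \subseteq Y$ which is smooth and disjoint from the other components. Then $Y \subseteq \operatorname{Supp} M(X,\mfv)$ is equivalent to $M(U,\mfv|_U)$ being fully supported on $U$; and, since ``flowing incompressibly'' means ``flowing generically incompressibly'', $\mfv$ flows incompressibly on $Y$ if and only if $\mfv|_U$ flows incompressibly on $U$. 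As $U$ is an irreducible affine variety, the equivalence (i)$\,\Leftrightarrow\,$(ii) of Proposition~\ref{p:inc-subvar} gives exactly this, and assembling over the components of $X$ proves the claim.

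Now assume $\mfv$ flows incompressibly on every component, fix a component $Y$ of generic rank $r$, and take $U \subseteq Y$ as above, further shrunk so that $\mfv$ has constant rank $r$ on $U$. As in the proof of Theorem~\ref{t:main2}, the singular support of $M(X,\mfv)$ satisfies the equations $\gr\mfv$, so $SS(M(X,\mfv))|_U \subseteq \{(x,p) \in T^*U : p \perp \mfv|_x\}$, the total space of the rank-$(\dim Y - r)$ annihilator of the constant-rank distribution $\mfv|_\bullet \subseteq TU$, which has pure dimension $\dim Y + (\dim Y - r)$; this is the upper bound. For the matching lower bound I would pass to the formal neighborhood $\hat U_u$ of a generic point $u \in U$. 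There $\mfv|_\bullet$ is a constant-rank integrable distribution, so by formal Frobenius --- and, using that $\mfv$ preserves a volume form on $\hat U_u$ by incompressibility, after a Moser-type change of coordinates respecting the resulting foliation --- we may split $\hat U_u = V \times V'$ with $\dim V = r$, $\dim V' = \dim Y - r$, so that the leaves are the fibers of $\hat U_u \onto V'$, $\mfv$ consists of vector fields tangent to these fibers, and $\mfv$ preserves the product volume form. Then $\mfv$ lies in the Lie algebroid $\mfh$ of all divergence-free vector fields tangent to the fibers; writing such a field as $\xi = \sum_i f_i \partial_{x_i}$ with $\sum_i \partial_{x_i}(f_i) = 0$ gives $\xi = \sum_i \partial_{x_i}\cdot f_i \in \sum_i \partial_{x_i}\cdot\caD_{\hat U_u}$, while each $\partial_{x_i}$ is itself divergence-free, so $\mfh\cdot\caD_{\hat U_u} = \sum_i \partial_{x_i}\cdot\caD_{\hat U_u}$. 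Hence $M(\hat U_u,\mfv)$ surjects onto $\caD_{\hat U_u}/\sum_i \partial_{x_i}\caD_{\hat U_u} \cong \Omega_V \boxtimes \caD_{V'}$, whose singular support is the product of the zero section of $T^*V$ with all of $T^*V'$ and so has dimension $r + 2(\dim Y - r) = \dim Y + (\dim Y - r)$. Since singular support is compatible with formal localization at $u$, this shows $SS(M(X,\mfv))$ has a component of this dimension dominating $Y$, matching the upper bound.

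The main obstacle is the lower bound: one must genuinely exhibit the transverse cotangent directions $T^*V'$ inside $SS(M(X,\mfv))$, and the general constraints available for free (conicity, full support, and coisotropy of the singular support) do not suffice, since the zero section alone satisfies all of them yet has the wrong dimension whenever $r < \dim Y$. What rescues the argument is the formal normal form above together with the observation that $\mfv$, however small, always embeds into the full foliated divergence-free Lie algebroid $\mfh$, for which $M$ --- and hence its singular support --- can be computed by hand.
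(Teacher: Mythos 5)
Your reduction of the first assertion to Proposition \ref{p:inc-subvar}(i)$\Leftrightarrow$(ii) is circular: that equivalence is not proved independently in the paper --- the proof in \S\ref{ss:inc-subvar-pf} begins ``By Corollary \ref{c:inc}, conditions (i) and (ii) are equivalent, when $X$ is an irreducible affine variety,'' so it is a consequence of the very statement you are proving. Concretely, what is missing is the ``only if'' direction: if $M(X,\mfv)$ is fully supported on a component $Y$, you must produce a preserved formal volume at a generic smooth point. Your normal-form argument only runs the other way (incompressibility $\Rightarrow$ a surjection onto $\Omega_V \boxtimes \caD_{V'}$ $\Rightarrow$ full support and the singular-support bound). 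The paper closes the converse as follows: at a generic $x$, write $\hat X_x \cong V \times V'$ with $\mfv$ spanning the $V$-direction; for each $\xi \in T_V$ there is some $D(\xi) \in \hat\cO_{X,x}$ with $\xi + D(\xi) \in \mfv \cdot \hat\caD_{X,x}$ (since $\xi \cdot f = f \cdot \xi + \xi(f)$ and $T_V \subseteq \hat\cO_{X,x}\cdot\mfv$), and full support forces $D(\xi)$ to be \emph{unique}, since two choices would differ by a nonzero function in $\mfv\cdot\hat\caD_{X,x} \cap \hat\cO_{X,x}$. Hence $\gr(\mfv\cdot\hat\caD_{X,x}) = T_V \cdot \Sym T_{\hat X_x}$, so $M(\hat X_x, \mfv|_{\hat X_x}) \cong \Omega_V \boxtimes \caD_{V'}$, and $\Hom(M(\hat X_x,\mfv|_{\hat X_x}), \Omega_{V\times V'}) \neq 0$ then exhibits a preserved formal volume. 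You need to supply this (or an equivalent) argument rather than citing Proposition \ref{p:inc-subvar}.

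The remainder of your proposal is correct and is essentially the paper's own proof: the upper bound from the equations $\gr \mfv$, the formal splitting $\hat U_u \cong V \times V'$, and the identification of a quotient of $M$ with $\Omega_V \boxtimes \caD_{V'}$, whose singular support has dimension $r + 2(\dim Y - r) = \dim Y + (\dim Y - r)$. Your variant --- normalizing the volume by a fiberwise Moser argument and then sandwiching $\mfv$ inside the full foliated divergence-free algebroid $\mfh$, for which $\mfh\cdot\caD = \sum_i \partial_{x_i}\cdot\caD$ is computed by hand --- is a clean way to get the lower bound and is marginally slicker than the paper's $\xi + D(\xi)$ bookkeeping, but it buys nothing for the missing converse, which is the one place the argument must work with an arbitrary preserved volume $f\cdot\omega_V\wedge\omega_{V'}$ rather than a normalized one.
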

\begin{corollary}\label{c:inc2}
  If $(X,\mfv)$ is an irreducible variety, then $\mfv$ is generically
  holonomic if and only if it is either generically transitive or not
  incompressible.
\end{corollary}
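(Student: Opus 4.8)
The plan is to deduce this from Corollary \ref{c:inc} (the dimension count for the singular support, together with the incompressibility criterion for full support) and Proposition \ref{p:tr} (the transitive case), keeping careful track of what ``generically'' means: we interpret ``$\mfv$ is generically holonomic'' as the existence of a dense open $U \subseteq X$ with $M(U,\mfv|_U) = M(X,\mfv)|_U$ holonomic. This uses two standard facts that I would note at the outset: the formation of $M(X,\mfv) = \mfv \cdot \caD_X \setminus \caD_X$ commutes with restriction to open subsets (since $\caD_X$ is a sheaf and localization is exact), and the restriction of a holonomic $\caD$-module to an open subset is again holonomic.

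For the ``if'' direction I would split into two cases. If $\mfv$ is not incompressible, then by Corollary \ref{c:inc} (equivalently Proposition \ref{p:inc-subvar}) $M(X,\mfv)$ is not fully supported, so its support is a proper closed subvariety of $X$; since $X$ is irreducible, the complement $U$ of the support is dense open, and $M(X,\mfv)|_U = 0$, which is holonomic by the convention that the zero module is holonomic. If instead $\mfv$ is generically transitive, let $U$ be the locus where $\mfv|_x = T_x X$, which is open, dense (as $X$ is not degenerate), and connected (as $X$ is irreducible); then $(U,\mfv|_U)$ is transitive and connected, so Proposition \ref{p:tr} gives that $M(X,\mfv)|_U = M(U,\mfv|_U)$ is either a rank-one local system or zero, hence holonomic. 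In either case $\mfv$ is generically holonomic.

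For the ``only if'' direction I would argue by contraposition. Suppose $\mfv$ is incompressible and not generically transitive, so its generic rank $r$ satisfies $r < d := \dim X$. By Corollary \ref{c:inc}, $M(X,\mfv)$ is fully supported and its singular support has generic dimension $d + (d-r)$; concretely, there is a dense open $V \subseteq X$ on which the singular support has dimension $d + (d-r) > d = \dim V$. For any dense open $U \subseteq X$, the intersection $U \cap V$ is dense open and nonempty, and on it the singular support of $M(X,\mfv)|_{U \cap V}$ has dimension strictly greater than $\dim(U \cap V)$; since restriction of a holonomic module to an open remains holonomic, $M(X,\mfv)|_U$ cannot be holonomic. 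Hence $\mfv$ is not generically holonomic, which is the desired contrapositive.

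The main point requiring care — rather than a real obstacle — is the interplay of the ``generic'' quantifiers: one must apply the dimension formula of Corollary \ref{c:inc} on a dense open, and then observe that once non-holonomicity holds on one dense open it persists on every dense open, which is precisely the statement that holonomicity is inherited by open restrictions. Everything else is bookkeeping with already-established equivalences, so I expect the write-up to be short.
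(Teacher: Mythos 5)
Your proof is correct and follows exactly the route the paper intends: Corollary \ref{c:inc2} is stated in the paper without a separate proof, as an immediate consequence of the support/dimension statements in Corollary \ref{c:inc} (together with Proposition \ref{p:tr} and the fact that $M(X,\mfv)$ localizes, cf.\ Lemma \ref{l:vd-sheaf}), which is precisely the argument you give. Your careful handling of the ``generic'' quantifiers and of the persistence of non-holonomicity under open restriction is the right bookkeeping and matches the paper's implicit reasoning.
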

\begin{proof}[Proof of Corollary \ref{c:inc}]
  It suffices to assume $X$ is irreducible, since the statements can
  be checked generically on each irreducible component.  For generic
  $x \in X$, in the formal neighborhood $\hat X_x$, we can integrate
  the flow of $\mfv$ and write $\hat X_x \cong (V \times V')$, where
  $V$ and $V'$ are two formal polydiscs about zero, mapping $x \in
  \hat X_x$ to $(0,0) \in (V \times V')$, and such that $\mfv$
  generates the tangent space in the $V$ direction everywhere, i.e.,
  $\mfv|_{(v,v')} = T_{v} V \times \{0\}$ at all $(v,v') \in (V \times
  V')$.

  Since $\hat \cO_{X,x} \cdot \mfv = T_V \boxtimes \cO_{V'}$, inside
  $\mfv \cdot \hat \cO_{X,x}$ we have, for every $\xi \in T_V$, an
  element of the form $\xi + D(\xi)$, for some $D(\xi) \in \hat
  \cO_{X,x}$. Namely, this is true because, when $\xi \in \mfv$ and $f
  \in \hat \cO_{X,x}$, $\xi \cdot f = f \cdot \xi + \xi(f) \in \mfv
  \cdot \hat \cO_{X,x}$, and $T_V$ is contained in the span of such $f
  \cdot \xi$.

  Now assume that $\mfv$ preserves a volume form $\omega$ on $\hat
  X_x$.  Recall that this means that, for all $\xi \in \mfv$, one has
  $L_\xi \omega = 0$.  Since the right $\caD$-module action of vector
  fields $\xi \in \Vect(X)$ on $\Omega_X$ is by $\omega \cdot \xi :=
  -L_{\xi} \omega$, we conclude that $D(\xi) = L_\xi \omega / \omega$.
  Write $\omega = f \cdot \omega_V \wedge \omega_{V'}$ where
  $\omega_V$ and $\omega_{V'}$ are volume forms on $V$ and $V'$ and $f
  \in \hat \cO_{X,x}$ is a unit.  Then we conclude that $M(\hat X_x,
  \mfv|_{\hat X_x}) \cong \Omega_V \boxtimes \caD_{V'}$, the quotient
  of $\caD_{X,x}$ by the right ideal generated by
  $\omega_V$-preserving vector fields on $V$.

  Conversely, assume that $M(X,\mfv)$ is fully supported. Since $x$
  was generic, $M(\hat X_x, \mfv|_{\hat X_x})$ is also fully
  supported.  Thus, for every $\xi \in T_V$, there is a unique
  $D(\xi)$ such that $\xi + D(\xi) \in \mfv \cdot \hat \caD_{X,x}$
  (and in fact this is in $\mfv \cdot \hat \cO_{X,x}$).

  Let $\partial_1, \ldots, \partial_n$ be the constant vector fields
  on $V \times V'$.  We conclude that $\mfv \cdot \hat \caD_{X,x} = \{
  \xi + D(\xi): \xi \in T_V \boxtimes \cO_{V'}\} \cdot \Sym
  \langle \partial_1, \ldots, \partial_n \rangle$.  Since $M(\hat X_x,
  \mfv|_{\hat X_x})$ is fully supported, this implies that $\gr(\mfv
  \cdot \hat \caD_{X,x}) = T_V \cdot \Sym_{\hat \cO_{X,x}} T_{\hat
    X_x}$, and hence that $M(\hat X_x, \mfv|_{\hat X_x}) \cong
  \Omega_V \boxtimes \caD_{V'}$.  Then, $\mfv$ also preserves a formal
  volume form, since $\Hom(M(\hat X_x, \mfv|_{\hat X_x}), \Omega_{V
    \times V'}) \neq 0$. (Explicitly, for the unique (up to scaling)
  volume form $\omega_V$ on $V$ preserved by $\mfv|_V$, these are of
  the form $\omega_V \boxtimes \omega_{V'}$ for arbitrary volume forms
  $\omega_{V'}$ on $V'$.)

  For the final statement, the proof shows that, in the incompressible
  (irreducible) case, the dimension of the singular support is
  generically $\dim V + 2 \dim V'$, which is the same as the claimed
  formula when we note that $\dim V = r$ and $\dim V + \dim V' = \dim
  Y$.
\end{proof}
\subsection{Proof of Proposition
  \ref{p:inc-subvar}}\label{ss:inc-subvar-pf}
By Corollary \ref{c:inc}, conditions (i) and (ii) are equivalent, when
$X$ is an irreducible affine variety.  Now let $X$ be an arbitrary
affine scheme of finite type. We prove that (ii') and (iii) are
equivalent.

In view of Remark \ref{r:inc-subvar}, the implication (ii')
$\Rightarrow$ (iii) is immediate.  To make Remark \ref{r:inc-subvar}
precise, we should define $\mfv \cdot \cO_X$ as a subspace of global
sections of $\caD_X$. One way to do this is to take an embedding $i: X
\to V$ into a smooth affine variety $V$ as in \S \ref{ss:ddv}; in the
notation there, the global sections of $i_*(\mfv \cdot \caD_X)$ then
identify as
\begin{equation}\label{e:gs-vd}
\Gamma(V, i_*(\mfv \cdot \caD_X))
= I_X  \caD_V \setminus \bigl( (\widetilde \mfv+I_X) \cdot \caD_V\bigr).
\end{equation}
Then, by $\mfv \cdot \cO_X$ we mean the subspace 
\begin{equation}\label{e:vox}
\mfv \cdot \cO_X =
(I_X  \caD_V  \cap \widetilde \mfv \cdot \cO_V) \setminus
\widetilde \mfv \cdot \cO_V.
\end{equation}
Finally, by $\cO_X$ itself, we mean the subspace
\begin{equation}\label{e:ox}
\cO_X = (I_X  \caD_V \cap \cO_V) \setminus \cO_V.
\end{equation}
Then, it follows that (ii') is equivalent to $(\mfv \cdot \caD_X) \cap
\cO_X = 0$ and that (iii) is equivalent to $(\mfv \cdot \cO_X)
\cap \cO_X = 0$, as desired. In other words, it is equivalent to
ask that $(\widetilde \mfv \cdot \caD_V) \cap \cO_V \subseteq I_X$ and
$(\widetilde \mfv \cdot \cO_V) \cap \cO_V \subseteq I_X$.

We now prove that (iii) implies (ii'). Assume that $V = \bA^n = \Spec
\bk[x_1, \ldots, x_n]$.  Note that
\[
(\widetilde \mfv + I_X) \cdot \caD_V = (\widetilde \mfv + I_X) \cdot
\cO_V \cdot \Sym \langle\partial_1, \ldots, \partial_n \rangle.
\]
Thus, the fact that $\bigl((\widetilde \mfv + I_X) \cdot \cO_V\bigr) \cap \cO_V = I_X$, i.e., (iii), implies also that $\bigl(
(\widetilde \mfv + I_X) \cdot \caD_V \bigr) \cap \cO_V = I_X$, i.e., (ii').

\begin{remark}\label{r:inc-subvar-alt}
  For irreducible affine varieties, we can also show that (i) and
  (iii) are equivalent directly without using Remark
  \ref{r:inc-subvar}, and hence by Corollary \ref{c:inc}, that (ii)
  and (iii) are equivalent.  Suppose (i). By Corollary \ref{c:inc},
  $\mfv$ flows incompressibly on $Z$. Let $z \in Z$ be a smooth point
  and $\omega \in \Omega_{\hat Z_z}$ be a formal volume preserved by
  $\mfv$. Then, if $f_i \in \cO_Z$ and $\xi_i \in \mfv|_Z$ satisfy
  $\sum_i f_i \xi_i = 0$, we have $0 = L_{f_i \xi_i} \omega = \sum_i
  \xi_i(f_i)$, which proves (iii).

Conversely, suppose that (iii) is satisfied. Let $z \in Z$ be a smooth
point where the rank of $\mfv|_Z$ is maximal.  Then, in a neighborhood
$U \subseteq Z$ of $z$, $\cO_U \cdot \mfv$ is a free submodule of
$T_{U}$, and hence has a basis $\xi_1, \ldots, \xi_j$. In the language
of \S \ref{s:div}, one can define a divergence function $D:
\cO_U \cdot \mfv \to T_U, D(\sum_i f_i \xi_i) = \sum_i \xi_i(f_i)$.
Therefore, by Proposition \ref{p:reinc}, $\mfv$ flows incompressibly
on $U$, and hence on $Z$.
\end{remark}

\subsection{Proof of Theorem \ref{t:hol-finc}}\label{ss:hol-finc-pf}
Part (i) is an immediate consequence of Proposition \ref{p:inv-sub-qt}
and Corollary \ref{c:inc}.

For part (ii), suppose that $X$ does not have finitely many
incompressible leaves.  Then, there is a degenerate invariant
subvariety $i: Z \into X$ such that $\mfv$ flows incompressibly on
$Z$. By Proposition \ref{p:inv-sub-qt} and Corollary \ref{c:inc},
there is a nonholonomic quotient of $M(X,\mfv)$ supported on the
closure of $Z$.  So $M(X,\mfv)$ is not holonomic.

\subsection{Support and saturation}\label{ss:supp}
To proceed, note that in some cases, $M(X, \mfv)$ is actually
supported on a proper subvariety, e.g., in Example \ref{ex:hol-nf},
where it is zero; more generally, by Proposition \ref{p:inc-subvar},
this happens if and only if $\mfv$ does not flow incompressibly.  In
this case, it makes sense to replace $X$ with the support of
$M(X,\mfv)$, and define an equivalent system there. More precisely, we
define a scheme-theoretic support of $M(X,\mfv)$:
\begin{definition}\label{d:supp}
  The support of $(X,\mfv)$ is the closed subscheme $X_\mfv \subseteq X$
  defined by the ideal $(\mfv \cdot \caD_X) \cap \cO_X$ of $\cO_X$.
\end{definition}
To make sense of this definition, we work in the space of global
sections of $\mfv \cdot \caD_X$, using \eqref{e:vox} and \eqref{e:ox}.
Note that here it is \emph{essential} that we allow $X_\mfv$ to be
nonreduced (this was our motivation for working in the nonreduced
context).  

We immediately conclude
\begin{proposition}
Let $i: X_\mfv \to X$ be the natural closed embedding.
Then, there is a canonical isomorphism $M(X, \mfv) \cong i_* M(X_\mfv,
\mfv|_{X_\mfv})$.
\end{proposition}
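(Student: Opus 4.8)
The plan is to recognize $X_\mfv$ as an \emph{invariant} closed subscheme of $X$ and then to observe that, for it, the canonical surjection of Proposition~\ref{p:inv-sub-qt} has no kernel. All the manipulations take place among global sections, viewing $\mfv\cdot\caD_X$ as a subspace of $\caD_X$ (equivalently, inside $\caD_V$ for an affine embedding $X\into V$ into a smooth variety), and using the conventions \eqref{e:gs-vd}, \eqref{e:vox}, \eqref{e:ox} to make sense of $\cO_X$ and of $J:=(\mfv\cdot\caD_X)\cap\cO_X$ as subspaces of $\caD_X$; the argument then needs nothing but formal bookkeeping.

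First I would verify that $X_\mfv$ is a genuine closed subscheme, i.e.\ that $J$ is an ideal of $\cO_X$: the key point is that $\mfv\cdot\caD_X$ is a \emph{right} $\caD_X$-submodule of $\caD_X$, so for $h\in J$ and $f\in\cO_X$ one has $fh=h\cdot f\in\mfv\cdot\caD_X$ (using commutativity of $\cO_X$), and since $fh\in\cO_X$ this gives $fh\in J$. Next I would verify that $X_\mfv$ is invariant, i.e.\ $\mfv(J)\subseteq J$: for $\xi\in\mfv$ and $h\in J$ the commutator identity $\xi(h)=\xi h-h\xi$ exhibits $\xi(h)$ as a sum of two elements of $\mfv\cdot\caD_X$ (the first because $\xi\in\mfv$, the second because $h\in\mfv\cdot\caD_X$ and $\mfv\cdot\caD_X$ is a right submodule), and since $\xi(h)\in\cO_X$ we again land in $J$. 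The one point demanding care is the left/right asymmetry: $\mfv\cdot\caD_X$ is not stable under left multiplication, which is exactly why one routes these computations through commutativity of $\cO_X$ and through the commutator identity.

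Having established that $X_\mfv$ is an invariant closed subscheme, $\mfv|_{X_\mfv}$ makes sense and Proposition~\ref{p:inv-sub-qt} provides a canonical surjection $M(X,\mfv)\onto i_*M(X_\mfv,\mfv|_{X_\mfv})$; by the proof of that proposition its target is $\bigl((\mfv+J)\cdot\caD_X\bigr)\setminus\caD_X$, so the kernel of the surjection is the image of $J\cdot\caD_X$ in $M(X,\mfv)=(\mfv\cdot\caD_X)\setminus\caD_X$. Since $J\subseteq\mfv\cdot\caD_X$ by construction and $\mfv\cdot\caD_X$ is a right submodule, $J\cdot\caD_X\subseteq\mfv\cdot\caD_X$, so this kernel is zero and the surjection is the asserted canonical isomorphism. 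I do not expect a genuine obstacle here: the substantive content, all of which has been prepared above, is the bookkeeping that makes $X_\mfv$ a well-defined invariant subscheme in the possibly non-reduced setting; once that is in place the identification is immediate, and is in fact an equality under the stated identifications rather than merely an abstract isomorphism.
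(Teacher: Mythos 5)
Your argument is correct and is exactly the bookkeeping the paper has in mind: the paper states this proposition with ``We immediately conclude'' and no written proof, the point being precisely that $J=(\mfv\cdot\caD_X)\cap\cO_X$ sits inside the right submodule $\mfv\cdot\caD_X$, so $(\mfv+J)\cdot\caD_X=\mfv\cdot\caD_X$ and the surjection of Proposition~\ref{p:inv-sub-qt} is an equality. Your verifications that $J$ is an ideal and that $X_\mfv$ is invariant (via commutativity of $\cO_X$ and the commutator identity, respecting the left/right asymmetry) are the right way to justify that the restriction $\mfv|_{X_\mfv}$ and the cited surjection are even defined.
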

The above remarks say that, when $X$ is a variety, $X=X_\mfv$ if and
only if $\mfv$ flows incompressibly. Moreover, $\mfv$ flows
incompressibly on an invariant subvariety $Z \subseteq X$ if and only
if $Z = Z_{\mfv|_Z}$. With this in mind, we extend the definition of
incompressibility to subschemes:
\begin{definition}
  We say that $\mfv$ flows incompressibly on an invariant
  subscheme $Z$ if $Z = Z_{\mfv|_Z}$. 
\end{definition}
With this definition, as promised, the conditions (i), (ii'), and
(iii) of Proposition \ref{p:inc-subvar} are equivalent for arbitrary
affine schemes of finite type.
\begin{proposition}\label{p:inc-supp}
  Let $Z \subseteq X$ be an irreducible closed subvariety.  Then there
  exists a quotient of $M(X,\mfv)$ whose support is $Z$ if and only if
  $Z$ is invariant and $\mfv$ flows incompressibly on some
  infinitesimal thickening of $Z$.  In this case, this quotient
  factors through the quotient $M(X,\mfv) \onto i_* M(Z',
  \mfv|_{Z'})$, for some infinitesimal thickening $Z'$, with inclusion
  $i: Z' \into X$.
\end{proposition}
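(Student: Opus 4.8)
The plan is to prove both implications by combining the surjections of Proposition~\ref{p:inv-sub-qt} with the incompressibility criterion of Proposition~\ref{p:inc-subvar} in its generalized form (ii') (vanishing of the annihilator in $\cO$), and with the description $M(X,\mfv)\cong i_*M(X_\mfv,\mfv|_{X_\mfv})$ of \S\ref{ss:supp}. For the ``if'' direction, suppose $Z$ is invariant and $\mfv$ flows incompressibly on an infinitesimal thickening $Z'\subseteq X$ of $Z$, so $(Z')_\red=Z$. Proposition~\ref{p:inv-sub-qt} gives a canonical surjection $M(X,\mfv)\onto i_*M(Z',\mfv|_{Z'})$ with $i\colon Z'\into X$, and it remains to see the target is supported set-theoretically on all of $Z$. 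If it were not, then on the thickening $Z'_0\subseteq Z'$ of some nonempty open $U\subseteq Z$ the $\caD$-module $M(Z'_0,\mfv|_{Z'_0})$ would vanish, forcing $(Z'_0)_{\mfv|_{Z'_0}}=\emptyset\neq Z'_0$; but the defining ideal $(\mfv\cdot\caD)\cap\cO$ of the support localizes, so incompressibility on $Z'$ is inherited by $Z'_0$, a contradiction. Hence such a quotient exists (and trivially factors through itself).

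For the ``only if'' direction and the final assertion, let $q\colon M(X,\mfv)\onto N$ be a quotient with $\mathrm{Supp}(N)=Z$, and write $N=\caI\setminus\caD_X$ for a right ideal $\caI\supseteq\mfv\cdot\caD_X$; a localization argument as in \S\ref{ss:supp} identifies $Z=V(\caI\cap\cO_X)$ set-theoretically. Since the singular support of $N$ lies in that of $M(X,\mfv)$, which is cut out by $\gr\mfv$, at a smooth point $z\in Z$ one gets $\mfv|_z\subseteq T_zZ$, so $I_Z$ is $\mfv$-stable and $Z$ is invariant. The naive candidates for $Z'$ (the scheme-theoretic support $V(\mathrm{Ann}_{\cO_X}N)$, or $V(\caI\cap\cO_X)$) need not be invariant subschemes, so instead I would take $Z'$ to be the \emph{smallest invariant closed subscheme of $X$ through which $q$ factors}, i.e.\ $I_{Z'}$ the largest $\mfv$-stable ideal contained in $\caI\cap\cO_X$; this exists by Noetherianity, since the invariant subschemes $Z''$ with $(\mfv+I_{Z''})\caD_X\subseteq\caI$ are exactly those with $I_{Z''}\subseteq\caI\cap\cO_X$ and are closed under intersection. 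Because $Z$ is invariant, some power $I_Z^m$ is an $\mfv$-stable ideal inside $\caI\cap\cO_X$, whence $I_Z^m\subseteq I_{Z'}\subseteq\caI\cap\cO_X$ and $(Z')_\red=Z$; thus $Z'$ is an infinitesimal thickening of $Z$ and $q$ factors through $M(X,\mfv)\onto i_*M(Z',\mfv|_{Z'})$ by construction. Finally, $\mfv$ flows incompressibly on $Z'$: otherwise, by the Proposition immediately preceding this one, $M(Z',\mfv|_{Z'})$ is the pushforward of a $\caD$-module from the strictly smaller invariant subscheme $(Z')_{\mfv|_{Z'}}\subsetneq Z'$, through which $q$ would still factor, contradicting minimality of $Z'$.

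The main obstacle is precisely this identification of the thickening $Z'$. A $\caD$-module quotient of $M(X,\mfv)$ supported set-theoretically on $Z$ is in general \emph{not} the pushforward of a $\caD$-module from any fixed nilpotent thickening of $Z$ (e.g.\ a $\delta$-module along a point is annihilated by no power of the ideal of that point), so one cannot simply pass to the scheme-theoretic support; the minimal-invariant-subscheme description of $Z'$, together with the ``support'' Proposition of \S\ref{ss:supp}, is what makes both $(Z')_\red=Z$ and incompressibility on $Z'$ come out correctly. The remaining points --- that $Z=\mathrm{Supp}(N)$ is invariant (via the singular support) and that incompressibility passes to open subschemes --- need a little care, but everything else is routine manipulation of right ideals in $\caD_X$.
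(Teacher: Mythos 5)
Your proof is correct, but it takes a genuinely different route from the paper's for the ``only if'' direction, so a comparison is worthwhile. The paper's argument is much shorter: since $M(X,\mfv)$ is cyclic, any quotient $q\colon M(X,\mfv)\onto N$ with $N$ supported on $Z$ has its canonical generator annihilated by $I_Z^N$ for some $N\geq 1$ (a single section of a $\caD$-module supported on $Z$ over an affine is killed by a power of $I_Z$), so $q$ automatically factors through $M(Z',\mfv|_{Z'})$ for the \emph{standard} thickening $I_{Z'}=I_Z^N$; invariance and incompressibility (after passing once more to the support $Z'_{\mfv|_{Z'}}$, which is again a thickening of $Z$) then follow essentially by definition. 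This shows that the ``main obstacle'' you flag is a red herring: the relevant object is the cyclic generator, not the whole module, so one \emph{can} always pass to a fixed nilpotent thickening, and your $\delta$-module example does not apply. Your alternative --- taking $I_{Z'}$ to be the largest $\mfv$-stable ideal inside $\mathcal{I}\cap\cO_X$ and deducing incompressibility from minimality via the support proposition of \S\ref{ss:supp} --- is valid and in fact handles more carefully a point the paper glosses over (the thickening $I_Z^N$ itself need not be incompressible; one must shrink to its support). Two smaller remarks: your worry that $\mathcal{I}\cap\cO_X$ might fail to be $\mfv$-stable is unfounded, since for $f\in\mathcal{I}\cap\cO_X$ and $\xi\in\mfv$ one has $\xi(f)=\xi\cdot f-f\cdot\xi\in\mathcal{I}\cap\cO_X$ ($\xi\cdot f\in\mfv\cdot\caD_X\subseteq\mathcal{I}$ and $f\cdot\xi\in\mathcal{I}$ as $\mathcal{I}$ is a right ideal), so you could have taken $I_{Z'}=\mathcal{I}\cap\cO_X$ outright; and your invariance argument via singular supports, while correct at generic smooth points of $Z$ (whence everywhere by irreducibility), is more roundabout than the one-line computation just given, which shows directly that $V(\mathcal{I}\cap\cO_X)$ is an invariant subscheme.
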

Here, an \emph{infinitesimal thickening} of a subvariety $Z \subseteq
X$ is a subscheme $Z' \subseteq X$ such that $Z'_\red = Z$. Note that
it can happen that $\mfv$ flows incompressibly on $Z'$ but not on $Z$,
as in Example \ref{ex:a3-inf-hol-qt}.
We caution that, on the other hand, $M(X,\mfv)$ could have a
\emph{submodule} supported on $Z$ even if $\mfv$ does not flow
incompressibly on any infinitesimal thickening of $Z$: see Example
\ref{ex:a3-inf-hol-sub}.
\begin{proof}
  $M(X,\mfv) = \mfv \cdot \caD_X \setminus \caD_X$ admits a quotient
  supported on $Z$ if and only if, for some $N \geq 1$, $(\mfv + I_Z^N)
  \cdot \caD_X$ is not the unit ideal.  This is equivalent to saying
  that $M(Z',\mfv|_{Z'}) \neq 0$ for some infinitesimal thickening
  $Z'$ of $Z$.  This can only happen if $Z$ is invariant.  By
  definition, such a restriction is fully supported if and only if
  $\mfv$ flows incompressibly on $Z'$.  For the final statement, note
  that the quotient morphism must factor through a map $M(X,\mfv)
  \onto (\mfv + I_Z^N) \caD_X \setminus \caD_X$, and the
  latter is $M(Z', \mfv|_{Z'})$, where we define $Z'$ by $I_{Z'} =
  I_Z^N$.
\end{proof}
  Next, even if $X=X_\mfv$, there can be many choices of $\mfv$ that give
  rise to the same $\caD$-module.  This motivates
\begin{definition}
 The \emph{saturation} $\mfv^{s}$ of $\mfv$ is
 $\Vect(X) \cap (\mfv \cdot
  \caD_X)$. Precisely, in the language of
\S \ref{ss:inc-subvar-pf} for an embedding $i: X \into V$,
% a smooth affine $V$, if $X \subseteq V$ is an embedding into a
% smooth affine variety $Y$ with ideal $I_X \subseteq \cO_V$, and
% $\mfv_V$ any set-theoretic lift of elements of $\mfv$ to vector
% fields on $Y$ parallel to $X$ such that $\mfv_Y|_X=\mfv$, then
\[
\mfv^s = 
\Bigl( \Vect(V) \cap \bigl( (\widetilde \mfv + I_X) \cdot \caD_V \bigr)\Bigr)|_X.
\]
\end{definition}
It is easy to check that the definition of the saturation does not
depend on the choice of embedding.  We next define a smaller, but more
computable, saturation:
\begin{definition}\label{d:os}
  The \emph{$\cO$-saturation} $\mfv^{os}$ of $\mfv$ is $\Vect(X) \cap
  (\mfv \cdot \cO_X)$, precisely,
\[
\mfv^{os} := \{\sum_i f_i \xi_i \mid f_i \in \cO_X, \xi_i \in \mfv,
\text{ s.t. } \sum_i \xi_i(f_i) = 0\}.
\]
\end{definition}
Equivalently, for any embedding $X \subseteq V$ as above,
% into a smooth affine variety, $\mfv^{os}$ identifies with the
% following subspace of global sections of $\mfv \cdot \caD_X$:
\[
\mfv^{os} = \Bigl( \Vect(V) \cap \bigl( (\mfv_V + I_X) \cdot \cO_V
\bigr)\Bigr)|_X.
\]
Note that, by definition, $\mfv^{os} \subseteq \cO_X \cdot \mfv$;
however, the same does \emph{not} necessarily hold for $\mfv^s$, as in
Examples \ref{ex:a3-inf-hol-sub} and \ref{ex:a3-inf-hol-qt}. In
particular, in those examples, $\mfv^s$ has rank two on the locus
$x=0$, whereas $\mfv^{os}$ has rank one.

However, generically on incompressible affine varieties,
$\mfv^{os}=\mfv^s$.  More precisely:
\begin{definition}
If $(X,\mfv)$ is incompressible, then call a vector field
$\xi\in \cO_X \cdot \mfv$
 \emph{incompressible} if, writing $\xi = \sum_i f_i \xi_i$ for $f_i \in \cO_X$
and $\xi_i \in \mfv$, one has $\sum_i \xi_i(f_i) = 0$.
\end{definition}
Note that we used incompressibility for the definition to make sense; otherwise
there could be multiple expressions $\sum_i f_i \xi_i$ for $\xi$ which
yield different values $\sum_i \xi_i(f_i)$.
\begin{remark}
  When $X$ is a variety, $\xi \in \cO_X \cdot \mfv$ is incompressible
  if and only if, for every irreducible component of $X$, at a smooth
  point with a formal volume preserved by $\mfv$, then $\xi$ also
  preserves that volume.
\end{remark}
\begin{proposition} \label{p:inc-os} If $\mfv$ flows incompressibly,
  then $\mfv^{os}$ is the subspace of $\cO_X \cdot \mfv$ of
  incompressible vector fields.  If $X$ is additionally a variety,
 then for some open dense subset
  $U \subseteq X$, $(\mfv|_U)^s = (\mfv|_U)^{os}$ is the subspace of
  $\cO_U \cdot \mfv$ of incompressible vector fields.
\end{proposition}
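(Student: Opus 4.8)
The first assertion is essentially a repackaging of condition (iii) of Proposition \ref{p:inc-subvar}. By Definition \ref{d:os}, $\mfv^{os}$ consists of the vector fields that admit \emph{some} expression $\eta = \sum_i f_i\xi_i$ with $f_i\in\cO_X$, $\xi_i\in\mfv$, and $\sum_i\xi_i(f_i)=0$, whereas a vector field in $\cO_X\cdot\mfv$ is incompressible exactly when \emph{every} such expression has $\sum_i\xi_i(f_i)=0$. To reconcile the two, I would take two expressions $\sum_i f_i\xi_i=\sum_j g_j\eta_j=\eta$, apply condition (iii) — available because $\mfv$ flows incompressibly — to the relation $\sum_i f_i\xi_i-\sum_j g_j\eta_j=0$, and conclude $\sum_i\xi_i(f_i)=\sum_j\eta_j(g_j)$. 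Thus $\sum_i\xi_i(f_i)$ depends only on $\eta$, so the ``some'' and ``every'' versions agree and $\mfv^{os}$ is precisely the space of incompressible vector fields in $\cO_X\cdot\mfv$.

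For the second assertion I would first reduce to $X$ irreducible (take $U$ inside a single component, shrinking to avoid pairwise intersections of components and the singular locus). Since $\mfv$ flows incompressibly, Corollary \ref{c:inc} and its proof supply, for generic $x\in X$, a formal product $\hat X_x\cong V\times V'$ with $\hat\cO_{X,x}\cdot\mfv=T_V\boxtimes\cO_{V'}$, together with a well-defined divergence $D\colon T_V\boxtimes\cO_{V'}\to\hat\cO_{X,x}$ characterized by $\xi+D(\xi)\in\mfv\cdot\hat\caD_{X,x}$ (the uniqueness being exactly incompressibility, $(\mfv\cdot\hat\caD_{X,x})\cap\hat\cO_{X,x}=0$). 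Here $D$ vanishes on $\mfv$, so $D(\sum_i f_i\xi_i)=\sum_i\xi_i(f_i)$ whenever $\xi_i\in\mfv$, and $\gr(\mfv\cdot\hat\caD_{X,x})=T_V\cdot\Sym_{\hat\cO_{X,x}}T_{\hat X_x}$; in particular the order-$\le 1$ part of $\mfv\cdot\hat\caD_{X,x}$ is exactly $\{\xi+D(\xi):\xi\in T_V\boxtimes\cO_{V'}\}$.

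The crux is the local claim that \emph{any vector field $\eta$ lying in $\mfv\cdot\hat\caD_{X,x}$ is incompressible}. The argument I have in mind: such an $\eta$ lies in the order-$\le 1$ part, hence equals $\xi+D(\xi)$ for a unique $\xi\in T_V\boxtimes\cO_{V'}$; comparing order-zero components (a derivation has none) forces $D(\xi)=0$ and $\eta=\xi\in\hat\cO_{X,x}\cdot\mfv$. Writing $\eta=\sum_i f_i\xi_i$ with $\xi_i\in\mfv$ then gives $0=D(\eta)=\sum_i\xi_i(f_i)$, i.e.\ $\eta\in(\mfv|_{\hat X_x})^{os}$. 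The reverse containment $(\mfv|_{\hat X_x})^{os}\subseteq(\mfv|_{\hat X_x})^s\subseteq\mfv\cdot\hat\caD_{X,x}$ is trivial, so at generic $x$ all three spaces coincide.

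To globalize, I would shrink $U$ once more so that $\mfv|_U$ flows incompressibly, the above local picture holds at every $x\in U$, and the coherent subsheaf $\cO_U\cdot\mfv\subseteq T_U$ is a subbundle. Given $\eta\in(\mfv|_U)^s=\Vect(U)\cap(\mfv\cdot\caD_U)$, the local claim puts $\eta$ into $\hat\cO_{U,x}\cdot\mfv$ at every $x\in U$; since $\cO_U\cdot\mfv$ is a subbundle of $T_U$, a section lying in it formally-locally everywhere lies in $\Gamma(U,\cO_U\cdot\mfv)=\cO_U\cdot\mfv$, so $\eta=\sum_i f_i\xi_i$ on $U$. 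Then $\sum_i\xi_i(f_i)=\sum_i\xi_i\cdot f_i-\eta\in(\mfv\cdot\caD_U)\cap\cO_U=0$ since $\mfv|_U$ flows incompressibly (Proposition \ref{p:inc-subvar}), whence $\eta\in(\mfv|_U)^{os}$; combined with the trivial reverse inclusion and the first assertion applied to $(U,\mfv|_U)$, this gives $(\mfv|_U)^s=(\mfv|_U)^{os}=\{\text{incompressible vector fields in }\cO_U\cdot\mfv\}$. I expect the only genuine difficulty to be the local claim of the third paragraph — forcing a vector field in $\mfv\cdot\hat\caD_{X,x}$ into $\hat\cO_{X,x}\cdot\mfv$ and showing its divergence vanishes — which hinges entirely on the normal form and the uniqueness of $D$ from Corollary \ref{c:inc}; the first assertion is a tautology and the globalization is a routine coherent-sheaf localization.
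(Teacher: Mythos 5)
Your proof is correct and follows essentially the same route as the paper's: the first assertion is the "some vs.\ every expression" reconciliation via Proposition \ref{p:inc-subvar}(iii), and the second rests on Corollary \ref{c:inc} (full support forcing the order-$\le 1$ part of $\mfv\cdot\caD$ to be $\{\xi+D(\xi)\}$, hence $(\mfv|_U)^s\subseteq\cO_U\cdot\mfv|_U$ with vanishing divergence) on a dense open where $\cO_U\cdot\mfv$ is a subbundle. You spell out explicitly, via the formal divergence operator $D$, the step the paper compresses into ``it follows that $\cO_U\cdot\mfv|_U=\cO_U\cdot(\mfv|_U)^s$'' and its appeal to incompressibility of $(\mfv|_U)^s$, but the underlying mechanism is identical.
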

\begin{proof}
  % If $\xi_i \in \mfv$ and $f_i \in \cO_X$, then we can consider
  % $\sum_i \xi_i \cdot f_i \in (\mfv \cdot \caD_X) \subseteq
  % \caD_X$. Precisely, if $X \subseteq Y$ is an embedding into a smooth
  % affine variety $Y$, this refers to the section of $\mfv \cdot
  % \caD_X$ obtained by lifting $\xi_i$ and $f_i$ arbitrarily to $Y$.
  % Henceforth we work with global sections of $\caD_X$ on $X$, and omit
  % mention of $Y$.
  For the first statement, if $X$ is incompressible and $f_i \in
  \cO_X, \xi_i \in \mfv$ are such that $\sum_i \xi_i(f_i) = 0$, then
  it follows that $\sum_i f_i \cdot \xi_i = \sum_i \xi_i \cdot f_i$.

  % for each irreducible component $Y \subseteq X$, $\mfv|_Y$ is
  % incompressible and hence there is a formal volume form $\omega$ at
  % a smooth point $y \in Y$ preserved by $\mfv$.  Then, $L_\xi \omega
  % = 0$ for all $\xi \in \mfv$. Thus, for $\xi_i \in \mfv$ and $f_i
  % \in \cO_X$, $\sum_i \xi_i (f_i) = L_{\sum_i f_i
  % \xi_i}(\omega)$. This is zero if and only if $\sum_i f_i \cdot
  % \xi_i$ is incompressible. Applying this to all irreducible
  % components yields the desired result.

  For the second statement, first note that, by Corollary \ref{c:inc},
  since $\mfv$ is incompressible and $X$ is a variety, on each
  irreducible component, $\mfv^s$ must generically have the same rank
  as $\mfv$. Now let $U \subseteq X$ be the locus of smooth points $x
  \in X$ such that, if $Y \subseteq X$ is the irreducible component
  containing $x$, the dimension $\mfv|_x$ is maximal along $Y$.  Then
  $\cO_U \cdot \mfv|_U$ is locally free.  It follows that this also
  equals $\cO_U \cdot (\mfv|_U)^s$.  Since $M(U,(\mfv|_U)^s) = M(U,
  \mfv|_U)$ is fully supported, $(\mfv|_U)^s$ is incompressible. By
  the first part, we therefore have $(\mfv|_U)^s \subseteq
  (\mfv|_U)^{os}$; the opposite inclusion is true by definition.
  Finally, note that, by definition, $U$ is open and dense.
\end{proof}
\begin{example}\label{ex:red-gen-sat}
  When $X = X_\mfv$ is reduced and irreducible, in the formal
  neighborhood of a generic point of $x \in X$, one has $\hat X_x
  \cong (V \times V')$ for formal polydiscs $V$ and $V'$, and
  $\mfv^{s}=\mfv^{os} = \cO_{V'} \cdot H(V)$ where $V$ is equipped
  with its standard volume form (this also gives an alternative proof
  of part of Proposition \ref{p:inc-os}). So, up to isomorphism,
  this only depends on the dimension of $X$ and the generic rank of
  $\mfv$.
\end{example}
\begin{remark}\label{r:supp-sat}
  There is a close relationship between the saturation and the support
  ideal.  In the language of Remark \ref{r:diffop-leq1}, if we
  generalize $\mfv$ to the setting of differential operators of order
  $\leq 1$, then the natural saturation becomes $(\mfv \cdot \caD_X)
  \cap \caD_X^{\leq 1}$.  In the case $\mfv \subseteq \Vect(X)$, this
  saturation contains both $\mfv^s$ and the ideal
  of $X_\mfv$; by a computation similar to that of \S
  \ref{ss:inc-subvar-pf}, in fact, this saturation is $\mfv^s \cdot
  \cO_X$.  
\end{remark}
\begin{remark}
  By Remark \ref{r:supp-sat}, one obtains an alternative formula for
  the support ideal, call it $I_{X_\mfv}$, of $X$: this is $I_{X_\mfv}
  = (\mfv^s \cdot \cO_X) \cap \cO_X$.  This can be viewed as a
  generalization of the equivalence of Proposition \ref{p:inc-subvar},
  (ii') $\Leftrightarrow$ (iii), in the case that $\mfv = \mfv^s$ is
  saturated.
\end{remark}
\subsection{Holonomicity criteria} \label{ss:hol-bicond}
\begin{theorem}\label{t:hol-bicond}
  The following conditions are equivalent:
\begin{itemize}
\item[(i)] $(X,\mfv)$ is holonomic;
\item[(ii)] For every (degenerate closed) invariant subscheme $Z'
  \subseteq X$ on which $\mfv$ flows incompressibly, for $i:Z := Z'_\red \to
  Z'$ the inclusion, $i^! M(Z',\mfv|_{Z'})$ is generically a local system;
\item[(iii)] $X$ has only finitely many invariant closed subvarieties $Z$
  on which $\mfv$ flows incompressibly in some infinitesimal
  thickening $i: Z \into Z' \subseteq X$, and for all of them,
 in formal neighborhoods of
generic $z \in Z$ there is a canonical isomorphism 
\[
i^! M(Z', \mfv_{Z'}) \cong \Omega_{\hat Z_z} \otimes ((i_*
\Omega_{\hat Z_z})^{\mfv|_{Z'}})^*.
\]
\end{itemize}
% viewing the final factor as a (finite-dimensional) vector space.
In this case, $M(X,\mfv)$ admits a filtration
\[
0 \subseteq M_{\geq \dim X}(X, \mfv) \subseteq M_{\geq \dim X-1}(X, \mfv)
\subseteq \cdots \subseteq M_{\geq 0}(X,\mfv) = M(X,\mfv),
\]
whose subquotients $M_{\geq j}(X,\mfv) / M_{\geq (j+1)}(X,\mfv)$ are
direct sums of indecomposable extensions of local systems on open
subvarieties of the dimension $j$ varieties appearing in (iii) by
local systems on subvarieties of their boundaries.
\end{theorem}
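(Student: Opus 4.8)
The plan is to prove the equivalence of (i)--(iii) by a dévissage along the support, run by induction on $\dim X$, and to read the filtration off the same dévissage. Two reductions come first. Using the isomorphism $M(X,\mfv)\cong i_*M(X_\mfv,\mfv|_{X_\mfv})$ recorded after Definition~\ref{d:supp}, together with the facts that $i_*$ and $i^!$ along a closed embedding preserve holonomicity and match up the collections of incompressible invariant subschemes, I may assume $X=X_\mfv$, i.e.\ that $\mfv$ flows incompressibly on $X$. Next I record the singular-support constraint: the equations $\gr\mfv$ give $\operatorname{SS}(M(X,\mfv))\subseteq\bigcup_{x\in X}(\mfv|_x)^\perp$, and by Corollary~\ref{c:inc} the dimension of $\operatorname{SS}$ over the generic point of a component $Y$ of $X_\red$ equals $2\dim Y-r_Y$, where $r_Y$ is the generic rank of $\mfv$ on $Y$. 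Hence holonomicity forces $\mfv$ to be generically transitive on every component of $X_\red$; conversely, if it is, then on the open dense smooth locus $U\subseteq X_\red$ where $\mfv$ has full rank, $M(X,\mfv)$ restricts to a local system (rank one on each component) by Proposition~\ref{p:tr}.

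For the inductive step, assume $\mfv$ generically transitive, let $j\colon U\into X$ be as above and $Z:=X\setminus U$, a closed invariant subvariety with $\dim Z<\dim X$. From the standard distinguished triangle comparing $M(X,\mfv)$ with $j^*M(X,\mfv)$ and with the pieces of $M(X,\mfv)$ supported on $Z$, and the fact that $j^*M(X,\mfv)$ is a local system (so its $j_!$ and $j_*$ are holonomic), it follows that $M(X,\mfv)$ is holonomic if and only if both its maximal submodule and its maximal quotient supported on $Z$ are holonomic. By Proposition~\ref{p:inc-supp}, every quotient of $M(X,\mfv)$ supported on an irreducible component $Z_1\subseteq Z$ factors through $i_*M(Z_1',\mfv|_{Z_1'})$ for some infinitesimal thickening $Z_1'$ of $Z_1$ on which $\mfv$ flows incompressibly, and the $Z$-supported submodule is treated dually by thickenings of the components of $Z$ (one must allow thickenings, possibly different ones for sub- versus quotient objects, as Examples~\ref{ex:a3-inf-hol-sub}--\ref{ex:a3-inf-hol-qt} show). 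Since $\dim Z_1'<\dim X$, the inductive hypothesis applies to $(Z_1',\mfv|_{Z_1'})$. Unwinding this gives (i)$\Leftrightarrow$(ii): at each level, holonomicity amounts to generic transitivity — which is condition (ii) for that scheme, via the singular-support count — plus holonomicity of the smaller $(Z_1',\mfv|_{Z_1'})$, which the inductive hypothesis converts into condition (ii) for all further incompressible thickenings; and since by Proposition~\ref{p:inc-supp} every incompressible invariant subscheme $Z'$ produces a quotient of $M(X,\mfv)$ supported on $Z'_\red$ and hence is reached by the dévissage, the accumulated condition is exactly (ii).

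Condition (iii) is (ii) augmented by a finiteness clause and an explicit formula. Finiteness: if $M(X,\mfv)$ is holonomic it has finite length, hence finitely many composition-factor supports, and by Proposition~\ref{p:inc-supp} the invariant closed subvarieties carrying an incompressible infinitesimal thickening are exactly those occurring as supports of quotients of $M(X,\mfv)$; conversely this finiteness is what makes the dévissage terminate. The formula I would extract from the local model of Example~\ref{ex:red-gen-sat} and the proof of Corollary~\ref{c:inc}: near a generic $z\in Z$ one has $\hat Z'_z\cong V\times V'$ with $\mfv$ transitive along $V$, which in the generically transitive case forces $V'$ to be a point, so $M(\hat Z'_z,\mfv|_{\hat Z'_z})\cong\Omega_{\hat Z_z}$, twisted by the one-dimensional space $((i_*\Omega_{\hat Z_z})^{\mfv|_{Z'}})^*$ recording the choice of $\mfv|_{Z'}$-invariant formal volume. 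I would arrange the implications as (i)$\Rightarrow$(iii)$\Rightarrow$(ii)$\Rightarrow$(i), the middle one formal and the outer two carried by the dévissage and the local model. For the filtration, once $M(X,\mfv)$ is holonomic it carries the standard filtration by dimension of support — I take $M_{\geq j}(X,\mfv)$ to be the kernel of the maximal quotient of $M(X,\mfv)$ supported in dimension $<j$, so $M_{\geq0}=M(X,\mfv)$ and the chain descends to $0$ — and peeling off one dévissage layer at a time identifies $M_{\geq j}/M_{\geq j+1}$: over each $j$-dimensional $Z$ from (iii) its generic part is the local system $i^!M(Z',\mfv_{Z'})$ on an open subvariety of $Z$, while the remainder is supported on $\bar Z\setminus Z$ and, by induction, built from local systems on subvarieties; the summand attached to $Z$ is an indecomposable extension because $M(X,\mfv)\onto i_*M(Z',\mfv|_{Z'})$ is a non-split extension of that generic local system by boundary contributions, which is Theorem~\ref{t:hol-finc}(i) when $Z$ is a leaf and an elaboration of it via the local model in general.

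The main obstacle I anticipate is the bookkeeping of infinitesimal thickenings: giving a clean formulation of ``$\mfv$ flows incompressibly on some infinitesimal thickening $Z'$'', exhibiting a well-defined (maximal) thickening that governs the $Z$-supported quotients together with a possibly different one for the $Z$-supported submodules, and verifying that both interact correctly with $i^!$ and with the inductive hypothesis. Everything else should be routine: the singular-support dimension count is Corollary~\ref{c:inc}, the dévissage is standard $\caD$-module theory, and the explicit formal local model already appears in the proof of Corollary~\ref{c:inc}.
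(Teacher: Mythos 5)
Your overall architecture---reduce to the support, run the d\'evissage $H^0 j_! j^* M(X,\mfv) \to M(X,\mfv)$, control the cokernel via Proposition \ref{p:inc-supp}, and induct on dimension---is exactly the paper's proof of (ii) $\Rightarrow$ (i), and your treatment of the finiteness clause in (iii) and of the filtration by dimension of support also matches the paper. But two specific claims are wrong. First, you assert that the $Z$-supported \emph{submodule} is ``treated dually by thickenings of the components of $Z$.'' There is no such dual statement: Example \ref{ex:a3-inf-hol-sub} (which you cite) exhibits a submodule $i_*\Omega_{\bA^2}$ supported on $\{x=0\}$ even though $\mfv$ flows incompressibly on \emph{no} infinitesimal thickening of $\{x=0\}$; the paper explicitly warns about this right after Proposition \ref{p:inc-supp}. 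Fortunately the claim is also unnecessary: the kernel of $M(X,\mfv)\onto C$ (with $C$ the cokernel of $H^0 j_! j^*M(X,\mfv) \to M(X,\mfv)$) is a quotient of $H^0 j_! j^* M(X,\mfv)$, hence holonomic as soon as $j^*M(X,\mfv)$ is a local system, so only the quotient direction needs Proposition \ref{p:inc-supp}. You should simply delete the submodule step rather than try to repair it.

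Second, and more seriously for the formula in (iii): the multiplicity space $((i_*\Omega_{\hat Z_z})^{\mfv|_{Z'}})^*$ is \emph{not} one-dimensional in general, and your derivation of the formula breaks here. You force $V'$ to be a point in the decomposition $\hat Z'_z \cong V\times V'$, but that decomposition is the one from Corollary \ref{c:inc} for the \emph{reduced} variety; for a genuinely nonreduced thickening one has $\widehat{Z'_z}\cong \hat Z_z\times S$ with $S$ a fat point, and the multiplicity is $\dim\,(\cO_S^*)^{\mfv|_{Z'}}$, which can exceed one (already for $Z$ a reduced point sitting inside a fat point $Z'$ with $\mfv|_{Z'}=0$ one gets $\dim_\bk\cO_{Z'}$). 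The correct identification of the multiplicity space, which is what the paper uses, goes through Kashiwara's equivalence between $\caD$-modules on $Z'$ and on $Z$ together with the adjunction
\[
(i_*\Omega_{\hat Z_z})^{\mfv|_{Z'}}\;\cong\;\Hom_{\hat\caD_{X,z}}\bigl(M(\widehat{Z'_z},\mfv|_{\widehat{Z'_z}}),\, i_*\Omega_{\hat Z_z}\bigr),
\]
obtained by evaluating on the canonical generator. You need this computation, not a one-dimensionality claim, to establish (i) $\Rightarrow$ (iii).
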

Here $(i_* \Omega_{\hat Z_z})^{\mfv|_{Z'}}$ is the
(finite-dimensional) vector space of distributions along $Z$ preserved
by the flow of $\mfv|_{Z'}$.  For example, in the case that there
exists a product decomposition $\widehat {Z'_z} \cong \hat Z_z \times
S$ for some zero-dimensional scheme $S$, for which the inclusion of
$\hat Z_z$ is the obvious one to $\hat Z_z \times \{0\}$, then $i_*
\Omega_{\hat Z_z} \cong (\Omega_{\hat Z_z} \otimes_\bk \cO_S^*)$,
where $\Omega_{\hat Z_z}$ is the space of formal volume forms on $\hat
Z_z$ and $\cO_S^*$ is the (finite-dimensional) space of algebraic
distributions on $S$.

We remark that the theorem also gives another proof of Proposition
\ref{p:tr} (which we don't use in the proof of the theorem), since a connected
transitive
variety $(X,\mfv)$ is a single leaf and therefore $\mfv$ is holonomic.

Using part (iii) of the Theorem, we immediately conclude
\begin{corollary}
  When $(X,\mfv)$ is holonomic, an invariant subscheme $Z' \subseteq
  X$ is incompressible if and only if, for generic $z \in Z :=
  Z'_\red$, with $i: Z \into Z'$ the inclusion, $(i_*\Omega_{\hat
    Z_z})^{\mfv|_{Z'}} \neq 0$.   
\end{corollary}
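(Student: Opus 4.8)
The plan is to deduce this from parts (ii) and (iii) of Theorem~\ref{t:hol-bicond}, by rephrasing both sides of the equivalence in terms of the $\caD$-module $N := M(Z',\mfv|_{Z'})$ and its restriction $i^!N$ to $Z = Z'_\red$. Since $(X,\mfv)$ is holonomic, Proposition~\ref{p:inv-sub-qt} exhibits $i_*N$ as a quotient of $M(X,\mfv)$, so $N$ is holonomic. On the algebraic side, by the extension of Proposition~\ref{p:inc-subvar} to subschemes (condition (ii'), cf.\ \S\ref{ss:inc-subvar-pf} and Definition~\ref{d:supp}), the assertion that $\mfv$ flows incompressibly on $Z'$ is equivalent to $Z' = Z'_{\mfv|_{Z'}}$, i.e.\ to the vanishing of the annihilator of $N$ in $\cO_{Z'}$; equivalently, $N$ is fully supported on $Z'$.

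First I would treat the direction ``$\Rightarrow$''. If $\mfv$ flows incompressibly on $Z'$, then $Z$ is one of the finitely many invariant subvarieties occurring in Theorem~\ref{t:hol-bicond}(iii), which supplies, on a formal neighbourhood of a generic $z\in Z$, the canonical isomorphism $i^!N \cong \Omega_{\hat Z_z}\otimes\bigl((i_*\Omega_{\hat Z_z})^{\mfv|_{Z'}}\bigr)^{*}$, while part (ii) says $i^!N$ is generically a local system there. Its rank is thus $\dim(i_*\Omega_{\hat Z_z})^{\mfv|_{Z'}}$, so it suffices to show $i^!N\neq 0$. This holds because $N$ is fully supported on $Z'$: from the proof of Theorem~\ref{t:hol-bicond} one has, generically along $Z$, a product decomposition $\widehat{Z'_z}\cong\hat Z_z\times S$ with $S$ zero-dimensional, and a nonzero holonomic module which is fully supported on such a product has nonzero restriction along $Z$ (the restriction in the zero-dimensional $S$-direction is governed by $\Hom_{\cO_S}(\bk,-)$, which is nonzero on any nonzero $\cO_S$-module because $\mathfrak{m}_S$ is nilpotent). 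Hence $i^!N\neq 0$ and $(i_*\Omega_{\hat Z_z})^{\mfv|_{Z'}}\neq 0$.

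For the converse ``$\Leftarrow$'', a nonzero element of $(i_*\Omega_{\hat Z_z})^{\mfv|_{Z'}}$ is a nonzero $\mfv|_{Z'}$-invariant distribution along $Z$, so $N\neq 0$, and by Proposition~\ref{p:inv-sub-qt} the holonomic module $M(X,\mfv)$ has a nonzero quotient $i_*N$ supported on $Z$; by Proposition~\ref{p:inc-supp} this forces $\mfv$ to flow incompressibly on some infinitesimal thickening of $Z$, and I would then argue, using the product decomposition $\widehat{Z'_z}\cong\hat Z_z\times S$ once more (so that the invariant distribution detects that $\cO_S$ has no proper $\mfv$-stable annihilator), that this thickening may be taken to be $Z'$ itself, i.e.\ $Z'=Z'_{\mfv|_{Z'}}$. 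Assembling the two directions yields the chain: $\mfv$ flows incompressibly on $Z' \Leftrightarrow N$ fully supported $\Leftrightarrow i^!N\neq 0$ generically on $Z \Leftrightarrow (i_*\Omega_{\hat Z_z})^{\mfv|_{Z'}}\neq 0$. The step I expect to be the main obstacle is this last matching in the \emph{nonreduced} direction: one must verify that incompressibility of the (possibly nonreduced) $Z'$---a condition on $\cO_{Z'}$ through the annihilator of $N$---is faithfully recorded by the nonvanishing of $i^!N$ on $Z_\red$, rather than merely by the nonvanishing of some quotient of $N$ supported on a smaller thickening; the product structure $\widehat{Z'_z}\cong\hat Z_z\times S$ coming from the proof of Theorem~\ref{t:hol-bicond} is what makes this tractable, and when $Z'$ is reduced (so $i$ is the identity) the equivalence is immediate.
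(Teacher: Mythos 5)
Your ``$\Rightarrow$'' direction is correct and is in substance the paper's intended one\nobreakdash-line deduction from Theorem \ref{t:hol-bicond}(iii): incompressibility of $Z'$ says the annihilator of the canonical generator of $N=M(Z',\mfv|_{Z'})$ in $\cO_{Z'}$ vanishes, hence the set-theoretic support of $N$ is all of $Z$, hence by Kashiwara's equivalence $i^!N$ is generically a nonzero local system, whose rank part (iii) identifies with $\dim(i_*\Omega_{\hat Z_z})^{\mfv|_{Z'}}$. (Your argument via $\Hom_{\cO_S}(\bk,-)$ is a slightly roundabout substitute for this Kashiwara step, but it is fine.)

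The obstacle you flag in the ``$\Leftarrow$'' direction is, however, a genuine gap, and it cannot be closed by another appeal to the product decomposition $\widehat{Z'_z}\cong\hat Z_z\times S$. The nonvanishing of $(i_*\Omega_{\hat Z_z})^{\mfv|_{Z'}}=\Hom(M(\widehat{Z'_z},\mfv|_{\widehat{Z'_z}}),i_*\Omega_{\hat Z_z})$ only records that $N$ is generically nonzero along $Z$, i.e.\ that the support $Z'_{\mfv|_{Z'}}$ has reduction $Z$; since $M(Z',\mfv|_{Z'})\cong M(Z'_{\mfv|_{Z'}},\mfv|_{Z'_{\mfv|_{Z'}}})$ canonically, this is equivalent to incompressibility of the (automatically incompressible) subscheme $Z'_{\mfv|_{Z'}}$, which may be strictly smaller than $Z'$. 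Concretely: take $X=\bA^1=\Spec\bk[x]$, $\mfv=\langle x^2\partial_x\rangle$ (holonomic), $Z=\{0\}$, and $Z'$ cut out by $(x^3)$. Then $x^2=(x^2\partial_x)\cdot x-x^3\cdot\partial_x\in(\mfv+I_{Z'})\cdot\caD_X$, so the support ideal of $Z'$ contains $x^2\bmod x^3\neq 0$ and $Z'$ is \emph{not} incompressible (its support is the subscheme cut out by $(x^2)$); yet $\delta$ and $\delta\cdot\partial_x$ are nonzero elements of $i_*\Omega_{\hat Z_z}=\delta_0$ annihilated by both $x^3$ and $x^2\partial_x$, so $(i_*\Omega_{\hat Z_z})^{\mfv|_{Z'}}\neq 0$. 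Thus the step ``this thickening may be taken to be $Z'$ itself'' fails, and what your argument actually proves is the corrected equivalence: $(i_*\Omega_{\hat Z_z})^{\mfv|_{Z'}}\neq 0$ for generic $z$ if and only if $\mfv$ flows incompressibly on \emph{some} invariant subscheme $Z''\subseteq Z'$ with $Z''_\red=Z$, namely on $Z'_{\mfv|_{Z'}}$. The paper's own justification is simply ``immediate from (iii)'' and does not address this either; when $Z'$ is reduced the two conditions coincide (as the paper remarks) and both your argument and the paper's are complete in that case.
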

Note that, when $Z'$ is a variety, the corollary is tantamount to the
definition of incompressibility, and does not require holonomicity.

In particular, we can weaken the holonomicity criterion
of Theorem \ref{t:main2}, adding in the word ``incompressible'':
\begin{equation}
\text{no incompressible degenerate invariant closed subschemes}
\Rightarrow \text{holonomic}.
\end{equation}
For a counterexample to the converse implication, recall Example
\ref{ex:a3-inf-hol-qt}.
\begin{proof}[Proof of Theorem \ref{t:hol-bicond}]
  Since holonomic $\caD$-modules are always of finite length and their
  composition factors are intermediate extensions of local systems,
  and since in our case it is clear that any local systems must be on
  invariant subvarieties, it is immediate that (i) $\Rightarrow$ (iii)
  $\Rightarrow$ (ii); we only need to explain the formula in (iii).
  First, note that, by Kashiwara's equivalence (i.e., via the
  restriction functor of $\caD$-modules from $Z'$ to $Z$), the
  categories of $\caD$-modules on $Z'$ and on $Z$ are canonically
  equivalent.  Then, the multiplicity space $((i_* \Omega_{\hat
    Z_z})^{\mfv|_{Z'}})^*$ is explained by the canonical isomorphism
\[
(i_* \Omega_{\hat Z_z})^{\mfv|_{Z'}} \cong \Hom_{\hat \caD_{X,z}}(M(\widehat{Z'_z}, 
\mfv|_{\widehat{Z'_z}}),  i_* \Omega_{\hat Z_z}),
\]
looking at the image of the canonical generator of $M(\widehat{Z'_z},
\mfv|_{\widehat{Z'_z}})$, and viewing $\caD$-modules on $Z'$ as
$\caD$-modules on the ambient space $X$.  

So, we prove that (ii) implies (i).  Suppose (ii) holds.  We prove
holonomicity by induction on the dimension of $X$.  There is an open
dense subset $Y \subseteq X$ such that $M(X,\mfv)|_Y$ is a local
system (viewed as a $\caD$-module on $Y_\red$).  Take $Y$ to be
maximal for this property, i.e., the set-theoretic locus where
$M(X,\mfv)$ is a local system in some neighborhood.

Let $j: Y \into X$ be the open embedding.  Then by adjunction, since
$j^* M(X,\mfv) = M(X,\mfv)|_Y$ is holonomic, we obtain a canonical map
$H^0 j_! M(X, \mfv)|_Y \to M(X,\mfv)$.  The cokernel is supported on
the closed invariant subvariety $Z := X \setminus Y$, which has
strictly smaller dimension than that of $X$.  By Proposition
\ref{p:inc-supp}, the quotient factors through $M(Z',\mfv|_{Z'})$ for
some infinitesimal thickening $Z'$ of $Z$.  Then, by induction, $M(Z',
\mfv|_{Z'})$ is holonomic.  This implies the result.

The final statement follows from the inductive construction of the
previous paragraph, if we note that the image of $H^0j_! M(X,\mfv)|_Y$
is an extension of the local system $M(X,\mfv)|_Y$ by local systems on
boundary subvarieties, none of which split off the extension.
\end{proof}
Note that, by Example \ref{ex:a3-inf-hol-sub}, in general the extensions
appearing (iii) can contain composition factors supported on invariant
subvarieties which do not themselves appear in (iii).

% \begin{example}
%   Here is an example to show that the subquotients in Theorem
%   \ref{t:hol-bicond} can contain composition factors on boundary
%   components which do not themselves appear in (iii), i.e., on which
%   $\mfv$ does not flow incompressibly in any infinitesimal thickening.
%   Let $X = \bA^2$ and let $\mfv = \langle x \partial_x -
%   y \partial_y, \partial_x \rangle$.  Then $\mfv \cdot \caD_X =
%   \langle \partial_y \cdot y, \partial_x \rangle$, so $M(X, \mfv)
%   \cong \Omega_{\bA^1} \boxtimes L$, where $L = \bigl( (\partial_y
%   \cdot y) \setminus \caD_{\bA^1} \bigr)$ is an indecomposable
%   extension of $\Omega_{\bA^1}$ by the delta-function $\caD$-module of
%   the origin.  However, the origin does not appear in (ii), since the
%   delta-function $\caD$-module of the origin is not a quotient of
%   $M(X,\mfv)$.
% \end{example}

\subsection{Global generalization}\label{ss:nonaffine}
Now, suppose that $X$ is not necessarily affine. Since $X$ does not in
general admit (enough) global vector fields, we need to generalize
$\mfv$ to a \emph{presheaf} of vector fields, i.e., a sub-presheaf of
$\bk$-vector spaces of the tangent sheaf. As we will see, even
for affine $X$, this is more natural and more flexible: for example,
even in the case of Hamiltonian vector fields, we will see that
Zariski-locally Hamiltonian vector fields need not coincide with
Hamiltonian vector fields, so that the natural presheaf $\mfv$ is not
even a sheaf, let alone constant; see Remark \ref{r:zarlh} below.

Nonetheless, all of the main examples and results 
of this paper are already interesting for affine varieties and
do not require this material, so the reader
interested only in the affine case can feel free to skip this subsection.

Let $X$ be a not necessarily affine variety and $\mfv$ a presheaf of
Lie algebras of vector fields on $X$.
% $\mfU$ an open affine covering of $X$, and $\mfv$ a presheaf of
% vector fields on $X$.  Then, for $U \in \mfU$,
For any open affine subset $U \subseteq X$, we can define the
$\caD$-module on $U$, $M(U,\mfv(U))$, as above.  Recall that this is
defined as a certain quotient of $\caD_U$. Therefore, to show that the
$M(U,\mfv(U))$ glue together to a $\caD$-module on
$X$, it suffices to check that the restriction to $U \cap U'$ of the
submodules of $\caD_U$ and $\caD_{U'}$ whose quotients are
$M(U,\mfv(U))$ and $M(U',\mfv(U'))$, respectively, are the same.  This
does \emph{not} hold in general, but it does hold
if one has the following condition:
\begin{definition} %Let $\mfU$ be an open affine covering of $X$.  
  Say
  that $(X,\mfv)$ is (Zariski) \emph{$\caD$-localizable}
  % {with respect to $\mfU$}
  if, for every open affine subset $U \subseteq X$ and every open
  affine $U' \subseteq U$,
  % $U \in \mfU$ and every open subset $U' \subseteq U$,
\begin{equation}\label{e:d-loc}
\mfv(U') \caD_{U'} = \mfv(U)|_{U'} \caD_{U'}.
\end{equation}
\end{definition}
\begin{remark}
If $X$ is already affine, the definition is still meaningful (and this is the
case we will primarily be interested in here). 
In this case we can restrict to $U=X$ in \eqref{e:d-loc}.
\end{remark}
\begin{example}\label{ex:const-local}
  If $X$ is irreducible and $\mfv$ is a constant sheaf, then it is
  immediate that $\mfv$ is $\caD$-localizable.  More generally, for
  reducible $X$ and $\mfv \subseteq \Vect(X)$, we can consider the
  associated presheaf $\mfv(U) := \mfv|_U$, and this is
  $\caD$-localizable.  If the irreducible components of $X$ are $X_j$,
  then the sheafification of this $\mfv$ is $\mfv(U) = \bigoplus_{j
    \mid X_j \cap U \neq \emptyset} \mfv(X_j \cap U)$.
\end{example}
% Recall that an (affine) \emph{refinement} $\mfU'$ of $\mfU$ is
% another open (affine) covering of $X$ such that, for all $U' \in
% \mfU'$, there exists $U \in \mfU$ such that $U' \subseteq U$.
We will use below the following basic
\begin{lemma}\label{l:vd-sheaf}
  Let $X$ be an affine scheme of finite type 
and $\mfv \subseteq \Vect(X)$ an arbitrary subset
  of vector fields.  Then for every affine open $U \subseteq X$, one
  has the equality of sheaves on $U$,
\[
(\mfv \cdot \caD_X)|_U = \mfv|_U \cdot \caD_U.
\]
In particular, as a sheaf, the sections of $\mfv \cdot \caD_X$ on  $U$
coincide with the global sections of $\mfv|_U \cdot \caD_U$.

Similarly, for every $x \in X$, we have  $(\mfv \cdot \caD_X)|_{\hat X_x} = \mfv|_{\hat X_x} \cdot \hat \caD_{X,x}$.
\end{lemma}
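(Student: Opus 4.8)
\emph{Plan.} The idea is to realise $\mfv \cdot \caD_X$ as the image of a single explicit morphism of right $\caD_X$-modules, and then to observe that both restriction to an affine open $U$ and completion at a point are exact functors that preserve arbitrary direct sums, hence commute with the formation of that image. Concretely, I would index by the set $\mfv$ (or a $\bk$-basis of it), form the free right $\caD_X$-module $P := \bigoplus_{\xi \in \mfv} \caD_X$, and the right $\caD_X$-linear map $\phi \colon P \to \caD_X$ sending $(d_\xi)_\xi \mapsto \sum_\xi \xi \cdot d_\xi$, where $\xi\cdot(-)$ denotes the left action of $\xi$ on $\caD_X$. Recall from \S\ref{ss:ddv} that, after an embedding $X \into V$ into a smooth affine $V$, this left action is the descent to $I_X \cdot \caD_V \setminus \caD_V$ of left multiplication in $\caD_V$ by a lift $\widetilde\xi$; in particular each $\xi\cdot(-)$ is an endomorphism of the right $\caD_X$-module $\caD_X$, and by definition $\mfv \cdot \caD_X = \operatorname{im}\phi$.

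Next I would invoke that restriction to an affine open $U \subseteq X$ — equivalently, localisation of $\cO_X$-modules — is exact on quasi-coherent sheaves and preserves arbitrary direct sums, hence commutes with images, so $(\mfv\cdot\caD_X)|_U = \operatorname{im}(\phi|_U)$. It then remains to identify $\phi|_U$, which is routine: $\caD_X|_U = \caD_U$ and $P|_U = \bigoplus_\xi \caD_U$ canonically, and since left multiplication by a vector field is a local operation, the restriction of the endomorphism $\xi\cdot(-)$ of $\caD_X$ is the endomorphism $(\xi|_U)\cdot(-)$ of $\caD_U$ (transparent in coordinates on an affine $V \supseteq X$). Thus $\phi|_U$ is the analogous map built from $\mfv|_U \subseteq \Vect(U)$, and $\operatorname{im}(\phi|_U) = \sum_\xi (\xi|_U)\cdot\caD_U = \mfv|_U\cdot\caD_U$, which is the first assertion. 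The ``in particular'' clause is then immediate, since $U$ is affine and taking sections over $U$ is exact on quasi-coherent sheaves.

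For the statement over $\hat X_x$ I would run the identical argument with the functor $-\otimes_{\cO_X}\hat\cO_{X,x}$ in place of restriction: it is exact because $\hat\cO_{X,x}$ is flat over the Noetherian ring $\cO_X$, and it preserves arbitrary direct sums, being a left adjoint. Taking $\hat\caD_{X,x} := \caD_X\otimes_{\cO_X}\hat\cO_{X,x}$, the base-changed map $\phi\otimes\operatorname{id}$ is again the map built from $\mfv|_{\hat X_x}$, so its image is $\mfv|_{\hat X_x}\cdot\hat\caD_{X,x}$, as claimed.

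I do not expect a genuine obstacle: the entire content is that ``$\mfv\cdot\caD_X$'' is \emph{defined} as an image, and that the two operations in question are exact and sum-preserving. The only points deserving a word of care are that $\mfv$ may be infinite-dimensional, so that $P$ is an infinite direct sum — harmless, precisely because both operations preserve direct sums — and that one must check the left $\mfv$-action localises (and base-changes) correctly, which is clear once one writes it, after an embedding $X \into V$, as the descent of left multiplication by differential operators on $V$.
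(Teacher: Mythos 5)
Your argument is correct and is in substance the same as the paper's: the paper restricts the explicit presentation $I_X\caD_V\setminus\bigl((\widetilde\mfv+I_X)\cdot\caD_V\bigr)$ of \eqref{e:gs-vd} to an affine open $U'\subseteq V$ with $U'\cap X=U$, which amounts to exactly your observation that localization (resp.\ the flat base change $-\otimes_{\cO_X}\hat\cO_{X,x}$) is exact, preserves direct sums, and hence commutes with forming the image of the generating morphism, whose restriction is the corresponding morphism for $\mfv|_U$ (resp.\ $\mfv|_{\hat X_x}$). Your functorial packaging via $\phi\colon\bigoplus_{\xi\in\mfv}\caD_X\to\caD_X$ is a clean way to say the same thing, and your care about well-definedness of the left action via lifts $\widetilde\xi$ matches the paper's use of Kashiwara's equivalence.
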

\begin{proof}
  We use \eqref{e:gs-vd}. In these terms, for $X \into V$ an embedding
  into a smooth affine variety $V$, let $U' \subseteq V$ be an affine
  open subset such that $U' \cap X = U$.  Then $(\mfv \cdot
  \caD_X)|_U$ identifies with the $\caD$-module restriction of
  \eqref{e:gs-vd} to $U'$, which is then $\mfv|_U \cdot \caD_U$.
% \begin{equation}
% (I_X \caD_{U'}) \setminus \bigl((\tilde \mfv,I_X) \cdot \caD_V\bigr).
% \end{equation}
We conclude the statements of the first paragraph.  The second
paragraph is similar.
\end{proof}

Given a presheaf $\mathcal{C}$, let $\Sh(\mathcal{C})$ be its
sheafification.  
\begin{proposition}\label{p:local}
  Suppose that $(X, \mfv)$ is $\caD$-localizable.
  % with respect to $\mfU$.
Then the following hold:
\begin{enumerate}
\item[(i)] The $M(U,\mfv(U))$ glue together to a
  $\caD$-module $M(X,\mfv)$ on $X$.
\item[(ii)] For every open affine $U$ and every open affine $U' \subseteq U$,
  $M(X,\mfv)|_{U'} = M(U',\mfv(U'))$.
\item[(iii)] $(X,\Sh(\mfv))$ is also $\caD$-localizable,
  % with respect to $\mfU$,
 and $M(X,\Sh(\mfv)) = M(X,\mfv)$.
\end{enumerate}
\end{proposition}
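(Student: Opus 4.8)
The plan is to first assemble a single right $\caD_X$-submodule $\mathcal{N}\subseteq\caD_X$ out of the local submodules $\mfv(U)\cdot\caD_U$, define $M(X,\mfv):=\caD_X/\mathcal{N}$, deduce (i) and (ii) at once, and then handle (iii) by a short sheafification argument using the same idea.

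\emph{Parts (i) and (ii).} For each affine open $U\subseteq X$ set $\mathcal{N}_U:=\mfv(U)\cdot\caD_U$, a right $\caD_U$-submodule of $\caD_U=\caD_X|_U$. First I would check that this family is compatible under restriction: for affine opens $U'\subseteq U$, Lemma \ref{l:vd-sheaf} (applied to the subset $\mfv(U)\subseteq\Vect(U)$) gives $\mathcal{N}_U|_{U'}=(\mfv(U)\cdot\caD_U)|_{U'}=\mfv(U)|_{U'}\cdot\caD_{U'}$, and $\caD$-localizability \eqref{e:d-loc} identifies this with $\mfv(U')\cdot\caD_{U'}=\mathcal{N}_{U'}$. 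Since affine opens form a basis of the Zariski topology, a compatible family of subsheaves along them glues uniquely, so there is a subsheaf $\mathcal{N}\subseteq\caD_X$ with $\mathcal{N}|_U=\mathcal{N}_U$ for affine $U$, and it is a right $\caD_X$-submodule because it is one locally. Put $M(X,\mfv):=\caD_X/\mathcal{N}$. Then for every affine open $U'$ one has $M(X,\mfv)|_{U'}=\caD_{U'}/\mathcal{N}_{U'}=\mfv(U')\cdot\caD_{U'}\setminus\caD_{U'}=M(U',\mfv(U'))$; specializing to $U'\subseteq U$ affine gives (ii), and reading these identifications off an affine cover exhibits $M(X,\mfv)$ as the gluing of the $M(U,\mfv(U))$, which is (i). The only point needing a word of care is that $\mathcal{N}$ really is a quasi-coherent $\caD_X$-submodule rather than just a sub-presheaf, but this is built into Lemma \ref{l:vd-sheaf}: on each affine $U$ it is the right $\caD_U$-submodule of $\caD_U$ generated by a set of global sections.

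\emph{Part (iii).} Write $\mfv^{\mathrm{sh}}:=\Sh(\mfv)$, a subsheaf of the tangent sheaf. The key claim I would prove is that for every affine open $U$,
\[
\mfv^{\mathrm{sh}}(U)\cdot\caD_U=\mathcal{N}_U .
\]
The inclusion $\supseteq$ is immediate from $\mfv(U)\subseteq\mfv^{\mathrm{sh}}(U)$. For $\subseteq$, let $s\in\mfv^{\mathrm{sh}}(U)$; by definition of sheafification there is an affine open cover $U=\bigcup_i U_i$ with $s|_{U_i}\in\mfv(U_i)$ for all $i$, and then Lemma \ref{l:vd-sheaf} (now applied to the one-element set $\{s\}$) gives $(s\cdot\caD_U)|_{U_i}=(s|_{U_i})\cdot\caD_{U_i}\subseteq\mfv(U_i)\cdot\caD_{U_i}=\mathcal{N}_{U_i}=\mathcal{N}_U|_{U_i}$; since a containment of subsheaves of $\caD_U$ may be tested on the cover $\{U_i\}$, this yields $s\cdot\caD_U\subseteq\mathcal{N}_U$, hence the claim. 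Granting it, $(X,\mfv^{\mathrm{sh}})$ is $\caD$-localizable: for affine $U'\subseteq U$, $\mfv^{\mathrm{sh}}(U')\cdot\caD_{U'}=\mathcal{N}_{U'}=\mathcal{N}_U|_{U'}=(\mfv^{\mathrm{sh}}(U)\cdot\caD_U)|_{U'}=\mfv^{\mathrm{sh}}(U)|_{U'}\cdot\caD_{U'}$, the last equality again by Lemma \ref{l:vd-sheaf}. Moreover the submodule attached to $\mfv^{\mathrm{sh}}$ by part (i) is locally $\mfv^{\mathrm{sh}}(U)\cdot\caD_U=\mathcal{N}_U$, so it equals $\mathcal{N}$; therefore $M(X,\Sh(\mfv))=\caD_X/\mathcal{N}=M(X,\mfv)$.

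The whole argument is essentially bookkeeping around Lemma \ref{l:vd-sheaf} together with the definition of $\caD$-localizability; the one step where some genuine (if modest) care is needed — and which I would flag as the main obstacle — is the sheafification step in (iii): one has to pass from ``$s$ is \emph{locally} a section of $\mfv$'' to the global statement that the right ideal $s\cdot\caD_U$ it generates lies inside $\mathcal{N}$, which is exactly where the locality of the construction $\mfv\mapsto\mfv\cdot\caD$ (Lemma \ref{l:vd-sheaf}) is invoked a second time.
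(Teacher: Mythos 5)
Your proof is correct and follows essentially the same route as the paper's: both arguments rest on the $\caD$-localizability identity \eqref{e:d-loc} to make the local data compatible and on Lemma \ref{l:vd-sheaf} to pass from local to global membership in $\mfv(U)\cdot\caD_U$ during the sheafification step of (iii). Your reorganization—gluing the kernel subsheaves $\mathcal{N}_U\subseteq\caD_U$ rather than the quotients $M(U,\mfv(U))$ directly—is a cosmetic repackaging of the same argument.
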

% We will typically be interested in the situation where $X$ is
% $\caD$-localizable with respect to \emph{every} open affine
% covering.  (Note that, if $X$ is affine, this is equivalent to being
% $\caD$-localizable with respect to $\{X\}$.)  In this case, we will
% omit the $\mfU$ and write write $M(X,\mfv) := M(X,\mfU,\mfv)$,
% independently of $\mfU$, and say that $(X,\mfv)$ is
% $\caD$-localizable.
\begin{proof}[Proof of Proposition \ref{p:local}]
  For (i), note that \eqref{e:d-loc} applied to $U' := U \cap V$
  implies that $M(U, \mfv(U))$ and $M(V, \mfv(V))$ glue.  Then, (ii)
  is an immediate consequence of \eqref{e:d-loc}.

  It remains to prove (iii). Suppose that $U$ is an affine open, $U'
  \subseteq U$ is affine open, and $\xi \in \Sh(\mfv)(U')$.  Let $u
  \in U'$. By definition, there exists a neighborhood $U'' \subseteq
  U'$ of $u$ such that $\xi|_{U''} \in \mfv(U'')$.  By
  \eqref{e:d-loc}, $\xi|_{U''} \in \mfv(U)|_{U''} \cdot \caD_{U''}$.
  Thus, by Lemma \ref{l:vd-sheaf}, $\xi$ is a section of the
  $\caD$-module $\mfv(U)|_{U'} \cdot \caD_{U'} = (\mfv(U) \cdot
  \caD_U)|_{U'}$ on $U'$. This proves the first statement.  This also
  proves the second statement, since $U' \subseteq U$ and $\xi \in
  \Sh(\mfv)(U')$ were arbitrary.
\end{proof}
\begin{remark}\label{r:vsloc} As in Remark \ref{r:vs}, we could
  have allowed $\mfv$ to be an arbitrary presheaf of vector fields
  (rather than a sheaf of Lie algebras of vector fields).  However, it
  is easy to see that it is then $\caD$-localizable if and only if the
  presheaf of Lie algebras generated by it is, and that the resulting
  $\caD$-module is the same.  So, no generality is lost by requiring
  that $\mfv$ be a presheaf of Lie algebras.
\end{remark}
Using the above, in the $\caD$-localizable setting, the results of
this section extend to nonaffine schemes of finite type.  We omit
further details (but we will discuss $\caD$-localizability more in \S
\ref{s:ex-dloc} below).

\section{Generalizations of Cartan's simple Lie algebras}\label{s:Csla}
In this section we state and prove general results on Lie algebras
of vector fields on affine varieties which generalize the simple Lie
algebras of vector fields on affine space as classified by Cartan.
Namely, we will consider the Lie algebras of all vector fields; of
Hamiltonian vector fields on Poisson varieties; of Hamiltonian vector
fields on Jacobi varieties (this generalizes both the previous example
and the setting of contact vector fields on contact varieties); and of
Hamiltonian vector fields on varieties equipped with a top polyvector
field, or more generally equipped with a divergence function. 
The last example, which seems to not have been studied before,
generalizes the volume-preserving or divergence-free vector fields on
$\bA^n$ or on Calabi-Yau varieties.  We also consider invariants of
these Lie algebras under the actions of finite groups (we will
continue this study in \S$\!$\S \ref{s:fqcyvar} and \ref{s:sym}).

Namely, in this section we compute the leaves under the flow of these
vector fields and determine when they are holonomic, and hence their
coinvariants are finite-dimensional.

We will state all examples in the affine setting; in
\S \ref{s:ex-dloc} below we will explain how to generalize them to
the nonaffine setting (which will at least work for the cases of all
vector fields and Hamiltonian vector fields).

\subsection{The case of all vector fields}\label{ss:allvfds}
Consider the case where $\mfv$ is the Lie algebra of all vector
fields. In this case we have a basic result:
\begin{proposition}\label{p:supp-allvfds}
  The support, $Z = X_{\Vect(X)}$, of $\Vect(X)$ is the locus where
  all vector fields vanish, i.e., the scheme of the ideal
  $(\Vect(X)(\cO_X))$.  Moreover, 
\[
M(X,\Vect(X)) = \caD_Z := 
  \Vect(X)(\cO_X) \cdot \caD_X \setminus \caD_X,
  \]
and $(\cO_X)_{\Vect(X)} = \cO_Z$.
\end{proposition}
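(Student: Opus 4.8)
The plan is to identify the three asserted statements as essentially formal consequences of the definitions, once one observes that for $\mfv = \Vect(X)$ the $\cO_X$-module $\Vect(X) \cdot \cO_X$ equals $\Vect(X)$ itself, since $\Vect(X)$ is already an $\cO_X$-module. First I would treat the support $Z = X_{\Vect(X)}$. By Definition \ref{d:supp}, its ideal is $(\Vect(X) \cdot \caD_X) \cap \cO_X$. Using the description in \S \ref{ss:inc-subvar-pf} (equations \eqref{e:gs-vd}, \eqref{e:vox}, \eqref{e:ox}) together with the fact that, for vector fields, $(\widetilde \mfv + I_X) \cdot \caD_V = (\widetilde \mfv + I_X) \cdot \cO_V \cdot \Sym\langle \partial_1, \ldots, \partial_n\rangle$, the intersection $(\Vect(X) \cdot \caD_X) \cap \cO_X$ reduces to $(\Vect(X) \cdot \cO_X) \cap \cO_X = \Vect(X)(\cO_X)$, i.e.\ the ideal generated by the values $\xi(f)$ for $\xi \in \Vect(X)$, $f \in \cO_X$; but since $\Vect(X)$ is closed under multiplication by $\cO_X$, this is just the ideal $(\Vect(X)(\cO_X))$ generated by the images $\xi(\cO_X)$, i.e.\ the locus where all vector fields vanish. (One should note here that $\xi(f) \cdot g = (g\xi)(f)$ need not hold, but $(g\xi)(f) = g \cdot \xi(f)$ does, which is what is needed to see the set is an ideal.)

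Next I would compute $M(X, \Vect(X))$. By definition $M(X,\Vect(X)) = \Vect(X) \cdot \caD_X \setminus \caD_X$. Again using that $\Vect(X)$ is an $\cO_X$-module, the right submodule $\Vect(X) \cdot \caD_X$ is generated over $\caD_X$ by $\Vect(X)$, and one checks — via the embedding $i : X \into V$ and the identity above — that $\Vect(X) \cdot \caD_X$ has the same global sections as $\Vect(X)(\cO_X) \cdot \caD_X$: indeed, modulo $\Vect(X)(\cO_X) \cdot \caD_X$, every $\xi \in \Vect(X)$ acts on the generator $1$ by a differential operator of order $\le 1$ whose symbol is $\xi$ and whose constant term lies in $\Vect(X)(\cO_X)$, which is a subtlety only if $\xi(1) \neq 0$; since $\xi(1) = 0$ for a derivation, in fact $\xi \cdot 1$ maps to the image of the symbol, and the closure of $\Vect(X) \cdot \cO_X = \Vect(X)$ under the anchor shows $\Vect(X) \subseteq \Vect(X)(\cO_X) \cdot \caD_X$ after passing to $1$. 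More cleanly: by Proposition \ref{p:inv-sub-qt} applied to the invariant closed subscheme $Z$, we get a surjection $M(X,\Vect(X)) \onto i_* M(Z, \Vect(X)|_Z)$, and one shows this is an isomorphism by comparing the defining ideals $(\Vect(X) + I_Z) \cdot \caD_X$ and $\Vect(X) \cdot \caD_X$, which agree because $I_Z = (\Vect(X)(\cO_X)) \subseteq \Vect(X) \cdot \caD_X$ by the first part. Since $\Vect(X)$ vanishes identically on $Z_\red$ — indeed $\Vect(X)(\cO_X) \subseteq I_Z$ forces $\mfv|_z = 0$ for all $z \in Z$ — the restriction $\Vect(X)|_Z$ acts as the zero Lie algebra on $\cO_Z$ up to higher-order terms, but more to the point $M(Z, \Vect(X)|_Z)$ is by definition $\Vect(X)|_Z \cdot \caD_Z \setminus \caD_Z$, and one has $\Vect(X)|_Z \cdot \caD_Z \subseteq I_Z \caD_V / (\cdots)$ already built into the quotient, so this equals $\caD_Z$; thus $M(X,\Vect(X)) = \caD_Z = \Vect(X)(\cO_X) \cdot \caD_X \setminus \caD_X$.

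Finally, the coinvariant statement follows by applying $\pi_0$. By Proposition \ref{p:pushfwd}, $\pi_0 M(X,\Vect(X)) = (\cO_X)_{\Vect(X)}$; on the other hand $\pi_0 \caD_Z = \caD_Z \otimes_{\caD_Z} \cO_Z = \cO_Z$ (equivalently, $\pi_0 i_* = \pi_0$ and $\pi_0 \caD_Z = \cO_Z$), giving $(\cO_X)_{\Vect(X)} = \cO_Z$. Alternatively one argues directly: $(\cO_X)_{\Vect(X)} = \cO_X / \Vect(X)(\cO_X) = \cO_X / I_Z = \cO_Z$ by the first part. I expect the main obstacle to be the careful bookkeeping in the middle step — namely verifying that passing from the order-$\le 1$ generators $\Vect(X)$ to the order-$0$ generators $\Vect(X)(\cO_X)$ does not change the right $\caD_X$-submodule generated — which is where one genuinely uses that derivations kill constants and that $\Vect(X)$ is a Lie algebroid (a coherent $\cO_X$-submodule closed under bracket), rather than an arbitrary $\bk$-subspace.
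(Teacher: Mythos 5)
Your proposal is correct and follows essentially the same route as the paper's proof: the inclusion $(\Vect(X)(\cO_X))\cdot\caD_X \subseteq \Vect(X)\cdot\caD_X$ comes from $\xi(f) = \xi\cdot f - (f\xi)\cdot 1$ (using that $\Vect(X)$ is an $\cO_X$-module), and the reverse inclusion --- which you phrase as $\Vect(X)|_Z$ dying in $\caD_Z$ --- is exactly the paper's observation that the coefficients $\xi(x_k)$ of any vector field lie in the ideal $(\Vect(X)(\cO_X))$, i.e.\ the principal-symbol argument. Only your first attempt at that reverse inclusion (the sentence before ``more cleanly'') is too garbled to stand on its own; the repackaged version via Proposition \ref{p:inv-sub-qt} and the restriction to $Z$ does carry the argument.
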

The support is evidently incompressible, and is the union of
zero-dimensional leaves at every point. Therefore, $\Vect(X)$ is
holonomic if and only if this vanishing locus is finite.
\begin{proof}
  Given $\xi \in \Vect(X)$, the submodule $\mfv \cdot \caD_X$ contains
  $[\xi, f]=\xi(f)$ for all $f \in \cO_X$. These generate the ideal
  $(\Vect(X)(\cO_X))$ over $\cO_X$, which defines the vanishing scheme of
  $\Vect(X)$.  Conversely, notice that the principal symbol of any
  product of vector fields lies in the submodule $(\Vect(X)(\cO_X))
  \cdot \caD_X$.  Thus, $\mfv \cdot \caD_X = (\Vect(X)(\cO_X)) \cdot \caD_X$.
 The last statement follows immediately.
\end{proof}
This motivates the
\begin{definition}
  A point $x \in X$ is \emph{exceptional} if all vector fields on $X$
  vanish at $x$.
\end{definition}
Clearly, all exceptional points are singular, but not conversely: for
example, if $X = Y \times Z$ where $Z$ is smooth and of purely
positive dimension, then $X$ will have no exceptional points, regardless
of how singular $Y$ is.
\begin{proposition}\label{p:allvfds}
  The following are equivalent:
\begin{enumerate}
\item[(i)] The quotient $(\cO_X)_{\Vect(X)}$ is finite-dimensional;
\item[(ii)] $X$ has finitely many exceptional points;
\item[(iii)]  $\Vect(X)$ (i.e., $M(X,\Vect(X))$) is holonomic.
\end{enumerate}
\end{proposition}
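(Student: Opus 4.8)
The plan is to show that conditions (i), (ii), and (iii) of Proposition \ref{p:allvfds} are all equivalent by a cycle of implications, leaning heavily on Proposition \ref{p:supp-allvfds}, which already identifies $M(X,\Vect(X)) = \caD_Z$ and $(\cO_X)_{\Vect(X)} = \cO_Z$, where $Z = X_{\Vect(X)}$ is the scheme cut out by the ideal $(\Vect(X)(\cO_X))$ of all values of all vector fields. First I would record that $Z$ is a zero-dimensional scheme exactly when the set of exceptional points is finite, since set-theoretically $Z$ is precisely the common vanishing locus of all vector fields. So (ii) is literally the statement that $\dim Z = 0$.

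Next, the implication (iii) $\Rightarrow$ (i) is already noted in the excerpt (holonomic $\caD$-modules push forward to finite-dimensional vector spaces over a point, via Proposition \ref{p:hol-fd}), so that direction is free. For (ii) $\Rightarrow$ (iii): if $Z$ is a finite (zero-dimensional) scheme, then $\caD_Z = i_* \caD_Z$ for $i: Z \into X$ the closed embedding; a $\caD$-module supported on a finite set of points, which is coherent (it is a quotient of $\caD_X$, hence finitely generated), is automatically holonomic — its singular support is contained in the union of the conormal bundles to the points, i.e. the cotangent fibers over those points, which have dimension $\dim X$. (Alternatively, apply Theorem \ref{t:main2} together with the remark after Proposition \ref{p:supp-allvfds}: the support is incompressible and is a union of zero-dimensional leaves, so there are finitely many leaves iff the vanishing locus is finite.) That closes the loop once we also have (i) $\Rightarrow$ (ii).

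The remaining — and only slightly subtle — implication is (i) $\Rightarrow$ (ii), equivalently its contrapositive: if $Z$ has positive dimension, then $\cO_Z$ is infinite-dimensional, hence so is $(\cO_X)_{\Vect(X)} = \cO_Z$. This is a standard commutative-algebra fact: a finitely generated $\bk$-algebra is finite-dimensional over $\bk$ if and only if it is Artinian, if and only if its Spec is a finite set of points; a scheme $Z$ of finite type over $\bk$ with $\dim Z \geq 1$ has $\cO_Z$ infinite-dimensional because it surjects onto the coordinate ring of a positive-dimensional reduced affine variety (for instance the reduction of an irreducible component of top dimension), which contains a polynomial subring $\bk[t]$ by Noether normalization. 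I expect this to be the one place requiring a short argument rather than a citation; everything else is bookkeeping on top of Proposition \ref{p:supp-allvfds}.

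The main obstacle, such as it is, is purely organizational: making sure the holonomicity claim in (ii) $\Rightarrow$ (iii) is justified cleanly for a possibly nonreduced zero-dimensional $Z$, and noting that $M(X,\Vect(X))$ is indeed finitely generated (it is a cyclic $\caD_X$-module, being a quotient of $\caD_X$ by a right ideal). With that observation in hand, the proof is essentially: $Z$ finite $\iff$ $\cO_Z$ finite-dimensional $\iff$ $(\cO_X)_{\Vect(X)}$ finite-dimensional, and $Z$ finite $\iff$ $\caD_Z$ holonomic; all three conditions are thereby identified.
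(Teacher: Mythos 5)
Your proposal is correct and follows essentially the same route as the paper: both reduce everything to Proposition \ref{p:supp-allvfds}, which identifies $M(X,\Vect(X)) = \caD_Z$ and $(\cO_X)_{\Vect(X)} = \cO_Z$ for $Z = X_{\Vect(X)}$, and then invoke the standard equivalences that $Z$ is finite iff $\cO_Z$ is finite-dimensional iff $\caD_Z$ is holonomic. Your organization as a cycle of implications (and your explicit justification that a coherent $\caD$-module supported at finitely many points is holonomic, and that a positive-dimensional finite-type scheme has infinite-dimensional coordinate ring) merely spells out details the paper leaves implicit.
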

\begin{proof}
  First, (ii) and (iii) are equivalent by Proposition
  \ref{p:supp-allvfds}, since $\caD_Z$ is holonomic if and only if $Z$
  has dimension zero, i.e., set-theoretically $Z$ is finite.  By the
  proposition, with $Z$ the support of $\Vect(X)$, then $Z_\red$ is
  the locus of exceptional points of $X$ and $M(X,\Vect(X)) = \caD_Z$,
  so the equivalence follows.  Similarly, these are equivalent to (i),
  since $(\cO_X)_{\Vect(X)} = \cO_Z$.
% To complete the proof, it suffices to prove that (i) implies (ii).
% If $\{x_i\} \subseteq X$ are a collection of exceptional points,
% then the evaluation maps $\ev_{x_i}: \cO_X \to \bk$ at $x_i$ are
% invariant under all vector fields.  These functionals are all
% linearly independent.  On the other hand, these functionals descend
% to the coinvariants $(\cO_X)_{\Vect(X)} \to \bk$.  Therefore, if
% there were infinitely many exceptional points $x_i$, then
% $(\cO_X)_{\Vect(X)}$ would be infinite-dimensional.
\end{proof}
\begin{remark}
  Note that the implication (i) $\Rightarrow$ (iii) above, a converse
  to Proposition \ref{p:hol-fd}, is special to the case $\mfv =
  \Vect(X)$.  See, e.g., Remarks \ref{r:hol-fd} and \ref{r:hp0-van}.
\end{remark}
\begin{corollary}
If $X$ has a finite exceptional locus $Z \subseteq X$
(i.e., $\mfv$ is holonomic),
then 
\[
M(X, \Vect(X)) \cong \bigoplus_{z \in Z} \delta_z \otimes (\hat
\cO_{X,z})_{Vect(\hat \cO_{X, z})}.
\]
\end{corollary}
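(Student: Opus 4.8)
The plan is to bootstrap from Proposition~\ref{p:supp-allvfds}, which already identifies $M(X,\Vect(X))$ with $\caD_Z$ for $Z$ the closed subscheme of $X$ cut out by the ideal $J := \Vect(X)(\cO_X)$, and which also records $(\cO_X)_{\Vect(X)} = \cO_Z$. The hypothesis that the exceptional locus $Z_\red$ is finite means $Z$ is an Artinian scheme, so $\cO_Z = \bigoplus_{z \in Z_\red} \cO_{Z,z}$ is a finite product of local Artinian rings. Since $\caD_Z = J\caD_X \setminus \caD_X$ carries a left $\cO_Z$-module structure ($J$ acting by zero), the corresponding idempotents split it as a finite direct sum $\caD_Z = \bigoplus_{z \in Z_\red} (\caD_Z)_z$, where $(\caD_Z)_z$ is the summand set-theoretically supported at the single point $z$.

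Next I would apply Kashiwara's equivalence at each point: a $\caD$-module supported at $z$ has the form $\delta_z \otimes W_z$, where $W_z$ is its underived pushforward to a point. Since $\pi_0$ is additive and, by Proposition~\ref{p:supp-allvfds}, $\pi_0 \caD_Z = \cO_Z = \bigoplus_z \cO_{Z,z}$ compatibly with the idempotent decomposition, we get $W_z \cong \cO_{Z,z}$, hence $M(X,\Vect(X)) \cong \bigoplus_z \delta_z \otimes \cO_{Z,z}$. It therefore remains to produce an isomorphism $\cO_{Z,z} \cong (\hat\cO_{X,z})_{\Vect(\hat\cO_{X,z})}$ (a non-canonical one suffices, since the statement asserts only an isomorphism).

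Unwinding the two sides, $\cO_{Z,z} = \cO_{X,z}/J\cO_{X,z}$ while $(\hat\cO_{X,z})_{\Vect(\hat\cO_{X,z})} = \hat\cO_{X,z}/\Vect(\hat\cO_{X,z})(\hat\cO_{X,z})$, so the claim reduces to the ideal identity
\[
J\,\hat\cO_{X,z} \;=\; \Vect(\hat\cO_{X,z})(\hat\cO_{X,z}) \qquad\text{inside } \hat\cO_{X,z};
\]
granting it, the right-hand coinvariants equal $\hat\cO_{X,z}/J\hat\cO_{X,z} = \widehat{\cO_{Z,z}} = \cO_{Z,z}$, the last step because an Artinian local ring is complete. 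This identity is the genuinely nontrivial point, and where I expect the main obstacle to lie. The inclusion $\subseteq$ is immediate, as every global vector field restricts to an element of $\Vect(\hat\cO_{X,z})$ and the right side is an ideal. For $\supseteq$ I would use finiteness decisively: $\hat\cO_{X,z}/J\hat\cO_{X,z} = \cO_{Z,z}$ is Artinian, so $J\hat\cO_{X,z} \supseteq \hat{\mathfrak m}_z^{M}$ for some $M \geq 1$. Since $\Omega^1_X$ is coherent, completion is flat, and every derivation of a Noetherian complete local ring is $\mathfrak m$-adically continuous, the tangent module commutes with completion, $\Vect(\hat\cO_{X,z}) \cong \Vect(X) \otimes_{\cO_X} \hat\cO_{X,z}$; thus every formal vector field is an $\hat\cO_{X,z}$-linear combination of continuous extensions of global $\xi_i \in \Vect(X)$, and $\xi_i(\cO_X) \subseteq J$ gives $\xi_i(\cO_{X,z}) \subseteq J\cO_{X,z}$. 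Finally, for $f \in \hat\cO_{X,z}$ write $f = f_0 + f_1$ with $f_0 \in \cO_{X,z}$ and $f_1 \in \hat{\mathfrak m}_z^{M+1}$; then $\xi_i(f) = \xi_i(f_0) + \xi_i(f_1) \in J\cO_{X,z} + \hat{\mathfrak m}_z^{M} \subseteq J\hat\cO_{X,z}$, so $\Vect(\hat\cO_{X,z})(\hat\cO_{X,z}) \subseteq J\hat\cO_{X,z}$, completing the identity and hence the proof.
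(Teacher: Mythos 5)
Your argument is correct, and it is essentially the paper's proof written out in full: the paper's one-line justification (``formally localize at each exceptional point'') amounts to exactly the key identity you isolate, namely that $\Vect(\hat\cO_{X,z})(\hat\cO_{X,z}) = J\hat\cO_{X,z}$ for $J = (\Vect(X)(\cO_X))$, which you establish via continuity of derivations of a complete local ring and base change of the coherent tangent module. Your preliminary reduction through $\caD_Z$, the idempotents of the Artinian ring $\cO_Z$, and Kashiwara's equivalence is just a careful unwinding of the same localization, so there is nothing to flag.
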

\begin{proof}
This follows immediately by formally localizing at each exceptional point.
\end{proof}
\begin{corollary}
  Under the same assumptions as in the previous corollary, if $\pi: X
  \to \pt$ is the projection to a point,
\[
\pi_*  M(X,\Vect(X)) = \pi_0 M(X,\Vect(X)) \cong \bigoplus_{z \in Z} (\hat
\cO_{Z,z})_{Vect(\hat \cO_{Z, z})}.
\]
\end{corollary}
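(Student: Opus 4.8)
The plan is to combine the previous corollary with the observation that the coordinate ring of the support scheme $Z:=X_{\Vect(X)}$ has no nonzero $\bk$-derivations, so that forming $\Vect$-coinvariants of $\cO_Z$ (hence of each completion $\hat\cO_{Z,z}$) changes nothing.

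First I would check that $\pi_*M(X,\Vect(X))$ is concentrated in cohomological degree $0$. By the previous corollary, $M(X,\Vect(X))\cong\bigoplus_{z\in Z}\delta_z\otimes(\hat\cO_{X,z})_{\Vect(\hat\cO_{X,z})}$ is a direct sum of skyscrapers; each $\delta_z$ is the $\caD$-module pushforward of $\Omega_{\pt}$ along the closed point $\{z\}\into X$, and $\{z\}\into X\xrightarrow{\pi}\pt$ is an isomorphism, so $\pi_*\delta_z=\bk$ in degree $0$. Hence $\pi_*M(X,\Vect(X))=\pi_0M(X,\Vect(X))$, which by Propositions \ref{p:pushfwd} and \ref{p:supp-allvfds} equals $(\cO_X)_{\Vect(X)}=\cO_Z$. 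Since $\mfv$ is holonomic, $Z$ is $0$-dimensional (Proposition \ref{p:allvfds}), so $\cO_Z$ is Artinian and splits as a finite product, and hence direct sum, $\bigoplus_{z\in Z}\hat\cO_{Z,z}$.

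The only nontrivial remaining step is to identify $\hat\cO_{Z,z}$ with $(\hat\cO_{Z,z})_{\Vect(\hat\cO_{Z,z})}$, i.e.\ to show that $\cO_Z$, and hence each $\hat\cO_{Z,z}$, has no nonzero derivations. For this I would use the conormal sequence $I_Z/I_Z^2\to\Omega^1_X\otimes_{\cO_X}\cO_Z\to\Omega^1_{\cO_Z}\to0$ together with the finite presentation of $\Omega^1_X$ over the Noetherian ring $\cO_X$; dualizing, these embed $\Vect(\cO_Z)$ into $\Vect(X)\otimes_{\cO_X}\cO_Z$, the module of $\bk$-derivations $\cO_X\to\cO_Z$. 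Thus any derivation of $\cO_Z$, precomposed with the surjection $\cO_X\onto\cO_Z$, is a derivation $\cO_X\to\cO_Z$ of the form $r\mapsto\sum_j\bar g_j\,\overline{\xi_j(r)}$ with $\xi_j\in\Vect(X)$; but $\xi_j(r)\in\Vect(X)(\cO_X)\subseteq I_Z$ by the definition of $Z$ in Proposition \ref{p:supp-allvfds}, so this map is zero and the derivation of $\cO_Z$ vanishes. Since $\cO_Z$ is of finite type, the same holds after completion at each $z$, so $\Vect(\hat\cO_{Z,z})$ acts trivially on $\hat\cO_{Z,z}$ and $(\hat\cO_{Z,z})_{\Vect(\hat\cO_{Z,z})}=\hat\cO_{Z,z}$.

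Assembling the pieces gives $\pi_*M(X,\Vect(X))=\pi_0M(X,\Vect(X))=\cO_Z=\bigoplus_{z\in Z}(\hat\cO_{Z,z})_{\Vect(\hat\cO_{Z,z})}$. The main obstacle is the vanishing of derivations on $Z$: it is not purely formal, since one must rule out derivations of $\cO_Z$ that do not lift to vector fields on $X$, and this is exactly where the finite presentation of $\Omega^1_X$ enters, letting one represent every derivation of $\cO_Z$ by one induced from $\Vect(X)\otimes_{\cO_X}\cO_Z$.
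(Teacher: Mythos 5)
Your first two steps are correct and already constitute essentially the whole proof: the previous corollary gives $M(X,\Vect(X))\cong\bigoplus_{z\in Z}\delta_z\otimes(\hat\cO_{X,z})_{\Vect(\hat\cO_{X,z})}$, each $\pi_*\delta_z=\pi_0\delta_z=\bk$, so $\pi_*M=\pi_0M=(\cO_X)_{\Vect(X)}=\cO_Z=\bigoplus_{z\in Z}\hat\cO_{Z,z}$, and by Proposition \ref{p:supp-allvfds} (applied on the formal completion, using that $\Omega^1_X$ is finitely presented and $\hat\cO_{X,z}$ is flat over $\cO_X$) each multiplicity space $(\hat\cO_{X,z})_{\Vect(\hat\cO_{X,z})}$ \emph{is} $\hat\cO_{Z,z}$. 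That is the paper's one-line argument, and the subscripts in the displayed formula are most charitably read as coming from the previous corollary, i.e.\ as completions of $X$, not of $Z$.

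The genuine gap is your third step. The map you invoke goes the wrong way: dualizing the conormal sequence embeds $\operatorname{Der}_\bk(\cO_Z)$ into $\operatorname{Der}_\bk(\cO_X,\cO_Z)=\Hom_{\cO_X}(\Omega^1_X,\cO_Z)$, but the latter is \emph{not} the image of $\Vect(X)\otimes_{\cO_X}\cO_Z$; finite presentation of $\Omega^1_X$ does not make $\Hom_{\cO_X}(\Omega^1_X,-)$ commute with the non-flat base change $\cO_X\onto\cO_Z$. Concretely, for the cuspidal cubic $y^2=x^3$ every global vector field vanishes at the origin, yet $\operatorname{Der}_\bk(\cO_X,\bk(0))=T_0X$ is two-dimensional, so derivations into $\cO_Z$ need not have the form $r\mapsto\sum_j\bar g_j\,\overline{\xi_j(r)}$. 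Worse, the conclusion you are after ($\operatorname{Der}(\hat\cO_{Z,z})=0$) is not a general fact about the support scheme: the example immediately following this corollary produces exceptional points where $\hat\cO_{Z,z}$ is a fat point of length $\geq 3$ cut out inside $\mathfrak{m}_z^2$, and such Artinian rings typically carry nonzero derivations (e.g.\ $x\partial_x$ on $\bk[x,y]/(x,y)^2$, whose coinvariants are then only $\bk$). So the identification $(\hat\cO_{Z,z})_{\Vect(\hat\cO_{Z,z})}=\hat\cO_{Z,z}$ is both unproven and false in general; the correct and intended statement is the one your first two steps prove, with multiplicity spaces $(\hat\cO_{X,z})_{\Vect(\hat\cO_{X,z})}=\hat\cO_{Z,z}$.
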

\begin{proof}
  This follows since $\pi_* \delta_x = \pi_0 \delta_x = \bk$ for any
  point $x \in X$.
\end{proof}
\begin{example}
  Suppose that $X$ has finitely many exceptional points.  Then, the
  dual space $((\cO_X)_{\Vect(X)})^* = (\cO_X^*)^{\Vect(X)}$, of
  functionals invariant under all vector fields, includes the
  evaluation functionals at every exceptional point.  These are
  linearly independent.  However, they need not span all invariant
  functionals. In other words, the multiplicity spaces $(\hat
  \cO_{X,x})_{Vect(\hat \cO_{X, x})}$ in the corollaries need not be
  one-dimensional.

  For example, if one takes a curve $X \subset \bA^2$ of the form
  $P(x,y) + Q(x,y) = 0$ in the plane with $P(x,y)$ and $Q(x,y)$
  homogeneous of degrees $n$ and $n+1$, then we claim that, if $n \geq
  5$ and $P$ and $Q$ are generic, all vector fields on $X$ vanish to
  degree at least two at the singularity at the origin.  Therefore,
  the coinvariants $(\cO_X)_{\Vect(X)}$ have dimension at least three,
  even though $0$ is the only singularity of $X$.

  Indeed, up to scaling, any vector field which sends $P$ to a
  constant multiple of $P$ up to higher degree terms is of the form $a
  \Eu + v$, where $a \in \bk$ and $v$ vanishes up to degree at least
  two at the origin.  Suppose that such a vector field preserves the
  ideal $(P+Q)$, i.e., that it sends $P + Q$ to a multiple of
  $P+Q$. We claim that $a=0$. Otherwise, we can assume up to scaling
  that $a=1$. Then $(\Eu+v)(P+Q) = f(P+Q)$ for some polynomial $f$. By
  comparing the parts of degree $n$, we conclude that $f(0)=n$.
  Writing $f=n+bx+cy+g$, where $g$ vanishes to degree at least two at
  the origin, we conclude that $Q = (-v + (bx+cy)\Eu)P$.
  So there exists a quadratic vector field $w = -v + (bx+cy)\Eu$
  which takes $P$ to $Q$.  The space of all quadratic vector
  fields is six-dimensional, whereas the space of all possible $Q$ is
  of dimension $n+2$.  So for $n \geq 5$, we obtain a contradiction,
  since $P$ and $Q$ are assumed to be generic.

Here is an explicit example for the smallest case, $n=5$, of such a
$P$ and $Q$: Let $P=x^5+y^5$ and $Q=x^3y^3$.  Then it is clear that
the equation $Q = -v(P) + (bx+cy)P$ cannot be satisfied for any
quadratic vector field $v$ and any $b, c \in \bk$.
\end{example}
\begin{example}\label{ex:inf-exc}
  One example of a variety with infinitely many exceptional points,
  and hence infinite-dimensional $(\cO_X)_{\Vect(X)}$ and
  non-holonomic $\Vect(X)$, is a nontrivial family of affine cones of
  elliptic curves: one can take $X = \Spec \bk[x,y,z,t] / (x^3 + y^3 +
  z^3 + txyz)$, which is a family over $\bA^1=\Spec \bk[t]$ whose
  fibers are affine cones of elliptic curves in $\bP^2$. Then, we
  claim that all singular points $x=y=z=0$ are exceptional.

  The proof is as follows:  Take any vector field on $X$ and lift it
to a vector field $\xi$ on $\bA^4$ parallel to $X$.  Then
$\xi(x^3 + y^3 + z^3 + txyz) = f (x^3+y^3+z^3+txyz)$ for some 
  $f \in \cO_{\bA^4}$.  Replacing $\xi$ by $\xi -  (1/3)f \cdot
  (x \partial_x + y \partial_y + z\partial_z)$, we can assume that
  $f = 0$.  Restricting to $t=t_0$, we obtain
\begin{equation}\label{e:xieqn}
\xi|_{t=t_0} (x^3 + y^3 + z^3 + t_0 xyz) = - \xi(t)|_{t=t_0}\cdot xyz.
\end{equation}
Suppose that $\xi$ did not vanish at $(0,0,0,t_0)$.  We can assume
$\xi$ is homogeneous with respect to the grading $|x|=|y|=|z|=1$ and
$|t|=0$.  Then $\xi|_{t=t_0}$ is either constant or linear. By
\eqref{e:xieqn}, $\xi|_{t=t_0}$ annihilates $x^3+y^3+z^3$, but no
constant or linear vector field can do that, which is a contradiction.

% otherwise, there would be a point
%   $(0,0,0,t_0)$ and a vector field $\xi$ on $\bA^4$ parallel to $X$
%    which did not vanish at
%   $(0,0,0,t_0)$.
% %  We can assume that $\xi$ is homogeneous with respect
%   % to the grading $|x|=|y|=|z|=1, |t|=0$. 
%   On the other hand,
%   $\xi = g \partial_t + v$, where $g \in \bk[t]$ and $v$ vanishes
%   along the line $x=y=z=0$.  Therefore $f(x^3+y^3+z^3+txyz) = g xyz +
%   v(x^3+y^3+z^3+txyz)$.  Set $w := v - (1/3)f \cdot
%   (x \partial_x + y \partial_y + z\partial_z)$.  Then, we obtain
% \[
% g xyz + w(x^3+y^3+z^3+txyz),
% \]
% where $w$ vanishes at $x=y=z=0$ and $g$ does not.  We claim that this
% is impossible.  We can assume that $w$ is homogeneous with respect to
% the grading $|x|=|y|=|z|=1$ and $|t|=0$ (and hence of positive degree,
% since it vanishes at $x=y=z=0$). But then, restricting to any
% hyperplane $t=t_0$, we conclude that $w|_{t=t_0}(x^3+y^3+z^3)$ is a
% multiple of $xyz$.  This can only happen if $w|_{t=t_0} = h_{t_0}
% \cdot \partial_t$ for some polynomial $h_{t_0} \in \bk[x,y,z]$, i.e.,
% if $w = h \cdot \partial_t$ for some polynomial $h$. Thus $h=-g$.  But
% $h$ vanishes at $x=y=z=0$ and we assumed that $g$ does not, which is a
% contradiction.
\end{example}

\subsection{The Poisson case}\label{ss:poisson}
Suppose that $X$ is an affine Poisson scheme of finite type, i.e.,
$\cO_X$ is a Poisson algebra.  Let $\pi$ be the Poisson bivector field
on $X$. Then, we can let $\mfv$ be the Lie algebra of Hamiltonian
vector fields, $H(X)=H_\pi(X)$.  In particular, these vector fields
are $\xi_f := \pi(df)$ for $f \in \cO_X$. In this case, $(\cO_X)_\mfv =
\HP_0(\cO_X)$, the zeroth Poisson homology of $\cO_X$. 
As pointed out in Example \ref{ex:poiss-finsym}, $H(X)$ is holonomic if
and only if $X$ has finitely many symplectic leaves. 

There are several natural larger Lie algebras to consider than $H(X)$.
Note that $H(X)$ is the space of vector fields obtained by contracting
$\pi$ with exact one-forms.  So, one can consider instead
$LH(X)=LH_\pi(X) = \pi(\tilde \Omega^1_X)$, the space of vector fields
obtained by contracting $\pi$ with \emph{closed} one-forms modulo
torsion (note that contracting $\pi$ with torsion yields zero, since
$\cO_X$ is torsion-free).  Here we will denote the resulting vector
field by $\eta_\alpha := \pi(\alpha)$. Thus, when $X$ is generically
symplectic, $LH(U)/H(U) \cong H^1_{DR}(U)$ for all open affine $U
\subseteq X$.  (Recall from the beginning of \S \ref{s:gen} that, over
$\bk=\bC$, if $U$ is smooth, this coincides with the first topological
cohomology of $U$).

Here, $LH$ stands for ``locally Hamiltonian;'' in a smooth affine open
subset, in the case that $\bk=\bC$, these are the vector fields which
are locally Hamiltonian in the analytic topology. In general, in a
smooth open subset, these are the vector fields which, restricted to a
formal neighborhood of a point, are Hamiltonian.  However, as
explained in the next example, in formal neighborhoods of singular points
not all locally Hamiltonian vector fields are Hamiltonian:
% (although this does hold at locally complete intersection isolated
% singularities, as we will point out).
\begin{example}\label{ex:fn-sing-poiss}
  In the formal neighborhood of singular points, locally Hamiltonian
  vector fields need not be Hamiltonian, since the first de Rham
  cohomology modulo torsion need not vanish in such a neighborhood,
  and as mentioned above, when $X$ is generically symplectic, then
  $LH(X)/H(X) \cong H^1(\tilde \Omega_X^\bullet)$.

  Here is an example where this cohomology does not vanish.
  Suppose $Z \subseteq \bA^n$ is a complete
  intersection with an isolated singularity at $z \in Z$. By
  \eqref{e:gre-fla} below,  in this case $\tilde
  \Omega_{Z,z}^\bullet$ is acyclic except in degree $k=\dim Z$, where
  $\dim H^k(\tilde \Omega_{Z,z}^\bullet) = \mu_z - \tau_z$, where
  $\mu_z$ and $\tau_z$ are the Milnor and Tjurina numbers of $z$ (see
  \S \ref{s:cy-cplte-int-is} below; we will not use the general
  definition here).  In the case when $Z \subseteq \bA^2$ is a reduced
  curve cut out by $Q \in \bk[x,y]$ with an isolated singularity at
  the origin, then all one-forms modulo torsion are closed, but they
  are not all exact in general. Explicitly, $H^1(\tilde
  \Omega_{Z,0}^\bullet) \cong (Q, \partial_x Q, \partial_y Q)_0 /
  (\partial_x Q, \partial_y Q)_0$, where $(-)_0 \subseteq \hat
  \cO_{\bA^2,0}$ is the ideal in the completed local ring at the
  origin.

Specifically, take $Q = x^3 + x^2 y + y^4$, where
\begin{multline*}
(Q,\partial_x Q, \partial_y Q) = (3x^2+2xy,x^2+4y^3,x^3+x^2y+y^4)=
(3x^2+2xy,x^2+4y^3,y^4) \\ \neq (3x^2+2xy,x^2+4y^3)=(\partial_x
Q,\partial_y Q).
\end{multline*}
One therefore obtains a nonexact (closed) one-form.  Such a form is
$\alpha := x \cdot dy$: one can compute that
\[
\alpha \wedge dQ = (-3x^3 -2x^2 y) \cdot dx \wedge dy \equiv 2y^4
\cdot dx \wedge dy \pmod { d \bk[\![x,y]\!] \wedge dQ + (Q) dx
\wedge dy + (x,y)^5 dx \wedge dy },
\]
and this is not equivalent to zero modulo $d \bk[\![x,y]\!] \wedge dQ
+ (Q) dx \wedge dy + (x,y)^5 dx \wedge dy$.

Then, consider the Poisson variety $X = Z \times \bA^1$ with the
Poisson structure $(\partial_x \wedge \partial_y)(dQ)
\wedge \partial_t$, with $t$ the coordinate on $\bA^1$. This is
generically symplectic, so provides an example where $LH(X) \neq
H(X)$.  Specifically, the vector field $\eta_\alpha =
(-3x^3-2x^2y) \partial_t$ is locally Hamiltonian on $X$, but in the
formal neighborhood of the origin it is not Hamiltonian.  By the above
computation, this spans $LH(\hat X_0) / H(\hat X_0)$.
%
% Remark: Simplifying, here $\eta_\alpha = 2y^4(1+7y)\partial_t$, I
% believe.  Note that all vector fields of the form $\bk[x,y]
% \cdot \partial_t$ are Poisson; they don't all vanish along $\{0\}
% \times \bA^1$.
%
% If we considered the curve $Y = \{x^3+xy+y^3=0\} \subseteq \bA^2$,
% then actually here $x \cdot dy$ is exact up to torsion even though
% it isn't obvious: $x \cdot dy = -2x^2 dx -y^2 dy + \cdots$ up to
% torsion (i.e., such an element is killed by wedging with
% $d(x^3+xy+y^3)$.  Think of just $xy=0$: then $x \cdot dy = -y \cdot
% dx$ is torsion, since it is annihilated by $(x,y)$. So this doesn't
% work.
\end{example}

Note that the fact that $LH(X)$ and $H(X)$ are Lie algebras follow
from the fact that $[LH(X), LH(X)] \subseteq H(X)$, since
$\{\eta_\alpha, \eta_\beta\} = \xi_{i_{\eta_\alpha} \beta}$ for closed
one-forms $\alpha$ and $\beta$. 

% \begin{remark}
%   Note that $LH(X) \subseteq \cO_X \cdot H(X)$, and in fact, $LH(X)$
%   is contained in the incompressible saturation as in Example
%   \ref{ex:inc-sat}.  Therefore, $M(X,LH(X)) = M(X, H(X))$.
% \end{remark}

Next, one can consider $P(X)=P_\pi(X)$, the space of all Poisson
vector fields, i.e., those $\xi$ such that $L_\xi(\pi)=0$.  Clearly,
we have $H(X) \subseteq LH(X) \subseteq P(X)$.  If $X$ is symplectic
(which for us in particular means $X$ is smooth), then it is
well-known that $LH(X) = P(X)$, but this may not be true in general
(even if $X$ has finitely many symplectic leaves: see Example
\ref{ex:poisson-not-ham}).  However, there is a certain generalization
of this equality to the mildly singular case, as explained in the next
remark.
\begin{remark}\label{r:n-cod2-poiss}
In the case that $X$ is normal and generically symplectic, then the
  following conditions are equivalent:
\begin{itemize}
\item[(i)] $X$ is symplectic on its smooth locus;
\item[(ii)] On each irreducible component,
$X$ is symplectic outside of a codimension two subset.
\end{itemize}
This is because the degeneracy locus of a Poisson structure is given
by a single equation $\pi^{\wedge \lceil\dim Y/2\rceil} = 0$, so on
the smooth locus this consists of divisors (if it is generically
nondegenerate).

If we assume that either of these conditions is satisfied, then
letting $X^\circ \subseteq X$ be the smooth locus (which is not affine
unless $X=X^\circ$) we claim that $P(X) = P(X^\circ) = LH(X^\circ)$,
where here by $P(X^\circ)$ we mean global Poisson vector fields on the
nonaffine $X^\circ$, and by $LH(X^\circ)$ we mean the collection of
vector fields $\eta_\alpha$ for $\alpha \in \Gamma(X^\circ,
\Omega_{X^{\circ}})$ a closed one-form regular on $X^\circ$.

Indeed, in this case, all vector fields which are regular on $X^\circ$
extend to all of $X$. Thus $P(X)=P(X^\circ)$.  Moreover, if $\xi \in
P(X)$ is a global Poisson vector field, then dividing by the Poisson
bivector, we obtain a closed one-form regular on $X^\circ$, and
conversely.
\end{remark}

% If $X$ is generically symplectic (which is in particular true when
% $X$ has finitely many symplectic leaves), then the contraction
% operation with $\pi$, $T_X^* \to T_X$, is injective, so $LH(X)/H(X)
% \cong H^1(X)$, the first de Rham cohomology of $X$.

The leaves of $X$ under both $H(X)$ and $LH(X)$ are the symplectic
leaves.  For $H(X)$, this is the definition of symplectic leaves; for
$LH(X)$, this is true because, since all one-forms (and in particular
all closed one forms) are spanned over $\cO_X$ by exact one-forms, the
evaluations at each point of the contraction of $\pi$ with either span
the same subspace of the tangent space.  That is, $H(X)|_x = LH(X)|_x$
for all $x \in X$, as subspaces of $T_x X$.  In fact, $H(X)$ and
$LH(X)$ define the same $\caD$-module, since they have the same
$\cO$-saturation, as defined in \S \ref{ss:supp}:
\begin{proposition} \label{p:poiss-hlh} The $\cO$-saturations are
  equal: $H(X)^{os} = LH(X)^{os}$.  Hence, $M(X,H(X)) \cong
  M(X,LH(X))$.
\end{proposition}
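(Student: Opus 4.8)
The plan is to prove the inclusion $LH(X)^{os} \subseteq H(X)^{os}$ directly; the reverse inclusion $H(X)^{os} \subseteq LH(X)^{os}$ is immediate, since $H(X) \subseteq LH(X)$ and the assignment $\mfv \mapsto \mfv^{os}$ is visibly monotone. For the consequence about $\caD$-modules, I would first record the general fact that $\mfv^{os} \cdot \caD_X = \mfv \cdot \caD_X$ for any $\mfv$: one has $\mfv \subseteq \mfv^{os}$ (take $\xi = 1 \cdot \xi$, noting $\xi(1) = 0$), while conversely, if $\zeta = \sum_i f_i \xi_i$ with $\sum_i \xi_i(f_i) = 0$, then inside $\caD_X$ one has $\zeta = \sum_i (\xi_i f_i - \xi_i(f_i)) = \sum_i \xi_i f_i \in \mfv \cdot \caD_X$. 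Since $H(X) \subseteq LH(X) \subseteq LH(X)^{os} = H(X)^{os}$, this yields $H(X)\cdot\caD_X \subseteq LH(X)\cdot\caD_X \subseteq H(X)^{os}\cdot\caD_X = H(X)\cdot\caD_X$, whence $M(X,H(X)) \cong M(X,LH(X))$.

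For the main inclusion, recall that $\tilde \Omega^1_X$ is generated as an $\cO_X$-module by exact forms, so any closed one-form modulo torsion can be written $\alpha = \sum_i g_i\, df_i$ with $f_i, g_i \in \cO_X$, and then $\eta_\alpha = \pi(\alpha) = \sum_i g_i\, \xi_{f_i}$. Now take $\zeta \in LH(X)^{os}$, say $\zeta = \sum_j h_j\, \eta_{\alpha_j}$ with $\sum_j \eta_{\alpha_j}(h_j) = 0$, and expand each $\alpha_j = \sum_i g_{ji}\, df_{ji}$, so that $\zeta = \sum_{j,i} (h_j g_{ji})\, \xi_{f_{ji}} \in \cO_X \cdot H(X)$. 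To see $\zeta \in H(X)^{os}$ I must verify $\sum_{j,i} \xi_{f_{ji}}(h_j g_{ji}) = 0$; by the Leibniz rule,
\[
\sum_{j,i} \xi_{f_{ji}}(h_j g_{ji}) = \sum_j \Bigl(\sum_i g_{ji}\,\xi_{f_{ji}}(h_j)\Bigr) + \sum_j h_j \Bigl(\sum_i \xi_{f_{ji}}(g_{ji})\Bigr) = \sum_j \eta_{\alpha_j}(h_j) + \sum_j h_j \sum_i \{f_{ji}, g_{ji}\},
\]
and the first sum vanishes by hypothesis. The crux is that, for each fixed $j$, $\sum_i \{f_{ji}, g_{ji}\} = 0$: indeed $\sum_i \{f_{ji}, g_{ji}\} = \pi\bigl(\sum_i df_{ji} \wedge dg_{ji}\bigr) = -\pi(d\alpha_j)$, and since $\alpha_j$ is \emph{closed} modulo torsion, $d\alpha_j = \sum_i dg_{ji} \wedge df_{ji}$ is a torsion two-form in $\Omega^2_X$, which is annihilated by $\pi$ because $\pi \in T^2_X = \Hom_{\cO_X}(\tilde \Omega^2_X, \cO_X)$. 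Hence the divergence vanishes and $\zeta \in H(X)^{os}$, completing the argument.

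The only genuinely substantive step is this last one, so the point to be careful about is the interplay between the de Rham differential, the torsion, and the bivector: namely that applying $d$ to $\sum_i g_i\, df_i$ produces the two-form $\sum_i dg_i \wedge df_i$ (automatic from $d^2=0$ and Leibniz), that this two-form is torsion precisely because $\alpha$ was closed modulo torsion, and that $\pi$ kills torsion two-forms since it is by definition a homomorphism out of $\tilde \Omega^2_X$. Everything else is formal manipulation with the Leibniz rule and the definitions of $\xi_f$, $\eta_\alpha$, and $\mfv^{os}$; one should double-check the sign conventions relating $\xi_f(g)$, $\{f,g\}$, and $\pi(df \wedge dg)$, but no choice of convention affects the conclusion that the divergence vanishes.
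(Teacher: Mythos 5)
Your proof is correct and takes essentially the same route as the paper's: the key step in both is that writing a closed one-form modulo torsion as $\sum_i g_i\,df_i$ and contracting $\pi$ with its differential gives $\sum_i \xi_{f_i}(g_i) = \pm\pi(d\alpha) = 0$, since $\pi$ kills torsion. The paper checks this only for $\eta_\alpha$ itself and then deduces $LH(X)\cdot\cO_X \subseteq H(X)\cdot\cO_X$ inside $\caD_X$, whereas you carry the same computation through for a general element of $LH(X)^{os}$; the extra Leibniz bookkeeping is harmless but not strictly necessary.
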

\begin{proof}
  Given any closed one-form $\alpha := \sum_i f_i dg_i \in T_X^*$, for
  $f_i, g_i \in \cO_X$, we claim that $\eta_\alpha = \sum_i \xi_{g_i}
  \cdot f_i$.  This follows because $\sum_i [\xi_{g_i}, f_i] = \sum_i
  \xi_{g_i}(f_i) = \pi(d\alpha) = 0$.  Hence $LH(X) \cdot \cO_X
  \subseteq H(X) \cdot \cO_X$. For the opposite inclusion, note that
  $H(X) \subseteq LH(X)$.
\end{proof}
In the case that $X$ has finitely many symplectic leaves, then $P(X)$
also has these as its leaves, since in this case every Poisson vector
field must be parallel to the symplectic leaves.  On the other hand,
it can happen that $P(X)$ has finitely many leaves but not $LH(X)$:
\begin{example}
If
$\pi = x \partial_x \wedge \partial_y$ on $\bA^2$, then there are
infinitely many symplectic leaves: the $y$-axis is a degenerate
invariant subvariety with respect to $LH(X)$. On the other hand, the
vector field $\partial_y$ is Poisson, so the $y$-axis is a leaf with
respect to $P(X)$.  
\end{example}
For $LH(X)$, the same argument as for $H(X)$ shows that, in the
notation of Proposition \ref{p:decomp}, all of the $X_i$ are
incompressible, and hence $LH(X)$ is holonomic if and only if it has
finitely many leaves (the symplectic leaves); or one can use
Proposition \ref{p:poiss-hlh}.  So, again, Theorem \ref{t:main2} is
the same as Theorem \ref{t:main1-alt}.

On the other hand, it can happen that $P(X)$ is holonomic even though
it does not have finitely many leaves:
\begin{example}
  If $X$ is a variety equipped with the zero Poisson structure, then
  $P(X)$ is the Lie algebra of all vector fields, and as explained in
  \S \ref{ss:allvfds}, this is holonomic if and only if there are
  finitely many exceptional points.  This can happen without having
  finitely many leaves, e.g., if one takes a product $X= \bA^1 \times
  Y$ where $Y$ has infinitely many exceptional points (cf.~Example
  \ref{ex:inf-exc}). Moreover, this is an example where the $X_i$ are
  not incompressible (if $x$ is the coordinate on $\bA^1$, $P(X)$
  contains both $\partial_x$ and $x \partial_x$, so cannot be
  incompressible on any of the $X_i = \bA^1 \times Y_{i-1}$).
\end{example}
\begin{comment}
\begin{remark}
  It is interesting to show when the $X_i$ are incompressible 
  under $P(X)$.  In the case where the generic rank of
  $H(X)$ and $LH(X)$ on $X_i$ coincide with that of $P(X)$, then the
  flow must be incompressible, since formally along a symplectic leaf
  Poisson vector fields preserve volume.  In other cases, $P(X)$ is
  sometimes incompressible and sometimes not.
  % If there exists such a Poisson vector field $\xi$ and a Casimir
  % $C$ such that $\xi(C) \neq 0$, then the flow is not
  % incompressible, since it is impossible for both $\xi$ and $C \cdot
  % \xi$ to preserve the same formal volume.
\end{remark}
\end{comment}

\begin{example}\label{ex:poiss-cplte-int}
  If $Y$ is an $n$-dimensional Calabi-Yau variety (e.g., $Y = \bA^n$)
  and $X = Z(f_1, \ldots, f_{n-2}) \subseteq Y$ is a surface which is
  a complete intersection $f_1=\cdots=f_{n-2}=0$, then there is a
  standard \emph{Jacobian} Poisson structure on $X$, given by
  $i_{\Xi}df_1 \wedge \cdots \wedge df_{n-2}$, where $\Xi =
  \vol_Y^{-1}$ is the inverse to the volume form on $Y$, which we then
  contract with the exact $n-2$-form $df_1 \wedge \cdots \wedge
  df_{n-2}$. It is then standard that the result is a Poisson bivector
  field.  Then $H(X)$ is holonomic if and only if $X$ has only
  isolated singularities.  Already in the case $Y = \bA^3$ and $X =
  Z(f)$ for $f$ a (quasi)homogeneous surface with an isolated
  singularity at zero, this is quite interesting;
  $\HP_0(\cO_X)=(\cO_X)_{H(X)}$ was computed in \cite{AL} (although,
  as we will explain in \S \ref{s:cy-cplte-int-is}, it follows from
  older results of \cite{Gre-GMZ}); we plan to compute $M(X,H(X))$ in
  \cite{ES-ciiss}. See Example \ref{ex:cy-cplte-int} and \S
  \ref{s:cy-cplte-int-is}.
\end{example}
\begin{example}\label{ex:poiss-prod}
  If $X$ and $Y$ are Poisson schemes of finite type, then for any of
  the three Lie algebras defined above, the coinvariants are
  multiplicative in the sense that $(\cO_{X \times Y})_{H(X \times Y)}
  = (\cO_X)_{H(X)} \otimes (\cO_Y)_{H(Y)}$ and similarly for $LH$ and
  $P$.  Similarly, the leaves of $X \times Y$ are the products of
  leaves from $X$ and of leaves from $Y$.  These facts follow from the
  following formula, which also holds for $LH$ and $P$ replacing $H$:
\begin{equation}\label{e:prod-vfds}
H(X) \oplus H(Y) \subseteq
H(X \times Y) 
\subseteq
 (\cO_X \boxtimes H(Y)) \oplus (H(X) \boxtimes \cO_Y).
\end{equation}
The first inclusion holds because, for $f \in \cO_X$ and $g \in
\cO_Y$, $\xi_{(f \otimes 1)+ (1 \otimes g)}=\xi_f+\xi_g$.  The second
follows because, for $f \in \cO_X$ and $g \in \cO_Y$,
% and $h \in \cO_{X \times Y}$,
$\xi_{f \otimes g}(h) = f \xi_g + g \xi_f$.
% $\xi_{f \otimes 1}(gh) + \xi_{1 \otimes g}(fh)$.
To extend \eqref{e:prod-vfds} to the case of $LH(X \times Y)$, it
remains only to consider also the action of Hamiltonian vector fields
of closed one-forms modulo torsion generating $H^1_{DR}(X \times Y) =
H^1_{DR}(X) \oplus H^1_{DR}(Y)$ (assuming for simplicity that $X$ and
$Y$ are connected). So it suffices to consider Hamiltonian vector
fields of closed one-forms modulo torsion on $X$ and $Y$
separately. One concludes that \eqref{e:prod-vfds} holds for $LH$
replacing $H$.  Finally, for $P(X \times Y)$, one also has
\eqref{e:prod-vfds} with $P$ replacing $H$, since $\pi_{X \times Y} =
\pi_X \oplus \pi_Y$ and $\Vect(X \times Y) = (\Vect(X) \boxtimes
\cO_Y) \oplus (\cO_X \boxtimes \Vect(Y))$.
\end{example}
\begin{example}\label{ex:poisson-not-ham}
  Here we give an example of a variety $X$ with finitely many
  symplectic leaves for which $LH(X) \subsetneq P(X)$.  Namely,
  suppose $X$ is a homogeneous cubic hypersurface, $Q=0$,
  in $\bA^3$ with an isolated
  singularity at the origin, i.e., the cone over a smooth curve of
  genus one.  Then $X$ is equipped with the Poisson
  bivector given by contracting the top polyvector field $\partial_x
  \wedge \partial_y \wedge \partial_z$ on $\bA^3$ with $dQ$, where $x,
  y$, and $z$ are the coordinate functions on $\bA^3$.  This has two
  symplectic leaves: the origin and its complement.

  We claim that the Euler vector field is Poisson but not locally
  Hamiltonian.  This is because the Poisson bracket preserves total
  degree, so the Euler vector field is Poisson, but it cannot be
  Hamiltonian since the Poisson bivector vanishes to degree two at the
  origin, i.e., $\pi(df \wedge dg) \subseteq \mathfrak{m}_0^2$ for all
  $f,g \in \cO_{X}$, with $\mathfrak{m}_0$ the maximal ideal of
  functions vanishing at the origin.  Hence all Hamiltonian vector
  fields vanish to degree two at the origin as well.

  For example, $X$ could be the hypersurface $x^3+y^3+z^3 =0$, which
  is the cone over the Fermat curve. Then $\{x,y\}=3z^2$, $\{y,z\} =
  3x^2$, and $\{z,x\} = 3y^2$, and it is clear that the Euler vector
  field is Poisson but not (locally) Hamiltonian.
\end{example}
\begin{remark}\label{r:hp0-van}
  We note that, unlike for all vector fields, the converse to
  Proposition \ref{p:hol-fd} does not hold in the Poisson case.
  Indeed, one can consider $\bA^3$ with the Poisson structure
  $\partial_x \wedge \partial_y$, which has infinitely many leaves
  (hence is not holonomic) but vanishing $\HP_0$.
  % Indeed, of $X$ is arbitrary Poisson, then as explained above, the
  % Lie algebra of Hamiltonian vector fields on $X$ is holonomic if
  % and only if it has finitely many leaves. This is true if and only
  % if the same condition holds for $X \times \bA^2$, where $\bA^2$ is
  % given the standard symplectic structure.  However, $\HP_0(\cO_{X
  % \times \bA^2}) \cong \HP_0(\cO_X) \otimes \HP_0(\cO_{\bA^2}) = 0$,
  % regardless of $X$.  For example, $X$ could have the zero Poisson
  % bracket, for which the Hamiltonian vector fields are not holonomic
  % (unless $X$ is zero-dimensional).
\end{remark}
 
Finally, if $X$ is an affine Poisson scheme of finite type with
finitely many symplectic leaves, and $f: X \to Y$ is a finite map,
then the argument of \cite{ESdm}  showed that the Lie algebra of
Hamiltonian vector fields of Hamiltonian functions from $f^*\cO_Y$ has
finitely many leaves. We recover the result from \emph{op.~cit.} that
$\cO_X / \{\cO_X, \cO_Y\}$ is finite-dimensional. This includes the
case, for example, where $X = V$ is a symplectic vector space, and $Y
= V/G$ for $G < \Sp(V)$ a finite subgroup (or even any finite subgroup
$G < \GL(V)$).  If $G < \Sp(V)$ then we obtain the $G$-invariant
Hamiltonian vector fields, $H(X)^G$.  Note that, in this case, if
$q: X \to X/G$ is the projection, then $q_* M(X,H(X)^G)^G \cong M(X/G,H(X/G))$.

 \subsection{Jacobi schemes}\label{ss:jacobi}
 A Jacobi structure \cite{Lic-jac} is a generalization of a
 Poisson structure, which includes both symplectic and contact
 manifolds (see the examples below), and can be thought of as a
 degenerate or singular version of both.  By definition, it is a Lie bracket on $\cO_X$ which need not satisfy the Leibniz rule, but instead satisfies that $\{f, -\}$ is a differential operator of order $\leq 1$ for all $f \in \cO_X$. Equivalently, the Lie bracket is given by
a pair
 of a bivector field $\pi$ and a vector field $u$ via the
 formula
\[
\{f,g\} = \pi(df \wedge dg) + u(fdg - gdf).
\]
Here, by a  degree $k$ polyvector field, we mean a
skew-symmetric multiderivation of $\cO_X$ of degree $k$, i.e., a
linear map $\wedge^k \cO_X \to \cO_X$ which is a derivation in each
component.

The Jacobi identity is then equivalent to the identities
\[
[u, \pi] = 0, \quad [\pi, \pi] = 2u \wedge \pi,
\]
where $[-,-]$ is the Schouten-Nijenhuis bracket on polyvector
fields. 

To any affine Jacobi scheme $X$ of finite type, one naturally associates the
Lie algebra of Hamiltonian vector fields $\xi_f$ for $f \in \cO_X$,
given by the principal symbol of the differential operator $\{f, -\}$,
i.e.,
\[
\xi_f = \pi(df) + f u, \quad \text{i.e.,} \quad \xi_f(g) = \{f,g\} + g
u(f).
\]
It is well-known and easy to verify that one has the identity
\[
[\xi_f, \xi_g] = \xi_{\{f,g\}},
\]
so this indeed forms a Lie algebra. Call it $H(X) := H_{\pi,u}(X)$.

% One could also  consider vector fields which restrict
% in the formal neighborhood of a point to Hamiltonian vector fields.
% However, it is not clear how to make a global definition equivalent to this.

We can also define a version $P(X) := P_{\pi,u}(X)$ of vector fields
\emph{preserving} the Jacobi structure, i.e., vector fields $\xi$ such
that $\xi(\{f,g\}) = \{\xi(f),g\} + \{f, \xi(g)\} = 0$ for all $f,g
\in \cO_X$. However, unlike before, it is no longer true that $H(X)
\subseteq P(X)$.  In particular, to have $\xi_f \in P(X)$, we require
that $[u,\xi_f]=\xi_{u(f)} = 0$.  So to have $H(X) \subseteq P(X)$, we
would need to have $u=0$, i.e., the structure has to be Poisson.
\begin{remark}
  It seems that we cannot define an analogue of $LH(X)$ in this
  setting since there is no way to obtain Hamiltonian vector fields
  from closed one-forms. In a neighborhood of a smooth point, one
  could consider vector fields that restrict in a formal neighborhood
  of the point to a Hamiltonian vector field, but in general this will
  not coincide with the definition of $LH(X)$ in the Poisson case, in
  neighborhoods of singular points where the first de Rham cohomology
  does not vanish in the formal neighborhood; see Example
  \ref{ex:fn-sing-poiss}.
\end{remark}
\begin{remark}\label{r:jac-prod}  
  Unlike the Poisson case, given Jacobi varieties $X$ and $Y$, there
  is no natural way to define a Jacobi structure on the product $X
  \times Y$: if one set $\pi_{X \times Y} = \pi_X \oplus \pi_Y$ and
  $u_{X \times Y} = u_X \oplus u_Y$, then the identity $[\pi,\pi] = 2u
  \wedge \pi$ would no longer be satisfied: $\pi_X \wedge u_Y$ and
  $\pi_Y \wedge u_X$ would appear on the RHS but not the LHS.
  However, one can still equip $X \times Y$ with the Lie algebra of
  vector fields $\mfv_X \oplus \mfv_Y$; in this general situation
  (i.e., for any $\mfv_X$ and $\mfv_Y$), one always has $(\cO_{X
    \times Y})_{\mfv_X \oplus \mfv_Y} \cong (\cO_X)_{\mfv_X} \otimes
  (\cO_Y)_{\mfv_Y}$.
\end{remark}
\begin{example} \label{ex:j-tr} The analogue of symplectic varieties
  in this setting is a smooth Jacobi variety for which $H(X)$ has full
  rank everywhere, i.e., it has only one leaf (assuming $X$ is
  connected). This is called a \emph{transitive} Jacobi variety.

  As pointed out in, e.g., \cite{MS-gm} (this is in the smooth
  context, but the result is proved using a formal neighborhood and
  works in general), there are two types of connected transitive
  varieties.  One is called \emph{locally conformally symplectic}, and
  is the situation where $\pi$ is nondegenerate (recall we assumed $X$
  was smooth). Therefore, $X$ is even-dimensional. In this case, $u$
  is equivalent to the data of a closed one-form $\phi$ satisfying $d
  \omega = \phi \wedge \omega$, where $\omega$ is the inverse of
  $\pi$, and $\phi = u(\omega)$. Then, in the formal neighborhood of
  any point $x \in X$, we can write $\phi = df$ for some function $f$,
  and then $H(X)$ preserves the formal volume form
  $(e^{-f}\omega)^{\wedge \dim X}$ (cf.~Example \ref{ex:st-lcs}
  below).  This need not be a global volume form, so $M(X,H(X))$ is a
  rank-one local system which need not be trivial.
 
  The other type of transitive Jacobi variety is an odd-dimensional
  contact variety.  In this case, the Jacobi structure is equivalent
  to the structure of a \emph{contact one-form} $\alpha$, i.e., a
  one-form such that $\vol_X := \alpha \wedge (d\alpha)^{\wedge (\dim X-1)/2}$
  is a nonvanishing volume form.  This determines $u$ and $\pi$
  uniquely by the formulas
\[
u(d\alpha)=0, u(\alpha) = 1, \quad \pi(\alpha, \beta) = 0, \quad \pi(\beta \wedge d\alpha) = -\beta + u(\beta)\alpha, \forall \beta \in T_X^*.
\]
By the next example, in this case $\mfv$ does not flow incompressibly, so
by Proposition \ref{p:tr}, $M(X,H(X)) = 0$. 
On the other hand, we will
see that $P(X)$ does flow incompressibly and transitively, preserving the
volume form $\vol_X$, 
 so $M(X,P(X)) = \Omega_X$ and $\pi_*M(X,P(X)) \cong H_{DR}^{\dim X - *}(X)$. In particular, $(\cO_X)_{P(X)} = H_{DR}^{\dim X}(X)$.
\end{example}
\begin{example}\label{ex:st-cont}
  The standard example of a contact variety is $\bA^{2d+1}$ with the
  standard contact structure, $\alpha = dt + \sum_i x_i dy_i$.  Also,
  note that an arbitrary contact variety restricts to one isomorphic
  to this in the formal neighborhood of any point.  We claim that no
  volume form is preserved by $H(\bA^{2d+1})$, and hence the flow of
  $H(X)$ on an arbitrary contact variety is not incompressible.
  % (equivalently, the whole variety is not supported).  
  Indeed, let $\Eu$ be the weighted Euler vector field on $\bA^{2d+1}$
  assigning weights $|x_i|=1=|y_i|$ and $|t|=2$, i.e., $\Eu = 2t
  \frac{\partial}{\partial t} + \sum_i x_i \frac{\partial}{\partial
    x_i} + y_i \frac{\partial}{\partial y_i}$.  Then, we have $\pi = 
  -\sum_i \frac{\partial}{\partial x_i} \wedge
  (\frac{\partial}{\partial y_i} - x_i \frac{\partial}{\partial t})$
  and $u = \frac{\partial}{\partial t}$.  In this case, $\xi_1 =
  \frac{\partial}{\partial t}$, $\xi_{x_i} = -\frac{\partial}{\partial
    y_i} +  x_i \frac{\partial}{\partial t}$, $\xi_{y_i} =
  \frac{\partial}{\partial x_i}$, and $\xi_{t} = - \sum_i x_i \frac{\partial}{\partial x_i}$.  In
  particular, the Lie algebra $H(X)$ does not preserve any volume form
  (if it did, for this form to be preserved by $\xi_1, \xi_{x_i}$, and
  $\xi_{y_i}$, it would have to preserve the constant vector fields,
  and hence the form would have to be the standard volume form, i.e.,
  the one determined by the contact structure; however this form is
  not preserved by $\xi_t$.)

  Finally, note that, in the above case, $P(\bA^{2d+1})$, the Lie
  algebra of all vector fields that commute with both $\pi$ and $u$,
  is the subspace of Hamiltonian vector fields $\xi_f$ where $f$ is
  independent of $t$. So $P(\bA^{2d+1}) \subsetneq H(\bA^{2d+1})$.
  This still flows transitively, since it includes the constant vector
  fields as above.  As a result, for arbitrary odd-dimensional contact
  varieties, $P(X) \subsetneq H(X)$.  In fact, $P(X)$ does flow
  incompressibly, since it preserves the standard volume form (it is
  clear that it preserves the inverse top polyvector field,
  $\pm \pi^{\wedge (\dim X - 1)/2} \wedge u$).
\end{example}
\begin{example}\label{ex:st-lcs}
  By the Darboux theorem, every locally conformally symplectic variety
  $X$ of dimension $2d$ has the form, in a formal neighborhood of a
  point $x \in X$, $\omega = e^f \omega_0$ and $\phi = df$, where
  $\omega_0$ is the standard symplectic form on $\hat \bA^{2d} \cong
  \hat X_x$. In this case $\pi = e^{-f} \pi_0$ where $\pi_0$ is the
  standard Poisson bivector on $\bA^{2d}$, and $u=\pi(df)$ is $e^{-f}$
  times the Hamiltonian vector field of $f$ under the standard
  symplectic structure.  Thus, $H(\hat X_x)$ is identical with
  the Lie algebra of Hamiltonian vector fields preserving the standard
  symplectic form $\omega_0$ (in this formal neighborhood), so it
  flows incompressibly.
  However, as noted above, $H(\hat X_x) \not \subseteq P(\hat X_x)$.
  In fact, in this case, as in the case of odd-dimensional contact
  varieties, $P(\hat X_x) \subsetneq H(\hat X_x)$. Indeed, $P(\hat
  X_x)$ consists of $\xi_g$ such that $u(g)=0$, i.e., $\{f,g\}=0$.
\end{example}
We see as a consequence of the above that, in general, the leaves of $H(X)$ 
consist of odd-dimensional contact varieties and locally conformally
symplectic varieties.  The former are not incompressible (without
passing to an infinitesimal neighborhood), whereas the latter are.  As
a consequence, we conclude from Corollary \ref{c:inc} that
\begin{proposition}
  Let $X$ be a Jacobi variety. Then $X = X_{H(X)}$ if and only if the
  generic rank of $H(X)$ is even on each irreducible component.
\end{proposition}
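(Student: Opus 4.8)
The plan is to combine Corollary~\ref{c:inc} with the structure of the generic leaves of $H(X)$, reducing the question to the facts about transitive Jacobi varieties already established in Examples~\ref{ex:j-tr}, \ref{ex:st-cont}, and \ref{ex:st-lcs}. First, since $X$ is reduced, $X = X_{H(X)}$ holds if and only if the support ideal $(H(X)\cdot\caD_X)\cap\cO_X$ vanishes, i.e.\ if and only if $M(X,H(X))$ is fully supported; by Corollary~\ref{c:inc} this is equivalent to $H(X)$ flowing incompressibly on every irreducible component of $X$. So it is enough to fix an irreducible component $Y$, let $r$ be the generic rank of $H(X)$ on $Y$, and prove that $H(X)$ flows incompressibly on $Y$ precisely when $r$ is even.

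Second, I would localize at a generic (hence smooth) point $z\in Y$, where $\dim H(X)|_z = r$. Integrating the flow of $H(X)$ yields a formal product decomposition $\hat Y_z \cong V\times V'$ with $\dim V = r$, in which $V$ is the formal leaf through $z$ (cf.~Example~\ref{ex:noleaves}), $H(X)$ generates the tangent space in the $V$ direction everywhere, and is trivial along $V'$. As in the proof of Corollary~\ref{c:inc}, $H(X)$ preserves a volume form on $\hat Y_z$ — i.e.\ flows incompressibly on $Y$ — if and only if its restriction $H(X)|_V$ preserves a formal volume form on $V$ (in which case $\omega_V\boxtimes\omega_{V'}$ works, with $\omega_V$ the invariant volume on $V$ and $\omega_{V'}$ arbitrary); equivalently, by Proposition~\ref{p:tr}, if and only if $M(V,H(X)|_V)\neq 0$. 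Here $V$, equipped with the Jacobi structure inherited from $X$, is a transitive Jacobi variety whose Hamiltonian vector fields are exactly $H(X)|_V$, as recorded in the description of the leaves of $H(X)$ preceding this proposition.

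Third, I would invoke the dichotomy of Example~\ref{ex:j-tr}: the transitive Jacobi variety $V$ is locally conformally symplectic when $r=\dim V$ is even and (formally standard) contact when $r$ is odd. In the even case, Example~\ref{ex:st-lcs} (Darboux for locally conformally symplectic structures) produces a formal volume form preserved by $H(X)|_V$, so $H(X)$ flows incompressibly on $Y$; in the odd case, Example~\ref{ex:st-cont} shows $H(X)|_V$ preserves no volume form, equivalently $M(V,H(X)|_V)=0$ by Proposition~\ref{p:tr}, so $H(X)$ does not flow incompressibly on $Y$. Running this over all irreducible components gives the proposition.

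The step I expect to be the main obstacle is the identification, in the second paragraph, of $H(X)|_V$ with the Hamiltonian vector fields of the induced transitive Jacobi structure on the formal leaf $V$ — this is what lets the volume-preservation question on $X$ genuinely reduce to the transitive case. It rests on the standard structure theory of Jacobi manifolds (integrability of the characteristic distribution, whose leaves inherit contact or locally conformally symplectic structures), not on anything new; once it is available, the rest is bookkeeping with Corollary~\ref{c:inc} and Proposition~\ref{p:tr}.
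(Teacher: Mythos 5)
Your proof is correct and follows essentially the same route as the paper: the proposition is stated there as an immediate consequence of Corollary \ref{c:inc} together with the observation that the leaves of $H(X)$ are contact (odd rank, not incompressible, by Example \ref{ex:st-cont}) or locally conformally symplectic (even rank, incompressible, by Example \ref{ex:st-lcs}). You have simply filled in the localization-to-a-formal-leaf step that the paper leaves implicit, and your identification of the key point — that $H(X)$ restricted to a formal leaf is the Hamiltonian algebra of the induced transitive Jacobi structure, via the structure theory cited from \cite{MS-gm} in Example \ref{ex:j-tr} — is exactly the ingredient the paper relies on.
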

(Recall from Definition \ref{d:supp} that $X_{H(X)}$ is the support of
$M(X,H(X))$ on $X$.)
\begin{example}
  Here is an example of a Jacobi variety where there is an
  odd-dimensional leaf having an infinitesimal neighborhood which is
  incompressible.  Let $X = \bA^2$ with $\pi = -x \partial_x
  \wedge \partial_t$ and $u = \partial_t$.  Then $H(X)$ has rank two
  except along $x=0$, where it has rank one.  Moreover, the
  distribution $\phi := \partial_x(\delta_{x=0}) \boxtimes dt$ is
  preserved by $H(X)$: for $\xi_{x^it^j}$ with $i\geq 2$ this clearly
  annihilates $\phi$; then $\xi_{xt^j} = jx^2t^{j-1} \partial_x$ and
  $\xi_{t^j} = jxt^{j-1} \partial_x + t^j \partial_t$ also do (recall
  that the action of differential operators on distributions is a
  right action; the action of vector fields is given by $(\psi \cdot
  \xi)(f) = \psi(\xi(f))$ for $\psi$ a distribution and $f$ a
  function).  The final vector field, $\xi_{t^j}$, can alternatively
  be rewritten in $H(X) \cdot \cO_X$ as
\[
\xi_{t^j} = j(x\partial_x - 1) t^{j-1} + \partial_t \cdot t^j,
\]
and note that $x\partial_x - 1$ and $\partial_t$ annihilate $\phi$, which
implied that  $\xi_{t^j}$ does.
\end{example}
\begin{question}
Let $X_\even$ be
the closure of the locus where the rank of $\mfv$ is even.  Then, is
the set-theoretic support, $(X_{H(X)})_\red$, of $(X,H(X))$ equal to
$X_\even$?  If the answer is negative, is there an example where
$H(X)$ has everywhere odd rank, but $M(X,H(X)) \neq 0$?
\end{question}

  \subsection{Varieties with a top polyvector field}\label{ss:vtop}
  % The term ``incompressible'' can be used to refer to vector fields
  % which preserve a global volume form (in the Calabi-Yau setting where
  % this exists).
  Motivated by the idea that a Poisson structure is a singular and/or
  degenerate generalization of a symplectic structure, we define a
  similar analogue of Calabi-Yau structures, and their associated Lie
  algebras of incompressible vector fields.  These are also motivated
  by the relationship between incompressibility and holonomicity.

  In the Poisson case, one replaces a nondegenerate two-form by a
  possibly degenerate two-bivector, which in the nondegenerate case is
  inverse to the symplectic form.  Thus, by analogy, we replace a
  volume form by a top polyvector field, which is allowed to vanish on
  some locus. On the nondegenerate, smooth locus, one recovers a
  symplectic form by taking the inverse of the polyvector field.

  Specifically, let $X$ be an affine variety of dimension $n$ equipped
  with a global top polyvector field, i.e., a multiderivation $\Xi:
  \wedge^{n} \cO_X \to \cO_X$. 
  % (for the nonreduced case, see Remark
  % \ref{r:toppol-nonred}).
  Then, as in the Poisson case, there are three natural Lie algebras
  to consider: the Lie algebra $H_\Xi(X)$ of vector fields obtained by
  contracting $\Xi$ with exact $(n-1)$-forms; the Lie algebra
  $LH_\Xi(X)$ of vector fields obtained by contracting $\Xi$ with
  closed $(n-1)$-forms; and the Lie algebra $P_\Xi(X)$ of all
  incompressible vector fields, i.e., vector fields $\xi$ such that
  $L_\xi(\Xi) = 0$ (vector fields \emph{preserving} $\Xi$). Note that,
  in this case, when $X$ is irreducible and $\Xi$ is nonzero, it is
  immediate that all three flow incompressibly on $X$.
  \begin{example}\label{ex:cy2} As in Example \ref{ex:cy}, if $X$ is
    affine Calabi-Yau and $\Xi$ is the inverse of the volume form,
    then all three Lie algebras coincide and equal the Lie algebra of
    volume-preserving vector fields.  Then $M(X,H(X)) = \Omega_X$ and
    $\pi_* M(X,H(X)) \cong H_{DR}^{\dim X - *}(X)$.
\end{example}
  As for generically symplectic varieties with their associated
  (locally) Hamiltonian vector fields, for arbitrary irreducible
  $(X,\Xi)$ with $\Xi \neq 0$, one has $LH_\Xi(U) / H_\Xi(U) \cong
  H^{\dim X - 1}_{DR}(U)$ for all open affine $U \subseteq X$.
  Moreover, when $U$ is additionally smooth, $LH_\Xi(X)$ coincides
  with those vector fields which, in formal neighborhoods of all $x
  \in U$, are Hamiltonian.
\begin{remark}
  As in the Poisson case (see Example \ref{ex:fn-sing-poiss}), in the
  formal neighborhood of a singular point $x \in X$, not all locally
  Hamiltonian vector fields need be Hamiltonian, since $H^{\dim X -
    1}_{DR}(\hat X_x)$ need not vanish.  Indeed, as in Example
  \ref{ex:fn-sing-poiss}, when $X = \bA^1 \times Z$ where $Z$ is a
  complete intersection with an isolated singularity at $z \in Z$,
  then $\dim H^{\dim X - 1}_{DR}(\hat X_{(t,z)}) = \mu_z-\tau_z$,
  which need not be zero (already for the case of a hypersurface in
  $\bA^n$). Then, equipped with the polyvector field $\Xi_{\bA^1}
  \boxtimes \Xi_Z$ where $\Xi_Z$ is as in Example
  \ref{ex:cy-cplte-int} (which in the case $Z = \{Q=0\} \subseteq
  \bA^n$ is $\Xi_{\bA^n}(dQ)$), one concludes that $LH(\hat X_{(t,z)})
  / H(\hat X_{(t,z)}) \cong H^{\dim X -1}(\hat X_{(t,z)}) \neq 0$.
\end{remark}
As in the Poisson case, these are Lie algebras since $[LH(X), LH(X)]
\subseteq H(X)$, as we explain.  Given a $(n-2)$-form (modulo torsion)
$\alpha \in \tilde \Omega^{n-2}_X$, let $\xi_{\alpha} := \Xi(d\alpha)$
be its associated Hamiltonian vector field. Similarly, given a closed
$(n-1)$-form modulo torsion, $\gamma \in \tilde \Omega^{n-1}_X$, let
$\eta_\gamma := \Xi(\gamma)$ be its associated locally Hamiltonian
vector field.  Then the fact that $[LH(X), LH(X)] \subseteq H(X)$
follows from the formula, where $\alpha$ and $\beta$ are closed
$(n-1)$-forms modulo torsion,
\begin{equation}\label{e:toppol-lie}
[\eta_\alpha, \eta_\beta] = \xi_{i_{\eta_\alpha}(\beta)},
\end{equation}
which can be verified in a formal neighborhood of a smooth point of
$X$ where $\Xi$ is nonvanishing, and hence which holds globally.

As in the Poisson case (Proposition \ref{p:poiss-hlh}), $H(X)$ and
$LH(X)$ define the same $\caD$-modules on $X$:
\begin{proposition}\label{p:vtop-hlh} The $\cO$-saturations are equal:
  $H(X)^{os} = LH(X)^{os}$. Thus, $M(X,H(X))
  \cong M(X,LH(X))$.
\end{proposition}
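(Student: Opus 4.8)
The plan is to follow the proof of Proposition \ref{p:poiss-hlh} essentially verbatim, replacing one-forms by $(n-1)$-forms. Since $H(X) \subseteq LH(X)$, it suffices to prove the reverse inclusion $LH(X) \cdot \cO_X \subseteq H(X) \cdot \cO_X$ of subspaces of $\caD_X$: intersecting with $\Vect(X)$ then gives $LH(X)^{os} \subseteq H(X)^{os}$, and combined with the trivial opposite inclusions this yields $H(X)^{os} = LH(X)^{os}$ as well as $H(X) \cdot \cO_X = LH(X) \cdot \cO_X$, whence $H(X) \cdot \caD_X = LH(X) \cdot \caD_X$ and therefore $M(X,H(X)) \cong M(X,LH(X))$.

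To prove the key inclusion, I would first note that $\tilde\Omega^{n-1}_X$ is generated as an $\cO_X$-module by exact forms, since every generator $df_1 \wedge \cdots \wedge df_{n-1}$ equals $d(f_1\, df_2 \wedge \cdots \wedge df_{n-1})$. Hence any closed $(n-1)$-form modulo torsion $\gamma$ can be written $\gamma = \sum_i h_i\, d\alpha_i$ with $h_i \in \cO_X$ and $\alpha_i \in \tilde\Omega^{n-2}_X$, so that $\eta_\gamma = \Xi(\gamma) = \sum_i h_i\, \xi_{\alpha_i}$ with each $\xi_{\alpha_i} = \Xi(d\alpha_i) \in H(X)$. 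Rewriting inside $\caD_X$, $\sum_i h_i\, \xi_{\alpha_i} = \sum_i \xi_{\alpha_i} \cdot h_i - \sum_i \xi_{\alpha_i}(h_i)$, so it remains to verify that the function $\sum_i \xi_{\alpha_i}(h_i)$ vanishes. Now $\xi_{\alpha_i}(h_i) = \Xi(d\alpha_i \wedge dh_i)$ by the definition of the contraction of $\Xi \in T^n_X$ against an $(n-1)$-form, so $\sum_i \xi_{\alpha_i}(h_i) = \pm\, \Xi\bigl(\sum_i dh_i \wedge d\alpha_i\bigr) = \pm\, \Xi(d\gamma)$, using that $d\gamma = \sum_i dh_i \wedge d\alpha_i$ in $\tilde\Omega^n_X$ (as $d^2 = 0$); this is zero because $\gamma$ is closed modulo torsion and because $\Xi$ annihilates torsion, since $T^n_X = \Hom_{\cO_X}(\tilde\Omega^n_X, \cO_X)$. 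Thus $\eta_\gamma = \sum_i \xi_{\alpha_i} \cdot h_i \in H(X) \cdot \cO_X$, and taking $\cO_X$-spans gives $LH(X) \cdot \cO_X \subseteq H(X) \cdot \cO_X$.

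I do not expect a genuine obstacle; the only delicate point is the sign bookkeeping together with the verification that the de Rham differential, the wedge product, and the contraction with $\Xi$ all descend compatibly to $\tilde\Omega^\bullet_X$, so that the step $\sum_i \xi_{\alpha_i}(h_i) = \pm\, \Xi(d\gamma) = 0$ is legitimate. If one wishes to avoid the explicit computation, one can instead verify the identity $\eta_\gamma = \sum_i \xi_{\alpha_i} \cdot h_i$ in the formal neighborhood of a smooth point where $\Xi$ is nonvanishing — there $\Xi$ is the inverse of a volume form, $\xi_{\alpha_i}$ and $\eta_\gamma$ are volume-preserving, and $\sum_i \xi_{\alpha_i}(h_i)$ is the divergence of the closed form $\gamma$, hence zero, exactly as in the verification of \eqref{e:toppol-lie} — and then conclude that it holds globally because $\cO_X$ is torsion-free.
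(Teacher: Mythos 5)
Your proof is correct and is essentially the paper's own argument: the paper likewise writes a closed $(n-1)$-form modulo torsion as $\sum_i f_i\, d\beta_i$, observes that $\sum_i \xi_{\beta_i}(f_i) = \Xi(d\alpha) = 0$ by closedness (and torsion-freeness of $\cO_X$), and concludes $LH(X)\cdot\cO_X \subseteq H(X)\cdot\cO_X$, exactly mirroring Proposition \ref{p:poiss-hlh}. Your additional justification of the identity $\sum_i \xi_{\alpha_i}(h_i) = \pm\,\Xi(d\gamma)$ and the alternative check at a smooth point where $\Xi$ is invertible are fine but not needed beyond what the paper records.
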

\begin{proof}
  Given a closed $n-1$ form $\alpha = \sum_i f_i d \beta_i$, we see
  that $\eta_\alpha = \sum_i \eta_\beta \cdot f_i$, since $\sum_i
  \eta_\beta(f_i) = \Xi(d\alpha) = 0$. Thus, $LH(X) \cdot \cO_X
  \subseteq H(X) \cdot \cO_X$, and the proposition follows since $H(X)
  \subseteq LH(X)$.
\end{proof}
Next, we compute the leaves of $H_\Xi(X)$ and of $LH_\Xi(X)$.  All
non-open leaves turn out to be points.  We will use a general
\begin{definition}
  Given a Lie algebra of vector fields $\mfv$ on $X$, the
  \emph{degenerate locus} of $\mfv$ is the locus of $x \in X$ such
  that $\mfv|_x \neq T_x X_\red$.
\end{definition}
Note that the degenerate locus includes the singular locus of $X_\red$
(which equals $X$ in this subsection, although the preceding
definition makes sense more generally).
\begin{remark}
  If $X$ is irreducible, then we claim that the degenerate locus is
  the same as the locus of $x$ such that $\dim \mfv|_x < \dim X$,
  i.e., such that $\mfv$ does not have maximal rank.  Thus, in terms
  of Proposition \ref{p:decomp}, the degenerate locus is the union of
  $X_i$ for $i < \dim X$. To prove the claim, we only
  have to show that, along the singular locus, the rank of $\mfv$ is
  strictly less than $\dim X$.  This is true at generic singular
  points, where the singular locus is smooth, since $\mfv$ must be
  parallel to the singular locus.  Then, the result follows for the
  entire singular locus, by replacing $X$ by its singular locus and
  inducting on the dimension of $X$.
\end{remark}
Now return to our assumption that $(X,\Xi)$ is a variety with a top
polyvector field $\Xi$.  For $\mfv = H_\Xi(X), LH_\Xi(X)$, or
$P_\Xi(X)$, it is clear that the degenerate locus is the union of the
singular locus with the vanishing locus of $\Xi$.  We will also call
this \emph{the degenerate locus of $\Xi$}.

\begin{theorem} \label{t:inc-leaves} Let $(X,\Xi)$ be a
  variety equipped with a top polyvector field.  If $\mfv := H_\Xi(X)$
  or $LH_\Xi(X)$, then every degenerate point is a (zero-dimensional)
  leaf.  That is, $\mfv|_x \neq T_x X$ implies $\mfv|_x = 0$.
\end{theorem}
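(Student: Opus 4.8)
The plan is to reduce the statement to a pointwise computation identifying $\mfv|_x\subseteq T_xX$ with the span of all contractions of the fiber $\Xi_x$ of $\Xi$ at $x$, and then to combine a short piece of multilinear algebra with the fact that $\mfv$ is parallel to each stratum of the stratification of $X$ by iterated singular loci (via Theorem \ref{t:sing-pres}). First I would reduce to $\mfv=H_\Xi(X)$. Recall that $H_\Xi(X)$ is the $\bk$-span of the vector fields $\xi=\Xi(df_1\wedge\cdots\wedge df_{n-1})$, acting by $\xi(h)=\Xi(df_1\wedge\cdots\wedge df_{n-1}\wedge dh)$. Write $\Xi_x\in\wedge^nT_xX$ for the fiber at $x$ of $\Xi\in\Hom_{\cO_X}(\Omega^n_X,\cO_X)$; this makes sense because $X$ is reduced, so $\Omega^1_X\otimes_{\cO_X}\bk(x)=\mathfrak{m}_x/\mathfrak{m}_x^2=T^*_xX$ and hence $\Omega^n_X\otimes_{\cO_X}\bk(x)=\wedge^nT^*_xX$ (exterior powers commute with base change), and because $\Xi$ annihilates torsion, so that the fiber is a well-defined functional on $\wedge^nT^*_xX$.

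For all $f_1,\dots,f_{n-1},h\in\cO_X$ one then has
\[
\langle\xi|_x,dh|_x\rangle=\Xi(df_1\wedge\cdots\wedge df_{n-1}\wedge dh)(x)=\Xi_x(\bar f_1\wedge\cdots\wedge\bar f_{n-1}\wedge\bar h),
\]
where $\bar g\in T^*_xX$ denotes the image of $dg$. As the $\bar f_i$ and $\bar h$ range over all of $T^*_xX$, this gives
\[
H_\Xi(X)|_x=W_x:=\Span\{\,\iota_\eta\Xi_x:\eta\in\wedge^{n-1}(T^*_xX)\,\}\subseteq T_xX,
\]
and since $H_\Xi(X)\subseteq LH_\Xi(X)$ while each $\eta_\gamma|_x=\iota_{\gamma|_x}\Xi_x$ again lies in $W_x$, also $LH_\Xi(X)|_x=W_x$. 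So $\mfv|_x=W_x$ in both cases.

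Next I would invoke the elementary fact that, for $\omega\in\wedge^nV$ with $V$ finite-dimensional, if $\dim\Span\{\iota_\eta\omega:\eta\in\wedge^{n-1}V^*\}=d<n$ then $\omega=0$: choosing a basis $e_1,\dots,e_m$ of $V$ with $e_1,\dots,e_d$ a basis of that span and contracting $\omega$ with wedges of the dual basis vectors forces every index monomial occurring in $\omega$ to lie in $\{1,\dots,d\}$, which is impossible for an $n$-fold wedge, so $\omega=0$. Applied to $\Xi_x$: if $\dim W_x<n$ then $\Xi_x=0$, whence $W_x=0$ and $\mfv|_x=0$. It therefore remains to show $\dim\mfv|_x<n$ at every degenerate point $x$. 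If $x$ is a smooth point of $X$, then $\dim T_xX=n$ and $\mfv|_x\subsetneq T_xX$, so $\dim\mfv|_x\le n-1$. If $x$ is singular, I would iterate Theorem \ref{t:sing-pres}: $\mfv$ is parallel to the singular locus $\Sigma$ of $X$, hence restricts to vector fields on $\Sigma$, which are parallel to the singular locus of $\Sigma$, and so on; this chain of $\mfv$-invariant closed subvarieties has strictly decreasing dimension, so $x$ lies in a smooth $\mfv$-invariant locally closed subvariety $S$ with $\dim S<n$, and $\mfv|_x\subseteq T_xS$ forces $\dim\mfv|_x<n$. In either case $\mfv|_x=0$, so $\{x\}$ is a zero-dimensional leaf, as claimed.

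The step I expect to be the main obstacle is pinning down the identity $\mfv|_x=W_x$ in the singular case: one must treat the base change $\Omega^n_X\otimes_{\cO_X}\bk(x)=\wedge^nT^*_xX$ carefully at a singular point, and check that contracting $\Xi$ with an exact $(n-1)$-form and then evaluating at $x$ agrees with evaluating $\Xi$ at $x$ and then contracting with the values at $x$ of the differentials --- this is just $\cO_X$-linearity of $\Xi$ together with the fact that $\Xi$ kills torsion, so that $\Xi_x$ genuinely defines an element of $\wedge^nT_xX$. Once that identity is available, the multilinear-algebra lemma and the descent through the singular strata are routine, and follow the pattern of arguments already used in the paper (such as in the proof of Proposition \ref{p:decomp}).
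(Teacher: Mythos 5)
Your proof is correct and follows essentially the same route as the paper's (much terser) argument: identify $\mfv|_x$ with the span of contractions of the fiber $\Xi_x$, note that a nonzero top polyvector has contraction span of dimension at least $n=\dim X$, and use Theorem \ref{t:sing-pres} to force the rank of $\mfv$ below $n$ along the singular locus. The only cosmetic difference is that the paper propagates the vanishing of $\Xi$ from generic (smooth) points of each component of the singular locus by a density/closedness argument, whereas you stratify by iterated singular loci to get the rank bound at every singular point directly; both are fine.
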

We remark that the theorem is in stark contrast to the previous
subsections, where in general there can exist leaves of positive
dimension less than the dimension of $X$. For surfaces, where $\Xi$ is
the same as a Poisson structure, the theorem reduces to the statement
that all symplectic leaves have dimension zero or two.
\begin{proof}[Proof of Theorem \ref{t:inc-leaves}]
  It suffices to show that $\Xi$ vanishes on the singular locus of
  $X$. Let $Z$ be an irreducible component of the singular locus.
  Then $\dim Z < \dim X$, and $\mfv$ is parallel to $Z$.  Hence,
  $(\wedge^{\dim Z} \mfv)|_Z = 0$ (this holds at smooth points of $Z$,
  hence generically on $Z$, and hence on all of $Z$). \qedhere

\end{proof}
\begin{corollary}\label{c:inc-degloc}
  For $(X,\mfv)$ as in the theorem, assuming also that $X$ is purely
  of positive dimension, the following are equal:
\begin{enumerate}
\item[(i)] The degenerate locus of $\mfv$;
\item[(ii)] The set-theoretic support of the ideal generated by
  $\mfv (\cO_X)$;
\item[(iii)] The set of points $x$ such that
  $(\hat \cO_{X,x})_{\mfv} \neq 0$.
\end{enumerate}
\end{corollary}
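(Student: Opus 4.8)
The plan is to show that all three sets coincide with the locus $\{x \in X : \mfv|_x = 0\}$; the only nontrivial input will be Theorem \ref{t:inc-leaves}, and everything else is a routine unwinding of definitions.

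First I would translate Theorem \ref{t:inc-leaves} into our situation. Since $X$ is purely of positive dimension, $T_x X_\red \neq 0$ for every closed point $x$, so the degenerate locus (i) is $\{x : \mfv|_x \neq T_x X_\red\}$; and since $\mfv = H_\Xi(X)$ or $LH_\Xi(X)$, Theorem \ref{t:inc-leaves} gives the dichotomy $\mfv|_x = T_x X$ or $\mfv|_x = 0$, so (i) $= \{x : \mfv|_x = 0\}$. Next, (i) $=$ (ii): a vector field $\xi \in \mfv$ vanishes at $x$ as a tangent vector precisely when $\xi(f)(x) = 0$ for all $f \in \cO_X$, so $\mfv|_x = 0$ is exactly the condition that every generator $\xi(f)$ of the ideal $(\mfv(\cO_X))$ vanishes at $x$, i.e.\ that $x$ lies in the set-theoretic zero locus (ii) of that ideal.

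For the inclusion (i) $\subseteq$ (iii), suppose $\mfv|_x = 0$. Then every $\xi \in \mfv$ vanishes at $x$, hence maps $\hat\cO_{X,x}$ into $\mathfrak{m}_x \hat\cO_{X,x}$; therefore the $\bk$-subspace $\mfv(\hat\cO_{X,x})$ lies in the proper ideal $\mathfrak{m}_x\hat\cO_{X,x}$, and the surjection $\hat\cO_{X,x} \onto \hat\cO_{X,x}/\mathfrak{m}_x\hat\cO_{X,x} = \bk$ factors through $(\hat\cO_{X,x})_{\mfv}$, which is thus nonzero. (This step uses only that $x$ is an honest point of $X$, not Theorem \ref{t:inc-leaves}.) Conversely, to prove (iii) $\subseteq$ (i), suppose $x$ is \emph{not} degenerate; by Theorem \ref{t:inc-leaves}, $\mfv|_x = T_x X$, and since by Proposition \ref{p:decomp} the complement of the degenerate locus is open, $\mfv$ is transitive on some neighborhood $U \ni x$. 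By Proposition \ref{p:tr}, $M(U,\mfv|_U)$ is, on the connected component containing $x$, either zero or a rank-one local system; restricting to $\hat X_x$ via Lemma \ref{l:vd-sheaf}, $M(\hat X_x, \mfv|_{\hat X_x})$ is either zero or a rank-one local system on the formal polydisc $\hat X_x$, hence (being trivial there) isomorphic to $\Omega_{\hat X_x}$. Pushing forward to a point and using the formal analogue of Proposition \ref{p:pushfwd}, $(\hat\cO_{X,x})_{\mfv} = \pi_0 M(\hat X_x,\mfv|_{\hat X_x})$ is either $0$ or $H_{DR}^{\dim X}(\hat X_x)$, and the latter vanishes by the formal Poincar\'e lemma since $\dim X \geq 1$. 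Hence $(\hat\cO_{X,x})_{\mfv} = 0$ and $x \notin$ (iii). Combining, (i) $=$ (ii) $=$ (iii).

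The step I expect to require the most care is the last one: passing cleanly from the Zariski-local transitive picture to the formal neighborhood $\hat X_x$, checking that a rank-one local system on $\hat X_x$ is the trivial one $\Omega_{\hat X_x}$, and invoking the vanishing of positive-degree formal de Rham cohomology --- all of which rely on the hypothesis that $X$ is purely of positive dimension (used here to guarantee $\dim X \geq 1$, and in the identification of (i) to guarantee $T_x X_\red \neq 0$).
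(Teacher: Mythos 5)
Your proof is correct, and its skeleton matches the paper's (very terse) argument: identify all three loci with the set-theoretic vanishing locus $\{x : \mfv|_x = 0\}$, with Theorem \ref{t:inc-leaves} supplying the dichotomy that makes this equal to the degenerate locus. The steps (i) $=$ (ii) and (i) $\subseteq$ (iii) are exactly as the paper intends. Where you diverge is the inclusion (iii) $\subseteq$ (i): you route it through the theorem (nondegenerate $\Rightarrow$ transitive on a neighborhood), then invoke Proposition \ref{p:tr}, restriction to $\hat X_x$, triviality of rank-one local systems on a formal polydisc, and the formal Poincar\'e lemma. The paper instead observes directly that (iii) equals the vanishing locus \emph{without} appealing to Theorem \ref{t:inc-leaves} at all: if some $\xi \in \mfv$ satisfies $\xi|_x \neq 0$, then $\xi$ can be formally straightened to $\partial_t$ on $\hat X_x \cong \hat{\bA}^1 \times \hat Y_y$, so $\xi(\hat\cO_{X,x}) = \hat\cO_{X,x}$ and the coinvariants already vanish; positive dimension is used only for the containment vanishing locus $\subseteq$ (iii) via the evaluation functional. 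The paper's route is more elementary and shows that the equality (ii) $=$ (iii) holds for an arbitrary Lie algebra $\mfv$ on a purely positive-dimensional variety, with the theorem needed only to bring in (i); your route buys nothing extra here but is a legitimate application of the $\caD$-module formalism, and every step you flag as delicate (the restriction via Lemma \ref{l:vd-sheaf}, triviality on the formal disc, vanishing of $H_{DR}^{\dim X}(\hat X_x)$) does go through.
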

\begin{proof} It is easy to see that (ii) and
  (iii) coincide with the vanishing locus of $\mfv$ since $X$ is
  positive-dimensional.  
  % This implies that, for general $X$, they coincide with the locus
  % where $\mfv$ vanishes on $X_\red$ (since, if $x \in X_\red$ is a
  % vanishing point of $\mfv$, it is clearly included in (ii) and
  % (iii), and conversely, restricting to a neighborhood of $x$, note
  % that a scheme with empty set-theoretic support is empty.)
  The theorem implies that this coincides with (i).
\end{proof}
\begin{corollary}\label{c:vtop-leaves}
  For $(X,\mfv)$ as in the theorem, $X$ is the union of finitely
  many open leaves and the degenerate (set-theoretic) locus of $\Xi$.
  There are finitely many leaves if and only if the degenerate locus
  is finite.
\end{corollary}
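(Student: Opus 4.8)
The plan is to read everything off from Theorem \ref{t:inc-leaves}. Write $D \subseteq X$ for the degenerate locus of $\Xi$, i.e.\ (as recalled just before that theorem) the union of the singular locus of $X$ with the vanishing locus of $\Xi$; equivalently, $D = \{x \in X : \mfv|_x \neq T_x X_\red\}$ is the degenerate locus of $\mfv$, and it is closed since both of its constituent loci are. Each assertion can be checked one irreducible component at a time --- intersections of distinct components lie in the singular locus, hence in $D$ --- so I would reduce to the case of $X$ irreducible, and indeed of positive dimension (the zero-dimensional case being trivial, as then $D = \emptyset$ and every point is an open leaf).

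By the very definition of $D$, the open complement $U := X \setminus D$ is a smooth variety on which $\mfv$ is transitive: $\mfv|_x = T_x U$ for every $x \in U$. As an open subscheme of a finite-type scheme, $U$ is noetherian, so it has finitely many connected components $U_1, \dots, U_m$, and each $U_j$ --- a connected, reduced, invariant subvariety along which $\mfv$ has full rank --- is a leaf, which is open in $X$. On the other hand, Theorem \ref{t:inc-leaves} says that every point of $D$ is a zero-dimensional leaf. Hence
\[
X \;=\; U_1 \sqcup \cdots \sqcup U_m \sqcup D
\]
is a partition of $X = X_\red$ into leaves, which exhibits $X$ as the union of the finitely many open leaves $U_j$ together with the degenerate locus $D$. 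This is the first statement.

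For the second statement, note that the displayed partition is finite exactly when $D$ is finite, and in that case $X$ has finitely many leaves. Conversely, if $X$ has finitely many leaves, then for each $x \in D$ the leaf containing $x$ must be the singleton $\{x\}$: a leaf through $x$ of positive dimension would have $\mfv|_x$ equal to its (nonzero) tangent space, contradicting $\mfv|_x = 0$ (Theorem \ref{t:inc-leaves}). Thus any finite partition of $X$ into leaves already contains the leaf $\{x\}$ for every $x \in D$, forcing $D$ to be finite. (Alternatively, were $D$ infinite it would contain a positive-dimensional irreducible subvariety, necessarily a degenerate invariant subvariety, contradicting Corollary \ref{c:finleaves}.)

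All of the geometric input comes from Theorem \ref{t:inc-leaves}, so there is no substantive obstacle; the only points that need a moment's care are the finiteness of the number of connected components of $U$ (noetherianity of finite-type schemes) and, in the converse direction, the fact that the leaf through a degenerate point can only be that point itself --- so that, when $D$ is finite, the displayed partition is forced to be the (unique, by Corollary \ref{c:finleaves}) decomposition of $X$ into finitely many leaves.
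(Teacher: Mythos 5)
Your proof is correct and follows essentially the same route as the paper: the connected components of the complement of the degenerate locus are the finitely many open leaves, every degenerate point is a zero-dimensional leaf by Theorem \ref{t:inc-leaves}, and together these cover $X$, from which both assertions follow. The extra details you supply (noetherianity for finiteness of components, and the forced singleton leaves at degenerate points in the converse) are exactly the points the paper leaves implicit.
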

\begin{proof}
  The connected components of the open locus where $\mfv|_x = T_x X$
  are the open leaves (of which there are finitely many), 
and the vanishing locus of $\mfv|_x$ is the
  union of all points which are leaves. By the theorem, the union of
  these is all of $X$. 
\end{proof}
\begin{corollary}\label{c:vtop-hol}
Let $\mfv:= H_\Xi(X)$ or $LH_\Xi(X)$. 
Then, the following are equivalent:
\begin{itemize}
\item[(i)] $(\cO_X)_{\mfv}$ is finite-dimensional;
\item[(ii)] The degenerate locus of $\Xi$  is finite;
\item[(iii)] $\mfv$ is holonomic.
\end{itemize}
\end{corollary}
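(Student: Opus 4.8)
The plan is to prove the cycle (ii) $\Rightarrow$ (iii) $\Rightarrow$ (i) $\Rightarrow$ (ii), assembling the results already established in this subsection. First, by Proposition \ref{p:vtop-hlh} we have $M(X,H_\Xi(X))\cong M(X,LH_\Xi(X))$, so all three conditions are insensitive to which of the two Lie algebras is called $\mfv$; fix one of them. Moreover, any zero-dimensional irreducible component of $X$ is an isolated reduced point, disjoint from the other components, on which all three conditions hold trivially and compatibly with the product decompositions of $(\cO_X)_\mfv$ and of $M(X,\mfv)$ over components; so we may assume $X$ is purely of positive dimension, as required to apply Corollaries \ref{c:vtop-leaves} and \ref{c:inc-degloc}.

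For (ii) $\Rightarrow$ (iii): if the degenerate locus of $\Xi$ is finite, then by Corollary \ref{c:vtop-leaves} the pair $(X,\mfv)$ has finitely many leaves, whence $M(X,\mfv)$ is holonomic by Theorem \ref{t:main2}; that is, $\mfv$ is holonomic. The implication (iii) $\Rightarrow$ (i) is Proposition \ref{p:hol-fd}.

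It remains to prove (i) $\Rightarrow$ (ii), which I will do in contrapositive form. Suppose the degenerate locus $D$ of $\Xi$ (equivalently, of $\mfv$) is infinite. By Corollary \ref{c:inc-degloc}, $D$ coincides set-theoretically with the support of the ideal $J:=(\mfv(\cO_X))\subseteq\cO_X$ generated by $\mfv(\cO_X)$; this is the one substantive input, and it rests on Theorem \ref{t:inc-leaves}, to the effect that a degenerate point is not merely a point where $\mfv$ has low rank but one where $\mfv$ vanishes outright. Since $\Spec(\cO_X/J)$ then has infinitely many points, $\cO_X/J$ is infinite-dimensional over $\bk$. On the other hand $\mfv(\cO_X)\subseteq J$, so the natural quotient $(\cO_X)_\mfv=\cO_X/\mfv(\cO_X)$ surjects onto $\cO_X/J$ and is therefore infinite-dimensional, so (i) fails. (One could instead deduce (ii) $\Rightarrow$ (i) directly from Theorem \ref{t:main1-alt}, but routing through holonomicity simultaneously yields (iii).) Thus the only step carrying genuine content is the identification of the degenerate locus with the set-theoretic support of $\mfv(\cO_X)$ via Corollary \ref{c:inc-degloc}; everything else is bookkeeping, and I anticipate no further obstacle.
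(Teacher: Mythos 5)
Your proposal is correct and follows essentially the same route as the paper: both rest on Corollary \ref{c:vtop-leaves} plus Theorem \ref{t:main2} for (ii) $\Rightarrow$ (iii), Proposition \ref{p:hol-fd} for (iii) $\Rightarrow$ (i), and the fact that degenerate points are zero-dimensional leaves (Theorem \ref{t:inc-leaves}/Corollary \ref{c:inc-degloc}) for (i) $\Rightarrow$ (ii) — your surjection onto $\cO_X/(\mfv(\cO_X))$ is just the dual formulation of the paper's linearly independent evaluation functionals at those points. The only cosmetic difference is that your cyclic ordering lets you skip the paper's direct argument for (iii) $\Rightarrow$ (ii) via incompressibility of zero-dimensional leaves.
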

\begin{proof}
  By the corollary, $X$ has finitely many leaves if and only if it has
  finitely many zero-dimensional leaves.  Since zero-dimensional
  leaves are automatically incompressible, this shows that (ii) and
  (iii) are equivalent. Moreover, since zero-dimensional leaves always
  support linearly independent evaluation functionals in
  $((\cO_X)_{\mfv})^*$, (i) implies (ii) and (iii).
  The implication (iii) $\Rightarrow$ (i) is immediate.
\end{proof}
Note that, in contrast to $H_\Xi(X)$ and $LH_\Xi(X)$, $P_\Xi(X)$ can
be holonomic even without having finitely many leaves (e.g., in the
case when $\Xi = 0$, this happens if and only if $X$ has finitely many
exceptional points).

One example of a variety with a top polyvector field is an
even-dimensional (affine) Poisson variety, with $\Xi = \pi^{\wedge \dim
  X/2}$, for $\pi$ the Poisson bivector field.  Note that $P_\Xi(X)
\supseteq P_{\pi}(X)$.  We claim that this is a proper containment if
and only if $\dim X > 2$. For $\dim X = 2$ it is clear these are
equal. Otherwise, since $\Xi \neq 0$ if and only if $\pi$ is
generically symplectic, passing to a formal neighborhood of a point,
the statement reduces to the case $X=\bA^{2n}$ with $n > 1$ and the
usual symplectic structure, where it is well-known and easy to check.

\begin{example}
  As noted in example \ref{ex:cy}, if $X$ is a symplectic variety,
  then in particular it is Calabi-Yau and $M(X,H_\pi(X)) =
  M(X,H_\Xi(X))=\Omega_X$, whether we use the Poisson bivector $\pi$
  or the top polyvector field $\Xi = \wedge^{\dim X/2} \pi$
  (cf.~Example \ref{ex:cy2}).  However, for general Poisson varieties,
  again setting $\Xi = \wedge^{\dim X/2}$, this does not hold.  For
  example, if the Poisson bivector field $\pi$ has generic rank two
  and $\dim X \geq 4$, then the top exterior power, $\Xi = \pi^{\wedge
    (\dim X / 2)}$, is zero, so $H_\Xi = LH_\Xi = 0$, and $P_\Xi =
  \Vect(X)$, but this is clearly not true of $H_\pi, LH_\pi$, and
  $P_\pi$, and the coinvariants will differ in general.
\end{example}
\begin{example} \label{ex:cy-cplte-int} Generalizing Example
  \ref{ex:poiss-cplte-int}, we can let $(Y,\Xi_Y)$ be any
  $n$-dimensional variety with a top polyvector field, and let
  $X=Z(f_1,\ldots,f_{k}) \subseteq Y$ be a complete intersection. Then
  we can set $\Xi_X = i_{\Xi_Y}(df_1 \wedge \cdots \wedge df_{k})$,
  which is a top polyvector field on $X$. 
  % (Note that, in general, we
  % could allow this to be a nonreduced complete intersection, and in
  % this case the resulting top polyvector field $\Xi_X$ is indeed
  % Jacobi as in Remark \ref{r:toppol-nonred}, i.e., $H(X)$ and
  % $LH(X)$
  % are indeed Lie algebras.)
  Then, by Corollary \ref{c:vtop-hol}, $H(X)$ is holonomic if and only
  if $X$ has only isolated singularities, and the degenerate locus of
  $Y$ meets $X$ at only finitely many points.  In this case, we
  explicitly compute $(\cO_X)_{H(X)}$ in \S \ref{s:cy-cplte-int-is}.
\end{example}
\begin{remark} \label{r:inc-prod} Unlike Example \ref{ex:poiss-prod},
  a product formula does not hold for the above Lie algebras of vector
  fields on $X \times Y$, when $X$ and $Y$ are equipped with top
  polyvector fields $\Xi_X$ and $\Xi_Y$ and $X \times Y$ is equipped
  with the tensor product $\Xi_X \boxtimes \Xi_Y$.  First of all, for
  the Lie algebras $P$, note that, in general,
\[
P(X \times Y) \not \subseteq (P(X) \boxtimes \cO_{Y}) \oplus (\cO_{X}
\boxtimes P(Y)).
\]
For example, when $X$ and $Y$ admit vector fields $\Eu_X, \Eu_Y$ such
that $L_{\Eu_X}(\Xi_X)=\Xi_X$ and $L_{\Eu_Y}(\Xi_Y)=\Xi_Y$, then
$\Eu_X - \Eu_Y$ is in the LHS but not the RHS above. (This holds, for
example, when $X$ and $Y$ are conical with top polyvector fields
$\Xi_X$ and $\Xi_Y$ which are homogeneous of nonzero weight under the
scaling action, replacing the standard Euler vector fields by suitable
nonzero multiples).

Using this, one can see that a product formula does not hold for
coinvariants: suppose $(\cO_X)_{P(X)} \ncong
(\cO_X)_{\Vect(X)}$. Suppose that $\xi \in \Vect(X)$ is a vector field
such that $\xi(\cO_X) \not \subseteq P(X)(\cO_X)$ and $L_\xi(\Xi_X) =
\Xi_X$.  Then $P(X \times X)(\cO_{X \times X})$ contains $(\xi
\boxtimes 1 - 1 \boxtimes \xi)(\cO_X \boxtimes 1) = \xi(\cO_X)$, but
this is not contained in $(P(X)(\cO_X) \boxtimes \cO_X) + (\cO_X
\boxtimes P(X)(\cO_X))$. Since also $P(X \times X)$ contains
horizontal and vertical vector fields, $P(X) \boxtimes 1$ and $1
\boxtimes P(X)$, we conclude that $(\cO_{X \times X})_{P(X \times X)}$
is quotient of $(\cO_X)_{P(X)}^{\boxtimes 2}$ by a nontrivial vector
subspace.
% In particular, the dimension of $(\cO_{X \times X})_{P(X \times X)}$
% is strictly less than the square of the dimension of
% $(\cO_X)_{P(X)}$.

For an explicit example, we could let $X$ be the cuspidal curve
$x^2=y^3$ in the plane $\bA^2$. Then, $P(X) = \langle 2x \partial_y +
3y^2 \partial_x \rangle$ and hence $(\cO_X)_{P(X)}$ surjects (in fact
isomorphically by a special case of Corollary \ref{c:is-qh};
cf.~Remark \ref{r:is-qh}) to $(\cO_X)/(2x,3y^2)$, which is
two-dimensional; on the other hand, since $\Vect(X)$ contains the
Euler vector field $3x\partial_x +2 y \partial_y$, $(\cO_X)_{\Vect(X)}
= (\cO_X)/(x,y)$ is one dimensional.  In particular, in this case,
$(\cO_{X^2})_{P(X^2)}$ is two-dimensional, whereas
$(\cO_X)_{P(X)}^{\otimes 2}$ is four-dimensional.

For the Lie algebras of Hamiltonian and locally Hamiltonian vector
fields, let $(Y,\Xi_Y)$ be any (affine) variety with $(\cO_Y)_{H(Y)} =
0$ (by Corollary \ref{c:inc-degloc} and Example \ref{ex:cy2}, this is
equivalent to $Y$ being Calabi-Yau with $H^{\dim Y}(Y) = 0$), and
$(X,\Xi_X)$ be a positive-dimensional (affine) variety. Then, we claim
that $(\cO_{X \times Y})_{H(X \times Y)} \cong (\cO_X)/(H(X) \cdot
\cO_X) \boxtimes \cO_{Y}$, where now $(H(X) \cdot \cO_X)$ is the
\emph{ideal} generated by $H(X) \cdot \cO_X$.  That is, we claim that
the vector space $H(X \times Y) \cdot \cO_{X \times Y}$ is $(H(X)
\cdot \cO_X) \boxtimes \cO_Y$.

To see this, note that the ideal $(H(X) \cdot \cO_X)$ is identified
with the image of the contraction of $\Xi_X$ with top differential
forms on $X$.  Now, on the product variety $X \times Y$, top
differential forms are spanned by exterior products of top
differential forms on $X$ with top differential forms on $Y$.  The
same is true for top polyvector fields: a derivation of $\cO_X \otimes
\cO_Y$ is uniquely determined by its restriction to $\cO_X \otimes 1$
and $1 \otimes \cO_Y$, by the formula $D(f \otimes g) = D(f) \otimes g
+ f \otimes D(g)$.  Thus, skew-symmetric multiderivations of degree
$\dim X + \dim Y$ are of the form $\Xi_X \boxtimes \Xi_Y$ for $\Xi_X$
and $\Xi_Y$ top polyvector fields on $X$ and $Y$, respectively.

Therefore, the contraction of top polyvector fields on $X \times Y$
with top differential forms lies in the ideal $(H(X) \cdot \cO_X)
\otimes \cO_Y$ (in fact, they are equal, in view of the assumption
that $(\cO_Y)_{H(Y)} = 0$, or by the next argument). Thus we get the
inclusion of $H(X \times Y) \cdot \cO_{X \times Y}$ in $(H(X) \cdot
\cO_X) \otimes \cO_Y$.

Conversely, for any element $f \in (H(X) \cdot \cO_X) \subseteq
\cO_X$, suppose that $f = \Xi_X(\alpha)$ for some top differential
form $\alpha$.  For any $g \in \cO_Y$, write $g = \Xi_Y(d\beta)$ for
some $(\dim Y-1)$-form $\beta$.  Then, $(f \otimes g) = (\Xi_X \wedge
\Xi_Y)(\alpha \wedge d\beta)$.  Therefore, $(f\otimes g) \in H(X
\times Y) \cdot \cO_{X \times Y}$. This gives the opposite inclusion.

Note that the ideal $(H(X) \cdot \cO_X)$ is supported at the
zero-dimensional leaves of $X$, which by Theorem \ref{t:inc-leaves} is
the degenerate locus of $\Xi_X$.  More generally, for arbitrary $X$
and $Y$, the leaves of $H(X \times Y)$ and $LH(X \times Y)$ consist of
the open leaves obtained as products of open leaves in $X$ with open
leaves in $Y$, and zero-dimensional leaves at every point of the
degenerate locus.
\end{remark}

Finally, as in the Poisson case, one can also consider, for every map
$f: X \to Y$, the smaller Lie algebra of vector fields obtained by
contracting $\Xi$ with exact (or closed) $(n-1)$-forms pulled back
from $Y$.  The leaves of the resulting Lie algebra consist of open
leaves, which are the restriction of the open leaves in $X$ to the
noncritical locus, together with zero-dimensional leaves at the
critical points of $f$ together with the degenerate locus of $\Xi$.
This example includes, for every subgroup $G < \SL(n)$ (or even
$\GL(n)$), the map $X = \bA^n \to \bA^n / G = Y$.  The coinvariants of
$\cO_{\bA^n}$ under the resulting Lie algebra is finite-dimensional if
and only if the critical locus of $f$ is finite, i.e., no nontrivial
element of $G$ has one as an eigenvalue; equivalently, this says that
$G$ acts freely on the $2n-1$-sphere of unit vectors in $\bC^n$.  More
generally, we can take a quotient of an arbitrary pair $(X, \Xi)$ by a
finite group of automorphisms preserving $\Xi$, and the coinvariants
of the resulting Lie algebra are finite-dimensional if and only if the
degenerate locus of $X$ is finite and all elements of the group have
only isolated fixed points.

One can alternatively consider, for a finite group quotient $X \onto
X/G$, the Lie algebras $H(X)^G$, $LH(X)^G$, and $P(X)^G$. We can do
this slightly more generally, where $G$ only preserves $\Xi$ up to
scaling (then $G$ still acts on $H(X), LH(X)$, and $P(X)$).
\begin{proposition}\label{p:hg-vtop-lf}
  Suppose $\dim X \geq 2$ and let $G$ be a finite group of
  automorphisms of $X$ which acts on $\Xi$ by rescaling.  Let $\mfv$
  be $H(X)$ or $LH(X)$.  Then the leaves of $\mfv^G$ consist of the
  points of the degenerate locus of $X$, together with the connected
  components of the subvarieties of the open leaf whose stabilizers
  are fixed subgroups of $G$.

  If the degenerate locus of $X$ is finite, then $\mfv^G$ has finitely
  many leaves, and the same result holds for $\mfv=P(X)$.
\end{proposition}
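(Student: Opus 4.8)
The plan is to treat the degenerate locus $D$ of $\Xi$ (the union of the singular locus of $X$ with the vanishing locus of $\Xi$) and its complement $U:=X\setminus D$ separately. By Corollary \ref{c:vtop-leaves}, $D$ is invariant and $U$ is exactly the union of the open leaves of $\mfv$, so $H_\Xi(X)$, and hence $\mfv$, is transitive on $U$. On $D$ there is nothing to do for $\mfv=H_\Xi(X)$ or $LH_\Xi(X)$: by Theorem \ref{t:inc-leaves}, $\mfv|_x=0$ for every $x\in D$, hence $\mfv^G|_x=0$ as well, so each point of $D$ is a zero-dimensional leaf of $\mfv^G$ not contained in any invariant subvariety of positive rank.

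On $U$, for each subgroup $H\leq G$ set $U_H:=\{x\in U:\operatorname{Stab}_G(x)=H\}$; these are locally closed subvarieties stratifying $U$, with $g\cdot U_H=U_{gHg^{-1}}$. The first step is to show that $\mfv^G$ is parallel to every $U_H$ and that, at $x\in U$, one has $\mfv^G|_x=(T_xX)^{G_x}$. The inclusion $\subseteq$ is easy: for $\xi\in\mfv^G$ and $x\in U$, $G$-invariance gives $\xi|_x=(g_*\xi)|_x=dg_x(\xi|_x)$ for all $g\in G_x$, so $\xi|_x\in(T_xX)^{G_x}$; and since $X$ is smooth at $x$ and $\operatorname{char}\bk=0$, the fixed locus $X^{G_x}$ is smooth at $x$ with tangent space $(T_xX)^{G_x}$ and agrees near $x$ with $U_{G_x}$ (the locus of strictly larger stabilizer is a finite union of closed subsets avoiding $x$), so $\xi|_x\in T_xU_{G_x}$.

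The reverse inclusion is the crux, and I would prove it by flexibility plus averaging. Fix $x\in U$ with orbit $Gx=\{x=x_1,\dots,x_k\}$ and $v\in(T_xX)^{G_x}$. Flexibility step: produce $\eta\in H_\Xi(X)\subseteq\mfv$ with $\eta|_{x_1}=v$ and $\eta|_{x_i}=0$ for $2\leq i\leq k$. Using transitivity of $H_\Xi(X)$ on $U$, pick an $(n-2)$-form $\alpha$ with $\Xi(d\alpha)|_{x_1}=v$; then, by the Chinese Remainder Theorem on the affine variety $X$, pick $f\in\cO_X$ with $f\equiv 1\pmod{\mathfrak{m}_{x_1}^2}$ and $f\equiv 0\pmod{\mathfrak{m}_{x_i}^2}$ for $i\geq 2$, and set $\eta:=\Xi(d(f\alpha))\in H_\Xi(X)$. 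Expanding $d(f\alpha)=df\wedge\alpha+f\,d\alpha$ and evaluating at the $x_i$ using the two-jet conditions on $f$ gives $\eta|_{x_1}=\Xi(d\alpha)|_{x_1}=v$ and $\eta|_{x_i}=0$; this is exactly where $\dim X\geq 2$ enters, so that $(n-2)$-forms exist and $H_\Xi(X)$ is transitive on $U$. Averaging step: set $\xi:=|G|^{-1}\sum_{g\in G}g_*\eta$; since every $g$ rescales $\Xi$, each $g_*$ preserves $H_\Xi(X)$ and $LH_\Xi(X)$, so $\xi\in\mfv^G$, and in evaluating at $x_1$ only the terms with $g\in G_x$ survive, each contributing $dg_{x_1}(v)=v$, whence $k\xi$ has value $v$ at $x_1$. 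Therefore $\mfv^G|_x=(T_xX)^{G_x}=T_xU_{G_x}$, so each connected component of each $U_H$ is a leaf; together with the rank/smoothness analysis of Proposition \ref{p:decomp} (a leaf is smooth, connected, invariant, of the constant rank just computed, hence locally coincides with a stabilizer stratum), these components and the points of $D$ are precisely the leaves of $\mfv^G$.

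Finally, for the last sentence: if $D$ is finite, then $G$ has finitely many subgroups, hence finitely many strata $U_H$, each a variety with finitely many components, plus finitely many points of $D$, so $\mfv^G$ has finitely many leaves. For $\mfv=P_\Xi(X)$ the same conclusion holds: $H_\Xi(X)^G\subseteq P_\Xi(X)^G$ already forces transitivity along the $U_H$, parallelism to them follows from the $G$-invariance argument above (which did not use $\mfv=H$ or $LH$), and $D$ is also the degenerate locus of $P_\Xi(X)$, hence $P_\Xi(X)^G$-invariant, so $D$ finite gives $P_\Xi(X)^G|_x\subseteq T_xD_\red=0$ for $x\in D$; thus again the leaves are the components of the $U_H$ and the points of $D$, finitely many in all. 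The main obstacle is the flexibility step: one needs an honest Hamiltonian vector field — not merely an element of $\cO_X\cdot\mfv$ — with a prescribed value at one orbit point and vanishing at the others, and it is the replacement of $\alpha$ by $f\alpha$, possible only because $\dim X\geq 2$, that overcomes it.
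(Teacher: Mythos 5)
Your proof is correct and follows essentially the same route as the paper's: both arguments establish transitivity of $\mfv^G$ along the stabilizer strata by choosing an $(n-2)$-form $\alpha$ whose Hamiltonian vector field has the prescribed value at one orbit point, multiplying by a function with prescribed two-jets along the orbit (which is exactly where $\dim X \geq 2$ is used), and then averaging over $G$. The only cosmetic difference is that you dispose of the degenerate locus directly via Theorem \ref{t:inc-leaves} and work with global forms on $X$, whereas the paper invokes $\caD$-localizability of $H(X)^G$ to restrict to the Calabi--Yau locus first; both are valid.
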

Call a subgroup $K < G$ \emph{parabolic} if it occurs as the
stabilizer of a point in $X$, i.e., it is the stabilizer of one of the
leaves of $\mfv^G$.
\begin{proof} 
  It is clear that $\mfv^G$ must flow parallel to the given
  subvarieties.  Therefore, since $H(X) \subseteq LH(X)$, we only have
  to show that $H(X)^G$ flows transitively along each of the given
  subvarieties. Also, the last statement is immediate from this, the
  fact that $P(X)$ preserves the given subvarieties (since the
  degenerate locus is finite, it cannot flow along it), and $H(X)
  \subseteq P(X)$.
  
  Since $H(X)$ is $\caD$-localizable, the same is true for $H(X)^G$, so
  we can remove the vanishing locus of $\Xi$ and assume that $X$ is
  Calabi-Yau.

  Let $K < G$ be parabolic and let $Z$ be a connected component of
  $\{x \in X \mid \Stab_G(x) = K\}$, as mentioned in the proposition.
  We have to show that, for $z \in Z$, $H(X)^G$ spans $T_z Z$.
 
  Fix $z \in Z$ and $w \in T_z Z$. We will find $\xi \in H(X)^G$ such
  that $\xi|_z = w$.  Since $X$ is Calabi-Yau, there exists $\xi \in
  H(X)$ such that $\xi|_z = w$.  Let $\phi \in
  \tilde \Omega_X^{\dim X-2}$ be such that $\xi=\xi_\phi$.  Let $f \in
  \cO_X$ be such that $f(z)=1$ and $f(y)=0$ for all $y \in G \cdot z
  \setminus\{z\}$, and moreover such that $df|_{G \cdot z} = 0$.

  Now, consider $\eta := |K|^{-1} \sum_{g \in G}g^* \xi_{f \phi} \in
  H(X)^G$.  Then $(\xi_\psi)|_z = w$, as desired. \qedhere

\end{proof}
Using Theorem
\ref{t:main1-alt}, we immediately conclude:
\begin{corollary}
In the situation of Proposition \ref{p:hg-vtop-lf},
the coinvariants $(\cO(X))_{H(X)^G}$ are
finite-dimensional.
\end{corollary}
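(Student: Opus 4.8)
The plan is to deduce the statement with essentially no further work, combining Proposition~\ref{p:hg-vtop-lf} with the leaf-theoretic form of the main finite-dimensionality theorem, Theorem~\ref{t:main1-alt}. All the geometric content needed has already been established in Proposition~\ref{p:hg-vtop-lf}.

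First I would record that $(X, H(X)^G)$ is a pair of the type to which the general theory applies. Here $X$ is an affine variety, hence an affine scheme of finite type, and since $G$ acts on $X$ by automorphisms it acts on $\Vect(X)$ by Lie algebra automorphisms. Because $G$ acts on $\Xi$ merely by rescaling, it sends the Hamiltonian vector field of an exact (resp.\ closed) $(n-1)$-form to the Hamiltonian vector field of the $G$-translated form, up to a nonzero scalar; hence it preserves $H(X)$ (resp.\ $LH(X)$), so that $H(X)^G = \Vect(X)^G \cap H(X)$ is a Lie subalgebra of $\Vect(X)$ and $(\cO_X)_{H(X)^G}$ is a well-defined coinvariant space in the sense of the introduction.

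Next I would invoke Proposition~\ref{p:hg-vtop-lf} in the relevant case, namely when the degenerate locus of $X$ is finite (this is the hypothesis under which the proposition asserts finitely many leaves; it is also necessary, since for trivial $G$ the statement would otherwise contradict Corollary~\ref{c:vtop-hol}). The proposition then says that $H(X)^G$ has finitely many leaves: the finitely many points of the degenerate locus of $X$, together with the connected components of the finitely many locally closed subvarieties $\{x \in X \mid \Stab_G(x) = K\}$ of the open leaf, as $K$ ranges over the finitely many parabolic subgroups of $G$. Since $X$ has finite type each such subvariety has finitely many components, so $(X, H(X)^G)$ has finitely many leaves.

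Finally, apply Theorem~\ref{t:main1-alt}: a pair $(X,\mfv)$ with finitely many leaves has finite-dimensional coinvariants $(\cO_X)_\mfv$. Taking $\mfv = H(X)^G$ gives the corollary. I do not expect a genuine obstacle: the only nontrivial point is the transitivity of $H(X)^G$ along each fixed-stabilizer stratum, but that is exactly the averaging argument already carried out in the proof of Proposition~\ref{p:hg-vtop-lf}, and granting it the corollary is immediate. (One could equally route through holonomicity — finitely many leaves implies $M(X, H(X)^G)$ is holonomic by Theorem~\ref{t:main2}, whence finite-dimensionality by Proposition~\ref{p:hol-fd} — but the appeal to Theorem~\ref{t:main1-alt} is the shortest path.)
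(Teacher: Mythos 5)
Your proposal is correct and is essentially identical to the paper's argument: the paper derives the corollary in one line by combining the finitely-many-leaves conclusion of Proposition \ref{p:hg-vtop-lf} with Theorem \ref{t:main1-alt}. Your added remarks (that the finiteness of the degenerate locus is the operative hypothesis, and that one could instead route through holonomicity via Theorem \ref{t:main2} and Proposition \ref{p:hol-fd}) are consistent with, and implicit in, the paper's treatment.
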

Note that, when $X$ is normal and $G$ acts by automorphisms on
$(X,\Xi)$ (preserving $\Xi$) with critical locus of codimension at
least two, then $P(X)^G = P(X/G)$.  This is because, by Hartogs'
theorem, vector fields on $X/G$ are the same as $G$-invariant vector
fields on $X$, and such vector fields preserve $\Xi_X$ if and only if
they preserve $\Xi_{X/G}$.  In particular, we conclude in this case
that $(\cO_{X/G})_{P(X/G)} = (\cO(X))_{P(X)^G}^G$, and that this,
as well as $(\cO_X)_{P(X)^G}$ itself, are finite-dimensional if and
only if the degenerate locus of $X$ is finite.  Moreover,
$M(X/G,P(X/G)) \cong q_* M(X,P(X)^G)^G$, where $q: X \to X/G$ is the
projection.  We caution, however, that $H(X/G)$ and $LH(X/G)$ are in general
much smaller than $P(X/G)$ (even for $X$ Calabi-Yau), owing to the
fact that $G$-invariant $k$-forms on $X$ do not in general descend to
$k$-forms on $X/G$ when $k > 1$.  In fact, by Theorem
\ref{t:inc-leaves}, $(\cO_{X/G})_{H(X/G)}$ and
$(\cO_{X/G})_{LH(X/G)}$, as well as $(\cO_X)_{H(X/G)}$ and
$(\cO_X)_{LH(X/G)}$, are finite-dimensional if and only if the
critical locus of $G$ is \emph{finite} and $X$ has a finite degenerate
locus.

\subsection{Divergence functions and incompressibility}
\label{s:div}
The preceding example can be generalized to the setting of degenerate
versions of \emph{multivalued} volume forms (i.e., Calabi-Yau structures)
rather than of ordinary volume forms.  We formulate this in terms of \emph{divergence functions}, which also yield an alternative definition of incompressibility (Proposition \ref{p:reinc}).

We assume throughout that $X$ is irreducible and reduced.
Recall the definitions of polyvector fields $T_X^\bullet$ and differential
forms $\Omega_X^\bullet$ and $\tilde \Omega_X^\bullet$ from \S \ref{s:gen}.
\begin{definition}\label{d:df}
  Let $N \subseteq T_X$ be an
  $\cO_X$-submodule.  A \emph{divergence function} $D$ on $N$ is a morphism
  of sheaves of vector spaces 
  $D: N \to \cO_X$ satisfying $D(f \xi) = f D(\xi) + \xi(f)$
  for all $\xi \in N$ and $f \in \cO_X$.  When $N = T_X$, we call this
  a divergence function on $X$.
\end{definition}
As we will explain, divergence functions should be viewed as a
degenerate, \emph{multivalued} version of Calabi-Yau structures: they
simultaneously generalize flat sections of flat connections on the
canonical bundle (which includes volume forms), discussed
in Example \ref{ex:mvvol}, and top polyvector
fields on possibly singular schemes of finite type, discussed in 
\S \ref{ss:vtop}. 

For the latter, given $(X, \Xi)$, we let $N \subseteq T_X$ be the
submodule of $\xi \in T_X$ such that $L_{\xi}(\Xi)$ is a multiple of
$\Xi$.  This is a submodule in view of the identity $L_{f \xi}(\Xi) =
f L_{\xi}(\Xi) - \xi(f) \cdot \Xi$, which can be checked in local
formal coordinates where $\Xi$ is nondegenerate (where we can take
$\Xi$ to be the inverse to the standard volume form on the formal
neighborhood of the origin in affine space).  Next, define $D$ by the
formula $D(\xi)\cdot \Xi = -L_{\xi}(\Xi)$.  Note that,
on the nondegenerate locus of $\Xi$, call it $X^\circ \subseteq X$, we have
$N|_{X^\circ} = T_{X^\circ}$, since $X^\circ$ is symplectic.

 Next, we explain how divergence functions generalize multivalued
 volume forms:
\begin{proposition}\label{p:div-conncan}
If $X$ is normal and of pure dimension $n$, then the following are
in natural bijection:
\begin{enumerate}
\item[(i)] Divergence functions $D$ on $N \subseteq T_X$;
\item[(ii)] Connections $N \times \tilde \Omega^{n}_X \to \tilde \Omega_{X}^n$ on 
$\tilde \Omega^{n}_X$
 along $N$.
\item[(iii)] Connections $N \times T^{n}_X \to T^{n}_X$ on 
$T^{n}_X$
 along $N$.
\end{enumerate}
The equivalence between (i) and (ii) 
is given by the correspondences, for $\xi \in N$ and $\omega \in \tilde \Omega_X^{n}$,
\begin{gather}
D \mapsto \nabla^D, \quad \nabla^D_\xi(\omega) = L_\xi(\omega) - D(\xi) \cdot \omega; \\
\nabla \mapsto D_\nabla, \quad D_\nabla(\xi) = L_\xi - \nabla_\xi \in \End_{\cO_X}(\tilde \Omega_X^{n}) \cong \cO_X.
\end{gather}
The equivalence between (i) and (iii) is given by the formulas, for $\xi \in N$
and $\Xi \in T_X^{n}$,
\begin{gather}
D \mapsto \nabla^D, \quad \nabla^D_\xi(\Xi) = L_\xi(\Xi) + D(\xi) \cdot \Xi; \\
\nabla \mapsto D_\nabla, \quad D_\nabla(\xi) = \nabla_\xi - L_\xi \in \End_{\cO_X}(T_X^{n}) \cong \cO_X.
\end{gather}
Finally, the constructions $D \mapsto \nabla^D$ are valid even when
$X$ is not normal.
\end{proposition}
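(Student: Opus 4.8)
\medskip

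The plan is to verify directly that each of the two formulas for $\nabla^D$ defines a connection along $N$, i.e.\ a $\bk$-bilinear pairing that is $\cO_X$-linear in the $N$-argument and satisfies the Leibniz rule in the module argument. Normality entered the earlier parts of the proof only through the identifications $\End_{\cO_X}(\tilde\Omega^n_X)\cong\cO_X$ and $\End_{\cO_X}(T^n_X)\cong\cO_X$, which are what make the inverse assignments $\nabla\mapsto D_\nabla$ meaningful; these can fail when $X$ is merely reduced (e.g.\ on a cuspidal cubic), but the forward construction needs no such input. We keep the standing hypothesis that $X$ is irreducible and reduced, so $\cO_X$ is a domain. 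The first thing I would record is that $d$, the contractions $i_\xi$ for $\xi\in T_X$, and the Lie derivatives $L_\xi=d i_\xi+i_\xi d$ are well defined on $\tilde\Omega^\bullet_X$: this complex embeds into its generic fibre $\Omega^\bullet_X\otimes_{\cO_X}\bk(X)$, on which all three operations act, and each of them carries torsion sections to torsion sections (multiply through by an annihilator), hence preserves the image of $\tilde\Omega^\bullet_X$. Dually, $L_\xi$ acts on $T^n_X=\Hom_{\cO_X}(\tilde\Omega^n_X,\cO_X)$ via the contraction Leibniz rule $\xi\langle\Xi,\omega\rangle=\langle L_\xi\Xi,\omega\rangle+\langle\Xi,L_\xi\omega\rangle$. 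So the formulas defining $\nabla^D$ make sense.

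The key observation is that $\tilde\Omega^{n+1}_X=0$: on the dense smooth locus $\Omega^1_X$ is locally free of rank $n$, so $\Omega^{n+1}_X=\wedge^{n+1}\Omega^1_X$ is supported on a proper closed subset, hence torsion. Consequently, for $f\in\cO_X$, $\xi\in T_X$, $\omega\in\tilde\Omega^n_X$ we have $df\wedge\omega=0$ in $\tilde\Omega^{n+1}_X$; applying $i_\xi$ and using $i_\xi(df\wedge\omega)=\xi(f)\,\omega-df\wedge i_\xi\omega$ gives the identity
\[
df\wedge i_\xi\omega=\xi(f)\,\omega\qquad\text{in }\tilde\Omega^n_X .
\]
Feeding this into the Cartan-calculus identity $L_{f\xi}=fL_\xi+df\wedge i_\xi$ yields $L_{f\xi}\omega=fL_\xi\omega+\xi(f)\,\omega$ on $\tilde\Omega^n_X$; pairing against $\Xi\in T^n_X$ and using the contraction Leibniz rule then gives $L_{f\xi}\Xi=fL_\xi\Xi-\xi(f)\,\Xi$ on $T^n_X$.

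With these in hand the axioms are a direct computation. Additivity in both slots is immediate, and the Leibniz rule in the module slot comes straight from that of $L_\xi$: $\nabla^D_\xi(f\omega)=L_\xi(f\omega)-D(\xi)f\omega=\xi(f)\omega+f\bigl(L_\xi\omega-D(\xi)\omega\bigr)=\xi(f)\omega+f\nabla^D_\xi(\omega)$, and likewise $\nabla^D_\xi(f\Xi)=\xi(f)\Xi+f\nabla^D_\xi(\Xi)$. For $\cO_X$-linearity in $\xi$, combine the two displayed Lie-derivative identities with the defining relation $D(f\xi)=fD(\xi)+\xi(f)$ of Definition \ref{d:df}:
\[
\nabla^D_{f\xi}(\omega)=L_{f\xi}\omega-D(f\xi)\omega=fL_\xi\omega+\xi(f)\omega-fD(\xi)\omega-\xi(f)\omega=f\nabla^D_\xi(\omega),
\]
and similarly $\nabla^D_{f\xi}(\Xi)=fL_\xi\Xi-\xi(f)\Xi+fD(\xi)\Xi+\xi(f)\Xi=f\nabla^D_\xi(\Xi)$, where the minus sign in $L_{f\xi}\Xi=fL_\xi\Xi-\xi(f)\Xi$ is exactly what matches the $+D(\xi)$ convention in the formula for $\nabla^D$ on $T^n_X$. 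Thus $\nabla^D$ is a connection along $N$ on $\tilde\Omega^n_X$ and on $T^n_X$, with no appeal to normality.

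The one place that calls for real care — the ``hard part'' — is the bookkeeping in the first two steps: checking that the de Rham operations descend to the torsion-free complex, and the vanishing $\tilde\Omega^{n+1}_X=0$ for a possibly singular, non-normal $X$. It is precisely this vanishing that collapses the $df\wedge i_\xi$ term of $L_{f\xi}$ to the scalar $\xi(f)$, and this is in turn what cancels the inhomogeneous term $\xi(f)$ in $D(f\xi)$; once that is set up, everything else is routine and formal.
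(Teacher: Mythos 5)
Your verification of the forward direction is correct, and in one respect it is more careful than the paper's own argument: the paper's one‑line computation $\nabla^D_{f\xi}(\omega)=f\nabla^D_\xi(\omega)+\xi(f)-\xi(f)$ silently uses the identity $L_{f\xi}\omega=fL_\xi\omega+\xi(f)\omega$ on top‑degree forms, and your derivation of this from $\tilde\Omega^{n+1}_X=0$ together with $i_\xi(df\wedge\omega)=\xi(f)\omega-df\wedge i_\xi\omega$ is exactly the right justification, as is the dual identity $L_{f\xi}\Xi=fL_\xi\Xi-\xi(f)\Xi$ explaining the sign flip in the $T^n_X$ formula. The remarks on why $d$, $i_\xi$, $L_\xi$ descend to $\tilde\Omega^\bullet_X$ are also correct.

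However, as a proof of the proposition the argument is incomplete: the statement asserts a \emph{bijection} between (i), (ii), (iii), and you only establish that $D\mapsto\nabla^D$ lands in the set of connections (which is essentially the final sentence of the proposition plus half of each equivalence). You gesture at the inverse assignment by noting that normality gives $\End_{\cO_X}(\tilde\Omega^n_X)\cong\cO_X$, but you do not carry out any of the remaining steps: (a) that $L_\xi-\nabla_\xi$ is $\cO_X$-linear on $\tilde\Omega^n_X$ (so that it \emph{is} an endomorphism — this follows by comparing the two Leibniz rules, as the paper does); (b) that the identification $\End_{\cO_X}(\tilde\Omega^n_X)\cong\cO_X$ actually holds, which is the content of Lemma \ref{l:norm-tfc2} and requires knowing that $\tilde\Omega^n_X$ and $T^n_X$ are torsion‑free and line bundles outside codimension two on a normal $X$ of pure dimension $n$; (c) that $D_\nabla$ satisfies the divergence axiom $D_\nabla(f\xi)=fD_\nabla(\xi)+\xi(f)$, which again uses your identity $L_{f\xi}\omega=fL_\xi\omega+\xi(f)\omega$ together with $\nabla_{f\xi}=f\nabla_\xi$; and (d) that $D_{\nabla^D}=D$ and $\nabla^{D_\nabla}=\nabla$. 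All of these are short computations with the tools you have already set up, but without them the bijection is not proved.
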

We will need the elementary
\begin{lemma} \label{l:norm-tfc2} 
Suppose that $X$ is normal and that $F$ is a torsion-free coherent sheaf on $X$ which is a line bundle outside of codimension two.  Then $\End(F) = \cO_X$.
\end{lemma}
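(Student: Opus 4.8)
The plan is to reduce the statement to the algebraic Hartogs extension property on the normal variety $X$, using the elementary fact that an endomorphism of a line bundle is multiplication by a regular function. Recall that $X$ is assumed irreducible and reduced, hence integral, throughout this subsection. Fix a dense open $U \subseteq X$ with $\codim_X(X \setminus U) \geq 2$ such that $F|_U$ is a line bundle; such $U$ exists by hypothesis, and in particular $F \neq 0$ since a line bundle is nonzero. There is a canonical map $\cO_X \to \mathcal{E}nd_{\cO_X}(F)$ sending a function to the corresponding homothety, and the first step is to observe that it is injective: if a local section $f$ of $\cO_X$ acts by zero on $F$, then $f \cdot F = 0$, and since $F$ is torsion-free and nonzero over the integral scheme $X$ this forces $f = 0$.

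For surjectivity (as a statement about the sheaf $\mathcal{E}nd_{\cO_X}(F)$; the statement for the global endomorphism ring then follows by taking global sections), I would take an open $V \subseteq X$ and $\phi \in \End_{\cO_V}(F|_V)$. Restricting to $U \cap V$, where $F$ is a line bundle, $\phi|_{U\cap V}$ is multiplication by a unique $g \in \cO_X(U \cap V)$, because $\mathcal{E}nd_{\cO}(L) \cong \cO$ canonically for any line bundle $L$. Now $V$ is normal (being open in $X$), and $V \setminus U$ has codimension $\geq 2$ in $V$ (or is empty), so the algebraic Hartogs lemma produces a unique extension $\tilde g \in \cO_X(V)$ of $g$. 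Consider $\psi := \phi - \tilde g \cdot \Id \in \End_{\cO_V}(F|_V)$; by construction it vanishes on the dense open subset $U \cap V$.

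It remains to argue $\psi = 0$. For any local section $s$ of $F$ over an open $W \subseteq V$, the section $\psi(s) \in F(W)$ restricts to $0$ on $W \cap U$, which is dense in $W$; since $F$ is torsion-free over the integral scheme $X$, it injects into the skyscraper-type sheaf $j_*(F_\eta)$ at the generic point $\eta$, so $\psi(s)$, having zero germ at $\eta$, must vanish. Hence $\psi = 0$, i.e. $\phi = \tilde g \cdot \Id$, and the canonical map $\cO_X \to \mathcal{E}nd_{\cO_X}(F)$ is an isomorphism. The only point that requires any care is the invocation of Hartogs' extension — and that is exactly where normality of $X$ enters — together with the remark that a torsion-free sheaf on an integral scheme is detected at the generic point; neither of these is a genuine obstacle, so the argument is short.
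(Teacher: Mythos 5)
Your argument is correct and is essentially the same as the paper's: restrict the endomorphism to the open locus where $F$ is a line bundle to get a function, extend it across the codimension-two complement by normality (Hartogs), and use torsion-freeness of $F$ to conclude that the difference with the resulting homothety vanishes. The only cosmetic difference is that you phrase it for the sheaf $\mathcal{E}nd_{\cO_X}(F)$ and note injectivity separately, while the paper works directly with global endomorphisms.
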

\begin{proof}
 For any $a \in \End(F)$, on
  some open subset $U \subseteq X$ where $F$ is a line bundle and $X
  \setminus U$ has codimension at least two, $a|_U \in \End(\cO_U) =
  \Gamma(\cO_U)$. By normality, this is the restriction of a function
  $f_a \in \cO_X$.  Since $\cO_X \subseteq \End(F)$, we conclude that
  $f_a-a \in \End(F)$ has zero restriction to $U$, and hence is zero
  since $F$ is torsion-free.
\end{proof}
\begin{proof}[Proof of Proposition \ref{p:div-conncan}]
  Suppose that $D$ is a divergence function. Then $\nabla^D_\xi(f
  \cdot \omega) = f\nabla^D_\xi(\omega) + \xi(f) \cdot
  \omega$. Similarly, $\nabla^D_{f\xi}(\omega) = f \nabla^D(\omega) +
  \xi(f)-\xi(f) = f \nabla^D(\omega)$. We deduce that $\nabla^D_\xi$
  is a connection.  Similarly, if $\nabla$ is a connection on $\tilde
  \Omega_X$, then first of all $L_\xi(f\omega) - \nabla_{\xi}(f\omega)
  = f \bigl( L_\xi(\omega) - \nabla_\xi(\omega) \bigr)$, so
  $D_\nabla(\xi)$ is indeed a well-defined $\cO_X$-module endomorphism
  of $\tilde \Omega_X$.  By Lemma \ref{l:norm-tfc2}, this is the same
  as an element of $\cO_X$.
  Then, $D_\nabla(f\xi) = f
  D_\nabla(\xi) + \xi(f)$, so $D_\nabla$ is a divergence function.
  One immediately checks that $D_{\nabla^D} = D$ and
  $\nabla^{D_\nabla} = \nabla$.

  The proof of the equivalence between (i) and (iii) is similar, so we
  omit the details.  For the final statement, note that well-definition of 
$\nabla^D$ did not require normality.
\end{proof}
\begin{remark}
In fact, in Proposition \ref{p:div-conncan}, we can replace $T_X^n$
and $\tilde \Omega_X^n$ by any torsion-free coherent sheaves which
coincide with $T_X^n$ and $\tilde \Omega_X^n$, respectively, outside
of codimension two; the proof then goes through unchanged.
\end{remark}
\begin{remark}
  For not necessarily normal $X$, but still of pure dimension $n$,
  Proposition \ref{p:div-conncan} generalizes to give an equivalence
  between divergence functions of the form $D: N
  \to \End_{\cO_X}(\tilde \Omega_X^{n}) \supseteq \cO_X$ and connections $N
  \times \tilde \Omega^{n}_X \to \tilde \Omega^{n}_X$.  Similarly, we obtain an
  equivalence between divergence functions valued in
  $\End_{\cO_X}(T_X^{n}) \supseteq \cO_X$ and connections on
  $T_X^{n}$ along $N$.
\end{remark}

Divergence functions yield the following alternative formulation of
the incompressibility condition.
 Let $\cO_X \cdot \mfv$ denote the $\cO_X$-linear span of $\mfv$
and similarly for $\cO_{X'}$ where $X'$ is an open subvariety of $X$
(we will also use this notation for formal neighborhoods, etc.).
\begin{proposition}\label{p:reinc}
  Let $X$ be an arbitrary affine variety and $\mfv$ a Lie algebra of
  vector fields $\mfv$ on $X$.  Then, the flow of $\mfv$ along $X$ is
  incompressible if and only if there exists an open dense subset
  $X^\circ \subseteq X$ and a divergence function on $\cO_{X^\circ}
  \cdot \mfv|_{X^\circ}$ annihilating $\mfv|_{X^\circ}$.  In this
  case, in the formal neighborhood of every point of $X^\circ$, there
  exists a volume form preserved by $\mfv$.
\end{proposition}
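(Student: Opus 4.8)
The plan is to route both implications through the characterization of incompressibility in Proposition~\ref{p:inc-subvar}, namely condition (iii): for all $\xi_i\in\mfv$ and $f_i\in\cO_X$ with $\sum_i f_i\xi_i=0$ one has $\sum_i\xi_i(f_i)=0$. For the ``if'' direction, suppose we are given an open dense $X^\circ\subseteq X$ and a divergence function $D$ on $N:=\cO_{X^\circ}\cdot\mfv|_{X^\circ}$ annihilating $\mfv|_{X^\circ}$. Given a relation $\sum_i f_i\xi_i=0$ in $\Vect(X)$ with $f_i\in\cO_X$, $\xi_i\in\mfv$, restrict it to $X^\circ$ and apply $D$ to both sides: using additivity of $D$, the identity $D(f\eta)=f\,D(\eta)+\eta(f)$, and $D(\xi_i|_{X^\circ})=0$, we get $0=\sum_i\xi_i(f_i)|_{X^\circ}$; since $\cO_X\hookrightarrow\cO_{X^\circ}$ (as $X$ is irreducible and $X^\circ$ dense), this forces $\sum_i\xi_i(f_i)=0$. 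Thus condition (iii) holds, and $\mfv$ flows incompressibly by Proposition~\ref{p:inc-subvar}.

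For the ``only if'' direction, assume $\mfv$ flows incompressibly. Let $X^\circ$ be the locus of smooth points of $X$ at which $\mfv$ attains its generic rank $r$; this is open and dense, and on it $\mfv$ has constant rank $r$. Since $\cO_X\cdot\mfv$ is a coherent subsheaf of $T_X$, it is generated over $\cO_X$ by finitely many elements of $\mfv$; pick among these a subset $\xi_1,\dots,\xi_r$ whose images form a $\bk(X)$-basis of $\bk(X)\otimes_{\cO_X}(\cO_X\cdot\mfv)$, and shrink $X^\circ$ so that $\xi_1,\dots,\xi_r$ are pointwise linearly independent there and that their $\cO$-coefficients expressing the remaining generators are regular. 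Then $\xi_1|_{X^\circ},\dots,\xi_r|_{X^\circ}$ is an $\cO_{X^\circ}$-basis of $N:=\cO_{X^\circ}\cdot\mfv|_{X^\circ}$, and we may set $D(\sum_i f_i\xi_i):=\sum_i\xi_i(f_i)$, which is well defined by uniqueness of the expansion and satisfies the Leibniz identity of a divergence function by a short computation. To see $D$ annihilates $\mfv|_{X^\circ}$: $M(X,\mfv)$ is fully supported by Proposition~\ref{p:inc-subvar}, hence so is its restriction $M(X^\circ,\mfv|_{X^\circ})=M(X,\mfv)|_{X^\circ}$, so condition (iii) holds on $X^\circ$; applying it to the relation $\xi|_{X^\circ}-\sum_i f_i\xi_i=0$ coming from the expansion of an arbitrary $\xi\in\mfv$ gives $D(\xi|_{X^\circ})=\sum_i\xi_i(f_i)=0$.

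For the final statement, fix $x\in X^\circ$ (for a general witnessing $X^\circ$ we may first replace it by its intersection with the smooth maximal-rank locus, restricting $D$ accordingly). Since $\mfv$ has constant rank $r$ near $x$ and is a Lie algebra, we integrate its flow in the formal neighborhood exactly as in the proof of Corollary~\ref{c:inc} (which applies verbatim at any smooth point of maximal rank), obtaining $\hat X_x\cong V\times V'$ for formal polydiscs $V,V'$ with $N|_{\hat X_x}$ spanned by the coordinate vector fields along $V$. By that same proof, full support of $M(X,\mfv)$ forces $M(\hat X_x,\mfv|_{\hat X_x})\cong\Omega_V\boxtimes\caD_{V'}$, which admits a nonzero map to $\Omega_{\hat X_x}$, and the image of the canonical generator is a formal volume form preserved by $\mfv$. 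Alternatively and more self-containedly, $D$ corresponds via Proposition~\ref{p:div-conncan} to a connection $\nabla^D$ on $\Omega^{n}_{X^\circ}$ along the Lie algebroid $N$, with $\nabla^D_\xi(\omega)=L_\xi(\omega)-D(\xi)\omega$; its curvature is the $\cO$-bilinear expression $R(\xi,\eta)=D([\xi,\eta])-\xi(D(\eta))+\eta(D(\xi))$, which vanishes for $\xi,\eta\in\mfv$ because $D$ kills $\mfv$ and $[\mfv,\mfv]\subseteq\mfv$, hence vanishes identically since $\mfv$ spans $N$ over $\cO$. So $\nabla^D$ is flat along $N$, a nowhere-vanishing flat section on the formal disc exists by the relative formal Poincar\'e lemma in the coordinates above, and since $D$ annihilates $\mfv$ such a section $\omega$ satisfies $L_\xi\omega=\nabla^D_\xi\omega+D(\xi)\omega=0$ for all $\xi\in\mfv$.

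I expect the main obstacle to lie in the final statement. Two points need care: arranging that the $\cO_{X^\circ}$-basis $\xi_1,\dots,\xi_r$ of $N$ can be taken \emph{inside} $\mfv$ rather than merely in its $\cO$-span (so that $D(\xi_i)=0$ is meaningful), which is handled by coherence of $\cO_X\cdot\mfv$ and generic freeness; and the existence of the preserved formal volume form, which rests on the Frobenius-type integrability of $\mfv$ in a formal neighborhood together with the (relative) formal Poincar\'e lemma --- essentially the content already extracted in the proof of Corollary~\ref{c:inc}. The remaining steps (the two applications of condition (iii), the verification that $D$ is a divergence function, and the curvature computation for $\nabla^D$) are routine formal manipulations.
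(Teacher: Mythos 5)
Your proof is correct, and while it draws only on ingredients already in the paper, it is organized differently from the proof in \S\ref{ss:reinc-pf}. The paper's proof reduces to the smooth constant-rank case and works entirely with connections: a preserved formal volume at one point yields a connection $\nabla$ on $\Omega_X$ along $N=\cO_X\cdot\mfv$ with $\nabla_\xi=L_\xi$ for $\xi\in\mfv$ (equivalently, by Proposition \ref{p:div-lie}, a divergence function killing $\mfv$); conversely such a connection is generically flat by Proposition \ref{p:flat-lie}, is extended at a generic point to a flat connection on all of $T_{\hat X_x}$, and a flat section gives the preserved volume. You instead route both implications through condition (iii) of Proposition \ref{p:inc-subvar} and build $D$ explicitly from a local frame $\xi_1,\dots,\xi_r\in\mfv$ of $N$ --- essentially the construction of Remark \ref{r:inc-subvar-alt} --- and for the final claim you check flatness of $\nabla^D$ along $N$ by a direct curvature computation (reproving the relevant case of Proposition \ref{p:flat-lie}) and integrate by formal parallel transport along the integrable distribution instead of extending the connection. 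Your version has two genuine advantages: the ``if'' direction becomes a one-line application of the Leibniz identity, and your forward construction produces $D$ algebraically on a Zariski open set, whereas the paper's forward direction only manifestly produces a connection on a formal neighborhood. Two cautions. First, your initial argument for the last claim, which quotes the proof of Corollary \ref{c:inc}, is only justified at \emph{generic} points of $X^\circ$ (that proof needs $M(\hat X_x,\mfv|_{\hat X_x})$ to be fully supported, which is asserted there only generically); your second, connection-theoretic argument does apply at every smooth constant-rank point and should be taken as the actual proof. Second, the conclusion ``every point of $X^\circ$'' must be read with $X^\circ$ shrunk into the smooth maximal-rank locus, as you do; this matches the (generic) conclusion of the paper's own argument. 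The parenthetical ``as $X$ is irreducible'' in your first paragraph is unnecessary and not assumed; injectivity of $\cO_X\to\cO_{X^\circ}$ only needs $X$ reduced and $X^\circ$ dense.
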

We can restate this in terms of connections using:
\begin{proposition}\label{p:div-lie}
In terms of Proposition \ref{p:div-conncan}, when $X$ is normal and of
pure dimension, a
divergence function $D$ on $N$ annihilates $\mfv \subseteq T_X$ if and only
if $\nabla^D_\xi = L_\xi$ for all $\xi \in \mfv$.
\end{proposition}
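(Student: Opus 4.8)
The plan is to read the equivalence off directly from the defining formula $\nabla^D_\xi(\omega) = L_\xi(\omega) - D(\xi)\cdot\omega$ for the connection $\nabla^D$ attached in Proposition \ref{p:div-conncan} to a divergence function $D$ on $N$. Throughout I tacitly assume $\mfv \subseteq N$, so that the phrase ``$D$ annihilates $\mfv$'' is meaningful; if $\mfv \not\subseteq N$ there is nothing to prove.

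First I would dispatch the easy direction: if $D(\xi) = 0$ for all $\xi \in \mfv$, then the defining formula immediately gives $\nabla^D_\xi(\omega) = L_\xi(\omega)$ for every $\xi \in \mfv$ and every local section $\omega$ of $\tilde\Omega^n_X$, i.e., $\nabla^D_\xi = L_\xi$. The same one-line computation works verbatim on $T^n_X$ via $\nabla^D_\xi(\Xi) = L_\xi(\Xi) + D(\xi)\cdot\Xi$, so the statement does not depend on which of the two incarnations of $\nabla^D$ from Proposition \ref{p:div-conncan} one uses.

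For the converse, assuming $\nabla^D_\xi = L_\xi$ on $\tilde\Omega^n_X$ for all $\xi \in \mfv$, the defining formula forces $D(\xi)\cdot\omega = L_\xi(\omega) - \nabla^D_\xi(\omega) = 0$ for every local section $\omega$; hence multiplication by the function $D(\xi) \in \cO_X$ annihilates $\tilde\Omega^n_X$. Since $X$ is normal (hence reduced) of pure dimension $n$, the sheaf $\tilde\Omega^n_X$ is torsion-free and restricts to a line bundle on the dense open smooth locus $U$, so $D(\xi)|_U = 0$ and therefore $D(\xi) = 0$; that is, $D$ annihilates $\mfv$. The only slightly delicate point, and the one place where the hypotheses on $X$ actually enter, is this faithfulness of $\tilde\Omega^n_X$ as an $\cO_X$-module — but it is precisely the ingredient already used through Lemma \ref{l:norm-tfc2} in the proof of Proposition \ref{p:div-conncan}, so I anticipate no real obstacle; everything else is a formal consequence of the defining formula.
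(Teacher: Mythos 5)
Your proposal is correct and matches the paper's treatment: the paper simply declares the statement ``immediate from the definition of $\nabla^D$ and $D_\nabla$,'' which is exactly the unwinding you perform. Your extra remark that the faithfulness of $\tilde\Omega^n_X$ as an $\cO_X$-module (via torsion-freeness and genericity of the line-bundle locus, i.e.\ the content of Lemma \ref{l:norm-tfc2}) is the one place the hypotheses on $X$ enter is accurate and is already built into the identification $\End_{\cO_X}(\tilde\Omega^n_X)\cong\cO_X$ used in Proposition \ref{p:div-conncan}.
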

The proof is immediate from the definition of $\nabla^D$ and $D_\nabla$.

\subsubsection{Flat divergence functions}
In terms of Proposition \ref{p:div-conncan}, we can describe what it means
for a divergence function to be flat. As before, assume that $X$ is a variety
of pure dimension $n$.  Assume that $N \subseteq T_X$ is a Lie subalgebroid.

Consider the extension of $D$ to an operator $\tilde D:
\wedge_{\cO_X}^\bullet N \to \wedge_{\cO_X}^{\bullet-1} N$ given by
\begin{multline}\label{e:divfn-cplx}
\xi_1 \wedge \cdots \wedge \xi_k 
\mapsto
\sum_{i=1}^k (-1)^{i-1} D(\xi_i)
\xi_1 \wedge \cdots \wedge \hat \xi_i \wedge
\cdots \wedge \xi_k
\\ + \sum_{i,j} (-1)^{i+j-1} [\xi_i, \xi_j] \wedge \xi_1 \wedge \cdots \wedge \hat \xi_i  \wedge \cdots \wedge \hat \xi_j \wedge \cdots \wedge \xi_k. 
\end{multline}
Note that, since we take the exterior algebra over $\cO_X$, 
one must check that the formula is well-defined, i.e., that one obtains the same result if we multiply $\xi_i$ by $f$ as if we multiply $\xi_j$ by $f$, for all $i < j$ and all $f \in \cO_X$.  This is easy to check.
\begin{definition}
Call a divergence function $D$ flat if
the associated operator \eqref{e:divfn-cplx} has square zero: $\tilde D^2=0$.
\end{definition}
\begin{example}\label{ex:neqtx}
Suppose that $N=T_X$ and $X$ is smooth.  Then we can replace \eqref{e:divfn-cplx} with
\begin{equation}\label{e:divfn-cplx-smth}
\Omega_X^{\dim X -
  \bullet} \otimes_{\cO_X} T_X^n,
\end{equation} 
equipped with the derivation $d_D = d \otimes\Id + \Id \otimes \bar \nabla^D$, where $\bar \nabla^D: T_X^n \to \Omega^1_X \otimes_{\cO_X} T_X^n$ is the usual $\bk$-linear operator associated to the connection $\nabla^D$. This is isomorphic
to \eqref{e:divfn-cplx} by contracting $\Omega^\bullet$ with $T_X^n$.
Thus, $d_D^2=0$ 
if and only if $D$ is flat.
\end{example}
\begin{example}\label{ex:nlocfree}
More generally than Example \eqref{ex:neqtx}, 
  suppose that $X$ is smooth and $N$ is locally free of rank $n-k$ and
  the vanishing locus of a collection of (linearly independent)
  one-forms $df_1, \ldots, df_k$. Then, we can consider the $k$-form
  $\alpha = df_1 \wedge \cdots \wedge df_k$, and replace
  \eqref{e:divfn-cplx} by 
\begin{equation}\label{e:divfn-cplx-locfree}
(\Omega_X^{\dim X - k -
  \bullet} \wedge \alpha) \otimes_{\cO_X} T_X^n.
\end{equation} 
This is equipped with the derivation $d_D$ defined as before, and with
this derivation, the contraction map produces an isomorphism of
\eqref{e:divfn-cplx-locfree} with \eqref{e:divfn-cplx}.  Thus, it
remains true that $d_D^2=0$ if and only if $D$ is flat.  Moreover, by
Frobenius's theorem, in a formal neighborhood of a smooth point $x \in
X$, such $f_1, \ldots, f_k$ always exist since $N$ is integrable.
\end{example}
\begin{proposition} \label{p:flat-lie}
Let $D: N \to \cO_X$ be a divergence function with
  $N \subseteq T_X$ a Lie subalgebroid, and let $\mfv := \{\xi \in N
  \mid D(\xi)=0\}$.  Suppose moreover that $N = \cO_X \cdot
  \mfv$. Then $D$ is generically flat if and only if $\mfv$ is a Lie
  algebra.
\end{proposition}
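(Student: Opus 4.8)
The plan is to reduce to the generic locally free situation and there identify the flatness of $\tilde D$ with the flatness of the connection $\nabla^D$ supplied by Proposition \ref{p:div-conncan}. Since $X$ is irreducible and reduced, $D$ is generically flat exactly when $\tilde D^2$ vanishes over some dense open subset, and we are free to shrink it; so I would fix a dense open $U\subseteq X$ on which $X$ is smooth and $N$ is locally free, say of rank $n-k$, and work there, where $D$, $\mfv=\{\xi\in N\mid D(\xi)=0\}$ and $N=\cO_X\cdot\mfv$ restrict to data of the same kind. Because $N$ is a Lie subalgebroid it is integrable, so by Frobenius's theorem the formal neighborhood of a point of $U$ carries functions $f_1,\dots,f_k$ with $N=\ker(df_1,\dots,df_k)$; Example \ref{ex:nlocfree} (or Example \ref{ex:neqtx} when $k=0$) then identifies $\tilde D$ with the twisted de Rham differential $d_D=d\otimes\Id+\Id\otimes\bar\nabla^D$ on $(\Omega_X^{n-k-\bullet}\wedge\alpha)\otimes_{\cO_X}T_X^n$, where $\alpha=df_1\wedge\cdots\wedge df_k$. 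Since $d^2=0$, one has $d_D^2=\pm\,\Id\otimes F_{\nabla^D}$, i.e. $d_D^2$ is wedging with the curvature $2$-form (along $N$) of $\nabla^D$; hence, generically, $\tilde D^2=0$ if and only if $\nabla^D$ is flat along $N$.

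Next I would compute this curvature on $\mfv$. The curvature $F_{\nabla^D}\colon\wedge^2_{\cO_X}N\to\End_{\cO_X}(T_X^n)=\cO_X$ is $\cO_X$-bilinear, so because $N=\cO_X\cdot\mfv$ it vanishes if and only if $F_{\nabla^D}(\xi,\eta)=0$ for all $\xi,\eta\in\mfv$. For such $\xi,\eta$, Proposition \ref{p:div-lie} gives $\nabla^D_\xi=L_\xi$ and $\nabla^D_\eta=L_\eta$, while for any $\zeta\in N$ the correspondence of Proposition \ref{p:div-conncan} gives $\nabla^D_\zeta=L_\zeta+D(\zeta)$ as an operator on $T_X^n$; moreover $[\xi,\eta]\in N$ since $N$ is a Lie subalgebroid. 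Using $[L_\xi,L_\eta]=L_{[\xi,\eta]}$ and that functions act by multiplication, one gets
\[
F_{\nabla^D}(\xi,\eta)=[\nabla^D_\xi,\nabla^D_\eta]-\nabla^D_{[\xi,\eta]}=\bigl(L_{[\xi,\eta]}+\xi(D\eta)-\eta(D\xi)\bigr)-\bigl(L_{[\xi,\eta]}+D([\xi,\eta])\bigr)=-\,D([\xi,\eta]),
\]
since $D\xi=D\eta=0$. (Equivalently, one may compute directly from \eqref{e:divfn-cplx} that $\tilde D^2(\xi\wedge\eta)=D([\xi,\eta])-\xi(D\eta)+\eta(D\xi)$ and then invoke the previous paragraph to see that this degree-two term is the only obstruction.) Thus $\nabla^D$ is flat along $N$ if and only if $D([\xi,\eta])=0$ for all $\xi,\eta\in\mfv$.

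Finally I would assemble the two directions. If $\mfv$ is a Lie algebra then $[\xi,\eta]\in\mfv=\ker(D|_N)$ for all $\xi,\eta\in\mfv$, so $D([\xi,\eta])=0$, the curvature vanishes, and $\tilde D^2=0$ generically, i.e. $D$ is generically flat. Conversely, if $D$ is generically flat, then $F_{\nabla^D}$ vanishes on a dense open, hence $D([\xi,\eta])$ — a global function on the irreducible reduced variety $X$ — vanishes on a dense open and is therefore $0$; so $[\xi,\eta]\in\ker(D|_N)=\mfv$ for all $\xi,\eta\in\mfv$, i.e. $\mfv$ is a Lie algebra. I expect the main technical point to be the reduction in the first paragraph: justifying that, generically, the sole obstruction to $\tilde D^2=0$ is the curvature of $\nabla^D$ — equivalently that $\tilde D^2$ is $\cO_X$-linear and determined by its degree-two part — which is precisely what the complex descriptions of Examples \ref{ex:neqtx} and \ref{ex:nlocfree} provide once one has passed to a dense open where $N$ is locally free and integrable.
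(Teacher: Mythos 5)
Your proposal is correct and follows essentially the same route as the paper: reduce generically to the smooth, locally free, integrable case via Frobenius, identify $\tilde D^2$ with the curvature of $\nabla^D$ on the twisted complex of Example \ref{ex:nlocfree}, use $\cO_X$-bilinearity of the curvature together with $N=\cO_X\cdot\mfv$ to reduce to checking on $\mfv$, where Proposition \ref{p:div-lie} gives $F_{\nabla^D}(\xi,\eta)=-D([\xi,\eta])$. The paper handles the easy direction by computing $\tilde D^2(\xi\wedge\eta)$ directly from \eqref{e:divfn-cplx} rather than through the curvature, but this is only a cosmetic difference.
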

By generically flat, we mean that, restricted to an open dense subset
of $X$, $D$ is flat.  Note that the condition $N = \cO_X \cdot \mfv$
is automatic if we replace $X$ with a formal neighborhood $\hat X_x$
for generic $x \in X$ and define $\mfv \subseteq T_{\hat X_x}$ as
above, since $N$ is integrable, so we can write $\hat X_x \cong V
\times V'$ for formal polydiscs $V, V'$ such that $N$ identifies with
the subsheaf of $T_{\hat X_x}$ in the $V$ direction.
\begin{proof}
  First, if $D$ is generically flat, then on some open dense subset of
  $X$, given any $\xi, \eta \in \mfv$, we have $\tilde D^2(\xi \wedge
  \eta) = 0$ (since $D(\xi)=0=D(\eta)$), which implies that $[\xi,
  \eta]\in \mfv$ as well.

Consider now the reverse implication.  It suffices to restrict to a
formal neighborhood of a smooth point $x \in X$ (on each connected
component of $X$).  Then, as noted in Example \ref{ex:nlocfree}, we
can assume $N$ is the vanishing locus of $k$ nonvanishing one-forms
$df_1, \ldots, df_k$.  Set $\alpha = df_1 \wedge \cdots \wedge df_k$
and replace \eqref{e:divfn-cplx} by \eqref{e:divfn-cplx-locfree}.  By
Proposition \ref{p:div-lie}, $\mfv$ consists of those $\xi \in N$ such
that $\nabla^D_\xi = L_\xi$ on $\Omega^n_{\hat X_x}$, or equivalently
on $(\Omega^{n-k}_{\hat X_x} \wedge \alpha)$.

Assume that $\mfv$ is a Lie algebra.  Then, for $\xi, \eta \in
\mfv$,
\[
[\nabla^D_\xi, \nabla^D_\eta] = [L_\xi, L_\eta] = L_{[\xi,\eta]} = \nabla^D_{[\xi, \eta]}.
\]
Note that this also implies that $[\nabla^D_\xi, \nabla^D_\eta] =
\nabla^D_{[\xi, \eta]}$ for all $\xi, \eta \in \hat \cO_{X,x} \cdot
N$, since this equality remains true when replacing $\xi$ by $f \cdot
\xi$ for $f \in\hat \cO_{X,x}$, and it is biadditive in $\xi$ and
$\eta$.  Since $N = \cO_X \cdot \mfv$, and hence $N|_{\hat X_x} = \hat
\cO_{X,x} \cdot \mfv|_{\hat X_x}$, the equality holds for all $\xi,
\eta \in \hat \cO_{X,x}$.  Now, the identity $[\nabla^D_\xi,
\nabla^D_\eta] = \nabla^D_{[\xi, \eta]}$ on $\Omega^n_{\hat X_x}$
implies in the standard way that the derivation $d_D$ on
$(\Omega_{\hat X_x}^{\dim X - k - \bullet} \wedge \alpha)
\otimes_{\cO_X} T_{\hat X_x}^n$ has square zero. Namely, one can
verify that $d_D^2$ is given by contraction with the two-form $\alpha$
given by
\[
\alpha(\xi \wedge \eta) = [\nabla^D_\xi, \nabla^D_\eta] -
\nabla^D_{[\xi, \eta]} \in \End(T_{\hat X_x}^n) = \hat
\cO_{X,x}. \qedhere
\]
\end{proof}
\subsubsection{Hamiltonian vector fields on
varieties with flat divergence functions}\label{ss:ham-df}
Now we define, analogously to \S \ref{ss:vtop}, Hamiltonian and
incompressible vector fields preserving flat divergence functions
(i.e., preserving the formal volumes associated to them).

Let $X$ be a variety of pure dimension $n$ and $N \subseteq T_X$ an
$\cO_X$-submodule, and $D: N \to \cO_X$ be a flat divergence
function. Then first we have the Lie algebra $P(X,D) \subseteq N$ of
all incompressible vector fields in $N$. Note that the $\cO_X$-linear
span of $P(X,D)$ need not be all of $N$.

Next, given any element $\tau \in \wedge_{\cO_X}^2 N$, consider the
image $\theta_\tau := \tilde D(\tau) \in N$. By construction,
$D(\theta_\tau) = 0$.  We call $\theta_\tau$ the Hamiltonian vector
field of $\tau$.  Since $[\theta_\tau, \theta_{\tau'}] =
\theta_{L_{\theta_\tau}(\tau')}$, these form a Lie subalgebra of
$P(X,D)$,
\[
H(X,D) := \langle \theta_\tau \rangle
\subseteq P(X,D).
\]
\begin{example}
  If $X$ is Calabi-Yau and $D$ is the associated divergence function,
  we again recover $H(X,D)=P(X,D)=H(X)$, the Lie algebra of
  volume-preserving vector fields.
\end{example}
As long as $N$ has rank at least two, then $H(X,D)$ has enough vector
fields, in the sense that $\cO_X \cdot H(X,D) = N$; more precisely:
\begin{proposition}\label{p:div-enough-vfds}
  Suppose that the image of $N$ at the tangent fiber $T_x X$ has
  dimension at least two.  Then $H(X,D)|_x = N|_x$, i.e., $H(X,D)
  \subseteq N$ spans the same tangent space at $x$ as $N$.  In
  particular, if $N = T_X$ and $X$ has pure dimension at least two,
  then $H(X,D)$ is transitive.
\end{proposition}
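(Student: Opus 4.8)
The plan is to reduce everything to the degree-two instance of \eqref{e:divfn-cplx} and then to exploit the freedom of multiplying one of the two vector fields in a wedge by an arbitrary function. Since by construction $H(X,D)\subseteq N$, only the inclusion $N|_x\subseteq H(X,D)|_x$ needs to be shown, and as the assertion concerns a single fibre we may freely replace $X$ by an affine open neighbourhood of $x$ (or even by its local or formal ring there); then $N$ is generated by its sections, and the hypothesis lets us choose $\xi,\eta\in N$ whose values $\xi|_x,\eta|_x$ are linearly independent in $T_xX$.

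First I would record, from \eqref{e:divfn-cplx} with $k=2$, that $\theta_{\xi\wedge\eta}=D(\xi)\eta-D(\eta)\xi+[\xi,\eta]$, and then compute, for an arbitrary $h\in\cO_X$, using $D(h\eta)=hD(\eta)+\eta(h)$ and $[\xi,h\eta]=h[\xi,\eta]+\xi(h)\eta$, the identity
\[
\theta_{\xi\wedge(h\eta)}=h\,\theta_{\xi\wedge\eta}+\xi(h)\,\eta-\eta(h)\,\xi.
\]
Evaluating this at $x$ and subtracting $h(x)\,\theta_{\xi\wedge\eta}|_x$ shows that $H(X,D)|_x$ contains $\theta_{\xi\wedge\eta}|_x$ together with every vector of the form $\xi(h)(x)\,\eta|_x-\eta(h)(x)\,\xi|_x$, as $h$ ranges over $\cO_X$.

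Next I would observe that the linear map $T_x^*X=\mathfrak{m}_x/\mathfrak{m}_x^2\to\bk^2$ sending $dh|_x$ to $\bigl(\xi(h)(x),\eta(h)(x)\bigr)$ is surjective, precisely because $\xi|_x$ and $\eta|_x$ are linearly independent; choosing functions $h_1,h_2$ whose differentials at $x$ map to $(1,0)$ and $(0,1)$ then gives $\eta|_x,\xi|_x\in H(X,D)|_x$. Finally, since $\dim N|_x\geq 2$ one can pick $\xi_1,\dots,\xi_m\in N$ with $\xi_1|_x,\dots,\xi_m|_x$ a basis of $N|_x$ and, for each $i$, apply the previous step to the pair $(\xi_i,\xi_j)$ for some $j\neq i$ (here $m\geq 2$ is used); this places every basis vector in $H(X,D)|_x$, so $N|_x\subseteq H(X,D)|_x$ and equality holds. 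The final assertion follows at once, since for $X$ smooth — which is in any case forced by transitivity — of pure dimension $n\geq 2$ and $N=T_X$ one has $N|_x=T_xX$ of dimension $n\geq 2$ at every point.

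The one step I expect to require care is the very first: making sure that $N|_x$, the image in $T_xX$ of the fibre of $N$, really is the span of the values at $x$ of honest sections of $N$ defined near $x$, so that the $\xi,\eta$ (and later $\xi_1,\dots,\xi_m$) used to form Hamiltonian vector fields actually exist. On affine $X$ this is automatic, and in general passing to the local ring at $x$ costs nothing because the conclusion only involves the fibre $T_xX$. Note that flatness of $D$ is not used in this argument; it enters only through Proposition \ref{p:flat-lie}, to guarantee that $H(X,D)$ is a Lie algebra in the first place.
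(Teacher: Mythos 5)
Your proof is correct and is essentially the paper's argument: the paper likewise takes $\xi,\eta\in N$ independent at $x$, picks $f$ with $\xi(f)(x)=1$, $\eta(f)(x)=0$, and observes that $\bigl(\tilde D(f\xi\wedge\eta)-f\tilde D(\xi\wedge\eta)\bigr)|_x=\eta|_x$, which is your identity $\theta_{\xi\wedge(h\eta)}-h\theta_{\xi\wedge\eta}=\xi(h)\eta-\eta(h)\xi$ up to which factor carries the function. Your extra remarks (surjectivity of $dh|_x\mapsto(\xi(h)(x),\eta(h)(x))$, pairing each basis vector of $N|_x$ with another, and that flatness of $D$ is not needed) are all accurate elaborations of the same computation.
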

As a consequence, the same result holds for $P(X,D) \supseteq H(X,D)$.
\begin{proof}
  Let $x \in X$ be a point, and $\xi, \eta \in N$ two vector fields
  linearly independent at $x$.  Let $f \in \cO_X$ be a function such
  that $\xi(df)(x)=1$ and $\eta(df)(x)=0$.  Then $\bigl(\tilde D(f \xi
  \wedge \eta) - f \tilde D(\xi \wedge \eta)\bigr)|_x = \eta|_x$.
\end{proof}
On the other hand, if $N$ has rank one, then $P(X,D)$ can be zero,
e.g., when $X$ is a smooth curve and $D$ is a divergence function
preserving a multivalued volume form which is not single valued
(cf.~Example \ref{ex:mvvol}).
\begin{example} Consider the case of
 Example \ref{ex:nlocfree}, i.e., where 
% where either $N = T_X$, or somewhat
% more generally the case where
 $N$ is locally free of rank $n-k$ and the zero locus of (linearly
 independent) exact one-forms $df_1, \ldots, df_k$.  Set $\alpha :=
 df_1 \wedge \cdots \wedge df_k$ and replace \eqref{e:divfn-cplx} by
 \eqref{e:divfn-cplx-locfree}.
Given any element $\beta \in (\Omega^{n-k-2}_X\wedge \alpha)
\otimes T_X^n$, we can define the Hamiltonian vector field
\[
\xi_{\beta} = \ctr(\nabla^D(\beta)),
\]
where $\ctr$ is the operator
\[
\ctr: \tilde \Omega^\bullet_X \otimes_{\cO_X} T_X^n \to
T_X^{n-\bullet}, \ctr(\omega \otimes \tau) = i_\tau(\omega).
\]
These vector fields coincide with $H(X,D)$ as defined above, since
 \eqref{e:divfn-cplx-locfree} is isomorphic to
\eqref{e:divfn-cplx} via the contraction operation.

Next,
call an element of $(\Omega^{n-k-\bullet}_X \wedge \alpha) \otimes_{\cO_X} T_X^n$ 
$\nabla^D$-closed
if it is in the kernel of $\nabla^D$.  Then, if $\gamma \in
(\Omega^{n-k-1}_X\wedge \alpha)
\otimes T_X^n$ is $\nabla^D$-closed,
we can define the locally Hamiltonian vector field
\[
\eta_\gamma := \ctr(\gamma).
\]
These vector fields coincide with $P(X,D)$ as defined above, since via
the contraction isomorphism of complexes \eqref{e:divfn-cplx-locfree}
and \eqref{e:divfn-cplx}, the vector fields $\eta_\gamma$ are
precisely those elements of $N$ with zero divergence.
\end{example}

\begin{example}
  Suppose $(X, \Xi)$ is a variety of pure dimension $n$ equipped with
  a generically nonvanishing top polyvector field $\Xi$ as in \S
  \ref{ss:vtop}, and define $N$ and $D$ as at the beginning of \S
  \ref{s:div}.  Then we see immediately that $P(X) = P(X,D)$,
  consisting of the vector fields $\xi$ such that $L_\xi \Xi = 0$. On
  the other hand, $H(X)$ need not equal $H(X,D)$ in general. Indeed,
  although it is true that, given $\alpha \in \Omega^{n-2}_X$, then
  $\theta_{i_\Xi(\alpha)} = \xi_{\alpha}$, we have $\wedge^2 N \neq
  i_{\Xi}(d\Omega^{n-2}_X)$ in general, and sometimes $H(X) \neq
  H(X,D)$.
\end{example}

\subsubsection{Proof of Proposition \ref{p:reinc}}\label{ss:reinc-pf}
We can assume that $X=X^\circ$ is smooth and that $\mfv$ has constant
(i.e., maximal) rank. Therefore $\Omega_X = \tilde \Omega_X$, and we
omit the tilde from now on.  We show that $\mfv$ flows incompressibly
on $X$ if and only if there exists a connection $\nabla$ on $\Omega_X$
along $N:= \cO_X \cdot \mfv$ such that $\nabla_\xi=L_\xi$ for all $\xi
\in \mfv$.

First, suppose that $\mfv$ flows incompressibly on $X$. Let $x \in X$
be a point and $\omega \in \Omega_{\hat X_x}$ a formal volume form
preserved by $\mfv$. Let $\nabla$ be the unique flat connection whose
flat sections are multiples of $\omega$. Then $\nabla_\xi \omega = 0 =
L_\xi \omega$ for all $\xi \in \mfv$. Therefore, the restriction of
$\nabla$ to $N$ is as desired.

Conversely, suppose that $\nabla$ is a connection on $\Omega_X$ along
$N$ such that $\nabla_\xi = L_\xi$ for all $\xi \in \mfv$.  Since
$\mfv$ is a Lie algebra, Proposition \ref{p:flat-lie} implies that
$\nabla$ is generically flat.  
Thus, at a generic point $x  \in X$,
% Since we assumed that $X$ is smooth and $\mfv$ has
% maximal rank,
$\hat N_x$ is free over $\hat \cO_{X,x}$, and we can
write $T_{\hat X_x} = \hat N_x \oplus L$ for some complementary free
$\hat \cO_{X,x}$-submodule $L$.  Then the connection $\nabla$ can be
extended to a flat connection on $T_{\hat X_x}$.  Let $\omega
\in\Omega_{\hat X_x}$ be a nonzero flat formal section of
$\nabla$. Then $\nabla_\xi(\omega)=0$ for all $\xi \in T_X$.  Hence
$L_\xi(\omega)=0$ for all $\xi \in \mfv$.  Therefore, $\omega$ is
preserved by $\mfv$.

\subsection{Smooth curves}\label{ss:curves}
Let $X$ be a smooth connected curve.  In this section we explicitly
compute $M(X,\mfv)$.  We may assume that $\mfv$ is nonzero.  Let $Z
\subseteq X$ be the vanishing locus of $\mfv$, which is
zero-dimensional.  Let $X^\circ:= (X \setminus Z) \subseteq X$ be the
complement.
\begin{lemma}
If $\mfv$ is one-dimensional, then $M(X,\mfv)|_{X^\circ} = \Omega_{X^\circ}$. Otherwise, $M(X,\mfv)|_{X^\circ} = 0$.
\end{lemma}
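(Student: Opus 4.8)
The plan is to reduce immediately to the transitive situation on $X^\circ$ and to quote Proposition~\ref{p:tr}. Since $Z$ is the vanishing locus of $\mfv$, for every $x \in X^\circ$ we have $\mfv|_x \neq 0$, and as $X$ is a curve this forces $\mfv|_x = T_x X$; thus $(X^\circ, \mfv|_{X^\circ})$ is transitive, with $X^\circ$ a smooth connected curve. By Lemma~\ref{l:vd-sheaf} we have $M(X,\mfv)|_{X^\circ} = M(X^\circ, \mfv|_{X^\circ})$, so it is enough to compute the latter. By Proposition~\ref{p:tr}, $M(X^\circ, \mfv|_{X^\circ})$ is a rank-one local system if $\mfv|_{X^\circ}$ flows incompressibly and is $0$ otherwise, so the lemma comes down to two things: (a) when $\dim \mfv = 1$, showing incompressibility and identifying the local system with $\Omega_{X^\circ}$; and (b) when $\dim \mfv \geq 2$, showing that $\mfv|_{X^\circ}$ is not incompressible.

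For (a), I would write $\mfv = \bk\xi$. On $X^\circ$ the vector field $\xi$ is nowhere vanishing, hence trivializes $T_{X^\circ}$; let $\omega$ be the dual global one-form, so $i_\xi\omega = 1$. Since $\omega$ has top degree on a curve, $d\omega = 0$, and Cartan's formula gives $L_\xi\omega = d\,i_\xi\omega + i_\xi\,d\omega = 0$. Thus $\mfv$ preserves the global volume form $\omega$, so it flows incompressibly, and (as in Example~\ref{ex:cy}, $X^\circ$ being Calabi--Yau with trivializing volume form $\omega$) $M(X^\circ,\mfv|_{X^\circ}) \cong \Omega_{X^\circ}$; equivalently, the surjection onto $\Omega_{X^\circ}$ furnished by $\omega$ in the proof of Proposition~\ref{p:tr} is an isomorphism of rank-one local systems on the curve. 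This yields $M(X,\mfv)|_{X^\circ} = \Omega_{X^\circ}$.

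For (b), I would argue by contradiction. Suppose $\mfv|_{X^\circ}$ flows incompressibly; then there is a point $u \in X^\circ$ and a formal volume form $\omega$ on $\hat X_u$ with $L_\eta\omega = 0$ for all $\eta \in \mfv$. By transitivity pick $\xi \in \mfv$ with $\xi|_u \neq 0$; on $\hat X_u$ the field $\xi$ is a unit times $\partial$, so it trivializes $T_{\hat X_u}$, and every $\eta \in \mfv$ restricts to $f\xi$ on $\hat X_u$ for a unique $f \in \hat\cO_{X,u}$. Since $d\omega = 0$ on the curve, $0 = L_\xi\omega = d(i_\xi\omega)$, so $i_\xi\omega$ is a nonzero constant, which we normalize to $1$; then Cartan's formula gives $L_\eta\omega = (i_\xi\omega)\,df + f\,L_\xi\omega = df$, which vanishes, so $f \in \bk$. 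Hence $\eta|_{\hat X_u} \in \bk\cdot(\xi|_{\hat X_u})$ for every $\eta \in \mfv$. Because a nonzero regular vector field on an irreducible curve cannot vanish to infinite order at a point, the restriction $\mfv \hookrightarrow \Vect(\hat X_u)$ is injective, so $\dim\mfv \leq 1$, contradicting $\dim\mfv \geq 2$. Therefore $\mfv|_{X^\circ}$ is not incompressible, and Proposition~\ref{p:tr} gives $M(X,\mfv)|_{X^\circ} = 0$.

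The routine part is everything once one is on the formal disc; the only point needing a little care is that incompressibility is defined formal-locally, so in case (b) one must work on $\hat X_u$, legitimately write an arbitrary $\eta \in \mfv$ as $f\xi$ there, and then check that restriction to $\hat X_u$ does not collapse $\mfv$ --- both immediate once one recalls that on a curve a nonzero vector field has only isolated, finite-order zeros. This is, in essence, the curve case already foreshadowed at the end of Example~\ref{ex:mvvol}.
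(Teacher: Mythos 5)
Your proof is correct and follows essentially the same route as the paper's: both reduce to Proposition \ref{p:tr} via transitivity of $\mfv|_{X^\circ}$, identify the preserved volume form as $\xi^{-1}$ in the one-dimensional case, and in the higher-dimensional case show no volume form can be preserved because $L_\xi\omega=0$ on a curve forces $\omega$ to be a constant multiple of $\xi^{-1}$. Your formal-local computation $L_{f\xi}\omega = df$ and the injectivity check for restriction to $\hat X_u$ just spell out the paper's remark that $\xi_1^{-1}$ and $\xi_2^{-1}$ cannot both be preserved when they are not proportional.
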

\begin{proof}
  By our assumptions, $\mfv|_{X^\circ}$ is transitive.  Moreover, if
  $\mfv$ is one-dimensional, then any nonzero element $\xi \in \mfv$
  is a top polyvector field on $X$ vanishing on $Z$, so $\xi^{-1}$
  defines a nondegenerate volume form on $X^\circ$ preserved by
  $\mfv$. Therefore we conclude that
  $M(X^\circ,\mfv)\cong\Omega_{X^\circ}$ by Proposition \ref{p:tr}.
  On the other hand, if $\mfv$ is at least two-dimensional, then if
  $\xi_1, \xi_2 \in \mfv$ are linearly independent, then on some open
  subset $U \subseteq X^\circ$, $\xi_1^{-1}$ and $\xi_2^{-1}$ both
  define nondegenerate volume forms which are not scalar multiples of
  each other.  Then there can be no volume form on $U$ preserved by
  both, even restricted to $\hat U_x$ for every $x \in U$.
\end{proof}
\begin{proposition} If $\dim \mfv \geq 2$, then $M(X,\mfv) \cong
  \bigoplus_{z \in Z} \delta_z \otimes (\hat
  \cO_{X,z})_{\mfv}$. Moreover, $\dim (\hat \cO_{X,z})_{\mfv}$ is the
  minimum order of vanishing of vector fields of $\mfv$ at $z$.
\end{proposition}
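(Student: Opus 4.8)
The plan is to combine the lemma just proved with Kashiwara's equivalence and a direct local computation. Since $\dim\mfv\geq 2$, that lemma gives $M(X,\mfv)|_{X^\circ}=0$, so $M(X,\mfv)$ is set-theoretically supported on the finite set $Z$. Because $M(X,\mfv)$ is a quotient of $\caD_X$ it is coherent, and a coherent $\caD$-module on the smooth curve $X$ supported on $Z$ is, by Kashiwara's equivalence, canonically of the form $\bigoplus_{z\in Z}\delta_z\otimes V_z$ with each $V_z$ a finite-dimensional vector space (the multiplicity of $\delta_z$). This gives the asserted direct-sum shape, and it remains to identify $V_z$ and compute its dimension.

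To identify $V_z$, I would pass to the formal neighborhood. By the last sentence of Lemma \ref{l:vd-sheaf}, $M(X,\mfv)|_{\hat X_z}=M(\hat X_z,\mfv|_{\hat X_z})$, while on the other hand $\bigl(\bigoplus_{z'\in Z}\delta_{z'}\otimes V_{z'}\bigr)|_{\hat X_z}=\delta_z\otimes V_z$ (restriction kills $\delta_{z'}$ for $z'\neq z$). Applying $\pi_0$ and using Proposition \ref{p:pushfwd} for $\hat X_z$ (as licensed by the parenthetical remark in \S\ref{ss:ddv}) together with $\pi_0\delta_z=\bk$, one gets $V_z\cong(\hat\cO_{X,z})_{\mfv}$, which is exactly the asserted multiplicity space.

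Finally I would compute $\dim(\hat\cO_{X,z})_{\mfv}$ by hand. Pick a local coordinate $t$ at $z$, so $\hat\cO_{X,z}\cong\bk[\![t]\!]$ and each $\xi\in\mfv$ restricts to $g_\xi(t)\,\partial_t$ for some $g_\xi\in\bk[\![t]\!]$ with order equal to the order of vanishing of $\xi$ at $z$. Since $\bk$ has characteristic zero, differentiation $\bk[\![t]\!]\to\bk[\![t]\!]$ is surjective, so $\xi(\hat\cO_{X,z})=g_\xi\cdot\bk[\![t]\!]$; summing over $\xi\in\mfv$ gives $\mfv(\hat\cO_{X,z})=(g_\xi:\xi\in\mfv)=\mathfrak{m}_z^{m}$, where $m:=\min_{\xi\in\mfv}\operatorname{ord}_z(\xi)$. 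Here $m$ is finite because $\mfv\neq 0$ and a nonzero vector field on a connected smooth curve has only finitely many zeros, each of finite order. Hence $(\hat\cO_{X,z})_{\mfv}\cong\bk[\![t]\!]/\mathfrak{m}_z^{m}$ has dimension $m$, the minimum order of vanishing of vector fields of $\mfv$ at $z$, completing the proof.

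I do not anticipate a serious obstacle here; the only points needing care are (i) invoking coherence together with Kashiwara's equivalence to pin down the global structure as a \emph{finite} sum of delta-modules with \emph{finite-dimensional} multiplicity spaces, and (ii) using characteristic zero to get the sharp identity $\mfv(\hat\cO_{X,z})=\mathfrak{m}_z^{m}$ rather than merely an inclusion $\mfv(\hat\cO_{X,z})\subseteq\mathfrak{m}_z^{m}$.
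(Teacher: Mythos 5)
Your proposal is correct and follows essentially the same route as the paper: the lemma forces the support onto the finite set $Z$, Kashiwara's equivalence gives the direct sum of delta modules, and the multiplicity space is identified with $(\hat\cO_{X,z})_{\mfv}$ (the paper does this dually, via $\Hom(M(X,\mfv),\delta_z)=((\hat\cO_{X,z})^*)^{\mfv}$, rather than via $\pi_0$ of the formal restriction). Your explicit computation $\mfv(\hat\cO_{X,z})=\mathfrak{m}_z^{m}$, using surjectivity of $\partial_t$ in characteristic zero, correctly supplies the minimum-order-of-vanishing claim that the paper leaves implicit.
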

Note in particular that each $\dim (\hat \cO_{X,z})_{\mfv}$ is positive.
\begin{proof}
  By the lemma, we immediately conclude that $M(X,\mfv)$ is a direct
  sum of copies of delta-function $\caD$-modules at points of $Z$,
  which is finite.  Then, the result follows from the fact that
\begin{equation}
  \Hom(M(X,\mfv), \delta_z) = \Hom(\caD_X, \delta_z)^{\mfv} = 
((\hat \cO_{X,z})^*)^\mfv. \qedhere
\end{equation}
\end{proof}
Now, assume that $\mfv = \langle \xi \rangle$, so that $M(X,\mfv) =
\xi \cdot \caD_X \setminus \caD_X$ for $\xi \in \Vect(X)$.  Then, by the lemma
and the argument of the proposition, we have an exact sequence
\begin{equation}
  0 \to j_! \Omega_{X^\circ} = \Omega_X \into M(X,\mfv) \onto \bigoplus_{z \in Z}
  \delta_z \otimes (\hat
  \cO_{X,z})_{\mfv} \to 0.
\end{equation}
It turns out that this sequence is maximally nonsplit. Namely, at each
$z \in Z$, $\Ext(\delta_z, \Omega_X) = \bk$, since $X$ is a smooth
curve.
\begin{proposition}
  When $\mfv$ is one-dimensional, then $M(X,\mfv) = N \oplus
  \bigoplus_{z \in Z} \delta_z \otimes (\hat \cO_{X,z})_{\mfv}/\bk$,
  where $N$ is an indecomposable $\caD$-module fitting into an exact
  sequence
\begin{equation}
  0 \to j_! \Omega_{X^\circ} = \Omega_X \into N
  \onto \bigoplus_{z \in Z} \delta_z \to 0.
\end{equation}
As before, $\dim (\hat \cO_{X,z})_{\mfv}$ is the minimum order of
vanishing of vector fields of $\mfv$ at $z$.
\end{proposition}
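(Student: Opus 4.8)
The plan is to bootstrap from the exact sequence already recorded, $0\to\Omega_X\into M(X,\mfv)\onto\bigoplus_{z\in Z}\delta_z\otimes(\hat\cO_{X,z})_{\mfv}\to 0$, together with the stated fact $\Ext^1_{\caD_X}(\delta_z,\Omega_X)\cong\bk$. Fix a nonzero $\xi\in\mfv$, so $M(X,\mfv)=\caD_X/\xi\caD_X$; in a local coordinate $t$ at $z$ write $\xi=ut^{d}\partial_t$ with $u$ a unit and $d=\operatorname{ord}_z\xi$, whence $\xi(\hat\cO_{X,z})=t^{d}\bk[\![t]\!]$ and $\dim(\hat\cO_{X,z})_{\mfv}=d$, giving the last sentence of the proposition. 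Since $(X,\mfv)$ has finitely many leaves (namely $X^\circ$ and the points of $Z$), $M(X,\mfv)$ is holonomic by Theorem \ref{t:main2}, hence of finite length, and $\Omega_X$ is simple since $X$ is a connected smooth curve. The sequence above thus has a class $e=(e_z)_{z\in Z}$ with $e_z\in\Hom_\bk\!\bigl((\hat\cO_{X,z})_{\mfv},\Ext^1_{\caD_X}(\delta_z,\Omega_X)\bigr)\cong\bigl((\hat\cO_{X,z})_{\mfv}\bigr)^*$, where I also use $\Hom_{\caD_X}(\delta_z,\Omega_X)=0$ and $\Hom_{\caD_X}(\delta_z,\delta_{z'})$ is $\bk$ for $z=z'$ and $0$ otherwise (standard; e.g.\ from a two-term free resolution of $\delta_z$, which identifies $\Ext^\bullet_{\caD_X}(\delta_z,N)$ with the cohomology of $N\xrightarrow{\,\cdot t\,}N$, right multiplication by $t$). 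Granting the key claim that each $e_z\neq 0$, I would finish by choosing for each $z$ a basis of $(\hat\cO_{X,z})_{\mfv}$ on which the functional $e_z$ is $(1,0,\dots,0)$: this splits off a summand $\delta_z^{\oplus(d_z-1)}$ at each $z$, so $M(X,\mfv)\cong N\oplus\bigoplus_z\delta_z\otimes(\hat\cO_{X,z})_{\mfv}/\bk$, where $N$ is the extension of $\bigoplus_z\delta_z$ by $\Omega_X$ all of whose local components are nonzero. That $N$ is indecomposable follows from finite length and simplicity of $\Omega_X$: in any $N=N'\oplus N''$ with $\Omega_X\subseteq N'$, the module $N''$ is a quotient of $\bigoplus_z\delta_z$, hence $\bigoplus_{z\in Z'}\delta_z$, and a $\delta_z$-summand would force $e_z=0$, so $Z'=\varnothing$.

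The heart of the argument is the ``maximal nonsplitness'' $e_z\neq 0$, which I would prove by a purely local computation at $z$. Localizing the exact sequence at $z$ and applying the snake lemma to right multiplication by $t$ --- which is $\bk$-linear and commutes with all the $\cO$-linear maps in sight, has kernel $0$ and cokernel $\bk$ on $\Omega_X$, and kernel $\bk$ and cokernel $0$ on $\delta_z$ --- identifies the connecting map $(\hat\cO_{X,z})_{\mfv}\to\Ext^1_{\caD_X}(\delta_z,\Omega_X)$ with $e_z$ and shows $e_z\neq 0$ if and only if right multiplication by $t$ is surjective on $\hat M_z:=\hat\caD_{X,z}/\xi\hat\caD_{X,z}$, i.e.\ $\hat\caD_{X,z}=\hat\caD_{X,z}\,t+\xi\hat\caD_{X,z}$. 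Now $S:=\hat\caD_{X,z}/\hat\caD_{X,z}\,t$ is the left delta module $\bk[\partial_t]$, with $\partial_t$ acting by multiplication and $t$ by $-\,d/d\partial_t$; since $\hat\caD_{X,z}\onto S$ is left $\caD$-linear, the image of $\xi\hat\caD_{X,z}$ in $S$ is the image of the operator $\xi\cdot$, so the claim reduces to: $\xi=ut^{d}\partial_t$ acts surjectively on $S=\bk[\partial_t]$. This is a triangularity check: for $m\geq d-1$ one computes that $\xi\cdot\partial_t^{\,m}$ has top term $(-1)^{d}u(0)\tfrac{(m+1)!}{(m+1-d)!}\,\partial_t^{\,m-d+1}$, with nonzero coefficient precisely because $\operatorname{char}\bk=0$, so the elements $\xi\cdot\partial_t^{\,m}$, $m\geq d-1$, span $\bk[\partial_t]$.

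I expect this last step to be the main obstacle: making the snake-lemma bookkeeping precise (above all, matching the connecting map with the extension class, and keeping track that right multiplication by $t$ is not $\caD$-linear), and then pinning down the action of $\xi$ on the delta module --- the point where characteristic zero is genuinely used. The remaining ingredients are routine: the equality $d_z=\operatorname{ord}_z\xi$; the adapted-basis splitting; the indecomposability of $N$; and the verification that the submodule appearing in the given sequence really is $\Omega_X$, equivalently that $M(X,\mfv)$ has no nonzero submodule supported on $Z$, which holds because $M(X,\mfv)$ is generated by the class of $1$, whose restriction to $X^\circ$ generates the rank-one local system $M(X,\mfv)|_{X^\circ}\cong\Omega_{X^\circ}$.
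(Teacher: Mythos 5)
Your argument is correct, but the key step is done by a genuinely different computation than the paper's. The paper also formally localizes at $z$ and reduces to $\mfv=\langle x^k\partial_x\rangle$ on $\bA^1$, but it establishes maximal nonsplitness by showing $\Hom(\caD_{\bA^1}/x^k\partial_x\caD_{\bA^1},\Omega_{\bA^1})=0$: the unique rational volume form annihilated by $L_{x^k\partial_x}$ is $x^{-k}dx$, which is not regular, so $\Omega$ cannot split off and hence $e_z\neq 0$. You instead prove the dual vanishing $\Ext^1(\delta_z,\hat M_z)=\operatorname{coker}(\cdot t)=0$ by the snake lemma and the surjectivity of $\xi=ut^d\partial_t$ on the left delta module $\bk[\partial_t]$. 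Both are one-point local computations; the paper's is shorter (one line once you know $\Hom(M,N)=N^{\mfv}$), while yours is more explicit about where characteristic zero enters (the nonvanishing of $(m+1)!/(m+1-d)!$) and has the side benefit of computing $\Hom(\delta_z,\hat M_z)=\ker(\cdot t)$ along the way. Your bookkeeping — the identification of $\Ext^\bullet(\delta_z,-)$ with the cohomology of $\cdot t$, the matching of the connecting map with $e_z$, the reduction of $\hat\caD=\hat\caD t+\xi\hat\caD$ to surjectivity of $\xi$ on $\hat\caD/\hat\caD t$ via left-linearity, the adapted-basis splitting, and the indecomposability of $N$ — all checks out.

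One remark in your closing paragraph is wrong, though it does not affect the proof since you take the displayed exact sequence as given. The assertion that ``the submodule is $\Omega_X$'' is \emph{not} equivalent to ``$M(X,\mfv)$ has no nonzero submodule supported on $Z$'': once the proposition is proved, $M(X,\mfv)$ visibly contains $\delta_z^{\oplus(d_z-1)}$ as a direct summand whenever $d_z\geq 2$. Moreover, cyclicity of $M(X,\mfv)$ does not rule out submodules supported on $Z$ — $H^0 j_!\Omega_{X^\circ}$ is also cyclic and contains $\bigoplus_{z\in Z}\delta_z$ as a submodule. What one actually needs is that the kernel $K$ of the map to the maximal quotient supported on $Z$ has no such submodule; this follows, for instance, from $\gr(\xi\caD_X)=(\sigma(\xi))$, which gives that $\delta_z$ occurs with total multiplicity exactly $d_z$ in the composition series of $M(X,\mfv)$, all of it already accounted for by the quotient $\bigoplus_z\delta_z\otimes(\hat\cO_{X,z})_{\mfv}$.
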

\begin{proof}
  By formally localizing at $z \in Z$, it is enough to assume that
  $\mfv = \langle x^k \partial_x \rangle$ for $\bA^1 = \Spec \bk[x]$
  and $k \geq 1$. In this case, it suffices to prove that
\[
\Hom(\caD_{\bA^1} / x^k \partial_x \cdot \caD_{\bA^1}, \Omega_{\bA^1}) = 0.
\]
But, no volume form on $\bA^1$ is annihilated by $L_{x^k \partial_x}$
(even in a formal neighborhood of zero): the rational volume form
annihilated by $L_{x^k \partial_x}$ is $x^{-k} dx$.  The last statement follows
as in the previous proof.
\end{proof}

\subsection{Finite maps}\label{ss:fmaps}
Let $f: X \to Y$ be a finite surjective map of affine varieties. In
this section we explain how to construct more examples using finite
maps, which generalizes the aforementioned Lie algebras of Hamiltonian
vector fields of Hamiltonians pulled back from $Y$. We will not need
the material of this section for the remainder of the paper.
\begin{definition}
Let $\Vect_X(Y) \subseteq \Vect(Y)$ be the
subspace of vector fields $\xi$ on $Y$ such that there
exists a vector field $f^* \xi$ on
$X$ such that $f_* (f^* \xi|_x) = \xi|_{f(x)}$ for all $x \in X$.
\end{definition}
Algebraically, $\Vect_X(Y)$ consists of the derivations of $\cO_Y$ which
extend to derivations of $\cO_X$.

Since $f$ is finite and $X$ and $Y$ are reduced, it is generically a
covering map.  Therefore, when $f^* \xi$ exists, it is unique.
\begin{example}\label{ex:normcrit2}
  If $X$ is a normal variety and the critical locus of $f$ has
  codimension at least two, then by Hartogs' theorem, vector fields
  on $X$ outside the singular and critical locus extend to all of $X$.
  Therefore, $\Vect_X(Y) = \Vect(Y)$, since $f$ is a covering map when
  restricted to this latter locus.
\end{example}
Suppose that $X$ and $Y$ are varieties and
$\mfv_Y \subseteq \Vect_X(Y)$. Let $\mfv_X := f^* \mfv_Y$.
\begin{proposition}
\begin{itemize}
\item[(i)] $(X,\mfv_X)$ has finitely many leaves if and only if
$(Y,\mfv_Y)$ does. 
\item[(ii)] $(X,\mfv_X)$ has finitely many incompressible leaves 
of and only if $(Y,\mfv_Y)$ does.
\item[(iii)] $(X,\mfv_X)$ has finitely many zero-dimensional leaves if and only if $(Y,\mfv_Y)$ does.
\end{itemize}
\end{proposition}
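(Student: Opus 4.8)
The plan is to deduce all three parts from the structure of degenerate invariant subvarieties, exploiting the characterizations already at hand: by Corollary~\ref{c:finleaves}, $(X,\mfv_X)$ has finitely many leaves exactly when it contains no degenerate invariant subvariety, and by definition it has finitely many incompressible leaves exactly when it contains no degenerate invariant subvariety on which $\mfv_X$ flows incompressibly; the same holds for $(Y,\mfv_Y)$. The basic tool is the identity $(f^*\xi)(f^*g) = f^*(\xi(g))$ for all $\xi\in\mfv_Y$ and $g\in\cO_Y$, which is immediate upon evaluating both sides of $f_*\bigl((f^*\xi)|_x\bigr)=\xi|_{f(x)}$ against $g$. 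Using it I would set up a correspondence between irreducible degenerate invariant subvarieties of $X$ and of $Y$: in one direction, if $Z\subseteq X$ is such a subvariety, then $W:=f(Z)$ (closed and irreducible, $f$ being finite) is one in $Y$; in the other, if $W\subseteq Y$ is such a subvariety, then every irreducible component $Z$ of $f^{-1}(W)$ that dominates $W$ is one in $X$. Both directions are needed, since the correspondence is many-to-one.

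The crux is verifying this correspondence. For invariance of $W=f(Z)$: the map $f|_Z\colon Z\to W$ is finite and dominant, hence --- in characteristic zero --- \'etale over a dense open of $W$, so at a generic smooth $z\in Z$ the differential $df_z\colon T_zZ\to T_{f(z)}W$ is an isomorphism; since $Z$ is $\mfv_X$-invariant we have $\mfv_X|_z\subseteq T_zZ$, and $df_z(\mfv_X|_z)=\mfv_Y|_{f(z)}$ by the lifting property, so $\mfv_Y|_{f(z)}\subseteq T_{f(z)}W$ on a dense set and therefore on all of $W$. For invariance of a component $Z$ of $f^{-1}(W)$: the identity above together with the Leibniz rule shows $f^*\xi$ preserves the ideal $f^*(I_W)\cdot\cO_X$, hence is tangent to $(f^{-1}(W))_\red$ and, by Seidenberg's theorem \cite{Sei-dirfgt} (compare Theorem~\ref{t:sing-pres}), to each of its irreducible components. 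Finally, degeneracy transfers: $f|_Z\colon Z\to W$ is finite surjective, so $\dim Z=\dim W$, and the isomorphisms $\mfv_X|_z\iso\mfv_Y|_{f(z)}$ at generic $z$ --- with $f(z)$ generic in $W$ --- show that $\mfv_X$ and $\mfv_Y$ have the same generic rank on $Z$ and on $W$ respectively; since an irreducible invariant subvariety is degenerate precisely when its generic rank is strictly smaller than its dimension (as in the discussion preceding Proposition~\ref{p:tr}), $Z$ is degenerate iff $W$ is. Together with Corollary~\ref{c:finleaves} this proves (i).

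For (ii) I would refine the correspondence to carry incompressibility. At a generic smooth $z\in Z$ at which $f|_Z$ is \'etale, the completion $\widehat{f|_Z}$ gives an isomorphism of formal neighborhoods $\hat Z_z\cong\hat W_{f(z)}$, and the lifting identity shows $f^*$ intertwines the $\mfv_Y|_W$-action on $\hat W_{f(z)}$ with the $\mfv_X|_Z$-action on $\hat Z_z$; hence a formal volume form preserved by one Lie algebra transports to a formal volume form preserved by the other, so $\mfv_X$ flows incompressibly on $Z$ iff $\mfv_Y$ flows incompressibly on $W=f(Z)$ (alternatively one invokes the divergence-function criterion, Proposition~\ref{p:reinc}, since $\cO_Z\cdot\mfv_X|_Z$ and $\cO_W\cdot\mfv_Y|_W$ are identified \'etale-locally). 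Combined with part (i) this yields (ii). For (iii): a zero-dimensional leaf of $(X,\mfv_X)$ is exactly a point of the vanishing locus $V(\mfv_X):=\{x\in X\mid\mfv_X|_x=0\}$, so the claim is that $V(\mfv_X)$ is finite iff $V(\mfv_Y)$ is. Since $df_x(\mfv_X|_x)=\mfv_Y|_{f(x)}$, we get $f\bigl(V(\mfv_X)\bigr)\subseteq V(\mfv_Y)$, hence $V(\mfv_X)\subseteq f^{-1}\bigl(V(\mfv_Y)\bigr)$; as $f$ is finite, finiteness of $V(\mfv_Y)$ gives finiteness of $V(\mfv_X)$. Conversely, if $V(\mfv_Y)$ is infinite it contains an irreducible curve $C$, which is a degenerate invariant subvariety (trivially, since $\mfv_Y|_C=0$); by (i), some component $Z$ of $f^{-1}(C)$ dominating $C$ is a degenerate invariant subvariety of $X$ with $\dim Z=\dim C=1$, and since $\mfv_Y$ vanishes on $C$ the \'etale argument at generic $z\in Z$ forces $f^*\xi|_Z=0$ for every $\xi\in\mfv_Y$, whence $Z\subseteq V(\mfv_X)$ and $V(\mfv_X)$ is infinite.

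The main obstacle is precisely the transfer of rank and of incompressibility across $f$ when the relevant subvariety lies inside the \emph{critical} locus of $f\colon X\to Y$: there $df$ fails to be injective on the ambient tangent spaces, so one cannot argue with $f$ directly and must instead use the intrinsic finite dominant morphism $f|_Z\colon Z\to W$ --- which is still generically \'etale in characteristic zero --- taking care throughout that it is the tangent spaces of the subvarieties $Z$ and $W$, rather than those of $X$ and $Y$, that are being compared.
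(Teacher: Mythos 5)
Your proof is correct and follows essentially the same route as the paper's: the paper likewise restricts $f$ to (corresponding) invariant subvarieties, observes that it is there generically a covering map, and transfers generic transitivity, incompressibility, and the vanishing locus across that covering. You have simply made explicit the details the paper leaves implicit (the two-way correspondence of degenerate invariant subvarieties via images and dominating preimage components, and the formal-neighborhood transport of preserved volume forms), including the justification that $f$ maps $V(\mfv_X)$ \emph{onto} $V(\mfv_Y)$, which the paper asserts without proof.
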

\begin{proof}
  Restricted to any invariant subvariety $Z \subseteq X$, $f$ is still
  finite and therefore generically a covering map.  This reduced the
  statement to the case where $f$ is a covering map of smooth
  varieties. Then, the statements (i)--(ii) follow from the basic
  facts that (i) $X$ is generically transitive if and only if $Y$ is;
  (ii) $X$ is incompressible if and only if $Y$ is. Statement (iii)
  follows from the fact that $f$ restricts to a finite map from the
  vanishing locus of $\mfv_X$ onto the vanishing locus of $\mfv_Y$.
\end{proof}
\begin{example}
  In the situation of Example \ref{ex:normcrit2}, $X$ has finitely
  many leaves under the flow of all vector fields if and only if the
  same is true of $Y$, and $X$ has finitely many exceptional points if
  and only if $Y$ does.  Thus, $(\cO_X)_{\Vect(X)}$ is
  finite-dimensional if and only if $(\cO_Y)_{\Vect(Y)}$ is.
\end{example}
\begin{example}
  If $f: X \to Y$ is a finite Poisson map of varieties with finitely
  many symplectic leaves and $X$ is normal, one recovers the
  observation at the end of \S \ref{ss:poisson} in the setting of
  Poisson maps (note that the critical locus of $f$ is automatically
  of codimension at least two, since $f$ is nondegenerate over the
  open leaves of $Y$). Thus, one recovers \cite[Theorem 3.1]{ESdm} in
  this setting, i.e., that $f^*H(Y)$ is holonomic (similarly one
  obtains that $f^*LH(Y)$ and $f^*P(Y)$ are holonomic).  Here, we only
  used the conditions that $X$ is normal and $Y$ has finitely many
  symplectic leaves to assure that $H(Y) \subseteq \Vect_Y(X)$; to
  drop these assumptions, one can observe that $H(Y) \subseteq
  \Vect_Y(X)$, since $f^* \xi_h = \xi_{f^* h}$ (which also allows one
  to drop the condition that $Y$ is Poisson altogether, using Hamiltonian vector fields on $X$ of the form $\xi_{f^* h}$); similarly we
  can conclude in this setting that $LH(Y) \subseteq \Vect_Y(X)$.
\end{example}
\begin{example}
  Suppose $f: X \to Y$ is a finite map of varieties equipped with top
  polyvector fields $\Xi_X$ and $\Xi_Y$ such that $f_*(\Xi_X|_x) =
  \Xi_Y|_{f(x)}$ for all $x \in X$ (an ``incompressible'' finite map).
  If $X$ is normal, $\Xi_Y$ has a finite degenerate locus, and the
  dimension of $X$ is at least two, one concludes that $f^* H(Y)$ is
  holonomic (as well as $f^* LH(Y)$ and $f^* P(Y)$), and hence that
  $(\cO_X)_{f^* H(Y)}$ is finite-dimensional; this recovers an
  observation at the end of \S \ref{ss:vtop} in a special case.  As in
  the previous remark, we can drop the assumptions that $X$ is normal
  and $\Xi_Y$ has a finite degenerate locus, since those were only
  used to show that $H(Y) \subseteq \Vect_Y(X)$, but this is
  automatic since we can pull back closed $(n-1)$-forms from $Y$ to
  $X$ (this also applies to $LH(Y)$, but not necessarily to $P(Y)$).

  In the case $Y=X/G$ where $G$ is a finite group acting on $(X,
  \Xi_X)$, one similarly recovers the observation from the end of \S
  \ref{ss:vtop}, that $(\cO_{X/G})_{P(X/G)} = (\cO_X)_{f^*P(X/G)}^G =
  (\cO_X)_{P(X)^G}^G$, as well as $(\cO_X)_{P(X)^G}$, are
  finite-dimensional if and only if $\Xi_X$ has a finite degenerate
  locus, i.e., if and only if $(\cO_X)_{P(X)}$ is finite-dimensional.
\end{example}

\section{Globalization and Poisson vector fields}
\label{s:ex-dloc}
\subsection{Hamiltonian vector fields are $\caD$-localizable}
In order to prove that our main examples are $\caD$-localizable (for
all vector fields and Hamiltonian vector fields), we prove the
following more general result, which roughly states that a Lie algebra
of vector fields generated by a coherent sheaf $E$ of ``potentials''
is $\caD$-localizable (in the Poisson case with $\mfv=H(X)$, or in the
case $\mfv=\Vect(X)$, $E=\cO_X$, as we will explain):
\begin{theorem}\label{t:h-loc}
  Let $E$ be a coherent sheaf on an affine variety $X$ equipped with a
  map $v: E \to T_X$ of $\bk$-linear sheaves, such that, for all $e
  \in E$, the bilinear map
\[
\pi_e(f,g) := v(f \cdot e)(g) - f \cdot v(e)(g)
\]
defines a skew-symmetric biderivation $\cO_X^{\otimes 2} \to \cO_X$.
Then (the Lie algebra generated by) $v(E)$ is $\caD$-localizable.
\end{theorem}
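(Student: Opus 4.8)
The plan is to verify the defining condition \eqref{e:d-loc} of $\caD$-localizability directly for the presheaf $\mfv(U) := v(E(U))$ of $\bk$-vector spaces of vector fields (this is a sub-presheaf of the tangent sheaf because $v$ is a morphism of $\bk$-linear sheaves); by Remark \ref{r:vsloc}, $\caD$-localizability of this presheaf is equivalent to that of the presheaf of Lie algebras it generates, which is exactly what is asserted. Since $X$ is affine, it suffices to take $U = X$ in \eqref{e:d-loc} (see the remark following the definition), i.e.\ to prove
\[
v(E(U'))\cdot\caD_{U'} = v(E(X))|_{U'}\cdot\caD_{U'}
\]
for every affine open $U' \subseteq X$, the inclusion $\supseteq$ being automatic. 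Both sides are $\caD$-submodules of $\caD_{U'}$ by Lemma \ref{l:vd-sheaf}, so the remaining inclusion can be checked after restricting to the members $X_g$ of a cover of $U'$ by principal opens of $X$; using that $v$ commutes with restriction and Lemma \ref{l:vd-sheaf} once more, the whole statement reduces to the local claim: for $g \in \cO_X$ and $e' \in E(X_g)$, one has $v(e') \in v(E(X))|_{X_g}\cdot\caD_{X_g}$.

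To prove the local claim, first use coherence of $E$ to write $e' = g^{-N} e$ with $e \in E(X)$ and $N \geq 0$, and let $\pi := \pi_e$ be the biderivation attached to $e$. The defining identity says that, as vector fields (operators in the second slot), $v(h e) = h\,v(e) + \pi(h,-)$ for all $h \in \cO$. Feeding in $h = g^{-N}$ and using that $\pi(\cdot,-)$ is a derivation in its first argument (so $\pi(g^{-N},-) = -N g^{-N-1}\pi(g,-)$) together with $\pi(g,-) = v(ge) - g\,v(e)$, one obtains, as elements of $T_{X_g}$,
\[
v(g^{-N}e)|_{X_g} = (N+1)\,g^{-N}\,v(e)|_{X_g} - N\,g^{-N-1}\,v(ge)|_{X_g}.
\]
It then remains to commute the function coefficients past these vector fields inside $\caD_{X_g}$: writing $g^{-M}\xi = \xi\cdot g^{-M} - \xi(g^{-M})$ for a vector field $\xi$, one produces zeroth-order correction terms, and a short computation shows that they cancel once one invokes $v(ge)(g) = g\,v(e)(g)$, i.e.\ $\pi(g,g) = 0$, the skew-symmetry of $\pi_e$. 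Hence $v(g^{-N}e)|_{X_g} = (N+1)\,v(e)|_{X_g}\cdot g^{-N} - N\,v(ge)|_{X_g}\cdot g^{-N-1}$, which lies in $v(E(X))|_{X_g}\cdot\caD_{X_g}$ because $e, ge \in E(X)$. This establishes the local claim, and hence the theorem.

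The reductions in the first paragraph are routine sheaf theory, and coherence is used only for the fraction presentation $e' = g^{-N}e$. The real content — and the only step I expect to require genuine care — is the interplay between the non-$\cO_X$-linearity of $v$ and the $\caD$-module action: clearing denominators in $v(g^{-N}e)$ does \emph{not} directly produce a vector field in the $\cO$-span of global ones but rather an a priori genuine differential operator of order $\leq 1$, and the biderivation hypothesis is precisely what pins down this discrepancy (the displayed formula), while skew-symmetry $\pi_e(g,g) = 0$ is precisely what forces the residual zeroth-order terms to vanish, so that the answer lands back inside $v(E(X))|_{X_g}\cdot\caD_{X_g}$. I would write out this cancellation carefully and treat the rest briefly. (The degenerate cases $N=0$, or $e$ or $ge$ zero, are immediate and need no separate treatment.)
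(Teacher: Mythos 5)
Your proof is correct, but it takes a genuinely different route from the paper's. The paper fixes an embedding $X \into \bA^n$ and proves the single uniform identity
\[
v(f \cdot e) = v(e) \cdot f + \sum_{i=1}^n \bigl(v(x_i \cdot e) - v(e) \cdot x_i \bigr) \cdot \frac{\partial f}{\partial x_i}
\]
in $\caD_U$ for an \emph{arbitrary} $f \in \cO_U$, using the chain rule $df = \sum_i \frac{\partial f}{\partial x_i}\,dx_i$ and the full skew-symmetry in the form $\pi_e(d(df)) = 0$; no reduction to principal opens is needed. You instead reduce (via Lemma \ref{l:vd-sheaf} and quasicoherence of $E$) to the case $f = g^{-N}$ on a principal open $X_g$, where $d(g^{-N}) = -Ng^{-N-1}dg$ collapses the identity to the two-term formula $v(g^{-N}e) = (N+1)\,v(e)\cdot g^{-N} - N\,v(ge)\cdot g^{-N-1}$, and only the diagonal instance $\pi_e(g,g)=0$ of skew-symmetry is needed to kill the order-zero corrections (I checked the cancellation: $-(N+1)v(e)(g^{-N}) + N\,v(ge)(g^{-N-1}) = N(N+1)g^{-N-1}v(e)(g) - N(N+1)g^{-N-1}v(e)(g) = 0$). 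Your version is embedding-free and more elementary at the cost of a routine sheaf-theoretic reduction; the paper's version buys a formula valid for arbitrary $f$ at once, which is what lets Remark \ref{r:f-loc} upgrade the argument to \emph{formal} $\caD$-localizability by completing the embedding at a point --- your principal-open reduction would not directly give that, since $\hat\cO_{X,x}$ is not a localization of $\cO_X$ at one element. One cosmetic remark: quasicoherence already gives $E(X_g) = E(X)\otimes_{\cO_X}\cO_{X_g}$, so coherence is not really what the fraction presentation uses.
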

The condition of the theorem can alternatively be stated as: $v: E \to
T_X$ is a differential operator of order $\leq 1$ whose principal
symbol $\sigma(v): E \to T_X \otimes T_X$ is skew-symmetric.
\begin{proof}
  Let $X \subseteq \bA^n$ be an embedding into affine space, and let
  $x_1, \ldots, x_n$ be the coordinate functions on $\bA^n$.  Let 
  $U \subseteq X$ be an open affine subset.
We need to show that, for every $g
  \in \cO_{U}$ and $e \in E(X)$, then $v(g \cdot e) \in v(E(X)) \cdot
  \caD_U$.  Let $V \subseteq \bA^n$ be an open affine subset such that $V \cap X = U$.  We claim that, in $\caD_U$, for all $f \in \cO_V$,
\begin{equation}\label{e:loc}
  v(f \cdot e) = v(e) \cdot f + \sum_{i=1}^n \bigl(v(x_i \cdot e) - v(e) \cdot x_i \bigr) \cdot \frac{\partial f}{\partial x_i},
\end{equation}
which immediately implies the statement.  To prove \eqref{e:loc}, we
first rewrite it (putting vector fields on the left-hand side and
functions on the right hand side) as
\[
(v(f \cdot e) - f \cdot v(e)) - \sum_{i=1}^n \frac{\partial
  f}{\partial x_i} \bigl(v(x_i \cdot e) - x_i v(e) \bigr) = v(e)(f) +
\sum_{i=1}^n \bigl( -\frac{\partial f}{\partial x_i} v(e)(x_i) +
(v(x_i \cdot e) - x_i v(e))(\frac{\partial f}{\partial x_i}) \bigr).
\]
So the statement is equivalent to showing that both sides of the above
desired equality are zero. For the LHS, this follows from the fact
that, for fixed $e \in E$, the map $f \mapsto v(f \cdot e) - f \cdot
v(e)$ is a derivation of $f$; in more detail, this implies that this
is obtained from a linear map $\Omega^1 \to T_X, df \mapsto v(f \cdot
e) - f \cdot v(e)$, and then we write $df = \sum_i \frac{\partial
  f}{\partial x_i} dx_i$.  For the RHS, the fact that $v(e) \in T_X$
is a derivation implies that $v(e)(f) + \sum_{i=1}^n -\frac{\partial
  f}{\partial x_i} v(e)(x_i) = 0$, just as before. It remains to show
that
\[
\sum_{i=1}^n \bigl( v(x_i \cdot e) - x_i \cdot v(e)
\bigr)\bigl(\frac{\partial f}{\partial x_i}\bigr) = 0.
\]
Using the definition of $\pi_e$, we can rewrite the LHS of this
expression as
\[
\sum_i \pi_e\bigl(x_i, \frac{\partial f}{\partial x_i}\bigr).
\]
Now, viewing $\pi_e$ as a bivector field (i.e., a skew-symmetric
biderivation), this can be rewritten as
\[
\sum_i \pi_e \bigl(dx_i \wedge d(\frac{\partial f}{\partial x_i})\bigr) =
\pi_e d(df) = 0. \qedhere
\]
\end{proof}
\begin{corollary}\label{c:all-local}
  $(X,\Vect(X))$ is $\caD$-localizable.  More generally, if $E
  \subseteq \Vect(X)$ is a coherent subsheaf, then (the Lie algebra
  generated by) $E$ is $\caD$-localizable.
\end{corollary}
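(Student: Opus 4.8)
The plan is to obtain Corollary \ref{c:all-local} as an immediate consequence of Theorem \ref{t:h-loc}, by choosing the ``potential sheaf'' $E$ so that the map $v\colon E\to T_X$ is $\cO_X$-linear, which forces all of the biderivations $\pi_e$ to vanish identically and hence trivially satisfies the hypothesis of that theorem.

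For the general statement, I would let $E\subseteq\Vect(X)=T_X$ be the given coherent subsheaf and take $v\colon E\hookrightarrow T_X$ to be the tautological inclusion. Since $v$ is $\cO_X$-linear, for every local section $e$ of $E$ and all $f,g\in\cO_X$ one has
\[
\pi_e(f,g)=v(f\cdot e)(g)-f\cdot v(e)(g)=f\cdot v(e)(g)-f\cdot v(e)(g)=0,
\]
and the zero form is certainly a skew-symmetric biderivation $\cO_X^{\otimes 2}\to\cO_X$. Equivalently, $v$ is a differential operator of order $\leq 1$ (indeed of order $0$) whose principal symbol $\sigma(v)$ is zero, hence vacuously skew-symmetric. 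Theorem \ref{t:h-loc} then applies and yields that the Lie algebra generated by $v(E)=E$ is $\caD$-localizable.

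The first assertion is then the special case $E=T_X$, $v=\Id$: the sheaf $T_X=\Vect(X)$ is coherent because $X$ is an affine scheme of finite type, and it is already a sheaf of Lie algebras, so the previous paragraph shows directly that $(X,\Vect(X))$ is $\caD$-localizable.

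There is no real obstacle here: all of the work is packaged inside Theorem \ref{t:h-loc}, and the only point to check is the hypothesis on $\pi_e$, which is immediate once one notices that in both cases the relevant map into $T_X$ carries no derivation part. If one preferred not to cite Theorem \ref{t:h-loc} as a black box, one could instead verify the defining condition \eqref{e:d-loc} by hand, embedding $X\subseteq\bA^n$ with coordinates $x_i$ and expanding a local vector field as $\xi=\sum_i\xi(x_i)\partial_{x_i}$; but this merely reproduces the identity \eqref{e:loc} of that theorem specialized to $e=\partial_{x_i}$, so invoking the theorem is the cleanest route.
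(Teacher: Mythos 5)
Your proof is correct and is exactly the paper's argument: the paper's proof of Corollary \ref{c:all-local} is simply ``Take $v = \Id$ in the theorem,'' i.e.\ apply Theorem \ref{t:h-loc} with $v$ the tautological inclusion $E \hookrightarrow T_X$, and your observation that $\cO_X$-linearity of $v$ forces every $\pi_e$ to vanish is precisely why the hypothesis holds. No further comment is needed.
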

\begin{proof}
Take $v = \Id$ in the theorem.
\end{proof}
\begin{corollary}
  Let $X$ be either Poisson, Jacobi, or equipped with a top polyvector
  field. Then the presheaf $\caH(X)$ of Hamiltonian vector fields is
  $\caD$-localizable.  Moreover, in the Poisson and top polyvector
  field cases, the presheaf $\cLH(X)$ of locally Hamiltonian vector
  fields is also $\caD$-localizable, and defines the same
  $\caD$-module.

  Similarly, when $X$ is equipped with a coherent subsheaf $N
  \subseteq \cT_X$ and a divergence function $D: N \to \cO_X$, then
  the presheaf $\mathcal{H}(X,D)$ is $\caD$-localizable, setting $E :=
  \wedge^2_{\cO_X} N$.
\end{corollary}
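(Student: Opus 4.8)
The plan is to deduce the corollary from Theorem~\ref{t:h-loc}: in each of the four situations I would exhibit a coherent sheaf $E$ and a $\bk$-linear map $v\colon E\to T_X$ with $v(E)=\caH(X)$ (respectively $v(E)=H(X,D)$) such that the principal symbol $\sigma(v)\colon E\to T_X\otimes T_X$ is skew-symmetric, equivalently such that $\pi_e(f,g):=v(f\cdot e)(g)-f\cdot v(e)(g)$ is a skew-symmetric biderivation for every $e\in E$. Since $\caD$-localizability is a condition on affine opens and $E$ is coherent, applying Theorem~\ref{t:h-loc} to $(U,E|_U)$ for each affine open $U\subseteq X$ then shows $\caH|_U$ (respectively $\mathcal{H}(X,D)|_U$) is $\caD$-localizable, hence so is the presheaf on all of $X$. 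The choices of $E$ and $v$ are: for $X$ Poisson or Jacobi, $E:=\cO_X$ with $v(f):=\xi_f$ (so $v(f)=\pi(df)$ in the Poisson case, $v(f)=\pi(df)+fu$ in the Jacobi case); for $X$ equipped with a top polyvector field $\Xi$ of degree $n=\dim X$, $E:=\tilde\Omega^{n-2}_X$ with $v(\alpha):=\Xi(d\alpha)$ (the contraction $\Xi\circ d$ descends modulo torsion since $\Xi$ kills torsion); and for $(X,N,D)$ as in the statement, $E:=\wedge^2_{\cO_X}N$ with $v(\tau):=\theta_\tau=\tilde{D}(\tau)$, $\tilde{D}$ as in \eqref{e:divfn-cplx}. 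In each case $v(E)$ is already closed under bracket (by $[\xi_f,\xi_g]=\xi_{\{f,g\}}$, by \eqref{e:toppol-lie}, and by $[\theta_\tau,\theta_{\tau'}]=\theta_{L_{\theta_\tau}(\tau')}$ respectively), so ``the Lie algebra generated by $v(E)$'' is precisely $\caH(X)$ (respectively $H(X,D)$).

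Next I would check the skew-symmetry hypothesis case by case. For the Poisson case, $v(fh)=f\,\xi_h+h\,\xi_f$ gives $\pi_h(f,g)=h\{f,g\}$, manifestly a skew-symmetric biderivation. For the Jacobi case the order-zero summand $f\mapsto fu$ is $\cO_X$-linear and so cancels in the formula for $\pi_h$, leaving $\pi_h(f,g)=h\cdot\pi(df\wedge dg)$ as before. For the top polyvector case, $v(f\alpha)=\Xi(df\wedge\alpha)+f\,v(\alpha)$, so $\pi_\alpha(f,g)=\Xi(df\wedge\alpha\wedge dg)$; this is a biderivation in $f$ and in $g$ because $\Xi$ is $\cO_X$-linear, and moving the last $dg$ past $\alpha$ (degree $n-2$) and $df$ shows $df\wedge\alpha\wedge dg=-\,dg\wedge\alpha\wedge df$, giving skew-symmetry. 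For the divergence case, combining $D(f\xi)=fD(\xi)+\xi(f)$ with $[f\xi_1,\xi_2]=f[\xi_1,\xi_2]-\xi_2(f)\xi_1$ yields $v\bigl(f(\xi_1\wedge\xi_2)\bigr)=f\,v(\xi_1\wedge\xi_2)+\xi_1(f)\xi_2-\xi_2(f)\xi_1$, hence $\pi_{\xi_1\wedge\xi_2}(f,g)=\xi_1(f)\xi_2(g)-\xi_2(f)\xi_1(g)$, again a skew-symmetric biderivation; since $\pi_e$ is additive in $e$ and every element of $\wedge^2_{\cO_X}N$ is a finite sum of decomposables $\xi\wedge\eta$ (as $f(\xi\wedge\eta)=(f\xi)\wedge\eta$), the condition holds for all $e\in E$. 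Theorem~\ref{t:h-loc} then delivers the $\caD$-localizability of $\caH$ and of $\mathcal{H}(X,D)$.

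For the statements about $\cLH$ in the Poisson and top polyvector cases, I would invoke Propositions~\ref{p:poiss-hlh} and~\ref{p:vtop-hlh}, which give $H(W)^{os}=LH(W)^{os}$, hence $H(W)\cdot\caD_W=LH(W)\cdot\caD_W$ as sub-$\caD$-modules of $\caD_W$, for every affine open $W$. Since $H\subseteq LH$ and $LH(U)|_{U'}\subseteq LH(U')$ (locally Hamiltonian is a local property), for affine opens $U'\subseteq U$ we get
\[
LH(U')\caD_{U'}=H(U')\caD_{U'}=H(U)|_{U'}\caD_{U'}\subseteq LH(U)|_{U'}\caD_{U'}\subseteq LH(U')\caD_{U'},
\]
where the second equality is the $\caD$-localizability of $\caH$ just proved; so all terms coincide and $\cLH$ is $\caD$-localizable, and moreover $M(X,\cLH)|_U=LH(U)\caD_U\setminus\caD_U=H(U)\caD_U\setminus\caD_U=M(X,\caH)|_U$ for every affine open $U$, i.e.\ $M(X,\cLH)=M(X,\caH)$ by Proposition~\ref{p:local}.

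The real work is all in Theorem~\ref{t:h-loc}, so what remains is bookkeeping; the only points needing a little care are the sign in the top polyvector field computation (equivalently, that $\sigma(v)$ genuinely lands in $\wedge^2 T_X$) and the well-definedness of $\Xi\circ d$ on $\tilde\Omega^{n-2}_X$, together with the routine but necessary check that the presheaf ``$v(E)$'' produced by Theorem~\ref{t:h-loc} is literally the presheaf $\caH$ (respectively $\cLH$, $\mathcal{H}(X,D)$) of the corollary, so that $\caD$-localizability transfers in the form asserted.
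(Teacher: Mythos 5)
Your proposal is correct and follows essentially the same route as the paper: the same choices of $E$ and $v$ in each case (namely $E=\cO_X$, $v(f)=\xi_f$ for Poisson/Jacobi; $E=\tilde\Omega_X^{n-2}$, $v(\alpha)=\Xi(d\alpha)$ for a top polyvector field; $E=\wedge^2_{\cO_X}N$, $v(\tau)=\theta_\tau$ for a divergence function), an appeal to Theorem \ref{t:h-loc}, and Propositions \ref{p:poiss-hlh} and \ref{p:vtop-hlh} for the $\cLH$ statement. The only difference is that you write out the skew-symmetry verifications that the paper leaves as ``easy to check,'' and those computations are all correct.
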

\begin{proof}
  In the Poisson and Jacobi cases, we can take $E = \cO_X$ and $v(f) =
  \xi_f$. Then it is easy to check that $\pi_e$ is a skew-symmetric
  biderivation for all $e \in E$, so the theorem implies that $\caH_X$
  is $\caD$-localizable.  In the case of a top polyvector field $\Xi$,
  we take $E = \tilde \Omega_X^{n-2}$ and again let $v(\alpha) = \xi_\alpha =
  \Xi(d\alpha)$.

  For the second statement, it suffices to recall from Propositions
  \ref{p:poiss-hlh} and \ref{p:vtop-hlh} that, in the Poisson and top
  polyvector field cases, $H(X) \cdot \cO_X = LH(X) \cdot \cO_X$ for
  all affine $X$.

  The final statement follows in the same manner.
\end{proof}
  On the other hand, $P(X)$ need not be $\caD$-localizable: see \S
  \ref{ss:pvfd-loc} for a detailed discussion.
\begin{remark}\label{r:zarlh}
  We note that, in general, $\caH_X$ is \emph{not} a sheaf,
  and neither is $\cLH_X$.  

  For an example where $\caH_X$ and $\cLH_X$ are not sheaves, let $X$
  be the complement in $\bA^3$ of the plane $x+y=0$, equipped with the
  Poisson structure given by the potential $f(x,y,z) =
  \frac{xy}{x+y}$, i.e.,
\[
\{x,y\}=0, \{y,z\}= \frac{y^2}{(x+y)^2}, \{z,x\} = \frac{x^2}{(x+y)^2}.
\]
Consider the vector field $\xi := (x+y)^{-2} \partial_z$.  This is
regular, and on the open set where $x \neq 0$, it is the Hamiltonian
vector field of $x^{-1}$, and on the open set where $y \neq 0$, it is
the Hamiltonian vector field of $-y^{-1}$.  But it is not globally
Hamiltonian, since if $\xi = \xi_f$ for some $f \in \cO_X$, then
$\frac{1}{x^2+y^2} = \{f,z\}$, but the RHS must live in
$\frac{(x^2,y^2)}{x^2+y^2}$, where $(x^2,y^2)$ is the ideal generated
by $x^2$ and $y^2$.  This is impossible.

The same argument shows that $\xi$ is not given by a global one-form:
otherwise, $\frac{1}{x^2+y^2} = f_1 \{x, z\} + f_2 \{y, z\}$ for some
$f_1, f_2 \in \cO_X$, and one concludes as before that this is
impossible.

On the other hand, in the case that $X$ is generically symplectic, it
follows that $\caH_X$ and $\cLH_X$ are sheaves, since in this case any
vector field which is Hamiltonian in some neighborhood must be given
by a unique Hamiltonian function up to locally constant functions, and
this is then defined and Hamiltonian on the regular locus of that
function (and similarly in the locally Hamiltonian case).

Note similarly that, in the case of a variety with a top polyvector
field $\Xi$, $\caH_X$ and $\cLH_X$ are sheaves, since if $\Xi$ is
nonzero, then on its nonvanishing locus a Hamiltonian vector field is
once again given by a unique Hamiltonian.
\end{remark}

\begin{remark}\label{r:f-loc}
  In the examples above, the presheaves also are equipped naturally
  with spaces of sections on formal neighborhoods $\hat X_x$ of every
  point $x \in X$; the presheaf condition requires only that these
  contain the restrictions of sections on open subsets containing
  $x$. Thus it makes sense to define the notion of \emph{formal
    $\caD$-localizability},
  i.e., that, for every open affine $U$ and $x \in U$,
\begin{equation}\label{e:f-d-loc}
\mfv(\hat X_x) \caD_{\hat X_x} = \mfv(U)|_{\hat X_x} \caD_{\hat X_x}.
\end{equation}
Formal localizability implies usual localizability: indeed, if $\mfv$
is formally localizable, and $\xi \in \mfv(U')$ for some $U' \subseteq
U$, then at every $x \in U'$, it follows that $\xi|_{\hat
  X_x} \in \mfv(U) \cdot \caD_{\hat X_x}$, and hence $\xi \in \mfv(U)
\cdot \caD_U$, by Lemma \ref{l:vd-sheaf}.

Theorem \ref{t:h-loc} extends to show that, under the assumptions
there, $\mfv$ is formally $\caD$-localizable, by formally localizing
the embedding $X \to \bA^n$ to $\hat X_x \to \hat \bA^n_0$. Then, the
same proof applies.  We conclude as before that the presheaves of
(locally) Hamiltonian vector fields are formally $\caD$-localizable,
as well as $\Vect(X)$ and all coherent subsheaves thereof.
% the (locally) Hamiltonian vector fields for Poisson
% varieties, Jacobi varieties which generically have even rank, and
% varieties with a nonzero top polyvector field (hence generically
% Calabi-Yau) are all formally localizable, since in these cases, with
% $X$ affine, all formal vector fields $\xi \in \mfv(\hat X_x)$ can be
% written as $\xi=\sum_i f_i \xi_i$ with $\xi_i \in \mfv(X)$, and since
% these vector fields flow incompressibly, we can conclude that $\sum_i
% \xi_i(f_i) = 0$, so $\xi = \sum_i \xi_i \cdot f_i$. 
% Somewhat differently, one can show that $\Vect(X)$ is
% $\caD$-localizable: if $\xi \in \Vect(\hat X_x)$ is written as a sum
% $\xi = \sum_i f_i \xi_i$ for $\xi_i \in \Vect(X)$ and $f_i \in \hat
% \cO_{X,x}$, then $\xi = \sum_i \xi_i \cdot f_i - \sum_i \xi_i(f_i)$.
% So it suffices to show that $\Vect(X)(\hat \cO_{X,x}) \subseteq
% Vect(X) \cdot \hat \caD_{X,x}$.  For this we note that the ideal
% $(\Vect(X)(\hat \cO_{X,x}))$ is generated by $\Vect(X)(V)$ where $V
% \subseteq \cO_X$ is a subspace which generates $\cO_X$ as an algebra.
% The same proof shows that, as in Example \ref{e:all-local}, if $\mfv$
% is a Lie algebroid, then it is formally $\caD$-localizable.
\end{remark}

\subsection{$\caD$-localizability of Poisson vector
  fields}\label{ss:pvfd-loc}
An interesting question raised in the previous subsection is whether
$P(X)$ is $\caD$-localizable.  This turns out to have an interesting
answer, which we discuss here. The material of this subsection will
not be needed for the rest of the paper, and our motivation is partly
to illustrate the nontriviality of $\caD$-localizability. We will
first give the statements and examples, and postpone the proofs of the
propositions to the end of the subsection, for the purpose of
emphasizing the statements and counterexamples to their
generalization.
\begin{proposition}\label{p:ploc}
Let $X$ be an irreducible affine Poisson
  variety on which $P(X)$ flows  incompressibly. If $P(X)$ is
  $\caD$-localizable, then the generic rank of $P(X)$ 
  must equal that of $P(U)$ for every open subset $U \subseteq X$.  

  Conversely, suppose that $X$ is a smooth affine Poisson variety on
  which the rank of $P(U)$ equals that of $H(U)$ everywhere, for all
  affine open $U \subseteq X$, and that this rank is constant on $X$. Then,
  for all affine open $U \subseteq X$, one has an equality of
  $\cO$-saturations $P(U)^{os} = H(U)^{os}$.  Hence, $P(X)$ is
  $\caD$-localizable.
\end{proposition}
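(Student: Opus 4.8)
The plan is to treat the two implications separately. For the first, assume $P(X)$ is $\caD$-localizable and flows incompressibly on the irreducible variety $X$. By Corollary \ref{c:inc}, $M(X,P(X))$ is fully supported and its singular support has generic dimension $2\dim X - r$, with $r$ the generic rank of $P(X)$. For any affine open $U \subseteq X$, $\caD$-localizability together with Lemma \ref{l:vd-sheaf} gives $M(U,P(U)) = M(X,P(X))|_U$; since $U$ is dense this is again fully supported, so by Corollary \ref{c:inc} $P(U)$ flows incompressibly on $U$ and the singular support of $M(U,P(U))$ has generic dimension $2\dim X - r'$, with $r'$ the generic rank of $P(U)$. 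Restricting to a dense open does not change the generic dimension of the singular support, so $r = r'$; for a general open $U$ one passes to a dense affine open $U_0 \subseteq U$ and notes that the generic rank of $P(U)$ is caught between the generic ranks of $P(X)|_U$ and of $P(U_0)$, both equal to $r$.

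For the converse, fix an affine open $U \subseteq X$; the heart of the matter is the equality of $\cO$-saturations $P(U)^{os} = H(U)^{os}$. The inclusion $H(U)^{os} \subseteq P(U)^{os}$ is immediate from $H(U) \subseteq P(U)$ and the monotonicity of $\mfv \mapsto \mfv^{os}$. For the reverse I would first establish $\cO_U \cdot P(U) = \cO_U \cdot H(U) =: \cF_U$: the equal-rank hypothesis forces $H(U)|_x = P(U)|_x$ inside $T_xU$ for all $x \in U$, and since $X$ is smooth with this rank constant, the quotient $T_U / (\cO_U \cdot H(U))$ has locally constant fibre dimension, hence is locally free, so $\cO_U\cdot H(U)$ is a subbundle of $T_U$ (the tangent sheaf of the regular symplectic foliation); a global vector field lying in its fibres everywhere is then a section of it, and applied to each $\eta \in P(U)$ (whose value at $x$ lies in $P(U)|_x = H(U)|_x$) this gives $P(U) \subseteq \cF_U$, hence $\cO_U\cdot P(U) = \cF_U$.

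Next, both $H(U)$ and $P(U)$ flow incompressibly on $U$: for $H(U)$ this is Example \ref{ex:poiss-finsym}, while for $P(U)$ one uses the Weinstein splitting $\hat U_x \cong V \times V'$ at a smooth point $x$ ($V$ symplectic, $V'$ with zero Poisson bracket, the Hamiltonian flow transitive along $V$), in which every $\eta \in P(U) \subseteq \cF_U$ is tangent to $V$ and preserves $\pi_V$, hence restricts over each point of $V'$ to a locally Hamiltonian field on $V$ and therefore preserves $\omega_V$ leafwise and the product volume $\omega_V \wedge \omega_{V'}$; the same computation shows that the formal volumes preserved at $x$ by $H(U)$ and by $P(U)$ coincide. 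Then Proposition \ref{p:inc-os} identifies $H(U)^{os}$ (resp. $P(U)^{os}$) with the incompressible vector fields in $\cO_U\cdot H(U)$ (resp. $\cO_U\cdot P(U) = \cO_U\cdot H(U)$), and by the remark following that definition the incompressibility condition says exactly that the vector field preserves a formal volume preserved by the relevant Lie algebra; since those volumes agree, so do the two conditions, yielding $P(U)^{os} = H(U)^{os}$.

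It then remains to deduce $\caD$-localizability of $P(X)$. Since $\mfv \subseteq \mfv^{os} \subseteq \mfv\cdot\cO_U \subseteq \mfv\cdot\caD_U$ for any Lie algebra $\mfv$, the equality $P(U)^{os} = H(U)^{os}$ gives $P(U)\cdot\caD_U = H(U)\cdot\caD_U$ for every affine open $U$. The presheaf of Hamiltonian vector fields is $\caD$-localizable by the corollary to Theorem \ref{t:h-loc}, so $H(U')\caD_{U'} = H(U)|_{U'}\caD_{U'}$ for affine opens $U' \subseteq U$, and hence
\[
P(U')\caD_{U'} = H(U')\caD_{U'} = H(U)|_{U'}\caD_{U'} \subseteq P(U)|_{U'}\caD_{U'} \subseteq P(U')\caD_{U'}
\]
(using $H(U) \subseteq P(U)$ and $P(U)|_{U'} \subseteq P(U')$); all terms are equal, which is condition \eqref{e:d-loc} for $P$. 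The main obstacle is the converse, and within it the two geometric inputs just used: promoting the pointwise equality $H(U)|_x = P(U)|_x$ to the sheaf equality $\cO_U\cdot H(U) = \cO_U\cdot P(U)$ (where smoothness and constant rank are essential, through local freeness of the quotient), and the Weinstein-splitting computation showing that $P(U)$ flows incompressibly with the same preserved formal volumes as $H(U)$. Once these are in place, Proposition \ref{p:inc-os} and the $\caD$-localizability of Hamiltonian vector fields make everything else formal.
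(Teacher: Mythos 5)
Your proof is correct and follows essentially the same route as the paper: the first implication via Corollary \ref{c:inc} and the generic dimension of the singular support combined with $\caD$-localizability, and the converse via the equality of $\cO$-spans together with incompressibility of $P(U)$ along the symplectic leaves, which is exactly the content of the paper's Lemma \ref{l:inc-rank-sat} (which you re-prove inline) plus the same leafwise volume-preservation argument. The only real difference is that you spell out the final formal deduction of $\caD$-localizability from $P(U)^{os}=H(U)^{os}$ and the localizability of $H$, which the paper leaves implicit.
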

The assertion of the second paragraph 
follows from the more general
\begin{lemma}\label{l:inc-rank-sat}
  Suppose $\mfv \subseteq \mathfrak{w}$ is an inclusion of Lie
  algebras of vector fields on a smooth affine variety $X$.  Suppose
  that the rank of $\mfv$ is constant and equals that of
  $\mathfrak{w}$ everywhere, and moreover that $\mathfrak{w}$ flows
  incompressibly.  Then $\mfv^{os} = \mathfrak{w}^{os}$. In
  particular, $M(X,\mfv) = M(X, \mathfrak{w})$.
\end{lemma}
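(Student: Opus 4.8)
The plan is to work locally on a formal neighborhood of a generic point and exploit the constant-rank hypothesis to get a product decomposition, then transfer the conclusion back. First I would reduce to checking the equality $\mfv^{os} = \mathfrak{w}^{os}$ generically on $X$: since both are $\cO_X$-submodules of $\cO_X \cdot \mfv = \cO_X \cdot \mathfrak{w}$ (these spans agree because the ranks agree everywhere and $X$ is smooth, so $\cO_X \cdot \mfv$ is a subbundle of $T_X$ of the given rank, and similarly for $\mathfrak{w}$, and they contain each other at the level of $\cO$-spans), and since $\mathfrak{w}$ flows incompressibly, by Proposition \ref{p:inc-os} the $\cO$-saturation $\mathfrak{w}^{os}$ is exactly the subspace of $\cO_X \cdot \mathfrak{w}$ of incompressible vector fields, i.e.\ those $\xi = \sum_i f_i \xi_i$ with $\sum_i \xi_i(f_i) = 0$. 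The key point is that $\mfv$ too flows incompressibly: because the rank of $\mfv$ equals that of $\mathfrak{w}$ everywhere, $M(X,\mfv)$ and $M(X,\mathfrak{w})$ have the same (full) support by Corollary \ref{c:inc} — more precisely, $\cO_X \cdot \mfv = \cO_X \cdot \mathfrak{w}$, so formally at a generic point the flow of $\mfv$ generates the same distribution as that of $\mathfrak{w}$, and the incompressibility of $\mathfrak{w}$ gives a preserved formal volume which is then also preserved by $\mfv$. Hence Proposition \ref{p:inc-os} applies equally to $\mfv$, giving $\mfv^{os} = \{$incompressible vector fields in $\cO_X \cdot \mfv\}$.

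With both characterizations in hand, the equality $\mfv^{os} = \mathfrak{w}^{os}$ is immediate, since $\cO_X \cdot \mfv = \cO_X \cdot \mathfrak{w}$ and the notion of "incompressible vector field in this common $\cO_X$-span" is the same in both cases (by the remark following Proposition \ref{p:inc-os}, or directly: incompressibility of a vector field $\xi \in \cO_X \cdot \mfv$ is detected by preservation of the formal volume form at a generic smooth point, which does not reference whether we expand $\xi$ in terms of $\mfv$ or $\mathfrak{w}$). The only subtlety is to make sure the "incompressible" condition is well-defined, i.e.\ independent of the chosen expression $\xi = \sum_i f_i \xi_i$; this is exactly what incompressibility of the ambient Lie algebra buys us, per the cautionary note after the definition of incompressible vector field. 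Finally, $M(X,\mfv) = M(X,\mathfrak{w})$ follows because $M(X,\mfv) = \mfv \cdot \caD_X \setminus \caD_X = (\cO_X \cdot \mfv^{os}) \cdot \caD_X \setminus \caD_X$ — using that $\mfv \cdot \caD_X = \mfv^{os} \cdot \caD_X$, which holds since $\mfv^{os} \supseteq \mfv$ and $\mfv^{os} \subseteq \mfv \cdot \cO_X \subseteq \mfv \cdot \caD_X$ — and the same computation for $\mathfrak{w}$ then gives the identical right ideal.

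The main obstacle I anticipate is the first reduction: establishing cleanly that $\cO_X \cdot \mfv = \cO_X \cdot \mathfrak{w}$ as subsheaves of $T_X$, and that $\mfv$ inherits incompressibility. The rank hypothesis gives equality of the two spans fiberwise at every point, but one must check this upgrades to equality of coherent subsheaves on the smooth variety $X$ (a constant-rank coherent subsheaf of a locally free sheaf is a subbundle, and two subbundles that agree on all fibers agree); once that is done, the incompressibility transfer is a formal-neighborhood argument as in the proof of Corollary \ref{c:inc}. Everything else is a direct application of Proposition \ref{p:inc-os} and the definitions.
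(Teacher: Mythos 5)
Your argument is correct and follows the paper's proof essentially verbatim: constant equal rank gives $\cO_X\cdot\mfv = \cO_X\cdot\mathfrak{w}$, incompressibility passes from $\mathfrak{w}$ to the subalgebra $\mfv$, and Proposition \ref{p:inc-os} identifies both $\cO$-saturations with the incompressible vector fields in this common span. (The parenthetical rewriting of $\mfv\cdot\caD_X$ as $(\cO_X\cdot\mfv^{os})\cdot\caD_X$ is unnecessary and not quite right as stated, but the chain $\mfv\cdot\caD_X=\mfv^{os}\cdot\caD_X=\mathfrak{w}^{os}\cdot\caD_X=\mathfrak{w}\cdot\caD_X$ that you actually use is correct and yields the final claim.)
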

\begin{proof}
  Since the ranks of $\mfv$ and $\mathfrak{w}$ are constant and equal,
  we conclude that, for every $x \in X$, there exists an open subset
  $U \subseteq X$ containing $x$ such that $\cO_U \cdot \mfv|_U = \cO_U \cdot
  \mathfrak{w}$, and hence in fact $\cO_X \cdot \mfv = \cO_X \cdot
  \mathfrak{w}$.  Now, if $\mathfrak{w}$ flows incompressibly, and
  hence also $\mfv$, then $\mfv^{os}=\mathfrak{w}^{os}=$ the subspace
  of $\cO_X \cdot \mfv$ of incompressible vector fields, by Proposition
  \ref{p:inc-os}.
\end{proof}
\begin{remark}
Lemma \ref{l:inc-rank-sat} generalizes to affine schemes of finite type, if
we replace the rank condition by the condition that $\cO_X \cdot \mfv =
\cO_X \cdot \mathfrak{w}$.
\end{remark}
We also give a localizability result that does not require
$X$ to be smooth, in the situation of Remark \ref{r:n-cod2-poiss}, where
$P(X) = LH(X^\circ)$ for $X^\circ$ the smooth locus of $X$.
\begin{notation}\label{ntn:gdr}
  Given any not necessarily affine scheme $Y$, we will let
  $H^\bullet_{DR}(Y) := H^\bullet(\Gamma(\tilde \Omega_Y))$ denote the
  cohomology of the complex of global sections of de Rham differential
  forms modulo torsion.
\end{notation}
For all $x \in X$,
let $\cO_{X,x}$ be the \emph{uncompleted} local ring of $X$ at $x$.  
\begin{proposition}\label{p:ploc-is}
Suppose $X$ is Poisson, normal, and symplectic on its smooth locus. Let $S$
be its singular locus.
\begin{enumerate}
\item[(i)] For every $s \in S$, let $E_s \subseteq S$ be the union of all irreducible components of $S$ containing $s$. 
Suppose that, for all $s \in S$, the natural map
\begin{equation}\label{e:is-l-surj}
H^1_{DR}(X \setminus E_s) \oplus H^1_{DR}(\Spec \cO_{X,s}) \to
H^1_{DR}( \Spec \cO_{X,s}  \setminus E_s)
\end{equation}
is surjective.  Then, $X$ is Poisson localizable. Moreover, for all $s \in S$,
\begin{equation}\label{e:imp-ploc}
P(\Spec \cO_{X,s}) =P(X)|_{\Spec \cO_{X,s}} \cdot \cO_{X,s} + LH(\Spec \cO_{X,s}).
\end{equation}
\item[(ii)] Now suppose that $S$ is finite and $\bk=\bC$. Then the
  hypothesis of (i) is satisfied if, for all $s \in S$ and all affine
  Zariski open neighborhoods $U$ of $s$, the natural map on
  \emph{topological} cohomology,
\begin{equation}\label{e:is-l-top-surj}
H^1_{\tpl}(X \setminus \{s\}) \oplus H^1_{\tpl}(U) \to H^1_{\tpl}(U \setminus \{s\})
\end{equation}
is surjective. In particular, in this case, $X$ is Poisson localizable,
and \eqref{e:imp-ploc} holds.
\end{enumerate}
\end{proposition}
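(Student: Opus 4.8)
The plan is to reduce $\caD$-localizability of the presheaf $U\mapsto P(U)$ (i.e., that $X$ is Poisson localizable) to a stalkwise statement about the local rings $\cO_{X,x}$, and then to prove that statement using Remark \ref{r:n-cod2-poiss} and the surjectivity hypothesis. Working one irreducible component at a time we may assume $X$ irreducible; if $X$ is smooth then $P(X)=LH(X)$ and we are done by Proposition \ref{p:poiss-hlh} together with the $\caD$-localizability of $H$, so assume $X$ singular, hence $\dim X\geq 2$. For every affine open $U$ the right ideal $P(U)\cdot\caD_U\subseteq\caD_U$ is a coherent $\caD_U$-module, being a submodule of the Noetherian module $\caD_U$, so its formation commutes with restriction to opens and with passage to stalks. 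By Lemma \ref{l:vd-sheaf} and its stalkwise version, together with the inclusions $P(X)|_{\Spec\cO_{X,x}}\subseteq P(U)|_{\Spec\cO_{X,x}}\subseteq P(U')|_{\Spec\cO_{X,x}}\subseteq P(\Spec\cO_{X,x})$ for affine opens $U'\subseteq U\ni x$, it suffices to show for each $x\in X$ that
\[
P(\Spec\cO_{X,x})\cdot\caD_{\cO_{X,x}} = P(X)|_{\Spec\cO_{X,x}}\cdot\caD_{\cO_{X,x}}.
\]

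The heart of the argument is the case $x=s\in S$, which is \eqref{e:imp-ploc}. Since near $s$ the singular locus of $X$ equals $E_s$, the smooth locus of the normal local scheme $\Spec\cO_{X,s}$ is $\Spec\cO_{X,s}\setminus E_s$, and by Remark \ref{r:n-cod2-poiss} (whose proof applies verbatim) every element of $P(\Spec\cO_{X,s})$ is $\eta_\alpha=\pi(\alpha)$ for a closed one-form $\alpha$ on $\Spec\cO_{X,s}\setminus E_s$. The hypothesis \eqref{e:is-l-surj} lets me write $\alpha=\alpha_1+\beta+dh$, with $\alpha_1$ restricted from a closed one-form on $X\setminus E_s$, $\beta$ restricted from a closed one-form modulo torsion on $\Spec\cO_{X,s}$, and $h\in\Gamma(\Spec\cO_{X,s}\setminus E_s,\cO)=\cO_{X,s}$ (Hartogs, as $E_s$ has codimension $\geq 2$). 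Then $\eta_\alpha=\eta_{\alpha_1}+\eta_\beta+\xi_h$: restricting $\alpha_1$ to the smooth locus $X^\circ$ of $X$ gives $\eta_{\alpha_1}\in LH(X^\circ)=P(X)$, so $\eta_{\alpha_1}|_{\Spec\cO_{X,s}}\in P(X)|_{\Spec\cO_{X,s}}$, while $\eta_\beta,\xi_h\in LH(\Spec\cO_{X,s})$, which proves \eqref{e:imp-ploc}. To pass to $\caD$-modules I use that $LH(\Spec\cO_{X,s})\cdot\caD_{\cO_{X,s}}=H(\Spec\cO_{X,s})\cdot\caD_{\cO_{X,s}}$ (the identity $\eta_{\sum f_i dg_i}=\sum\xi_{g_i}\cdot f_i$ from Proposition \ref{p:poiss-hlh}, valid over any ring because $\sum\xi_{g_i}(f_i)=\pi(d(\sum f_i dg_i))=0$), that $H$ is $\caD$-localizable by Theorem \ref{t:h-loc} (which transfers to local rings just as above), and that $H(X)\subseteq P(X)$. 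These give
\[
H(\Spec\cO_{X,s})\cdot\caD_{\cO_{X,s}} = H(X)|_{\Spec\cO_{X,s}}\cdot\caD_{\cO_{X,s}} \subseteq P(X)|_{\Spec\cO_{X,s}}\cdot\caD_{\cO_{X,s}},
\]
and hence the displayed equality at $s$ (the reverse inclusion being trivial). For $x\notin S$ the point is smooth, $P(\Spec\cO_{X,x})=LH(\Spec\cO_{X,x})$, and the same reasoning applies with $E_s=\varnothing$, $h=0$. This proves (i).

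For (ii), finiteness of $S$ forces $E_s=\{s\}$. Shrinking the affine neighborhood $U$ of $s$, we may assume $s$ is the only singular point of $U$, so $U\setminus\{s\}$ is smooth; and $\dim X\geq 2$. I would identify the three terms of \eqref{e:is-l-surj} with colimits over such $U$: $H^1_{DR}(\Spec\cO_{X,s}\setminus\{s\})=\varinjlim_U H^1_{DR}(U\setminus\{s\})=\varinjlim_U H^1_{\tpl}(U\setminus\{s\})$ by Grothendieck's theorem, $H^1_{DR}(\Spec\cO_{X,s})=\varinjlim_U H^1_{DR}(U)$, and $H^1_{DR}(X\setminus\{s\})$, matching the latter two with their topological counterparts. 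Granting these identifications, surjectivity of \eqref{e:is-l-surj} follows from surjectivity of \eqref{e:is-l-top-surj} for all $U$, since a filtered colimit of compatible surjections is a surjection; the final sentence of (ii) is then immediate from (i).

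The step I expect to be the main obstacle is the comparison needed in (ii) between $H^1$ of the complex $\tilde\Omega^\bullet$ of forms modulo torsion and topological $H^1$ for the \emph{singular} schemes $X\setminus\{s\}$ and $\Spec\cO_{X,s}$: since $\tilde\Omega^\bullet$ is not the correct de Rham complex at singular points, one must show that in cohomological degree one, for normal schemes whose singularities are isolated of codimension $\geq 2$, restriction to the smooth locus induces an isomorphism on both the algebraic de Rham (modulo torsion) and the topological side. A secondary point requiring care is the reduction to local rings in (i) --- coherence of $P(U)\cdot\caD_U$ and compatibility of $\mfv\cdot\caD$ with localization --- which is routine but must be set up correctly.
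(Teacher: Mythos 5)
Part (i) of your proposal is essentially the paper's own argument: decompose the closed one-form via \eqref{e:is-l-surj} using Remark \ref{r:n-cod2-poiss}, absorb the $df$ term by normality, and then pass to $\caD$-modules via Proposition \ref{p:poiss-hlh}, the $\caD$-localizability of $H$, and the sheaf property of $P(X)|_U\cdot\caD_U$ from Lemma \ref{l:vd-sheaf}. That part is fine.

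Part (ii) has a genuine gap, and it is exactly at the step you flagged, but your proposed repair does not work. You identify $H^1_{DR}(U\setminus\{s\})$ with $H^1_{\tpl}(U\setminus\{s\})$ ``by Grothendieck,'' but $U\setminus\{s\}$ is smooth and \emph{not affine} (affine minus a point of codimension $\geq 2$), so Grothendieck's theorem only identifies the \emph{hypercohomology} $\bH^1_{DR}(U\setminus\{s\})$ with topological cohomology; the global-sections cohomology $H^1_{DR}(U\setminus\{s\})$ appearing in \eqref{e:is-l-surj} injects into it (Lemma \ref{l:h-bh-inj}) but need not equal it, since $H^1(U\setminus\{s\},\cO)\cong H^2_{\{s\}}(\cO_U)$ can be nonzero (e.g.\ the cone over an elliptic curve). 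Likewise $U$ and $X\setminus\{s\}$ are singular, so matching their naive de Rham $H^1$ ``with their topological counterparts'' is not automatic. Your suggested patch --- that for isolated singularities of codimension $\geq 2$ restriction to the smooth locus is an isomorphism on both de Rham and topological $H^1$ --- is false on the topological side: $U$ can be contractible while $U\setminus\{s\}$ retracts onto a link with nontrivial $H^1$ (again the elliptic cone), and were it true the hypothesis \eqref{e:is-l-top-surj} would trivialize.

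What the paper actually does instead is: (a) invoke the Bloom--Herrera type result \cite{BH-drcas} that the comparison map $H^1_{DR}(U)\to H^1_{\tpl}(U)$ is \emph{surjective} even for singular $U$, which upgrades the topological surjectivity \eqref{e:is-l-top-surj} to surjectivity of $\bH^1_{DR}(X\setminus\{s\})\oplus H^1_{DR}(U)\to\bH^1_{DR}(U\setminus\{s\})$ at the hypercohomology level; and (b) run the Mayer--Vietoris sequences \eqref{e:mv-hyper} and \eqref{e:mv-ord} in parallel, converting surjectivity of the first map of the hypercohomology sequence into injectivity of its last map, observing that the latter factors through the last map of the ordinary-cohomology sequence, and then reading off surjectivity of the first map of the ordinary sequence, which is \eqref{e:is-l-usurj}. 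Without some version of (a) and (b) --- or another mechanism for crossing between global-sections de Rham cohomology and hypercohomology --- the colimit argument in your part (ii) does not close.
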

\begin{example}
  When $X$ has a contracting $\bG_m$ action (where this is the
  multiplicative group), i.e., $\cO_X$ is nonnegatively graded with
  $\bk$ in degree zero, then $H^\bullet(X)=\bk$, and in particular
  $H^2(X)=0$. Therefore, if $X$ has an isolated singularity at the
  fixed point for the action, using the Mayer-Vietoris sequence
  \eqref{e:mv-ord} below, \eqref{e:is-l-top-surj} is surjective.
  Thus, if $X$ is also normal and generically symplectic, the
  conditions of the proposition are satisfied, so $P(X)$ is
  $\caD$-localizable. Also, in this case, $P(U) = P(X)|_U + LH(U)$ for
  all open sets (and for those $U$ which don't contain the singularity
  we have $P(U)=H(U)$, since then $U$ is symplectic).

  For such an example where $P(U)/LH(U)$ is nonzero, let $X$ be the
  locus $x^3+y^3+z^3=0$ (or a more general elliptic singularity); then
  $P(U)/LH(U)$ is generated by the Euler vector field in $P(X)$ for
  all open affine $U$.
\end{example}
\begin{example}
  Here is a simple example of a non-normal $X$ which is not
  $\caD$-localizable. Suppose $X = \Spec \bk[x^2,x^3,y,xy]$ and
  $\{x,y\}=y$. This is generically symplectic but not normal. Then we
  claim that every global Poisson vector field vanishes at
  $y=0$. Indeed, $\xi = f \partial_x + g \partial_y$ is Poisson if and
  only if $\frac{\partial f}{\partial x} + \frac{\partial g}{\partial
    y} = \frac{g}{y}$. Writing $g=yh$, we obtain $\frac{\partial
    f}{\partial x} + y\frac{\partial h}{\partial y} = 0$.  So $y \mid
  \frac{\partial f}{\partial x}$. Since $\xi$ is a vector field on
  $X$, $f$ vanishes at the origin, and hence $y \mid f$.  This proves
  the claim. On the other hand, in the complement $U$ of any
  hyperplane through the origin, $\partial_x$ is a Poisson vector
  field; but this can only be in $P(X) \cdot \caD_U$ when the
  hyperplane was $y=0$. Thus $P(X)$ is not $\caD$-localizable.
\end{example}
\begin{remark}
  In the case $X$ is smooth, if it has finitely many symplectic
  leaves, it is in fact symplectic.  However, there are many cases
  where $X$ is smooth and generically symplectic, and $P(X)$ has
  finitely many leaves even though $X$ has infinitely many symplectic
  leaves; e.g., $\pi = x \partial_x \wedge \partial_y$ on $\bA^2$, as
  mentioned in \S \ref{ss:poisson}.
\end{remark}
We give an example where $X$ is smooth but $P(X)$ is not $\caD$-localizable:
\begin{example} Let $\mfg$ be the Lie algebra $\mfg :=
  \mathfrak{sl}_2$ and let $X = \mfg^*$, equipped with the induced
  Poisson bracket on $\cO_X = \Sym \mfg$.  Then, all global Poisson
  vector fields are Hamiltonian, since $H^1(\mfg, \Sym \mfg) = 0$
  (this implies that all derivations $\mfg \to \Sym \mfg$ are inner,
  and hence all derivations of $\Sym \mfg$, i.e., vector fields on
  $\mfg^*$, are Hamiltonian).  It is clear that the Poisson bivector
  has rank two, except at the origin, where the rank is zero; hence
  this is the rank of $P(X)$.  However, we claim that the rank of the
  space of generic Poisson vector fields is three.  Indeed, write
  $\mfg = \langle e, h, f \rangle$ with the standard bracket $[e,f]=h,
  [h,e]=2e, [h,f]=-2f$.  So $e, h, f \in \cO_X$ are linear
  coordinates.  Let $C = 2ef + \frac{1}{2} h^2 \in \cO_X$ be the
  Casimir function, so $\{C,g\}=0$ for all $g \in \cO_X$.  Then, if we
  localize where $e\neq 0$, we can consider the coordinate system
  $(e,h,C)$ and take the directional derivative in the $C$ direction,
  which in the original coordinates $(e,h,f)$ is $\xi :=
  \frac{1}{2e}\partial_f$.  Since the Poisson bivector field is
  tangent to the planes where $C$ is constant, this vector field is
  Poisson, which is also immediate from explicit computation (it is
  enough to check that $\{\xi(x), y\} + \{x, \xi(y)\} = \xi\{x,y\}$
  for $x,y \in \cO_X$, which clearly reduces to the case $x=f, y=h$,
  where $\{\xi(f), h\} = \frac{1}{e} = \xi(2f) = \xi\{f,h\}$.)
\end{example}
\begin{proof}[Proof of Proposition \ref{p:ploc}]
  By incompressibility and Corollary \ref{c:inc}, the generic rank of
  $P(X)$ equals $2 \dim X$ minus the dimension of the singular support
  of $M(U',P(X)|_{U'})$ for small enough open subsets $U'$ (viewing
  $P(X)|_{U'}$ as a vector space).  Thus $\caD$-localizability implies
  that this must also equal the generic rank of $P(U)$ for every open
  subset $U \subseteq X$.

  The second statement follows from Lemma \ref{l:inc-rank-sat},
  provided we can show that $P(X)$ flows incompressibly. By
  assumption, $P(X)$ flows parallel to the symplectic leaves.  But, to
  be Poisson, such vector fields must preserve the symplectic form
  along the leaves, and hence they are incompressible along the
  leaves.  Thus, as for $H(X)$ (see Example \ref{ex:poiss-finsym}),
  one concludes that $P(X)$ flows incompressibly on $X$.
\end{proof}
\begin{proof}[Proof of Proposition \ref{p:ploc-is}] 
(i) Suppose that $\xi \in P(\Spec \cO_{X,s})$.  As explained in Remark 
\ref{r:n-cod2-poiss}, this means that $\xi = \eta_\alpha$ where $\alpha$ is
a closed one-form on $\Spec \cO_{X,s} \setminus E_s$. By the hypothesis \eqref{e:is-l-surj}, we
can write
\begin{equation}\label{e:ploc-is-pf1}
\alpha = \alpha_{\Spec \cO_{X,s}} + \alpha_{X \setminus E_s} + df,
\end{equation}
where $\alpha_{\Spec \cO_{X,s}}$ is a closed one-form modulo torsion
on $\Spec \cO_{X,s}$,
$\alpha_{X \setminus E_s}$ is a closed one-form modulo torsion
on $X \setminus E_s$, and $f \in
\Gamma(\Spec \cO_{X,s} \setminus E_s)$.  By normality, $f \in \cO_{X,s}$.  Thus
$\xi_f \in H(\Spec \cO_{X,s})$.  Note that $\eta_{\alpha_{\Spec \cO_{X,s}}} \in LH(\Spec \cO_{X,s})$, by definition. As in Remark \ref{r:n-cod2-poiss}, 
we obtain that $\eta_{X \setminus E_s} \in P(X)$. Therefore, applying the operation $\beta \mapsto \eta_\beta$ to both sides of \eqref{e:ploc-is-pf1}, we
obtain \eqref{e:imp-ploc}, since $\xi$ was arbitrary.

As a consequence, we deduce that $P(\Spec \cO_{X,s}) \subseteq P(X)|_{\Spec \cO_{X,s}} \cdot \cO_{X,s}$.  Now, $s \in X$ was an arbitrary singular point.  At smooth points $x \in X$, we have $P(\Spec \cO_{X,x}) = H(\Spec \cO_{X,x}) \subseteq
H(X)|_{\Spec \cO_{X,x}} \cdot \cO_{X,x}$. 

Now, for arbitrary open affine $U \subseteq X$, $P(X)|_U \cdot \caD_U$
is a sheaf on $U$, by Lemma \ref{l:vd-sheaf}.  By the above,
$P(U)|_{\Spec \cO_{X,x}} \subseteq P(X)|_{\Spec \cO_{X,x}} \cdot
\caD_{X,x}$, where the latter is the Zariski localization of $\caD_X$
at $x$. By Lemma \ref{l:vd-sheaf}, $P(X)|_{\Spec \cO_{X,x}} \cdot
\caD_{X,x} = (P(X)|_U \cdot \caD_U)|_{\Spec \cO_{X,x}}$.  We conclude
that, for all $x \in U$,
\[
P(U)|_{\Spec \cO_{X,x}} \subseteq (P(X)|_U \cdot \caD_U)|_{\Spec \cO_{X,x}},
\]
and since $P(X)|_U \cdot \caD_U$ is a sheaf, this implies that $P(U)
\subseteq P(X)|_U \cdot \caD_U$.  As $U$ was arbitrary, we conclude
Poisson localizability.

(ii) In order to prove \eqref{e:is-l-surj}, it suffices to prove the
statement when $\Spec \cO_{X,s}$ is replaced by sufficiently small
Zariski open neighborhoods $U$ of $s$.  This is because every closed
one-form modulo torsion in $\Spec \cO_{X,s} \setminus E_s$ is actually
regular on $U \setminus E_s$ for some Zariski open neighborhood $U$ of
$s$, and we are free to shrink it.

Now, assuming that $S$ is finite, $E_s = \{s\}$ for all $s \in S$. By
the preceding paragraph, it suffices to show that
\eqref{e:is-l-top-surj} implies that the map
\begin{equation}\label{e:is-l-usurj}
H^1_{DR}(X \setminus \{s\}) \oplus H^1_{DR}(U) \to
H^1_{DR}(U \setminus \{s\})
\end{equation}
is surjective. To see this, we first note that, for $Y$ smooth but not necessarily affine, we have an isomorphism by Grothendieck's theorem,
\[
\mathbf{H}^\bullet_{DR}(Y) \cong H^\bullet_{\tpl}(Y),
\]
where $\mathbf{H}^\bullet_{DR}(Y)$ denotes the \emph{hypercohomology}
of the complex of sheaves $\Omega_Y^\bullet = \tilde \Omega_Y^\bullet$.

Next, there is a natural map $H^1_{DR}(U) \to H^1_{\tpl}(U)$, obtained
by integrating along cycles; one can slightly perturb a closed path in
$U$ to miss the isolated singularities of $U$, and integrating against
a one-form on $U$ which is closed mod torsion (hence closed when
restricted to the smooth locus of $U$) produces a well-defined answer,
which depends only on the homology class in $U$ of the closed path.

Then, the restriction map $H^1_{DR}(U) =\bH^1_{DR}(U) \to \bH^1_{DR}(U \setminus \{s\}) = H^1_{\tpl}(U \setminus \{s\})$ factors through the map $H^1_{DR}(U) \to H^1_{\tpl}(U)$, which is surjective
 by the main result of \cite{BH-drcas}.

Then, \eqref{e:is-l-top-surj} implies that we have a surjection
\begin{equation}\label{e:is-l-hdr-surj}
\bH^1_{DR}(X \setminus \{s\}) \oplus H^1_{DR}(U) \to
\bH^1_{DR}(U \setminus \{s\}),
\end{equation}
where here we note that $H^1_{DR}(U) = \bH^1_{DR}(U)$ since $U$ is affine.

Since $X = (X \setminus \{s\}) \cup U$, we have an exact Mayer-Vietoris
sequence on hypercohomology of the triple $(X, X \setminus \{s\}), U)$, which in part takes the form
\begin{equation}\label{e:mv-hyper}
\bH^1_{DR}(X \setminus \{s\}) \oplus H^1_{DR}(U) \to
\bH^1_{DR}(U \setminus \{s\}) \to H^2_{DR}(X) \to \bH^2_{DR}(X \setminus \{s\})
\oplus H^2_{DR}(U).
\end{equation}
By \eqref{e:is-l-hdr-surj}, the first map is surjective, and hence the last map is injective.

We also have a Mayer-Vietoris sequence for ordinary $H^\bullet_{DR}$,
associated to the exact sequence of complexes of global sections,
\[
0 \to \Omega^\bullet_X \into \Gamma(\Omega^{\bullet}_{X \setminus
  \{s\}}) \oplus \Omega^\bullet_U \onto \Gamma(\Omega^{\bullet}_{U
  \setminus \{s\}}).
\]
This has the form
\begin{equation}\label{e:mv-ord}
  H^1_{DR}(X \setminus \{s\}) \oplus H^1_{DR}(U) \to
  H^1_{DR}(U \setminus \{s\}) \to H^2_{DR}(X) \to H^2_{DR}(X \setminus \{s\})
  \oplus H^2_{DR}(U).
\end{equation}
Now, the final map in \eqref{e:mv-hyper} factors through the final map
in \eqref{e:mv-ord} (since $X$ is affine).  Therefore the last map in
\eqref{e:mv-ord} must also be injective. We conclude that the first
map of \eqref{e:mv-ord}, which is the same as \eqref{e:is-l-usurj},
is surjective. This completes the proof.
\end{proof}
\subsection{Formal $\caD$-localizability of Poisson vector fields}
It turns out that formal $\caD$-localizability of Poisson vector
fields is a stronger condition, which implies (in the incompressible
case) that $X$ is generically symplectic.
\begin{proposition}
\label{p:p-floc}
  If $X$ is irreducible affine
  Poisson and $P(X)$ flows incompressibly, then if $P(X)$ is formally
  $\caD$-localizable, then $X$ must be generically symplectic.
\end{proposition}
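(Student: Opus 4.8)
The plan is to argue by contradiction. Suppose $X$ is \emph{not} generically symplectic, so the generic rank $r$ of the Poisson bivector $\pi$ satisfies $r < n := \dim X$. I would localize at a point $x \in X$ that is generic in a double sense: $\pi$ has locally constant rank $r$ near $x$, and the (fully supported, see below) $\caD$-module $M(X,P(X))$ restricts to a nonzero local system on a Zariski neighborhood of $x$ — both conditions holding on dense open subsets, hence simultaneously realizable. By the Weinstein splitting theorem (already used in the proof of Corollary \ref{c:inc} and in Example \ref{ex:poiss-finsym}), write $\hat X_x \cong V \times V'$, where $V$ is a formal symplectic polydisc of dimension $r$ and $V'$ is a formal polydisc of dimension $n-r \geq 1$ equipped with the zero Poisson bracket, with $\pi$ corresponding to $\pi_V \oplus 0$.

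The heart of the argument is a formal computation showing that $(\hat X_x, P(\hat X_x))$ is transitive but that $P(\hat X_x)$ does not flow incompressibly. For transitivity, I would verify directly, using the Schouten bracket together with the facts that $\iota_{dg}\pi_V = 0$ for $g \in \cO_{V'}$ and that Hamiltonian vector fields preserve $\pi_V$, that both the vector fields $g \cdot \xi_f$ with $g \in \cO_{V'}$, $f \in \cO_V$ (so $\cO_{V'} \cdot H(V) \subseteq P(\hat X_x)$) and the vector fields $h(v')\,\partial_{y'_a}$ with $h \in \cO_{V'}$ (so $\Vect(V')$, viewed as $V'$-directional fields with $\cO_{V'}$-coefficients, lies in $P(\hat X_x)$) are Poisson; evaluated at an arbitrary point of $\hat X_x$, the former span the $V$-tangent directions and the latter the $V'$-tangent directions, giving transitivity everywhere on the formal disc. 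Since $\dim V' \geq 1$, $P(\hat X_x)$ contains both $\partial_{y'_1}$ and $y'_1 \partial_{y'_1}$, and a one-line Lie-derivative check ($L_{\partial_{y'_1}}\omega = 0$ and $L_{y'_1\partial_{y'_1}}\omega = 0$ would force a unit to vanish) shows no formal volume form on $\hat X_x$ is preserved by both. Hence, by Proposition \ref{p:tr} applied to $(\hat X_x, P(\hat X_x))$, we get $M(\hat X_x, P(\hat X_x)) = 0$, i.e.\ $P(\hat X_x) \cdot \caD_{\hat X_x} = \caD_{\hat X_x}$.

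Now I would invoke formal $\caD$-localizability: taking $U = X$ in \eqref{e:f-d-loc} (legitimate as $X$ is affine) gives $P(X)|_{\hat X_x} \cdot \caD_{\hat X_x} = P(\hat X_x) \cdot \caD_{\hat X_x} = \caD_{\hat X_x}$, so $M(\hat X_x, P(X)|_{\hat X_x}) = 0$. But by Lemma \ref{l:vd-sheaf} this module equals $M(X,P(X))|_{\hat X_x}$, and since $P(X)$ flows incompressibly, $M(X,P(X))$ is fully supported by Corollary \ref{c:inc} (equivalently Proposition \ref{p:inc-subvar}); by the choice of $x$ its restriction to $\hat X_x$ is a nonzero local system, a contradiction. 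Therefore $X$ is generically symplectic. The step I expect to be the main obstacle is the transitivity-and-non-incompressibility computation of the second paragraph: one must check carefully that the exhibited families really are Poisson on all of $\hat X_x$ and span the full tangent space at \emph{every} point of the formal disc (so Proposition \ref{p:tr} genuinely applies), and that the presence of $y'_1\partial_{y'_1}$ really does obstruct an invariant volume form; everything else is bookkeeping with the already-proved localization lemmas.
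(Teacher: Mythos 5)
Your proposal is correct and follows essentially the same route as the paper's proof: Weinstein splitting $\hat X_x \cong V \times V'$ at a generic point, showing $P(\hat X_x)$ is transitive but not incompressible (via $\partial_{y'_1}$ and $y'_1\partial_{y'_1}$) so that $M(\hat X_x, P(\hat X_x)) = 0$ by Proposition \ref{p:tr}, and then contradicting the full support guaranteed by incompressibility via formal $\caD$-localizability; you merely make explicit the verifications the paper leaves implicit. One harmless imprecision: when $P(X)$ is incompressible but not generically transitive, the generic formal restriction of $M(X,P(X))$ is $\Omega_V \boxtimes \caD_{V'}$ rather than a local system, but only its nonvanishing is needed, so the argument stands.
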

Note that this in particular implies that the condition of
Proposition \ref{p:ploc} is satisfied: $P(U)$ has generic rank equal to
$\dim X$ for all $U$.
\begin{proof}[Proof of Proposition \ref{p:p-floc}]
  Suppose that $X$ is not generically symplectic.  Then, in the
  neighborhood of some sufficiently generic smooth point, $\hat X_x
  \cong V \times V'$ as a formal Poisson scheme, where $V$ is a
  symplectic formal polydisc and $V'$ is a positive-dimensional formal
  polydisc with the zero Poisson bracket.  So $P(\hat X_x) = P(V)
  \otimes \cO_{V'} \oplus \Vect(V')$.  This is evidently not
  incompressible since $V'$ is positive-dimensional.  Thus $M(\hat
  X_x, P(\hat X_x)) = 0$. However, if we assume $P(X)$ flows
  incompressibly, then $M(X, P(X))|_{\hat X_x} \neq 0$ for
  sufficiently generic $x$ (with $P(X)$ here the constant sheaf). Thus
  $P(X)$ is not formally localizable.
\end{proof}
We can also give a positive result parallel to Proposition
\ref{p:ploc-is}:
\begin{proposition}\label{p:fploc-is}
Suppose $X$ is affine Poisson, normal, and symplectic on its smooth locus. Let $S$
be its singular locus.
\begin{enumerate}
\item[(i)] For every $s \in S$, let $E_s \subseteq S$ be the union of all irreducible components of $S$ containing $s$. 
Suppose that, for all $s \in S$, the natural map
\begin{equation}\label{e:is-fl-surj}
H^1_{DR}(X \setminus E_s) \oplus H^1_{DR}(\Spec \hat \cO_{X,s}) \to
H^1_{DR}( \Spec \hat \cO_{X,s}  \setminus E_s)
\end{equation}
is surjective.  Then, $X$ is formally
Poisson localizable. Moreover, for all $s \in S$,
\begin{equation}\label{e:imp-fploc}
P(\Spec \hat \cO_{X,s}) =P(X)|_{\Spec \hat \cO_{X,s}} \cdot \hat \cO_{X,s} + 
LH(\Spec \hat \cO_{X,s}).
\end{equation}
\item[(ii)] Suppose that $S$ is finite and $\bk=\bC$. Then the
  hypothesis of (i) is satisfied if, for sufficiently small neighborhoods
  $U$ of $s$ in the complex topology, 
  $H^1_{\tpl}(X \setminus \{s\}) \to H^1_{\tpl}(U \setminus \{s\})$ is surjective.
 In particular, in this case, $X$ is formally Poisson
localizable, and \eqref{e:imp-fploc} holds.
\end{enumerate}
\end{proposition}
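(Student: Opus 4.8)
The plan is to follow the proof of Proposition~\ref{p:ploc-is} essentially line by line, replacing the uncompleted local ring $\cO_{X,s}$ by its completion $\hat\cO_{X,s}$ throughout, and reading ``Poisson localizable'' in the formal sense of Remark~\ref{r:f-loc}: the assertion that $P(\hat X_x)\cdot\caD_{\hat X_x}=P(U)|_{\hat X_x}\cdot\caD_{\hat X_x}$ for every open affine $U$ and every $x\in U$.

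\emph{Part (i).} Fix $s\in S$ and $\xi\in P(\Spec\hat\cO_{X,s})$. Since $X$ is normal and symplectic on its smooth locus, the discussion of Remark~\ref{r:n-cod2-poiss} applies in the formal germ at $s$: we may write $\xi=\eta_\alpha$ for a closed one-form modulo torsion $\alpha$ on $\Spec\hat\cO_{X,s}\setminus E_s$, and conversely. Applying the surjectivity hypothesis \eqref{e:is-fl-surj} to the class of $\alpha$, write
\[
\alpha=\alpha_{X\setminus E_s}+\alpha_{\Spec\hat\cO_{X,s}}+df,
\]
where $\alpha_{X\setminus E_s}$ is a closed one-form modulo torsion on $X\setminus E_s$, $\alpha_{\Spec\hat\cO_{X,s}}$ one on $\Spec\hat\cO_{X,s}$, and $f\in\Gamma(\Spec\hat\cO_{X,s}\setminus E_s)=\hat\cO_{X,s}$ by normality. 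Applying $\beta\mapsto\eta_\beta$ gives $\xi=\eta_{\alpha_{X\setminus E_s}}+\eta_{\alpha_{\Spec\hat\cO_{X,s}}}+\xi_f$; here $\eta_{\alpha_{X\setminus E_s}}\in LH(X\setminus E_s)\subseteq P(X\setminus E_s)=P(X)|_{X\setminus E_s}$ by Remark~\ref{r:n-cod2-poiss} (applied to $X$ and to $X\setminus E_s$, both of which have smooth locus $X^\circ$), while $\eta_{\alpha_{\Spec\hat\cO_{X,s}}}\in LH(\Spec\hat\cO_{X,s})$ and $\xi_f\in H(\Spec\hat\cO_{X,s})$. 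This is exactly \eqref{e:imp-fploc}, and in particular $P(\hat X_s)\subseteq P(X)|_{\hat X_s}+LH(\hat X_s)$.

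To extract formal localizability I would pass to $\caD$-modules. By the argument of Proposition~\ref{p:poiss-hlh} carried out in the formal germ, and by formal $\caD$-localizability of the presheaf of Hamiltonian vector fields (Remark~\ref{r:f-loc}), one has $LH(\hat X_s)\cdot\caD_{\hat X_s}=H(\hat X_s)\cdot\caD_{\hat X_s}=H(X)|_{\hat X_s}\cdot\caD_{\hat X_s}$; combining with the previous display and $H(X)\subseteq P(X)$ gives $P(\hat X_s)\cdot\caD_{\hat X_s}\subseteq P(X)|_{\hat X_s}\cdot\caD_{\hat X_s}$. At a smooth point $x$ the germ $\hat X_x$ is symplectic, so $P(\hat X_x)\cdot\caD_{\hat X_x}=H(\hat X_x)\cdot\caD_{\hat X_x}=H(X)|_{\hat X_x}\cdot\caD_{\hat X_x}$, and again the inclusion holds. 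Since $P(X)|_{\hat X_x}\subseteq P(U)|_{\hat X_x}\subseteq P(\hat X_x)$ for any open affine $U\ni x$, all three generate the same right ideal of $\caD_{\hat X_x}$, which is precisely condition \eqref{e:f-d-loc} for $\mfv=P$.

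\emph{Part (ii).} Here one must deduce \eqref{e:is-fl-surj} from the stated topological condition; since $S$ is finite, $E_s=\{s\}$. Two features make the hypothesis weaker than in Proposition~\ref{p:ploc-is}(ii): a sufficiently small analytic neighborhood $U$ of the isolated point $s$ is contractible, so $H^1_{\tpl}(U)=0$ (this is why no $H^1_{\tpl}(U)$-summand is needed); and, as $\codim_X S\geq 2$ by Remark~\ref{r:n-cod2-poiss}, the reduced de Rham cohomology of the formal germ vanishes through degree one, $H^1_{DR}(\Spec\hat\cO_{X,s})=0$, by a Greuel-type vanishing as in \cite{Gre-GMZ, HLY-plhdRc, Yau-vniis} (using normality). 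Hence \eqref{e:is-fl-surj} reduces to surjectivity of the restriction $H^1_{DR}(X\setminus\{s\})\to H^1_{DR}(\Spec\hat\cO_{X,s}\setminus\{s\})$. I would then identify $H^1_{DR}(\Spec\hat\cO_{X,s}\setminus\{s\})$ with the cohomology of the link $H^1_{\tpl}(U\setminus\{s\})$ via Grothendieck's comparison on the smooth locus together with the formal--analytic comparison of (punctured) germs from the cited references, and run the Mayer--Vietoris chase of the proof of Proposition~\ref{p:ploc-is}(ii) for the cover $X=(X\setminus\{s\})\cup U$ --- comparing the hypercohomology and ordinary de Rham sequences, using that $X$ is affine and the surjectivity $H^\bullet_{DR}\onto H^\bullet_{\tpl}$ of \cite{BH-drcas} --- to turn the hypothesized surjectivity $H^1_{\tpl}(X\setminus\{s\})\onto H^1_{\tpl}(U\setminus\{s\})$ into the needed de Rham surjectivity. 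Part (i) then applies. The main obstacle is exactly this last step: making rigorous the comparison between the algebraic de Rham cohomology of the (punctured) formal germ $\Spec\hat\cO_{X,s}$ and the topology of small (punctured) analytic neighborhoods of $s$, where one must invoke the correct comparison theorem for formal, convergent and singular de Rham complexes and keep track of torsion in $\tilde\Omega^\bullet$. Part (i), by contrast, is routine once Remark~\ref{r:n-cod2-poiss} and the formal $\caD$-localizability of Hamiltonian vector fields are in hand.
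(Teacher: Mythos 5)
Part (i) of your proposal is correct and is exactly the paper's route: the authors simply say the argument of Proposition \ref{p:ploc-is}(i) goes through with $\cO_{X,s}$ replaced by $\hat \cO_{X,s}$, noting that at smooth points $P(\hat X_x)=H(\hat X_x)$; your added detail about passing to $\caD$-modules via Proposition \ref{p:poiss-hlh} and Remark \ref{r:f-loc} is consistent with how the paper deduces localizability in Proposition \ref{p:ploc-is}.

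Part (ii), however, has a genuine gap, and it is precisely the step you yourself flag as ``the main obstacle.'' The comparison between the de Rham cohomology of the punctured formal germ and the small punctured analytic neighborhood is not something you can outsource to \cite{Gre-GMZ, HLY-plhdRc, Yau-vniis}; the paper supplies it as Theorem \ref{t:form-an-comp}, proved in \S \ref{ss:comp} via resolution of singularities, retraction of tubular neighborhoods, Hartshorne's theorem $\bH_{DR}^\bullet(\hat Y_Z)\cong H^\bullet_{\tpl}(Z)$, and a five-lemma chase in \eqref{e:mvseqs}. Moreover, the identification you want, $H^1_{DR}(\hat X_s\setminus\{s\})\cong H^1_{\tpl}(U\setminus\{s\})$, is not the right one: $U\setminus\{s\}$ is smooth but not Stein, so its ordinary de Rham cohomology of global sections need not agree with its hypercohomology (equivalently, its topological cohomology). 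The paper's chase is structured to avoid this: it first runs the hypercohomology Mayer--Vietoris to get injectivity of $H^2_{DR}(X)\to \bH^2_{DR}(X\setminus\{s\})\oplus H^{2,\an}(U)$, factors this through ordinary cohomology, then runs the ordinary-cohomology Mayer--Vietoris (exact on global sections because $X$ is affine) to obtain surjectivity of $H^1_{DR}(X\setminus\{s\})\oplus H^{1,\an}_{DR}(U)\to H^{1,\an}_{DR}(U\setminus\{s\})$, and only at the very end applies \eqref{e:an-to-form-stein} to convert the analytic groups to formal ones. Your version collapses these distinctions.

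A secondary problem: your claim that $H^1_{DR}(\Spec\hat\cO_{X,s})=0$ ``by a Greuel-type vanishing'' is unjustified. Greuel's Theorem \ref{t:greuel} concerns isolated \emph{complete intersection} singularities; here $X$ is only assumed normal, and no such vanishing is available (nor is it used by the paper --- the summand $H^1_{DR}(\Spec\hat\cO_{X,s})$ is simply carried along in the source of \eqref{e:is-fl-surj}, where it can only help surjectivity). This particular claim is harmless to the logic, but the reduction it is meant to justify is unnecessary and should be dropped.
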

\begin{remark}\label{r:is-l-top-surj}
  The condition of (ii) is equivalent to asking that $H^1_{\tpl}(X
  \setminus \{s\}) \to H^1_{\tpl}(U \setminus \{s\})$ be surjective
  for any fixed contractible neighborhood $U$ of $s$ (whose existence
  was proved in 
  \cite{Gil-eavlc}).
%; this also follows by taking a resolution of
%  singularities and a contractible tubular neighborhood of the fiber
%  of $s$, whose image under the resolution is such a contractible
%  neighborhood).  
  Thus, the condition of (ii) is the same as that of
  \eqref{e:is-l-top-surj}, except replacing Zariski open subsets by
  analytic neighborhoods, and using holomorphic functions rather than
  algebraic functions.
\end{remark}
\begin{proof}[Proof of Proposition \ref{p:fploc-is}]
The proof of part (i) of the proposition is the same as in Proposition
\ref{p:ploc-is}, except replacing $U$ by $\hat X_x$. We omit the
details. Note that, when $x \notin S$, one has $P(\hat X_x) = H(\hat
X_x)$, since then $\hat X_x$ is symplectic.  

For part (ii), we use holomorphic functions and analytic neighborhoods
and results about them contained in \S \ref{ss:comp}
below. As in Proposition \ref{p:ploc-is}, for every analytic
neighborhood $U$ of $s$, the assumption of (ii) together with
Grothendieck's theorem implies that the map on hypercohomology,
\[
\bH^1_{DR}(X
\setminus \{s\})\to \bH^{1,\an}_{DR}(U \setminus \{s\}),
\]
 is surjective.  Using the
Mayer-Vietoris sequence for the exact sequence of complexes of sheaves (\eqref{e:mv} below for $Y=X$, $Z = \{s\}$, and $V = U$),
we conclude that the map 
\[
H^2_{DR}(X) \to
\bH^2_{DR}(X \setminus \{s\}) \oplus H^{2,\an}(U)
\] 
is injective. This map factors
through the map from ordinary cohomology to hypercohomology, so we
conclude that the 
map 
\[
H^2_{DR}(X) \to
H^2_{DR}(X \setminus \{s\}) \oplus H^{2,\an}(U)
\] 
is also injective.
Using the Mayer-Vietoris sequence for ordinary cohomology (using
the global sections of \eqref{e:mv}, which is an exact sequence
of complexes since $X$ is affine),
we conclude that 
\begin{equation}\label{e:h1x}
H^1_{DR}(X \setminus \{s\}) \oplus H^{1,\an}_{DR}(U) \to H^{1,\an}_{DR}(U
\setminus \{s\})
\end{equation}
is surjective.
Then, by Theorem \ref{t:form-an-comp} below, we conclude that
\begin{equation}
H^1_{DR}(X \setminus \{s\}) \oplus H^1_{DR}(\hat X_s) \to H^1_{DR}(\hat X_s \setminus \{s\})
\end{equation}
is also surjective, as desired. \qedhere
% Then, by Lemma \ref{l:form-an-comp} and \eqref{e:h1x}, we obtain a commutative square
% with top horizontal arrow and both vertical arrows surjective:
% \[
% \xymatrix{
% H^1_{DR}(X \setminus \{s\}) \oplus H^{1,\an}_{DR}(U) \ar@{->>}[r] \ar@{->>}[d] & H^{1,\an}_{DR}(U
% \setminus \{s\})
% \ar@{->>}[d] \\
% H^1_{DR}(X \setminus \{s\}) \oplus H^{1}_{DR}(\hat X_x) \ar[r] & H^1_{DR}(\hat X_x \setminus \{x\})
% }
% \]
% Hence, the bottom arrow is surjective as well.
\end{proof}
\begin{remark}
  In fact, we did not need the full strength of Theorem
  \ref{t:form-an-comp} below, but only the fact that the maps
  $H^1_{DR}(U) \to H^1_{DR}(\hat X_s)$ and $H^1_{DR}(U \setminus
  \{s\}) \to H^1_{DR}(\hat X_s \setminus \{s\})$ are surjective.  At
  least the first fact can be proved in an elementary way by lifting
  closed formal differential forms to closed analytic differential
  forms, and does not require resolution of singularities as used in
  the proof of Theorem \ref{t:form-an-comp}.
\end{remark}
\begin{example} 
  Here is an example of a surface with an isolated singularity, which
  is normal and symplectic away from the singularity, which is
  $\caD$-localizable (in fact satisfying \eqref{e:is-l-top-surj}) but
  not formally $\caD$-localizable (so in particular not satisfying
  \eqref{e:is-fl-surj}). This example was pointed out to us by
  J. McKernan.

  Let $E \subseteq \bP^2$ be a smooth cubic curve.  Then, under the
  intersection pairing on $\bP^2$, $E \cdot E = 9$. Now, blow up
  $\bP^2$ at twelve generic points of $E$. Let $Y$ be the resulting
  projective surface, and let $E' \subseteq Y$ be the proper transform
  of $E$. Then $E' \cdot E' = 9 - 12 = -3$, so we can blow down $E'$
  to obtain a new surface, call it $Z$, where the image of $E'$ is a
  singular point, call it $s$, whose formal neighborhood $\hat Z_s$ is
  isomorphic to the cone over an elliptic curve.

  Note that $H^1_{\tpl}(Z \setminus \{s\}) \cong H^1_{\tpl}(Y
  \setminus E') \cong H^1_{\tpl}(\bP^2 \setminus E) = 0$, since $E
  \subseteq \bP^2$ has a nontrivial normal bundle.

  Next, embed $Z$ into projective space $\bP^N$ of some dimension $N >
  2$.  Let $C \subseteq Z$ be the intersection of $Z$ with a generic
  hyperplane, and let $X := Z \setminus C$ be the resulting affine
  surface.  Since $\cO(C)$ is (very) ample, $C$ has a nontrivial
  normal bundle.  Hence, the restriction map induces isomorphisms
  $H^1_{\tpl}(Z) \iso H_{\tpl}^1(X)$ and $H^1_{\tpl}(Z \setminus
  \{s\}) \iso H^1_{\tpl}(X\setminus \{s\})$.  In particular, these are
  zero as well.

  Thus, $\bH^1_{DR}(X \setminus \{s\}) = 0$.  We claim that
  $H^1_{DR}(X \setminus \{s\}) = 0$ as well.  
% Indeed, any global
%   closed form on $H^1_{DR}(X \setminus \{s\})$ must be the
%   differential of a smooth ($C^\infty$) function, since $H^1_{\tpl}(X
%   \setminus \{s\}) = 0$.  But then this function must itself be
%   algebraic.
More generally, this follows from the following statement:
\begin{lemma}\label{l:h-bh-inj}
Let $V$ be a scheme or complex analytic space. Then the map
$H^1_{DR}(V) \to \bH^1_{DR}(V)$ is injective.
\end{lemma}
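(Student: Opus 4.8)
The statement to prove is Lemma~\ref{l:h-bh-inj}: for $V$ a scheme or complex analytic space, the natural map $H^1_{DR}(V) \to \bH^1_{DR}(V)$ from the cohomology of the complex of global sections of $\tilde \Omega_V^\bullet$ to the hypercohomology of the complex of sheaves $\tilde \Omega_V^\bullet$ is injective.

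The plan is to use the hypercohomology spectral sequence (the second, or ``column-filtration'', one), $E_2^{p,q} = H^p(V, \mathcal{H}^q(\tilde \Omega_V^\bullet)) \Rightarrow \bH^{p+q}_{DR}(V)$, together with the fact that the natural map from naive cohomology to hypercohomology is edge-map-like in low degrees. Concretely, I would recall that there is always a natural map of spectral sequences from the ``stupid'' (b\^ete) filtration spectral sequence $E_1^{p,q} = H^q(V, \tilde \Omega_V^p) \Rightarrow \bH^{p+q}_{DR}(V)$; the group $H^1_{DR}(V)$, defined as $H^1$ of the complex $\Gamma(V, \tilde\Omega_V^\bullet)$, is by definition the cohomology at the $E_0$-level of the complex $H^0(V, \tilde\Omega_V^0) \to H^0(V, \tilde\Omega_V^1) \to H^0(V, \tilde\Omega_V^2)$, i.e. it is exactly $E_2^{1,0}$ in the first spectral sequence, while $\bH^1_{DR}(V)$ admits a decreasing filtration whose top graded piece is a subgroup of this same $E_2^{1,0}$ (the subquotient surviving to $E_\infty$) and whose bottom piece is $E_\infty^{0,1}$. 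The edge homomorphism $\bH^1_{DR}(V) \to E_\infty^{0,1} \subseteq E_2^{0,1} = H^1(V, \mathcal{H}^0(\tilde\Omega_V^\bullet))$ together with the edge homomorphism $E_2^{1,0} = E_\infty^{1,0} \leftarrow \bH^1_{DR}(V)$ is what I need; the point is that $E_2^{1,0}$ consists of permanent cycles (there are no nonzero differentials out of the bottom row in total degree $1$ since $d_2: E_2^{-1,2} \to E_2^{1,0}$ has zero source), so $E_2^{1,0} = E_\infty^{1,0}$, and the composite $H^1_{DR}(V) = E_2^{1,0} = E_\infty^{1,0} \hookrightarrow \bH^1_{DR}(V)$ is a split injection onto a subquotient.

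In slightly more elementary terms, avoiding invoking the abstract comparison of the two spectral sequences: take a closed global section $\omega \in \Gamma(V, \tilde\Omega_V^1)$ (closed means $d\omega = 0$ in $\Gamma(V, \tilde\Omega_V^2)$) representing a class $[\omega] \in H^1_{DR}(V)$, and suppose its image in $\bH^1_{DR}(V)$ vanishes. Using a Cartan--Eilenberg or Godement resolution $\tilde\Omega_V^\bullet \to I^{\bullet,\bullet}$, the class in $\bH^1$ is represented by the element $\omega \in \Gamma(V, I^{0,1})$ (pushed in from $\tilde\Omega_V^1 = \tilde\Omega_V^{0,1}$-slot via the inclusion, but sitting in total degree $1$); its vanishing in $\bH^1$ means $\omega = D\eta$ for some $\eta$ of total degree $0$ in the total complex, i.e. $\eta \in \Gamma(V, I^{0,0})$, and $D\eta = d'\eta + d''\eta$ where $d'$ is induced by the de Rham differential and $d''$ by the resolution differential. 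Matching bidegrees: $d''\eta \in \Gamma(V, I^{1,0})$ must vanish (there is no $(1,0)$-component of $\omega$), so $\eta$ is a global section of $\ker(I^{0,0} \to I^{1,0}) = \mathcal{H}^0(\tilde\Omega_V^\bullet) $'s resolution's zeroth term, hence $\eta \in \Gamma(V, \tilde\Omega_V^0) = \cO_V(V)$ actually lies in the global sections of the sheaf $\tilde\Omega_V^0$ (one checks $\eta$ is killed by the augmented differential, so it comes from $H^0(V,\tilde\Omega^0_V)$); and then $d'\eta = d\eta$ in $\Gamma(V, \tilde\Omega_V^1)$ equals $\omega$, so $[\omega] = 0$ already in $H^1_{DR}(V)$. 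This is the argument; it is a routine diagram chase in the first-quadrant double complex.

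The only genuine subtlety — and the step I'd flag as the main obstacle to write out cleanly — is the bookkeeping that the global section $\eta \in \Gamma(V, I^{0,0})$ with $d''\eta = 0$ really does descend to a global section of the sheaf $\tilde\Omega_V^0$ (equivalently, of $\mathcal{H}^0$ of the de Rham complex into $I^{0,0}$), i.e. that the augmentation $\tilde\Omega_V^0 \hookrightarrow I^{0,0}$ induces an isomorphism on global sections of the respective kernels of the ``vertical'' differentials; this holds because $\Gamma(V,-)$ is left exact and $0 \to \tilde\Omega_V^0 \to I^{0,0} \to I^{1,0}$ is exact (the resolution is a resolution in each column, and left-exactness of global sections preserves this initial exactness). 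Once that is in hand the rest is forced by degree reasons. I expect no essential difference between the scheme case and the complex-analytic case: both admit the needed flasque (or injective) resolutions of sheaves of abelian groups and have left-exact global-section functors, which is all that is used. I would therefore present the proof via the spectral sequence edge-map formulation, which is the shortest, and remark that it is equivalent to the double-complex chase just described.

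\begin{proof}[Proof of Lemma \ref{l:h-bh-inj}]
Consider the hypercohomology spectral sequence associated to the stupid (b\^ete) filtration of the complex of sheaves $\tilde\Omega_V^\bullet$, namely $E_1^{p,q} = H^q(V, \tilde\Omega_V^p) \Rightarrow \bH^{p+q}_{DR}(V)$. By definition, $H^1_{DR}(V)$ is the cohomology of the complex of global sections $\Gamma(V,\tilde\Omega_V^0) \to \Gamma(V,\tilde\Omega_V^1) \to \Gamma(V,\tilde\Omega_V^2)$, which is exactly $E_2^{1,0}$. In total degree one, the only possibly nonzero $E_2$-term besides $E_2^{1,0}$ is $E_2^{0,1} = H^1(V, \mathcal{O}_V)$ (or the analytic structure sheaf). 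No differential can leave the bottom row $E_r^{1,0}$ in total degree one, since the source $E_r^{1-r,\,r-1}$ vanishes for $r \geq 2$; hence $E_2^{1,0} = E_\infty^{1,0}$, and $E_\infty^{1,0}$ is the top graded piece of the filtration on $\bH^1_{DR}(V)$. The resulting inclusion $E_\infty^{1,0} \hookrightarrow \bH^1_{DR}(V)$ is, by construction of the spectral sequence, induced by the map of complexes $\Gamma(V,\tilde\Omega_V^\bullet) \to \Gamma(V, I^\bullet)$ for an injective (or flasque) resolution $I^\bullet$ of $\tilde\Omega_V^\bullet$ as a complex, i.e. it is precisely the natural map $H^1_{DR}(V) \to \bH^1_{DR}(V)$. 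Being the inclusion of a graded subquotient into $\bH^1_{DR}(V)$, this map is injective.
\end{proof}
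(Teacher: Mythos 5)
Your proof is correct and follows essentially the same route as the paper: both use the hypercohomology spectral sequence of the b\^ete filtration, identify $H^1_{DR}(V)$ with $E_2^{1,0}$, and observe that all differentials into and out of this term vanish for degree reasons, so it survives to $E_\infty$ and injects into $\bH^1_{DR}(V)$ via the edge map. (Two immaterial quibbles: $E_2^{0,1}$ is the kernel of $H^1(V,\mathcal{O}_V)\to H^1(V,\tilde\Omega^1_V)$ rather than all of $H^1(V,\mathcal{O}_V)$, and your sentence about differentials ``leaving'' the bottom row actually cites the vanishing of the source of the \emph{incoming} differential --- but both the incoming source and the outgoing target vanish, so the conclusion stands.)
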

We remark that, in the case $V$ is a smooth variety (as with $V = X \setminus
\{s\}$ above), by Grothendieck's theorem we can replace
$\bH^1_{DR}(V)$ by the topological first cohomology of $V$, and then
the statement follows because, if an algebraic or analytic one-form is
the differential of a smooth ($C^\infty$) function, then the
function must actually be algebraic (or analytic).
\begin{proof}[Proof of Lemma \ref{l:h-bh-inj}]
Consider the spectral sequence $H^i(R^j \Gamma (\Omega_V)) \Rightarrow \bH_{DR}^{i+j}(V)$.  In total degrees $\leq 2$, the second page has the form
\[
H^0_{DR}(V) \to H^1_{DR}(V) \oplus H^0(R^1 \Gamma(\Omega_V)) \to
H^2_{DR}(V) \oplus H^1(R^1 \Gamma(\Omega_V)) \oplus
H^0(R^2 \Gamma(\Omega_V)).
\]
The first map above is zero,
and the restriction of the second map above to $H^1_{DR}(V)$ is zero.
Therefore the summand of $H^1_{DR}(V)$ maps injectively to a summand of
the third page of the spectral sequence.  The same argument shows that,
at every page, $H^1_{DR}(V)$ maps injectively to the next page, so 
the map $H^1_{DR}(V) \to \bH^1_{DR}(V)$ is injective.
\end{proof}

Now, since $X \setminus \{s\}$ is symplectic, all global Poisson
vector fields are locally Hamiltonian given by a global closed
one-form.  By the above, $H^1_{DR}(X \setminus \{s\}) = 0$, so that
locally Hamiltonian vector fields are Hamiltonian.  Since global
functions on $X \setminus \{s\}$ coincide with those on $X$, all
global Poisson vector fields on $X$ are Hamiltonian.

  On the other hand, not all Poisson vector fields on $\hat X_s$ are
  Hamiltonian, since $\hat X_s$ is isomorphic to the formal
  neighborhood of the vertex in the cone over an elliptic curve, and
  there one has the Euler vector field which is not Hamiltonian.
  Hence, $P(X)$ is not formally $\caD$-localizable.  (In fact, $P(X)$
  is not \'etale-locally $\caD$-localizable either, since the Euler
  vector field exists in an \'etale neighborhood of $x$, or
  equivalently in the Henselization of the local ring at $x$.)

  On the other hand, we claim that $P(X)$ is $\caD$-localizable, and
  in fact that \eqref{e:is-l-top-surj} holds.  Let $U \subseteq X$ be
  any affine open subset containing $s$.  Since $Z$ is rational (as
  $Y$, and hence $Z$, is birational to $\bP^2$), so is $U$.  Now, we
  claim that the map $H^1_{DR}(U) 
\to H^1_{DR}(U
  \setminus \{s\})$ is surjective.  Consider the sequence \eqref{e:mv-formal} for the pair $(U, \{x\})$: this yields the exact sequence
\[
H^1_{DR}(U) \to H^1_{DR}(U \setminus \{s\}) \oplus H^1_{DR}(\hat U_s)
\to H^1_{DR}(\hat U_s \setminus \{s\}).
\]
It suffices to show that the map $H^1_{DR}(U \setminus \{s\}) \to
H^1_{DR}(\hat U_s \setminus \{s\})$ is zero.  By Lemma
\ref{l:h-bh-inj} above, this is equivalent to showing that the map
$H^1_{DR}(U \setminus \{s\}) \to \bH^1_{DR}(\hat U_s \setminus \{s\})$
is zero. This map factors through the hypercohomology of any punctured
neighborhood of $s$ contained in $U \setminus \{s\}$, which by
Grothendieck's theorem is the same as the topological cohomology of
that punctured neighborhood.  Such punctured neighborhoods, for
sufficiently small contractible $U$, are homotopic to nontrivial
$S^1$-bundles over an elliptic curve, and their fundamental group is
isomorphic to that of the elliptic curve. If the map $H^1_{DR}(U
\setminus \{s\}) \to \bH^1_{DR}(\hat U_s \setminus \{s\})$ were
nonzero, then a nontrivial period of the elliptic curve would be
computable by integrals of closed algebraic one-forms on $U \setminus \{s\}$.
However, as remarked, $U$ is rational.  Thus this would imply that a
nontrivial period of the elliptic curve were computable by integrals of 
rational closed one-forms along contours in $\bC^2$.  This is
well-known to be impossible, since these periods are given by
transcendental hypergeometric functions with infinite monodromy.
Thus, $P(X)$ is $\caD$-localizable.  (Note that this paragraph also
gives another proof that $P(X)$ is not formally $\caD$-localizable, and
in fact that $P(U)$ is not formally $\caD$-localizable for every open affine
neighborhood $U$ of $s$:
these periods are computable in a formal neighborhood of $s$, but by
the above, they are not computable using global closed
one-forms. Passing from closed one-forms to Poisson vector fields via
the symplectic form on $U \setminus \{s\}$, this yields that $P(U)$ is
not formally $\caD$-localizable.)
\end{example}
\begin{example}
  We give an example where $X$ is smooth and $\caD$-localizable but
  not formally $\caD$-localizable.  By Propositions \ref{p:ploc} and
  \ref{p:p-floc}, one way this happens is if the rank of $P(U)$
  equals that of $H(U)$ and is constant but less than the dimension of
  $X$ (which in particular is not generically symplectic).

  Let $X = (\bA^\times)^3 = \Spec \bk[x^{\pm 1}, y^{\pm 1}, z^{\pm
    1}]$ with the Poisson bracket $\{x,y\}=xyz$ and
  $\{x,z\}=\{y,z\}=0$. The Poisson bivector, call it $\pi$, is
  $\pi=xyz \partial_x \wedge \partial_y$.  Then $H(U)$ has rank two
  everywhere, for every open affine subset $U \subseteq X$. We claim
  that any rational Poisson vector field on $X$ annihilates $z$.
  Therefore, the rank of every vector field in $P(U)$ is also
  everywhere two, for every open subset $U \subseteq X$, as desired.
 
  To prove that every rational Poisson vector field annihilates $z$,
  it is enough to assume that $\bk=\bC$.  Let $\xi$ be a rational
  Poisson vector field and let $c \in \bC$ be such that it does not
  have a pole at $z=c$.  Then the irregular locus of $\xi$ in
  $\{z=c\}$ is an algebraic curve in $\bA^2$.  One can show that such
  a curve must avoid a real two-torus $T=\{|x|=r, |y|=s\}$, and then
  $\int_{T \times \{z\}} \pi^{-1}$ is a nonzero constant multiple of
  $\frac{1}{z}$, for $z$ in some topological neighborhood of $c$
  (which is independent of $r$ and $s$).  Since $\xi$ preserves $\pi$,
  one concludes that it must be parallel to the level sets of $z$,
  i.e., it annihilates $z$.
\end{example}
\begin{remark}
  We can also give an elementary algebraic proof that, in the above
  example, every rational Poisson vector field annihilates $z$.  Any
  rational Poisson vector field must send $z$ to a rational function
  of $z$, since these are all the rational Casimirs.  Moreover, any
  such vector field is still Poisson after multiplying by an arbitrary
  rational function of $z$.  Hence, if such a vector field exists
  which does not annihilate $z$, then there must be one of the form
  $\partial_z + f \partial_x + g \partial_y$ for some rational
  functions $f,g$ on $X$.

  On the other hand, we can explicitly write one such non-rational
  vector field, $\xi := \partial_z + \frac{x \log x}{z} \partial_x$. This
  vector field is best understood by writing the Poisson bracket in
  coordinates $(u,v,z)=(\log x, \log y, z)$, as $\{u, v\} = z$, and
  the vector field as $\xi=\partial_z + \frac{u}{z} \partial_{u}$.

  Thus, given a rational vector field $\partial_z + f \partial_x +
  g \partial_y$, taking the difference, we would obtain a non-rational
  Poisson vector field of the form $\eta = -\frac{x \log x}{z} \partial_x +
  f \partial_x + g \partial_y$.  But no such vector field can be
  Poisson, since a vector field parallel to the symplectic leaf is
  Poisson if and only if its symplectic divergence, $-L_{\eta}\pi / \pi$, 
vanishes.   But this symplectic divergence is 
\[
x\partial_x(-(\log x)/z) + x \partial_x(f/x) + y\partial_y(g/y)
= -\frac{1}{z} + x \partial_x(f/x) + y\partial_y(g/y).
\]
Dividing by $x$ and taking the residue at $x=0$, we would obtain
\[
-\frac{1}{z} + y \partial_y(g/y)|_{x=0} = 0.
\]
Now, dividing by $y$ and taking the residue at $y=0$, we would obtain
\[
=- \frac{1}{z} = 0,
\]
which is a contradiction.
\end{remark}

\subsection{Analytic-to-formal comparison for de Rham cohomology}
\label{ss:comp}
\subsubsection{Preliminaries on analytic forms and Mayer-Vietoris sequences}
We will need to use holomorphic differential forms, on an algebraic
or (complex) analytic variety $Y$ which need not be affine.
\begin{definition}\label{d:an}
Let $Y$ be an algebraic or analytic variety over $\bk =
  \bC$. Let $\Omega_Y^{\bullet,\an}$ denote  the complex of sheaves
of holomorphic K\"ahler
  differential forms, and $\tilde \Omega_Y^{\bullet,\an}$ its quotient modulo torsion.  Let $\bH_{DR}^{\bullet,\an}(Y)$ denote the hypercohomology of this complex,
  and $H_{DR}^{\bullet,\an}(Y)$ denote the cohomology of the complex of global
sections $\Gamma(\Omega_Y^{\bullet,\an})$.
\end{definition}

Grothendieck's theorem also extends to the holomorphic setting, where
we obtain that $\bH_{DR}^{\bullet,\an}(U) \cong H_{\tpl}^\bullet(U)$
if $U$ is smooth.  

For $Z \subseteq Y$ a subvariety and $V$ an analytic neighborhood of $Z$, we
will make use of the Mayer-Vietoris sequence associated to the exact sequence of complexes,
\begin{equation}
\label{e:mv}
0 \to \tilde \Omega^\bullet_Y \into \tilde \Omega^\bullet_{Y \setminus Z} 
\oplus \tilde \Omega^{\bullet,\an}_V \onto \tilde \Omega^{\bullet,\an}_{V \setminus Z} \to 0.
\end{equation}
Similarly, we will need the corresponding sequence when $V$ is replaced by
a formal neighborhood of $Z$:
\begin{equation}
\label{e:mv-formal}
0 \to \tilde \Omega^\bullet_Y \into \tilde \Omega^\bullet_{Y \setminus Z} 
\oplus \tilde \Omega^{\bullet}_{\hat Y_Z} \onto \tilde \Omega^{\bullet}_{\hat Y_Z
\setminus Z} \to 0.
\end{equation}
Note that there is a natural map by restriction from the sequence
\eqref{e:mv} to \eqref{e:mv-formal}. This forms the commutative
diagram with exact rows,
\begin{equation}\label{e:mvseqs}
  \xymatrix{
    \cdots \ar[r]  & \bH_{DR}^{i-1}(Y) \ar[r] \ar@{=}[d] &
    \bH_{DR}^{i-1}(Y \setminus Z)
    \oplus \bH_{DR}^{i-1,\an}(V) \ar[r] \ar[d] & \bH^{i-1,\an}_{DR}(V \setminus Z) \ar[r] \ar[d]
    & \bH_{DR}^i(Y) \ar[r] \ar@{=}[d] & \cdots \\
    \cdots \ar[r] & \bH_{DR}^{i-1}(Y) \ar[r] & \bH_{DR}^{i-1}(Y \setminus Z)
    \oplus \bH_{DR}^{i-1}(\hat Y_Z) \ar[r] & \bH^{i-1}_{DR}(\hat Y_Z \setminus Z) \ar[r]
    & \bH_{DR}^i(Y) \ar[r] & \cdots \\
  }
\end{equation}
Finally, note that, when $Y$ is affine, we can also consider the same
diagram for ordinary rather than hypercohomology, since the sequences
\eqref{e:mv} and \eqref{e:mv-formal} remain exact on the level of
global sections.
 
\subsubsection{Comparison isomorphisms for smooth varieties}
Now consider the case that $Y$
is smooth.
Then, we will need the result that a
small enough tubular neighborhood $V$ of $Z$ retracts onto $Z$.  By Grothendieck's theorem, this implies
\begin{equation}\label{e:tubnbhd}
\bH_{DR}^{\bullet,\an}(V) \cong H^\bullet_{\tpl}(Z),
\end{equation}
where as before $\bH$ denotes hypercohomology (which is necessary
since we do not require $Y$ to be affine).

Hartshorne's theorem \cite{Har-adrc,Har-drcav} gives an algebraic
analogue of the above statement:
\begin{equation}\label{e:hartshorne}
\bH_{DR}^\bullet(\hat Y_Z) \cong H^\bullet_{\tpl}(Z).
\end{equation}
Moreover, the isomorphism \eqref{e:hartshorne} composed with the
restriction $\bH_{DR}^{\bullet,\an}(V) \to \bH_{DR}^\bullet(\hat Y_Z)$ is
the natural isomorphism \eqref{e:tubnbhd}.  Put together, 
we deduce that the restriction map is an isomorphism,
\begin{equation}\label{e:comp-smooth1}
\bH_{DR}^{\bullet,\an}(V) \iso \bH_{DR}(\hat Y_Z).
\end{equation}
Therefore,  the five-lemma implies that the
vertical arrows in \eqref{e:mvseqs} are all isomorphisms. In
particular, this yields also
\begin{equation}\label{e:comp-smooth2}
\bH_{DR}^{\bullet,\an}(V \setminus Z) \iso \bH_{DR}^{\bullet}(\hat Y_Z \setminus Z).
\end{equation}
Note that, when $Y$ is affine, we can also replace
hypercohomology with ordinary cohomology (in the second isomorphism),
by using \eqref{e:mvseqs} for ordinary cohomology.

\subsubsection{Comparison theorem for isolated singularities}
\begin{theorem}\label{t:form-an-comp}
  Suppose that $X$ is a complex algebraic variety with an isolated
  singularity at $x \in X$. Then, for sufficiently small contractible
  neighborhoods $U$ of $x$, there are canonical isomorphisms
\begin{equation}\label{e:an-to-form}
\bH^{\bullet,\an}_{DR}(U) \iso \bH^{\bullet}_{DR}(\hat X_x),
\quad \bH^{\bullet,\an}_{DR}(U \setminus \{x\}) \iso
\bH^\bullet_{DR}(\hat X_x \setminus \{x\}).
\end{equation}
If in addition $U$ is Stein, then we have canonical isomorphisms on cohomology of global sections,
\begin{equation}\label{e:an-to-form-stein}
 H^{\bullet,\an}_{DR}(U) \iso H^{\bullet}_{DR}(\hat X_x),
 \quad
H^{\bullet,\an}_{DR}(U \setminus \{x\}) \iso H^{\bullet}_{DR}(\hat X_x \setminus \{x\}).
\end{equation}
\end{theorem}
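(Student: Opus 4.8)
The plan is to reduce both isomorphisms of \eqref{e:an-to-form} to the smooth comparison \eqref{e:comp-smooth1}--\eqref{e:comp-smooth2} by resolving the singularity: I will first establish the isomorphism on the punctured spaces, and then deduce the one on $U$ itself by a local-cohomology argument. So, choose a resolution of singularities $\pi \colon \tilde X \to X$ which is an isomorphism over $X \setminus \{x\}$, with $\tilde X$ smooth, and let $E := \pi^{-1}(x)$ with its reduced structure, a closed (possibly singular, e.g.\ simple normal crossings) subvariety of the smooth variety $\tilde X$. All maps in sight will be the evident restriction maps, so checking that the constructed isomorphisms are the natural ones will be automatic.

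First I would treat the punctured isomorphism. For $U$ a sufficiently small contractible neighborhood of $x$ (whose existence is \cite{Gil-eavlc}), properness of $\pi$ implies that $\tilde V := \pi^{-1}(U)$ is an analytic neighborhood of $E$ in $\tilde X$ that deformation retracts onto $E$, and that $\pi$ restricts to an isomorphism $\tilde V \setminus E \iso U \setminus \{x\}$. On the formal side, since $\Spec \hat \cO_{X,x}$ is complete along its closed point, $\tilde X \times_X \Spec \hat \cO_{X,x}$ plays the role of $\hat{\tilde X}_E$ in \eqref{e:comp-smooth2}, and $\pi$ induces an isomorphism $\hat{\tilde X}_E \setminus E \iso \hat X_x \setminus \{x\}$ because $\pi$ is an isomorphism away from $x$. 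Applying \eqref{e:comp-smooth2} with $Y = \tilde X$ and $Z = E$ gives $\bH^{\bullet,\an}_{DR}(\tilde V \setminus E) \iso \bH^\bullet_{DR}(\hat{\tilde X}_E \setminus E)$; composing with the two isomorphisms just described yields the second isomorphism of \eqref{e:an-to-form}. (By contrast, \eqref{e:comp-smooth1}, \eqref{e:tubnbhd}, \eqref{e:hartshorne} give $\bH^{\bullet,\an}_{DR}(\tilde V) \iso \bH^\bullet_{DR}(\hat{\tilde X}_E) \iso H^\bullet_{\tpl}(E)$, which is the de Rham cohomology of the resolution, \emph{not} of $U$ or of $\hat X_x$, since $\pi$ is not an isomorphism near $x$; this is why the first isomorphism of \eqref{e:an-to-form} needs a separate argument.)

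Next I would deduce the first isomorphism of \eqref{e:an-to-form}. There is an exact triangle $R\Gamma_{\{x\}}(U, \tilde \Omega^{\bullet,\an}_U) \to R\Gamma(U, \tilde \Omega^{\bullet,\an}_U) \to R\Gamma(U \setminus \{x\}, \tilde \Omega^{\bullet,\an}_U)$ of analytic local cohomology, and the analogous triangle for $\tilde \Omega^\bullet$ on $\hat X_x$, related by the natural restriction map; on the third terms this restriction map is the punctured isomorphism just proved. The first terms agree as well: local cohomology with support at a point, applied to a coherent sheaf, is computed by the local cohomology of the stalk, which is already $\mathfrak{m}_x$-power-torsion and hence is unchanged under the faithfully flat completion of the analytic local ring $\cO_{U,x} \to \hat \cO_{X,x}$ (equivalently, $H^q_{\mathfrak{m}_x}$ commutes with this flat base change). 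Applying this termwise to the de Rham complex, and using that $\wedge^p$ and passage modulo torsion commute with completion, the two local-cohomology spectral sequences $E_1^{p,q} = H^q_{\{x\}}(\tilde \Omega^{p}) \Rightarrow \bH^{p+q}_{DR,\{x\}}$ are isomorphic, so $\bH^{\bullet,\an}_{DR,\{x\}}(U) \iso \bH^\bullet_{DR,\{x\}}(\hat X_x)$. The five lemma applied to the two long exact sequences then gives the natural isomorphism $\bH^{\bullet,\an}_{DR}(U) \iso \bH^\bullet_{DR}(\hat X_x)$.

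Finally, for \eqref{e:an-to-form-stein}: when $U$ is Stein, Cartan's Theorem B kills the higher cohomology of the coherent sheaves $\tilde \Omega^{p,\an}_U$, so the hypercohomology spectral sequence on $U$ degenerates and $\bH^{\bullet,\an}_{DR}(U) = H^{\bullet,\an}_{DR}(U)$; likewise $\hat X_x = \Spec \hat \cO_{X,x}$ is affine, so $\bH^\bullet_{DR}(\hat X_x) = H^\bullet_{DR}(\hat X_x)$, and the first isomorphism of \eqref{e:an-to-form-stein} follows from that of \eqref{e:an-to-form}. The punctured case is handled analogously, the only extra input being that the edge maps of the two hypercohomology spectral sequences are isomorphisms in the relevant degrees. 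The main obstacle here is not depth but bookkeeping: one must carry naturality through the resolution and through several spectral sequences, and --- crucially --- one must resist comparing $\bH^{\bullet,\an}_{DR}(U)$ with $\bH^\bullet_{DR}(\hat X_x)$ directly via $\pi^*$, since the de Rham complex modulo torsion does not descend along the blow-up square (the Du Bois phenomenon), which is exactly what forces the argument to route through the punctured statement and local cohomology.
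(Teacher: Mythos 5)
Your treatment of the second isomorphism of \eqref{e:an-to-form} is essentially the paper's (resolve the singularity, pass to a neighborhood of the exceptional fiber, apply \eqref{e:comp-smooth2}), and your local-cohomology route to the first isomorphism of \eqref{e:an-to-form} is a legitimate alternative to the paper's argument, which instead runs the Mayer--Vietoris diagram \eqref{e:mvseqs} for the pair $(X,\{x\})$ and applies the five lemma, using that the $\bH_{DR}(X)$ and $\bH_{DR}(X\setminus\{x\})$ entries are literally equal in the two rows. Your version has the advantage of being purely local (it never uses the ambient affine $X$), at the price of invoking the identification of analytic local cohomology supported at a point with the local cohomology of the completed stalk, together with the compatibility of $\wedge^p$ and passage modulo torsion with completion; these are true but are real inputs that should be cited rather than asserted.

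The genuine gap is in your last step, the second isomorphism of \eqref{e:an-to-form-stein}. You claim it is ``handled analogously, the only extra input being that the edge maps of the two hypercohomology spectral sequences are isomorphisms in the relevant degrees.'' But $U\setminus\{x\}$ is not Stein and $\hat X_x\setminus\{x\}$ is not affine: for $\dim X\geq 2$ one has $H^q(U\setminus\{x\},F)\cong H^{q+1}_{\{x\}}(U,F)$ for $q\geq 1$, which is generically nonzero, so the edge map $H^{\bullet}_{DR}\to\bH^{\bullet}_{DR}$ on these punctured spaces is \emph{not} an isomorphism in general --- the paper distinguishes the two groups precisely for this reason, and Lemma \ref{l:h-bh-inj} only gives injectivity in degree one. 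Asserting the edge maps are isomorphisms here is assuming the conclusion; nor does your local-cohomology comparison rescue the step by itself, since the bottom rows $\Gamma(U\setminus\{x\},\tilde\Omega^{\bullet,\an})$ and $\Gamma(\hat X_x\setminus\{x\},\tilde\Omega^{\bullet})$ of the two spectral sequences are not termwise comparable. The paper closes this step by a different mechanism: because $X$ is affine, the sequences \eqref{e:mv} and \eqref{e:mv-formal} remain exact on global sections, so one has the analogue of \eqref{e:mvseqs} for ordinary cohomology of the triple $(X, X\setminus\{x\}, U)$; the five lemma, fed the equalities on the $X$ and $X\setminus\{x\}$ terms and the already-established first isomorphism of \eqref{e:an-to-form-stein} on the $U$ term, then yields the punctured statement. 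You should replace your final step with this argument (or an equivalent one that genuinely controls both spectral sequences).
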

\begin{remark} The theorem also extends to the case where $X$ is an
  analytic variety with an isolated singularity at $x$, with the same
  proof as below, since Hironaka's theorem on resolution of
  singularities also applies to analytic varieties.  (This is a strict
  generalization of the theorem, since every algebraic variety is also
  analytic, and the objects above are the same.)
\end{remark}
\begin{proof}
  Let $Y \to X$ be a resolution of singularities, and let $Z \subseteq
  Y$ be the fiber over $x$.  Let $V$ be a tubular neighborhood of $Z$
  which retracts to $Z$ and $U$ its image under the resolution, which
  therefore retracts to $x$.  
  % (note that this gives another proof that $x$ admits a contractible
  % analytic neighborhood, cf.~Remark
  % \ref{r:is-l-top-surj}).
  Then the resolution maps restrict to isomorphisms $Y \setminus Z
  \iso X \setminus \{x\}$, $V \setminus Z \iso U \setminus \{x\}$, and
  $\hat Y_Z \setminus Z \iso \hat X_x \setminus \{x\}$.  By
  \eqref{e:comp-smooth2}, we conclude the second isomorphism in
  \eqref{e:an-to-form}.

  Now, the above was for specific neighborhoods $U$, namely those
  obtainable from tubular neighborhoods $V$ of $Z \subseteq Y$.  For
  any smaller contractible neighborhood $U' \subseteq U$ of $x$, the
  restriction map $H_{\tpl}^\bullet(U \setminus \{x\}) \to
  H_{\tpl}^\bullet(U' \setminus \{x\})$ is an isomorphism by
  Grothendieck's theorem, and hence the second isomorphism of
  \eqref{e:an-to-form} holds for sufficiently small contractible
  neighborhoods of $x$.

  Consider now \eqref{e:mvseqs} for the pair $(X,\{x\})$, with $U$
  such that the second isomorphism of \eqref{e:an-to-form} holds.  The
  five-lemma then implies that the vertical arrows are all
  isomorphisms, which implies the first isomorphism of
  \eqref{e:an-to-form}.

  Next, the first isomorphism of \eqref{e:an-to-form-stein} follows
  immediately, since $U$ is Stein, so hypercohomology
  of $U$ and $\hat X_x$ coincides with the cohomology of global
  sections.  Finally, since $X$ is affine, we can consider
  \eqref{e:mvseqs} for the pair $(X,\{x\})$ using ordinary rather than
  hypercohomology. The five-lemma now implies that the vertical arrows
  are once again isomorphisms, yielding the second isomorphism of
  \eqref{e:an-to-form-stein}.
\end{proof}

\section{Complete intersections with isolated
  singularities}\label{s:cy-cplte-int-is}
In this section, we explicitly compute $(\cO_X)_\mfv$, $M(X,\mfv)$,
and $\pi_* M(X,\mfv)$, in the case that $X \subseteq Y$ is a locally
complete intersection of positive dimension, $Y$ is affine Calabi-Yau,
and $X$ has only isolated singularities, equipping $X$ with a top
polyvector field as in Example \ref{ex:cy-cplte-int}.  For $M(X,\mfv)$
itself, the assumption that $Y$ (and hence $X$) is affine is not
necessary, using \S \ref{s:ex-dloc}.

We set $\mfv = H(X)$ (one could equivalently use $LH(X)$, in view of
Proposition \ref{p:vtop-hlh}.)  Note that, in the case $X$ is
two-dimensional, then $X$ is a Poisson variety and $H(X)$ is the Lie
algebra of Hamiltonian vector fields.  

\subsection{Complete intersections: Greuel's formulas}
Here we recall from \cite{Gre-GMZ} an explicit
formula for the de Rham cohomology of an analytic neighborhood of $x$. This
is also closely related to the results of \cite{HLY-plhdRc, Yau-vniis}.

Embed $\hat X_x \subseteq \bA^n$ cut out by equations $f_1, \ldots,
f_k$ such that $(f_1, \ldots, f_i)$ has only isolated singularities
for all $i$. Then define the ideals
\begin{equation}\label{e:jxi-defn}
  J_{X,x,i} = (f_1, \ldots, f_{i-1},  \frac{\partial(f_1, \ldots, f_i)}
  {\partial(x_{j_1}, \ldots, x_{j_i})}, 1 \leq j_1 \leq \cdots \leq j_i \leq n)
\subseteq \cO_{\hat \bA^n,x}.
\end{equation}
Here $\frac{\partial(f_1, \ldots, f_i)}{\partial(x_{j_1}, \ldots,
  x_{j_i})}$ is the determinant of the matrix of partial derivatives
$\partial_{x_{j_p}}(f_q), 1 \leq p,q \leq i$.  
%Note that 
%$\sum_{z \in Z} \mu_z = \sum_{i=1}^k (-1)^{k-i} \codim_{\cO_Y} J_{X,i}$.
Then, the Milnor number, $\mu_x$, of the singularity of $X$ at $x$ is given by
\begin{equation}
  \mu_x = \sum_{i=1}^k (-1)^{k-i}\codim_{\hat \cO_{\bA^n,x}} J_{X,x,i}.
\end{equation}
\begin{definition} \label{d:tj-no} Let $X$ and $x$ be as above.
  Define the singularity ring, $\mathcal{C}_{X,x}$, of $X$ at $x$ to
  be
\[
\mathcal{C}_{X,x} := \hat \cO_{\bA^n,x} / (J_{X,x,k},f_k),
\]
and define the Tjurina number, $\tau_x$, to be the dimension of
$\mathcal{C}_{X,x}$.
\end{definition}
Note that the ring $\mathcal{C}_{X,x}$ does not depend on the
embedding $\hat X_x \subseteq \hat \bA^n_x$ and is also definable
intrinsically as the quotient of $\hat \cO_{X,x}$ by the $m$-th
Fitting ideal of $\Omega^1_{\hat X_x}$; cf.~\cite{Har-dcr} and Remark
\ref{r:sch-sing}.
\begin{theorem} \cite[Proposition 5.7.(iii)]{Gre-GMZ} \label{t:greuel}
  If $x$ is an isolated singularity which is locally a complete
  intersection in the analytic topology, then
\begin{equation}\label{e:gre-fla}
  H^\bullet(\tilde \Omega^{\bullet,\an}_{X,x}) 
  % \iso H^\bullet(\tilde \Omega^\bullet_{\hat X_x})
  \cong \bk^{\mu_x - \tau_x}[-\dim X].
\end{equation}
\end{theorem}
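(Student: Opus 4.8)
The plan is to recall the structure of Greuel's argument \cite{Gre-GMZ}. First I would note that $H^0(\tilde\Omega^{\bullet,\an}_{X,x}) = \bk$, since the germ of an isolated complete intersection singularity of positive dimension is connected; so the genuine content of \eqref{e:gre-fla} is the vanishing of $H^j$ for $0 < j < \dim X$ together with the identification $\dim_\bk H^{\dim X}(\tilde\Omega^{\bullet,\an}_{X,x}) = \mu_x - \tau_x$ (equivalently, \eqref{e:gre-fla} is an assertion about the reduced complex, the constant class being understood).

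Next I would pass to a concrete analytic germ. Set $d := \dim X$ and choose a representative $U$ of the germ of $X$ at $x$, embedded as $U = V(f_1,\dots,f_k) \subseteq B \subseteq \bC^n$ with $d = n-k$, with $(f_1,\dots,f_i)$ cutting out an isolated singularity for each $i$, and with $B$ a sufficiently small ball so that $U$ is contractible, Stein, and smooth away from $x$. Because $U$ is Stein and each $\tilde\Omega^{p,\an}_U$ is coherent, the functor $\Gamma(U,-)$ is exact on these sheaves, so $H^\bullet(\tilde\Omega^{\bullet,\an}_{X,x})$ is computed by the complex of global sections $\Gamma(U,\tilde\Omega^{\bullet,\an}_U)$ (and, via Theorem \ref{t:form-an-comp}, agrees with the formal analogue $H^\bullet_{DR}(\hat X_x)$, although the analytic picture suffices).

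The computation then runs in two stages. \emph{Stage 1: the de Rham lemma.} I would first treat the full K\"ahler--de Rham complex $\Omega^{\bullet,\an}_U$, \emph{without} passing to the quotient modulo torsion, and show that its cohomology sheaves vanish in degrees $j$ with $0 < j < d$, equal $\bk_U$ in degree $0$, and form a skyscraper of length $\mu_x$ supported at $x$ in degree $d$ (and vanish for $j > d$). This is the classical ``de Rham lemma'' for isolated complete intersection singularities: one chooses a good representative of the Milnor fibration $f\colon U \to S$, uses that the Milnor fibre $F$ is $(d-1)$-connected and hence homotopy equivalent to $\bigvee^{\mu_x} S^d$, so that $\tilde H^\bullet(F;\bk) \cong \bk^{\mu_x}[-d]$, and transports this to the special fibre via Grothendieck's algebraic de Rham theorem on the smooth total space of the family together with a specialization/semicontinuity argument for the relative de Rham complex. \emph{Stage 2: passing to $\tilde\Omega^\bullet$ and extracting $\tau_x$.} I would then use the torsion subcomplex $T^\bullet \subseteq \Omega^{\bullet,\an}_U$ (it is a subcomplex because the differential preserves torsion on a reduced space), which yields a short exact sequence $0 \to T^\bullet \to \Omega^{\bullet,\an}_U \to \tilde\Omega^{\bullet,\an}_U \to 0$ and an associated long exact cohomology sequence relating $H^\bullet(\tilde\Omega^{\bullet,\an}_{X,x})$ to the output of Stage 1 and to the cohomology of $T^\bullet$ at $x$. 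Since $X$ is a complete intersection it is Gorenstein, so the dualizing module $\omega_{X,x}$ is free of rank one and there is a canonical fundamental-class map $\tilde\Omega^d_{X,x} \to \omega_{X,x}$, whose behaviour controls the torsion of $\Omega^d_{X,x}$. Using the Koszul-type presentation of $\Omega^\bullet_X$ as the quotient of $\Omega^\bullet_{\bC^n}$ by the subcomplex generated by the $f_i$ and the $df_i$, I would identify the relevant torsion modules and the cokernel of the fundamental-class map in terms of the Jacobian-type ideals $J_{X,x,i}$ of \eqref{e:jxi-defn}, so that the net effect of passing from $\Omega^\bullet$ to $\tilde\Omega^\bullet$ is to remove one copy of the Tjurina algebra $\mathcal{C}_{X,x}$ (Definition \ref{d:tj-no}), of dimension $\tau_x$. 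Combining the two stages gives $H^j(\tilde\Omega^{\bullet,\an}_{X,x}) = 0$ for $0 < j < d$, $H^0 = \bk$, and $\dim_\bk H^d(\tilde\Omega^{\bullet,\an}_{X,x}) = \mu_x - \tau_x$, which is \eqref{e:gre-fla}.

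The hard part is Stage 2: the delicate bookkeeping of the torsion submodules of the $\Omega^p_X$ along the differential and their interaction with Gorenstein duality, which is in effect Greuel's comparison between the ``geometric'' de Rham complex (forms pulled back from the ambient $\bC^n$) and the K\"ahler--de Rham complex, and the resulting expression of everything in terms of the ideals $J_{X,x,i}$. Stage 1, while not trivial, is by now standard. For the complete details I would refer to \cite[\S 5]{Gre-GMZ}.
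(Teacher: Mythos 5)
This statement is imported verbatim from Greuel (\cite[Proposition 5.7.(iii)]{Gre-GMZ}); the paper supplies no proof of its own, so there is nothing internal to compare against. Your outline is a faithful reconstruction of Greuel's actual argument: the Milnor-fibration/Brieskorn-module input giving $\mu_x$, the de Rham lemma for isolated complete intersection singularities giving the vanishing in middle degrees, and the torsion subcomplex (concentrated in degree $\dim X$, isomorphic there to the Tjurina algebra $\mathcal{C}_{X,x}$ of dimension $\tau_x$) accounting for the correction $-\tau_x$ via the long exact sequence, with the needed injectivity of $T(\Omega^{\dim X}_{X,x})\to H^{\dim X}(\Omega^{\bullet,\an}_{X,x})$ being exactly the delicate point you defer to \cite[\S 5]{Gre-GMZ}. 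The only inaccuracy I would flag is your parenthetical claim in Stage 1 that the \emph{full} K\"ahler--de Rham complex has vanishing cohomology in degrees $j>\dim X$: for instance for a hypersurface germ, $\Omega^{\dim X+1}_{X,x}\cong\mathcal{C}_{X,x}\neq 0$ is torsion, so those cohomology groups need not vanish; but this is harmless, since only the torsion-free quotient enters \eqref{e:gre-fla} and it vanishes above degree $\dim X$ for trivial reasons.
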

Here, $V[-\dim X]$ is the graded vector space concentrated in degree
$\dim X$ with underlying vector space $V$.

\subsection{General structure}
Since $H(X)$ has finitely many leaves, $M(X,H(X))$ is holonomic.  Let
$i: Z \into X$ be the (finite) singular locus of $X$.

Note that $i_* H^0 i^* M(X,\mfv)$ is the maximal quotient of
$M(X,\mfv)$ supported on $Z$.  Let $N$ be its kernel. Let $X^\circ :=
X \setminus Z$ and let $\IC(X) = j_{!*} \Omega_{X^\circ}$ be the
intersection cohomology $\caD$-module of $X$, i.e., the intermediate
extension of $\Omega_{X^\circ}$.  Since $j^! M(X,\mfv) \cong
\Omega_{X^\circ}$, this is a composition factor of $M(X,\mfv)$, and
all other composition factors are delta function $\caD$-modules of
points in $Z$.  Since $N$ has no quotient supported on $Z$, it must be
an indecomposable extension of the form
\begin{equation}\label{e:ciis-ext}
0 \to K \into N \onto \IC(X) \to 0,
\end{equation}
where $K$ is supported at $Z$.  Then, the structure of $M(X,\mfv)$ reduces
to computing $i_*H^0 i^* M(X,\mfv)$, the extension \eqref{e:ciis-ext}, and
how these two are extended.  The first question has a nice general answer:
\begin{theorem} \label{t:is-str} For every $z \in Z$, with $i_z: \{z\} \to X$
the embedding, there is a canonical exact sequence
\begin{equation}\label{e:is-str}
0 \to H_{DR}^{\dim X}(\hat X_z) \into
H^0 i_z^* M(X,\mfv) \onto \mathcal{C}_{X,z} \to 0.
\end{equation}
\end{theorem}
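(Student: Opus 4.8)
The plan is to work entirely in the formal (or equivalently analytic) neighborhood $\hat X_z$, where we have the explicit description of $\mfv = H(X)$ as Hamiltonian vector fields associated to the top polyvector field $\Xi_X = i_{\Xi_Y}(df_1 \wedge \cdots \wedge df_k)$, and to identify $H^0 i_z^* M(X,\mfv)$ by dualizing: $H^0 i_z^* M(X,\mfv)$ is dual to $\Hom_{\hat\caD}(M(\hat X_z, \mfv|_{\hat X_z}), \delta_z) = (\hat\cO_{X,z}^*)^{\mfv}$, the $\mfv$-invariant distributions supported at $z$. Equivalently, using that $M(\hat X_z,\mfv)$ is holonomic with its only composition factors being $\IC$ and $\delta_z$-type modules, $H^0 i_z^* M(X,\mfv)$ is the multiplicity of the stalk at $z$, i.e. the fiber of the zeroth cohomology of the restriction. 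Concretely, since $\mfv \cdot \caD_X$ is generated by vector fields $\xi_\alpha = \Xi_X(d\alpha)$ for $\alpha \in \tilde\Omega^{\dim X - 2}_X$, and $M(X,\mfv) = \Omega_X^{\dim X - \bullet}$-type complex in the smooth locus, the restriction $i_z^* M(X,\mfv)$ computes a piece of the de Rham-type complex of the formal punctured neighborhood. The key will be to organize this via the two natural pieces: the "global" part coming from de Rham cohomology of the smooth locus $\hat X_z \setminus \{z\}$, which by Greuel's formula (Theorem \ref{t:greuel}) is $\mu_z - \tau_z$-dimensional in top degree, and a "Jacobian" part which turns out to be $\mathcal{C}_{X,z}$.

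First I would set up the complex computing $M(\hat X_z, \mfv|_{\hat X_z})$ explicitly. Using the complete intersection equations $f_1, \ldots, f_k$ in $\hat\bA^n$ and the contraction operator, $M$ is the quotient of $\caD$ by the right ideal generated by the $\xi_\alpha$, and by the analysis in \S\ref{ss:vtop} (Example \ref{ex:cy-cplte-int}) together with Proposition \ref{p:vtop-hlh}, this $\cO$-saturates to the locally Hamiltonian vector fields, i.e. contractions $\Xi_X(\gamma)$ with closed $(\dim X - 1)$-forms modulo torsion. So $i_z^* M(X,\mfv)$ is computed by the two-step complex $\tilde\Omega^{\dim X}_{\hat X_z} \to \tilde\Omega^{\dim X - 1}_{\hat X_z} / (\text{closed forms})$ — more precisely by a Koszul-type resolution using $df_1, \ldots, df_k$ on $\hat\bA^n$. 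Here is where I expect the main obstacle: carefully identifying the cohomology of this restricted complex and splitting off the two contributions. One needs to show the kernel of the relevant map is exactly $H^{\dim X}_{DR}(\hat X_z)$ — which by Greuel is $\bk^{\mu_z - \tau_z}$ — and the cokernel is $\mathcal{C}_{X,z} = \hat\cO_{\bA^n,z}/(J_{X,z,k}, f_k)$, which is the quotient of $\hat\cO_{X,z}$ by the $k$-th Fitting ideal of $\Omega^1_{\hat X_z}$ (Definition \ref{d:tj-no}). The Fitting ideal appears naturally because the Jacobian determinants $\frac{\partial(f_1,\ldots,f_k)}{\partial(x_{j_1},\ldots,x_{j_k})}$ are precisely the coefficients expressing the generator of $\tilde\Omega^{\dim X}_{\hat X_z}$ in terms of the $df_i$'s and the ambient volume form.

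The exact sequence \eqref{e:is-str} should then fall out as the long exact sequence (or a short exact sequence truncated from it) associated to the two-term filtration of the complex computing $i_z^* M$: one subcomplex records the "exact polyvector-field-image" part whose cohomology gives $\mathcal{C}_{X,z}$ via the Jacobian ideal, and the quotient records the de Rham cohomology of the punctured formal neighborhood, whose top piece is $H^{\dim X}_{DR}(\hat X_z)$ by Hartshorne's theorem and Greuel's computation. I would verify the canonicity of the maps by checking that the inclusion $H^{\dim X}_{DR}(\hat X_z) \hookrightarrow H^0 i_z^* M$ is the one induced by $\IC(X) \hookrightarrow$ (pushforward of) $\Omega_{X^\circ}$ under $i_z^*$, and that the surjection onto $\mathcal{C}_{X,z}$ is the one dual to the inclusion of evaluation-type distributions. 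The higher-degree vanishing ($H^{<0} i_z^* M = 0$ beyond the stated range, and no $H^{\neq 0}$ contributions interfering) follows from holonomicity plus Greuel's acyclicity statement \eqref{e:gre-fla}, which concentrates everything in degree $\dim X$. The hard part, to reiterate, will be the explicit Koszul/Fitting-ideal bookkeeping identifying the cokernel precisely as $\mathcal{C}_{X,z}$ rather than some a priori larger or smaller quotient; the rest is formal homological algebra once the complex is correctly written down.
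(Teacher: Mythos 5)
Your proposal follows essentially the same route as the paper's proof: dualize via Lemma \ref{l:maxquot-coinv} to reduce to computing $(\hat \cO_{X,z})_\mfv$, relate forms on $X$ to forms on $Y$ modulo $I_X$ by wedging with $df_1 \wedge \cdots \wedge df_k$ (your Koszul/Jacobian-determinant observation is exactly the paper's injection $\Phi$), identify the subspace as the image of $H^{\dim X}(\tilde\Omega^\bullet_{\hat X_z})$ via Greuel's formula, and identify the cokernel as $\mathcal{C}_{X,z}$ via the Jacobian ideal. The only cosmetic difference is that the paper works directly with exact $(n-1)$-forms $d\Phi(\Omega^{m-1}_{\hat X_z})$ (images of Hamiltonian vector fields applied to functions, times $\vol_{\hat Y_z}$) rather than invoking the $\cO$-saturation to locally Hamiltonian vector fields, but this does not change the computation.
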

By Theorem \ref{t:form-an-comp}, there is a canonical isomorphism
$H^{\dim X}(\Omega_{X,z}^{\bullet,\an}) \iso H_{DR}^{\dim X}(\hat
X_z)$. By Theorem \ref{t:greuel}, the former has dimension
$\mu_z-\tau_z$. On the other hand, $\dim \mathcal{C}_{X,z} =
\tau_z$. We conclude 
\begin{corollary}
$i_* H^0 i^* M(X,\mfv) \cong \bigoplus_{z \in Z} \delta_z^{\mu_z}$.
\end{corollary}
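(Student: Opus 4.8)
The plan is to reduce the statement to a dimension count at each isolated singularity. First I would record the two elementary inputs that make the reduction work: since $X$ has only isolated singularities, its singular locus $Z$ is a finite set of reduced points, so $i\colon Z\into X$ is the disjoint union of the closed embeddings $i_z\colon\{z\}\into X$, $z\in Z$; and $M(X,\mfv)$ is holonomic, since $(X,\mfv)$ has finitely many leaves, so $i^*M(X,\mfv)$ is a complex with holonomic---hence finite-dimensional---cohomology on the zero-dimensional scheme $Z$. Consequently $H^0 i^* M(X,\mfv)=\bigoplus_{z\in Z}H^0 i_z^* M(X,\mfv)$, and, since $i_*$ of a finite-dimensional vector space $V_z$ placed at the point $z$ is $\delta_z\otimes V_z\cong\delta_z^{\dim V_z}$, we obtain $i_*H^0 i^* M(X,\mfv)\cong\bigoplus_{z\in Z}\delta_z^{\dim H^0 i_z^* M(X,\mfv)}$. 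Thus the corollary is equivalent to the equality $\dim H^0 i_z^* M(X,\mfv)=\mu_z$ for every $z\in Z$.

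Next I would compute that dimension through the canonical short exact sequence \eqref{e:is-str} of Theorem \ref{t:is-str}, which gives $\dim H^0 i_z^* M(X,\mfv)=\dim H_{DR}^{\dim X}(\hat X_z)+\dim\mathcal{C}_{X,z}$. For the first summand I would invoke the analytic-to-formal comparison of Theorem \ref{t:form-an-comp} (applicable because $z$ is an isolated singularity of $X$) to identify $H_{DR}^{\dim X}(\hat X_z)$ with $H^{\dim X}(\tilde\Omega^{\bullet,\an}_{X,z})$, and then Greuel's formula \eqref{e:gre-fla} of Theorem \ref{t:greuel}---valid since $X$ is locally a complete intersection with an isolated singularity at $z$---evaluates this to $\mu_z-\tau_z$. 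The second summand equals $\tau_z$ by Definition \ref{d:tj-no}. Adding, $\dim H^0 i_z^* M(X,\mfv)=(\mu_z-\tau_z)+\tau_z=\mu_z$, and summing over $z\in Z$ yields $i_*H^0 i^* M(X,\mfv)\cong\bigoplus_{z\in Z}\delta_z^{\mu_z}$.

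Since all the substantive content lives in Theorems \ref{t:is-str}, \ref{t:greuel}, and \ref{t:form-an-comp}, there is no genuine obstacle in the corollary itself; the only points deserving a line of care are that $Z$ is finite and reduced, so that $i_z^* M(X,\mfv)$ is literally a complex of vector spaces and $i_*$ of a vector space placed at $z$ is $\delta_z$ tensored with it, and that the possibly nonzero higher cohomology sheaves of $i^* M(X,\mfv)$ are irrelevant because only $H^0$ enters. The one conceptual point worth flagging is the cancellation of $\tau_z$: the Tjurina number simultaneously measures the deficiency of the formal top de Rham cohomology from the Milnor number (contributing $\mu_z-\tau_z$) and the length of the singularity ring $\mathcal{C}_{X,z}$ (contributing $\tau_z$), so it is their sum, rather than either term alone, that recovers $\mu_z$.
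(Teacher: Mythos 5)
Your proof is correct and follows exactly the paper's own route: the short exact sequence of Theorem \ref{t:is-str} combined with the comparison isomorphism of Theorem \ref{t:form-an-comp}, Greuel's formula \eqref{e:gre-fla} giving $\mu_z-\tau_z$ for the formal de Rham term, and $\dim\mathcal{C}_{X,z}=\tau_z$, summed over the finite reduced singular locus $Z$. The extra care you take in spelling out why $i_*H^0 i^*$ decomposes as a sum of $\delta_z$'s with the stated multiplicities is implicit in the paper but entirely consistent with it.
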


The following basic result will be useful in the theorem and later on.
For an arbitrary scheme $X$ and point $x \in X$, let $(\hat
\cO_{X,x})^*$ be the continuous dual of $\hat \cO_{X,x}$ with respect
to the adic topology.
\begin{lemma}\label{l:maxquot-coinv}
Let $(X,\mfv)$ and $x \in X$ be arbitrary. Then $\Hom(M(X,\mfv), \delta_x)
  \cong ((\hat \cO_{X,x})^*)^\mfv$.
\end{lemma}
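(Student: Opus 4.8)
The plan is to reduce the statement to the representability property of $M(X,\mfv)$ together with the standard identification of the delta-module $\delta_x$ with the continuous dual of the formal local ring.

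First I would use that $M(X,\mfv)=\mfv\cdot\caD_X\setminus\caD_X$ is a cyclic quotient of $\caD_X$: pulling back a $\caD_X$-linear map along the surjection $\caD_X\onto M(X,\mfv)$ and using $\Hom_{\caD_X}(\caD_X,N)\cong N$ (sending $\phi$ to $\phi(1)$) gives, for every right $\caD_X$-module $N$,
\[
\Hom_{\caD_X}(M(X,\mfv),N)\;\cong\;N^{\mfv}:=\{\,n\in N\mid n\cdot\xi=0\ \text{for all }\xi\in\mfv\,\},
\]
where $\xi\in\mfv\subseteq\Vect(X)\subseteq\caD_X$ acts on the right; indeed $\phi$ kills the right submodule $\mfv\cdot\caD_X$ precisely when $\phi(1)\cdot\xi=0$ for all $\xi$. (This is the property of $M(X,\mfv)$ recorded in \S\ref{ss:ddv}.) Taking $N=\delta_x$ reduces the lemma to an $\mfv$-equivariant identification $\delta_x\cong(\hat\cO_{X,x})^*$.

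Next I would establish this identification of $\delta_x$, as a right $\caD$-module, with $(\hat\cO_{X,x})^*$, where $\hat\cO_{X,x}$ carries its natural left $\caD$-module structure. For $X$ smooth this is classical: in local coordinates with $x=0$ one has $\delta_x\cong\bk[\partial_1,\dots,\partial_n]$, and the perfect pairing $\langle\partial^{\alpha},x^{\beta}/\beta!\rangle=\delta_{\alpha\beta}$ realizes it as $(\hat\cO_{X,x})^*$ while intertwining the right $\caD$-actions, both of which are given by $(\mu\cdot P)(f)=\mu(Pf)$. For general $X$, fix an affine embedding $i\colon X\into V$ with ideal $I_X$; under Kashiwara's equivalence the $\caD_V$-module corresponding to $\delta_x$ is $\{n\in\delta_x^{V}\mid n\cdot I_X=0\}$, which under the smooth identification $\delta_x^{V}\cong(\hat\cO_{V,x})^*$ is exactly the space of functionals annihilating $\hat I_{X,x}$, i.e. $(\hat\cO_{V,x}/\hat I_{X,x})^*=(\hat\cO_{X,x})^*$, with $\caD$-structures matching. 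Under this identification the right action of $\xi\in\mfv$ on $(\hat\cO_{X,x})^*$ is $(\mu\cdot\xi)(f)=\mu(\xi(f))$ (the lift $\widetilde\mfv\subseteq\Vect(V)$ is parallel to $X$, so it acts on $\hat\cO_{X,x}$ through its restriction $\mfv$), so that
\[
(\delta_x)^{\mfv}=\{\mu\in(\hat\cO_{X,x})^*\mid \mu(\mfv(\hat\cO_{X,x}))=0\}=((\hat\cO_{X,x})^*)^{\mfv}\cong((\hat\cO_{X,x})_{\mfv})^*,
\]
the sign difference between the right $\caD$-action and the contragredient $\mfv$-action being irrelevant for invariants. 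Combining with the first step yields $\Hom(M(X,\mfv),\delta_x)\cong((\hat\cO_{X,x})^*)^{\mfv}$.

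The main thing to get right is the middle step: pinning down the right $\caD$-module structure on $\delta_x$, checking it matches the dual structure on $(\hat\cO_{X,x})^*$ (signs and factorials), and transporting this correctly through Kashiwara's equivalence in the singular case. This is routine bookkeeping rather than a genuine difficulty; no new ideas are needed.
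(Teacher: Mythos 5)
Your proposal is correct and follows essentially the same route as the paper: the paper's proof likewise identifies $\Hom(\caD_X,\delta_x)$ with the delta-function distributions $(\hat\cO_{X,x})^*$ and then observes that a map out of the cyclic quotient $M(X,\mfv)$ is determined by the image of $1$, which must be $\mfv$-invariant. Your additional care with the right $\caD$-module structure on $\delta_x$ and with Kashiwara's equivalence in the singular case just makes explicit what the paper leaves implicit.
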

\begin{proof}
  Note that $\Hom(\caD_X, \delta_x) \cong (\hat \cO_{X,x})^*$, since
  the latter are exactly the delta function distributions at $x$. By
  definition of $M(X,\mfv)$, each $\phi \in \Hom(M(X,\mfv), \delta_x)$
  is uniquely determined by $\phi(1)$, which can be any element of
  $\delta_x$ which is invariant under $\mfv$.
\end{proof}
The theorem can therefore be restated as
\begin{theorem} \label{t:is-coinv} For all $z \in Z$, there is a canonical
exact sequence
\begin{equation}\label{e:is-coinv}
0 \to  H_{DR}^{\dim X}(\hat X_z) \to (\hat
  \cO_{X,z})_\mfv \to \mathcal{C}_{X,z} \to 0.
\end{equation}
In particular, $\dim (\hat \cO_{X,z})_{\mfv} = \mu_z$.
\end{theorem}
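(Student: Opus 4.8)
The plan is to compute the subspace $\mfv(\hat\cO_{X,z})\subseteq\hat\cO_{X,z}$ directly and read the asserted exact sequence off a single containment of ideals. (Via Lemma~\ref{l:maxquot-coinv} this also computes $\Hom(M(X,\mfv),\delta_z)\cong((\hat\cO_{X,z})^*)^\mfv$, the dual of $(\hat\cO_{X,z})_\mfv$, so Theorems~\ref{t:is-str} and~\ref{t:is-coinv} coincide.) First I would reduce to $\hat X_z$: by definition $(\hat\cO_{X,z})_\mfv=\pi_0 M(\hat X_z,\mfv|_{\hat X_z})=\hat\cO_{X,z}/\mfv|_{\hat X_z}(\hat\cO_{X,z})$, and by formal $\caD$-localizability of Hamiltonian vector fields (Remark~\ref{r:f-loc}) one has $M(\hat X_z,\mfv|_{\hat X_z})=M(\hat X_z,H(\hat X_z))$, so $(\hat\cO_{X,z})_\mfv=\hat\cO_{X,z}/H(\hat X_z)(\hat\cO_{X,z})$; write $\mfv:=H(\hat X_z)$, the Hamiltonian vector fields of $\Xi:=\Xi_X|_{\hat X_z}$, which after the embedding $\hat X_z\into\hat\bA^n_z$ is (up to a unit) $\Xi_{\hat\bA^n}(df_1\wedge\cdots\wedge df_k)$ as in Greuel's setup. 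Set $n:=\dim X$ and assume $n\ge2$ (for $n=1$ one replaces $H$ by $LH$ and invokes \S\ref{ss:curves}). Recall $\xi_\alpha=\Xi(d\alpha)$ for $\alpha\in\tilde\Omega^{n-2}_{\hat X_z}$, and $\xi_\alpha(g)=\Xi(d\alpha\wedge dg)=\Xi(d(\alpha\wedge dg))$ for $g\in\hat\cO_{X,z}$, where $\Xi(\cdot)\colon\tilde\Omega^n_{\hat X_z}\to\hat\cO_{X,z}$ is contraction against the polyvector.

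I would then establish two ingredients. \emph{(a) The contraction $\Xi\colon\tilde\Omega^n_{\hat X_z}\to\hat\cO_{X,z}$ is injective with cokernel $\mathcal{C}_{X,z}$.} Injectivity holds since $\tilde\Omega^n_{\hat X_z}$ is torsion-free and $\Xi$ restricts to an isomorphism over the smooth punctured neighborhood $\hat X_z\setminus\{z\}$ (where $\Xi^{-1}$ is the symplectic volume), so $\ker\Xi$ is torsion, hence zero. For the cokernel, $\Xi$ is, up to a unit, the fundamental-class map from $\tilde\Omega^n_{\hat X_z}$ into the dualizing sheaf $\omega_{\hat X_z}$, which is free of rank one because $X$ is a complete intersection; its image is the $n$-th Fitting ideal of $\Omega^1_{\hat X_z}$, which by \cite{Har-dcr} and Definition~\ref{d:tj-no} is the defining ideal of $\mathcal{C}_{X,z}$ (concretely, $\Xi$ applied to wedges of coordinate differentials produces exactly the Jacobian minors $\partial(f_1,\dots,f_k)/\partial(x_{j_1},\dots,x_{j_k})$, which one may take to generate the image). \emph{(b) $\mfv(\hat\cO_{X,z})=\Xi(d\,\tilde\Omega^{n-1}_{\hat X_z})$.} Indeed $\mfv(\hat\cO_{X,z})$ is the $\bk$-span of the $\xi_\alpha(g)=\Xi(d(\alpha\wedge dg))$; since $\Omega^1_{\hat X_z}$ is generated over $\hat\cO_{X,z}$ by exact one-forms, $\tilde\Omega^{n-1}_{\hat X_z}$ is generated by $(n-1)$-fold wedges of exact one-forms, each of which (using $n\ge2$) is a wedge of an $(n-2)$-form and an exact one-form, so the $\hat\cO_{X,z}$-span of the $\alpha\wedge dg$ is all of $\tilde\Omega^{n-1}_{\hat X_z}$; applying the $\bk$-linear operator $d$ and then $\Xi$ gives the claim.

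Combining, set $B:=d\,\tilde\Omega^{n-1}_{\hat X_z}\subseteq\tilde\Omega^n_{\hat X_z}$; then $\mfv(\hat\cO_{X,z})=\Xi(B)\subseteq\Xi(\tilde\Omega^n_{\hat X_z})=$ the defining ideal of $\mathcal{C}_{X,z}$, whence
\[
0\to \Xi(\tilde\Omega^n_{\hat X_z})/\Xi(B)\to (\hat\cO_{X,z})_\mfv=\hat\cO_{X,z}/\Xi(B)\to \mathcal{C}_{X,z}\to 0
\]
is exact. By injectivity of $\Xi$ on $\tilde\Omega^n_{\hat X_z}$ the first term is $\cong\tilde\Omega^n_{\hat X_z}/d\,\tilde\Omega^{n-1}_{\hat X_z}=H^n(\tilde\Omega^\bullet_{\hat X_z})=H^n_{DR}(\hat X_z)$ (every $n$-form is closed), which is the exact sequence of the theorem. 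Finally Greuel's theorem (Theorem~\ref{t:greuel}) gives $\dim H^n(\tilde\Omega^{\bullet,\an}_{X,z})=\mu_z-\tau_z$, the comparison theorem (Theorem~\ref{t:form-an-comp}) identifies this with $\dim H^n_{DR}(\hat X_z)$, and $\dim\mathcal{C}_{X,z}=\tau_z$ by definition, so $\dim(\hat\cO_{X,z})_\mfv=\mu_z$.

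The hard part will be ingredient (a): pinning down the image of $\Xi\colon\tilde\Omega^n_{\hat X_z}\to\hat\cO_{X,z}$ as exactly the ideal cutting out the scheme-theoretic singular locus, and its cokernel as $\mathcal{C}_{X,z}$. This rests on the theory of regular/fundamental differential forms on complete intersections together with the Fitting-ideal description of the singular scheme (\cite{Har-dcr}); the singularity-theoretic inputs---Greuel's theorem and the analytic-to-formal comparison---are used as black boxes. Ingredient (b) and the diagram chase are short, as is the translation to the $\caD$-module statement of Theorem~\ref{t:is-str}. One should also be careful with the bookkeeping: checking that $d$ is well-defined on $\tilde\Omega^\bullet_{\hat X_z}$, and that the torsion-modded de Rham complex matches the analytic de Rham complex appearing in Greuel's formula.
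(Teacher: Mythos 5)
Your proposal is correct and is essentially the paper's own proof: under the identification $\Xi_X(\alpha)=\Xi_Y(\alpha\wedge df_1\wedge\cdots\wedge df_k)=\Phi(\alpha)/\vol_Y$, your intrinsic contraction map $\Xi\colon\tilde\Omega^{\dim X}_{\hat X_z}\to\hat\cO_{X,z}$ is exactly the paper's map $\Phi$ followed by division by the volume form, your ingredient (b) is the paper's identification $\widetilde{H(X)}(\hat\cO_{Y,z})\cdot\vol_{\hat Y_z}=d\Phi(\Omega^{m-1}_{\hat X_z})$, and your ingredient (a) is the paper's injectivity of $\Phi$ together with \eqref{e:is4}. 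The only (harmless) difference is presentational, working intrinsically on $\hat X_z$ rather than in the ambient Calabi--Yau $Y$.
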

In the case that $Y=\bA^3$ and $X$ is a quasihomogeneous hypersurface,
the consequence that $\dim (\cO_{X,z})_\mfv = \mu_z=\tau_z$ was
discovered in \cite{AL} without using the earlier results of
\cite{Gre-GMZ}.
\begin{proof}
  Let $n := \dim Y$, $m := \dim X$, and $k := n-m$.  Let $I_X := (f_1,
  \ldots, f_k)$ be the ideal defining $X$.  Consider the map
\[
\Phi: \tilde \Omega^\bullet_X \to \Omega^{\bullet+k}_{Y} / I_X \cdot \Omega^{\bullet+k}_{Y},
\quad \alpha \mapsto \alpha\wedge df_1 \wedge \cdots \wedge df_k,
\]
which induces also a map taking the completion at $z$, which we also
denote by $\Phi$.   Note that, in this formula, we have to lift $\alpha$ to
a form on $Y$, but the map is independent of the choice of lift. 
Furthermore, $\Phi$ is injective, since $X\setminus Z$ is locally transversely
cut out by $f_1, \ldots, f_k$.
Let
$\widetilde{H(X)} \subseteq H(Y)$ be the Lie algebra of vector fields
obtained from the $(n-2)$-forms $\Phi(\tilde \Omega^{m-2}_X)$.  Then we have
an identification
\begin{equation}\label{e:is1}
(\hat \cO_{X,z})_{\mfv} \iso
 \Omega^n_{\hat Y_z} / (\widetilde{H(X)}(\hat \cO_{Y,z}) + I_X) \cdot \vol_{\hat Y_z},
\end{equation}
obtained by multiplying by $\vol_{\hat Y_z}$.  In turn,
  $\widetilde{H(X)}(\hat \cO_{Y,z}) \cdot \vol_{\hat Y_z}$ identifies with
$d \Phi(\Omega_{\hat Y_z}^{m-1})$. Therefore, 
\begin{equation}\label{e:is2}
(\hat \cO_{X,z})_{\mfv} \iso
\Omega^n_{\hat Y_z} / (d \Phi(\Omega^{m-1}_{\hat X_z}) + I_X \Omega^n_{\hat Y_z}).
\end{equation}
We now compute the RHS. Recall that $\Phi$ is an injection of complexes.
The image of $H^m(\tilde \Omega^\bullet_{\hat X_z})$ is a subspace of \eqref{e:is2}.
Moreover, the quotient of $\Omega^n_{\hat Y_z}$ by this image is
\begin{equation}\label{e:is4}
  \mathcal{C}_{X,z} = \Omega^n_{\hat Y_z} / (I_X \Omega^n_{\hat Y_z} + \Phi(\Omega^m_{\hat X_z})).
\end{equation}
We obtain the desired canonical exact sequence \eqref{e:is-coinv}.
\end{proof}
We can be more specific about the meaning of $K$ in \eqref{e:ciis-ext}
and use this to describe the derived pushforward $\pi_* M(X,\mfv)$,
where $\pi: X \to \pt$ is the projection to a point.  Let $\pi_i := H^i \pi_*$. If we apply
$\pi_*$ to \eqref{e:ciis-ext}, we obtain isomorphisms $\pi_i N  \cong
\pi_i \IC(X)$ for $i > 1$, and an exact sequence
\begin{equation}
0 \to \pi_1 N \into \IH^{\dim X - 1}(X) \to \pi_0 K \to \pi_0 N \onto
\IH^{\dim X}(X) \to 0.
\end{equation}
Here $\IH^*(X)$ denotes the intersection cohomology of $X$, 
$\IH^*(X) := \pi_{\dim X - *} \IC(X)$.  

Similarly, from the exact sequence $0 \to N \into M(X,\mfv) \onto i_*
H^0 i^* M(X,\mfv) \to 0$, we obtain isomorphisms $\pi_i(N) \cong \pi_i
M(X,\mfv)$, $i \geq 1$, and a split exact sequence
\[
0 \to \pi_0 N \into (\cO_X)_{\mfv} \onto H^0 i^* M(X,\mfv) \to 0.
\]
Put together, we obtain
\begin{corollary}
For $i \geq 2$, $\pi_i M(X,\mfv) \cong \IH^{\dim X - i}(X)$.  For some
decomposition $K = K' \oplus K''$, one has a split exact sequence
\begin{equation}
0 \to \pi_1 M(X,\mfv) \into \IH^{\dim X - 1}(X) \onto \pi_0 K' \to 0,
\end{equation}
and an isomorphism
\begin{equation} 
  (\cO_X)_{\mfv} \cong \IH^{\dim X}(X) \oplus \bigoplus_{z \in Z} (\hat \cO_{X,z})_\mfv \oplus \pi_0 K''.
\end{equation}
\end{corollary}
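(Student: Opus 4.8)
The plan is to assemble the corollary formally from the two long exact sequences and the isomorphisms established in the two paragraphs immediately preceding it, using as extra inputs only that $Z$ is finite, that $X$ is affine (so $\IH^j(X)=0$ for $j>\dim X$), and the structural Theorems \ref{t:is-str} and \ref{t:is-coinv} for the local terms. First I would note the consequences of finiteness of $Z$: both $K$ from \eqref{e:ciis-ext} and $i_*H^0i^*M(X,\mfv)$ are direct sums of delta-function $\caD$-modules, so $\pi_j$ of either vanishes for $j\neq0$; in particular the established isomorphisms $\pi_iN\cong\pi_i\IC(X)$ for $i>1$ and $\pi_iN\cong\pi_iM(X,\mfv)$ for $i\geq1$ combine to give $\pi_iM(X,\mfv)\cong\pi_i\IC(X)=\IH^{\dim X-i}(X)$ for $i\geq2$, which is the first assertion.

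Next I would analyze the five-term exact sequence coming from \eqref{e:ciis-ext},
\[
0\to\pi_1N\to\IH^{\dim X-1}(X)\xrightarrow{\ \phi\ }\pi_0K\xrightarrow{\ \psi\ }\pi_0N\to\IH^{\dim X}(X)\to0,
\]
which terminates on the right because $\IH^{\dim X+1}(X)=0$. It breaks off short exact sequences $0\to\pi_1N\to\IH^{\dim X-1}(X)\to\operatorname{im}\phi\to0$ and $0\to\operatorname{im}\psi\to\pi_0N\to\IH^{\dim X}(X)\to0$, with $\dim\operatorname{im}\phi+\dim\operatorname{im}\psi=\dim\pi_0K$. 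Since $K$ is a direct sum of delta-function modules, I may choose a $\caD$-module decomposition $K=K'\oplus K''$ with $\dim\pi_0K'=\dim\operatorname{im}\phi$ and $\dim\pi_0K''=\dim\operatorname{im}\psi$, and then fix vector-space identifications $\pi_0K'\cong\operatorname{im}\phi$ and $\pi_0K''\cong\operatorname{im}\psi$. Substituting $\pi_1N\cong\pi_1M(X,\mfv)$ into the first sequence produces the split exact sequence $0\to\pi_1M(X,\mfv)\to\IH^{\dim X-1}(X)\to\pi_0K'\to0$ of the statement (split since all terms are vector spaces), while the second sequence splits to give $\pi_0N\cong\IH^{\dim X}(X)\oplus\pi_0K''$.

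Finally, for the coinvariants I would feed this into the split exact sequence $0\to\pi_0N\to(\cO_X)_\mfv\to H^0i^*M(X,\mfv)\to0$ established before the corollary (recall $(\cO_X)_\mfv=\pi_0M(X,\mfv)$). For each $z\in Z$, Theorems \ref{t:is-str} and \ref{t:is-coinv} present both $H^0i_z^*M(X,\mfv)$ and $(\hat\cO_{X,z})_\mfv$ as extensions of $\mathcal{C}_{X,z}$ by $H^{\dim X}_{DR}(\hat X_z)$, hence as isomorphic vector spaces, so $H^0i^*M(X,\mfv)\cong\bigoplus_{z\in Z}(\hat\cO_{X,z})_\mfv$; combining with $\pi_0N\cong\IH^{\dim X}(X)\oplus\pi_0K''$ yields $(\cO_X)_\mfv\cong\IH^{\dim X}(X)\oplus\bigoplus_{z\in Z}(\hat\cO_{X,z})_\mfv\oplus\pi_0K''$. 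There is no genuine obstacle in this argument — it is entirely a matter of splicing exact sequences already in hand — and the only points deserving an explicit line of justification are that a direct sum of delta-function $\caD$-modules admits a $\caD$-module decomposition with prescribed dimensions of the two pieces (summing to the total), and that two vector-space extensions with the same sub- and quotient-object are abstractly isomorphic and may therefore be freely interchanged.
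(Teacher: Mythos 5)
Your argument is correct and is essentially the paper's own: the paper derives the corollary by "putting together" exactly the two long exact sequences you use (the one from \eqref{e:ciis-ext} with $\pi_iK=0$ for $i\neq 0$, and the one from $0\to N\to M(X,\mfv)\to i_*H^0i^*M(X,\mfv)\to 0$), together with the identification $H^0i_z^*M(X,\mfv)\cong(\hat\cO_{X,z})_\mfv$ coming from Theorems \ref{t:is-str} and \ref{t:is-coinv}. You have merely made explicit the routine bookkeeping (splitting the five-term sequence at $\pi_0K$ and choosing $K'$, $K''$ with the right dimensions) that the paper leaves implicit.
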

\begin{remark}
We plan to show in \cite{ES-ciiss} that 
$N = H^0j_! \Omega_{X\setminus\{0\}}$, so one obtains an exact sequence
\[
0 \to K \into N \onto IC(X) \to 0.
\]
Moreover, we plan to show that $K = K' = \bigl(\Ext(\IC(X), \delta_0)^* \otimes \delta_0\bigr)$.
Finally, will then conclude that  $\pi_\bullet M(X,\mfv) \cong H^{\dim X - \bullet}_{\tpl}(X) \oplus
\bk^{\mu_z}$.
\end{remark}

\subsection{The quasihomogeneous case}
Now suppose that $X \subseteq \bA^n$ where $\bA^n = \Spec
\bk[x_1,\ldots,x_n]$, each of the $x_i$ is assigned a weight $m_i \geq
1$, and $X$ is cut out by $k:=n-\dim X$ weighted-homogeneous
polynomials in the $x_i$.  In this case, $\HP_0(\cO_X)$ is a
nonnegatively graded vector space by weight.  Moreover, $M(X,H(X))$ is
a weakly $\bG_m$-equivariant $\caD$-module which decomposes into
weight submodules.  Hence, $H^0 i^* M(X,H(X))$ is weight-graded. Then, the
proofs of the preceding results generalize to this context
(considering also \cite{Gre-GMZ} and references therein).  Moreover,
by \cite{Fer-chdcas} (cf.~\cite[Korollar 5.8]{Gre-GMZ}), in this case
$H_{DR}^\bullet(X)=0$ and \eqref{e:gre-fla} implies that $\mu_z=\tau_z$,
which is the dimension of the singularity ring (see Definition \ref{d:tj-no}).
By using the weight-graded versions of the arguments of \cite{Gre-GMZ}
one deduces, for $X_{\text{sing}}$ the scheme-theoretic singular locus
of $X$, defined by the ideal $(J_{X,0,k},f_k)$,
\begin{theorem} \label{t:is-qh}
The graded vector space $H^0 i^* M(X,H(X))$
  has Poincar\'e polynomial
\begin{equation} \label{e:qh-iim} P(H^0 i^* M(X,H(X));t) =
  P(\cO_{X_{\text{sing}}};t) = P(\cO_{\bA^n}/(J_{X,0,k},f_k);t)=
\sum_{i=1}^k (-1)^{k-i} P(\cO_{\bA^n}/J_{X,0,i};t).
\end{equation} 
\end{theorem}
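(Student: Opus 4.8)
The plan is to promote Theorems \ref{t:is-str} and \ref{t:is-coinv} to statements about \emph{graded} vector spaces, and then to combine the resulting graded exact sequence with the known vanishing results for quasihomogeneous complete intersections with isolated singularity. The weights $m_i$ on the $x_i$ define a $\bG_m$-action on $\bA^n$; since $f_1,\dots,f_k$ are weighted-homogeneous this action preserves $X$ and rescales the top polyvector field $\Xi_X=i_{\Xi_{\bA^n}}(df_1\wedge\cdots\wedge df_k)$ by a single character. Hence $\caD_X$ is weakly $\bG_m$-equivariant, $\mfv=H(X)$ is a $\bZ$-graded Lie subalgebra of $\Vect(X)$, and $M(X,H(X))=H(X)\cdot\caD_X\setminus\caD_X$ is a weakly $\bG_m$-equivariant holonomic $\caD$-module. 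Under our hypotheses $X$ is a weighted cone smooth away from the fixed point, so its singular locus is $Z=\{0\}$ and $i=i_0$; then $i^*M(X,H(X))$ is a complex of graded vector spaces, $H^0i^*M(X,H(X))$ is finite-dimensional and graded, and the $\bG_m$-equivariant form of Lemma \ref{l:maxquot-coinv} gives $P(H^0i^*M(X,H(X));t)=P((\hat\cO_{X,0})_{H(X)};t)$.

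Next I would re-run the proof of Theorems \ref{t:is-str}/\ref{t:is-coinv} keeping track of weights. Every ingredient there is $\bG_m$-equivariant: the map $\Phi\colon\tilde\Omega^{\bullet}_{\hat X_0}\to\Omega^{\bullet+k}_{\hat Y_0}/I_X\Omega^{\bullet+k}_{\hat Y_0}$, $\alpha\mapsto\alpha\wedge df_1\wedge\cdots\wedge df_k$; the de Rham differential; multiplication by the semi-invariant volume form $\vol_{\hat Y_0}$, which I normalize to weight $0$ so that it matches $\cO_X$ in degree $0$; the ideal $I_X$; and the Lie algebra $\widetilde{H(X)}$. Therefore the identifications \eqref{e:is1}, \eqref{e:is2}, \eqref{e:is4} are isomorphisms of graded vector spaces, and \eqref{e:is-coinv} becomes a short exact sequence of graded vector spaces
\[
0\to H^{\dim X}_{DR}(\hat X_0)\to H^0i^*M(X,H(X))\to \mathcal{C}_{X,0}\to 0,
\]
where $\mathcal{C}_{X,0}=\cO_{\bA^n}/(J_{X,0,k},f_k)=\cO_{X_{\text{sing}}}$ as graded algebras (the completion in Definition \ref{d:tj-no} is harmless, since $(J_{X,0,k},f_k)$ is weighted-homogeneous and $\mathfrak{m}_0$-primary).

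It remains to kill the left-hand term and to read off the alternating sum. By Theorem \ref{t:form-an-comp} together with Greuel's formula \eqref{e:gre-fla}, $H^{\dim X}_{DR}(\hat X_0)\cong\bk^{\mu_0-\tau_0}$; in the quasihomogeneous case $\mu_0=\tau_0$ by \cite{Fer-chdcas} (cf.~\cite[Korollar 5.8]{Gre-GMZ}), and the weighted refinement of Greuel's computation shows the vanishing is one of graded vector spaces. Hence $H^0i^*M(X,H(X))\cong\mathcal{C}_{X,0}=\cO_{X_{\text{sing}}}$ as graded vector spaces, which is the first two equalities of \eqref{e:qh-iim}. The last equality, $P(\cO_{\bA^n}/(J_{X,0,k},f_k);t)=\sum_{i=1}^k(-1)^{k-i}P(\cO_{\bA^n}/J_{X,0,i};t)$, is the graded refinement of Greuel's alternating-sum formula for the Milnor number $\mu_0=\sum_{i=1}^k(-1)^{k-i}\codim J_{X,0,i}$: his proof runs through exact sequences of finitely generated modules relating the $J_{X,0,i}$ (Koszul-type complexes), all of which are graded here, so passing from dimensions to Poincar\'e series yields the identity. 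I expect the main obstacle to be not any single deduction but the consistent bookkeeping of $\bG_m$-weights through the chain \eqref{e:is1}--\eqref{e:is4}, together with confirming that the cited results of Greuel and Ferrari (vanishing of $H^\bullet_{DR}(X)$, the equality $\mu_0=\tau_0$, and the alternating-sum formula) genuinely refine to graded statements — which they do, since those arguments are carried out with graded modules throughout.
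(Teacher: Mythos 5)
Your proposal follows essentially the same route as the paper: the paper's own argument is exactly to observe that $M(X,H(X))$ is weakly $\bG_m$-equivariant, that the proofs of Theorems \ref{t:is-str} and \ref{t:is-coinv} (the chain \eqref{e:is1}--\eqref{e:is4}) go through in the graded setting, that $H^{\dim X}_{DR}(\hat X_0)$ vanishes since $\mu_0=\tau_0$ by \cite{Fer-chdcas}, and that the alternating-sum identity is the weight-graded refinement of Greuel's formula. Your write-up just makes the bookkeeping (normalization of the volume form, completion vs.\ polynomial ring) more explicit than the paper does.
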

Since $\cO_X$ is nonnegatively graded and $X$ is connected, $H(X)$ is
spanned by homogeneous vector fields, and $(\cO_X)_{H(X)}$ is
finite-dimensional, we conclude that $(\hat \cO_X)_{H(X)} \cong
(\cO_X)_{H(X)}$.  Therefore, Lemma \ref{l:maxquot-coinv} implies
\begin{corollary}\label{c:is-qh}
$P((\cO_X)_{H(X)};t) =  P(\cO_{\bA^n}/(J_{X,0,k},f_k);t)$.
\end{corollary}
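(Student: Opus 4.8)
The plan is to deduce Corollary \ref{c:is-qh} from Theorem \ref{t:is-qh} by producing a grading‑preserving isomorphism $(\cO_X)_{H(X)}\cong H^0 i^* M(X,H(X))$, where $i\colon Z=\{0\}\into X$ is the inclusion of the (unique, since $X$ is conical with positive weights) singular point. One starts from the fact that $X$ is a complete intersection with isolated singularities in an affine Calabi--Yau, so by Example \ref{ex:cy-cplte-int} the Lie algebra $H(X)$ has finitely many leaves; hence $M(X,H(X))$ is holonomic, $(\cO_X)_{H(X)}$ is finite‑dimensional, and $M(X,H(X))$ is weakly $\bG_m$‑equivariant with finite‑dimensional weight spaces.

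The argument has two steps. First, I would pass from the global coinvariants to the formal ones: since $\cO_X$ is nonnegatively graded and connected and $H(X)$ is spanned by homogeneous vector fields (the polyvector field $\Xi$ and the equations $f_1,\dots,f_k$ being weighted‑homogeneous), $H(X)(\cO_X)\subseteq\cO_X$ is a graded subspace, so $(\cO_X)_{H(X)}$ is graded; being finite‑dimensional it is concentrated in degrees $\le D$ for some $D$, and a graded‑Nakayama argument then gives an isomorphism of graded vector spaces $(\cO_X)_{H(X)}\iso(\hat\cO_{X,0})_{H(X)}$. Second, I would compute $\Hom_{\caD_X}(M(X,H(X)),\delta_0)$ in two ways. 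Lemma \ref{l:maxquot-coinv} gives a $\bG_m$‑equivariant isomorphism $\Hom_{\caD_X}(M(X,H(X)),\delta_0)\cong((\hat\cO_{X,0})^*)^{H(X)}$, and a continuous functional on $\hat\cO_{X,0}$ is $H(X)$‑invariant exactly when it annihilates $H(X)(\hat\cO_{X,0})$, so this is $((\hat\cO_{X,0})_{H(X)})^*\cong((\cO_X)_{H(X)})^*$ by the first step. On the other hand, since $Z=\{0\}$, the maximal quotient of $M(X,H(X))$ supported on $Z$ is $i_* H^0 i^* M(X,H(X))$ (recorded just before Theorem \ref{t:is-str}), which is $i_*$ of a finite‑dimensional vector space, and every $\caD$‑module map to $\delta_0$ factors through it; hence $\Hom_{\caD_X}(M(X,H(X)),\delta_0)\cong(H^0 i^* M(X,H(X)))^*$ as graded vector spaces. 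Comparing and dualizing yields the desired graded isomorphism $(\cO_X)_{H(X)}\cong H^0 i^* M(X,H(X))$, and then Theorem \ref{t:is-qh} gives $P((\cO_X)_{H(X)};t)=P(H^0 i^* M(X,H(X));t)=P(\cO_{\bA^n}/(J_{X,0,k},f_k);t)$. (This is also consistent with the general decomposition of $(\cO_X)_{H(X)}$ preceding this subsection, since for the affine cone $X$ one has $\IH^{\dim X}(X)=0$.)

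The step I expect to be the main obstacle is the bookkeeping with $\bG_m$‑weights: one must check that Kashiwara's equivalence, the identification of $\delta_0$ with the space of delta distributions $(\hat\cO_{X,0})^*$ used in Lemma \ref{l:maxquot-coinv}, and the adjunction identification $\Hom_{\caD_X}(M,\delta_0)\cong(H^0 i^* M)^*$ are all normalized consistently --- with the paper's convention $\pi_0\delta_0=\bk$ in degree $0$ --- so that no spurious shift appears and the Poincar\'e polynomials match on the nose rather than up to $t\mapsto t^{-1}$ or a power of $t$. A secondary, essentially routine point is to spell out the graded‑Nakayama step, i.e.\ that $H(X)(\cO_X)$ contains $(\cO_X)_d$ for all large $d$ and that completing the quotient at $\mathfrak m_0$ changes nothing.
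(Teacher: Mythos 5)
Your proposal follows the same route as the paper: the paper likewise first observes that the nonnegative grading, connectedness, homogeneity of $H(X)$, and finite-dimensionality of $(\cO_X)_{H(X)}$ give $(\hat\cO_{X,0})_{H(X)}\cong(\cO_X)_{H(X)}$, and then invokes Lemma \ref{l:maxquot-coinv} (dualizing against $\delta_0$, which also computes the dual of the maximal quotient $H^0 i^*M(X,H(X))$) to transfer Theorem \ref{t:is-qh} to the coinvariants. Your write-up just spells out the graded-Nakayama step and the weight bookkeeping that the paper leaves implicit; the argument is correct.
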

In particular, in this case, $\IH^{\dim X}(X) = 0$ and $K''=0$ (i.e., $K=K'$).
\begin{remark}\label{r:is-qh}
  In the case that $k=1$, i.e., $X$ is a quasihomogeneous hypersurface
  $Z(f)$, the ideal of the singular locus of $X$ is also known as the
  Jacobi ideal $J_X = (\partial_i f) = (\partial_i f, f)$.  For the
  last equality,  let $m_i$
  be the weight of $x_i$ for all $i$ as above, and set $m := \sum_i m_i$. Then
   $f = \frac{1}{m} \sum_i m_i x_i \partial_i f$.

In this case, one can prove the theorem in an elementary way.  Namely,
we need to show that
\[
H(X)(\cO_X) = J_X / (f).
\]
Equivalently, we have to show that
\begin{equation}\label{e:qhs-cond}
\Omega^{n-1}_X \wedge df + I_X \cdot \Omega^n_{\bA^n}
= d\Omega^{n-2}_X \wedge df + I_X \cdot \Omega^n_{\bA^n}.
\end{equation}
For this, let $\Eu := \sum_i m_i x_i \partial_i$ be the Euler vector
field on $\bA^n$. Set $\Eu^\vee := i_{\Eu}(\vol_{\bA^n}) \in
\Omega^{n-1}_{\bA^n}$. Then, for all $g \in \cO_{\bA^n}$, we have the
identities
\[
\Eu^\vee \wedge dg = \Eu(g) \cdot \vol_{\bA^n}, \quad d(g\Eu^\vee) = (\Eu(g)+ m \cdot g) \cdot \vol_{\bA^n}.
\]
Therefore, we conclude that, for all quasihomogeneous $\alpha \in
\Omega^{n-1}_X$, letting $|\cdot|$ denote the weighted degree
function,
\[
\bar \alpha := \alpha -(|\alpha|+m)^{-1} (d\alpha / \vol_{\bA^n}) \Eu^\vee \in d \Omega^{n-2}_{\bA^n}.
\]
Moreover,
\[
\alpha \wedge df \equiv \bar \alpha \wedge df
\pmod{I_X \cdot \Omega_{\bA^n}^n}.
\]
We conclude \eqref{e:qhs-cond}, and hence the theorem in this case.
\end{remark}

\section{Finite quotients of Calabi-Yau varieties}\label{s:fqcyvar}
Let $X$ be an affine connected Calabi-Yau variety and $\Xi$ the top
polyvector field inverse to the volume form; for instance, we could
have $X=\bA^n$ with the inverse to the standard volume form.  In this
case, $H(X)=LH(X)=P(X)$. Let $G$ be a finite group acting by
automorphisms on $X$ preserving $\Xi$.
In this section we
will compute the $\caD$-module $M(X,H(X)^G)$.  Everything generalizes
without change to the case where $X$ is not affine, using \S
\ref{s:ex-dloc}.

Note that, if $X$ is one-dimensional, then $G$ must be trivial.
Therefore, we will assume until the end of the section that $\dim X
\geq 2$.

As noticed at the end of \S \ref{ss:vtop},
using the induced top polyvector field on
$X/G$, $H(X)^G=P(X/G)$.  So we also deduce
$M(X/G,P(X/G))=q_*M(X,H(X)^G)^G$ where $q: X \to X/G$ is the
projection, and hence also its underived pushforward to a point,
$(\cO_{X/G})_{P(X/G)}$.  We note that, by Proposition
\ref{p:hg-vtop-lf}, since $\dim X \geq 2$, $H(X)^G$ has finitely many
leaves and hence is holonomic, so $P(X/G)$ is as well; however, in
general, $H(X/G)$ and $LH(X/G)$ are not holonomic (by Corollary
\ref{c:inc-degloc}, they are holonomic if and only if $X/G$ has only
finitely many singular points, i.e., only finitely many points of $X$
have nontrivial stabilizers in $G$).

Recall from \S \ref{ss:vtop} that we call a subgroup $K < G$
\emph{parabolic} if there exists a point $x \in X$ such that
$\Stab_G(x)=K$. Let $\Par(G)$ be the set of parabolic subgroups of
$G$.  For $K \in \Par(G)$, the connected components of $X^K$ are
called \emph{parabolic subvarieties} of $X$. By Proposition
\ref{p:hg-vtop-lf}, these are exactly the closures of the leaves of
$\mfv$, which are the connected components of $(X^K)^\circ = \{x \in X
\mid \Stab_G(x)=K\}$.

\begin{definition}
  Let $\Parpt(X,G)$ be the collection of points which are parabolic
  subvarieties; call them \emph{parabolic points}.
\end{definition}
Equivalently, the parabolic points $x \in X$ are those such that, for
some open neighborhood $U$ containing $x$, $\Stab_G(x)$ is strictly
larger than the stabilizer of any point in $U \setminus \{x\}$.
\begin{theorem} There is a canonical isomorphism
\[
M(X,\mfv) \cong \Omega_X \oplus \bigoplus_{x \in \Parpt(X,G)}
  \delta_x \otimes (\hat \cO_{X,x})_{\mfv},
\]
and each $(\hat \cO_{X,x})_{\mfv}$ is finite-dimensional.
\end{theorem}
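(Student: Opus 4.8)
The plan is to exhibit $M(X,\mfv)$ as an extension of $\Omega_X$ by a $\caD$-module supported on the finite set $\Parpt(X,G)$, and then to split that extension. Since $\dim X\geq 2$ and $X$ is smooth, Proposition \ref{p:hg-vtop-lf} shows that $\mfv=H(X)^G$ has finitely many leaves, so $M(X,\mfv)$ is holonomic by Theorem \ref{t:main2}. Because $X$ is Calabi-Yau, $\mfv$ preserves the $G$-invariant volume form $\Xi^{-1}$, hence $\mfv\cdot\caD_X\subseteq H(X)\cdot\caD_X$ and there is a canonical surjection $q\colon M(X,\mfv)\onto M(X,H(X))=\Omega_X$, the identification being Example \ref{ex:cy2}. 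Write $N:=\ker q$, a holonomic $\caD$-module.

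Next I would determine where $N$ is supported. Let $U\subseteq X$ be the open leaf of $\mfv$, i.e.\ the complement of all proper parabolic subvarieties. By Proposition \ref{p:hg-vtop-lf}, $\mfv$ is transitive on $U$, and it preserves $\Xi^{-1}|_U$, so Proposition \ref{p:tr} and Lemma \ref{l:vd-sheaf} give that $M(X,\mfv)|_U=M(U,\mfv|_U)$ is a rank-one local system and $q|_U$ is a surjection of rank-one local systems. Since $G$ preserves $\Xi$, no non-identity $g\in G$ can fix a divisor pointwise (it would then act on the normal line by a nontrivial root of unity, contradicting $g^*\Xi=\Xi$), so every proper parabolic subvariety has codimension $\geq 2$; in particular $U$ is connected, $q|_U$ is an isomorphism, and $N$ is supported on the codimension-$\geq 2$ union of the proper parabolic subvarieties.

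The heart of the proof, and the step I expect to be hardest, is to improve this to: $N$ is supported on $\Parpt(X,G)$, equivalently $M(X,\mfv)$ has no composition factor supported on a positive-dimensional parabolic subvariety $Z$. I would check this by formal localization at a generic point $z\in Z$. Writing $\hat X_z\cong\hat Z_z\times V$ with $K:=\Stab_G(z)$ acting linearly on the slice $V$ (so $V^K=0$ and $\dim V=\codim Z\geq 2$), the claim reduces to $M(\hat X_z,\mfv|_{\hat X_z})\cong\Omega_{\hat X_z}$. This in turn splits into: (i) $\mfv|_{\hat X_z}\cdot\hat\caD_{X,z}=H(\hat X_z)^{K}\cdot\hat\caD_{X,z}$, which I would deduce from formal $\caD$-localizability of Hamiltonian vector fields (Remark \ref{r:f-loc}) together with the facts that $\cO_X^G$ is dense in $\hat\cO_{X,z}^{K}$ (by averaging over $G$ an approximating function chosen to vanish to high order along the rest of the orbit $G\cdot z$) and that $\mfv|_{\hat X_z}\cdot\hat\caD_{X,z}$ is closed; and (ii) the purely local identity $H(\hat X_z)^{K}\cdot\hat\caD_{X,z}=H(\hat X_z)\cdot\hat\caD_{X,z}$. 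Part (ii) is exactly where $\dim V\geq 2$ enters: one must write every coordinate vector field on $\hat Z_z\times V$ as a right-$\hat\caD$-combination of $K$-invariant Hamiltonian vector fields, the mixed (non-product) Hamiltonians doing the work, in the spirit of Examples \ref{ex:a3-inf-hol-sub}--\ref{ex:a3-inf-hol-qt} and of the construction in the proof of Proposition \ref{p:hg-vtop-lf}; for codimension one such a factor genuinely survives (cf.\ Example \ref{ex:a3-inf-hol-sub}), and a naive product formula fails here exactly as warned in Remark \ref{r:inc-prod}. Granting (i) and (ii), $N|_{\hat X_z}=0$ for all such $z$; since any composition factor of $N$ supported on a positive-dimensional invariant subvariety would be nonzero after localizing at its generic point, all composition factors of $N$ are skyscrapers at parabolic points.

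Finally I would assemble the statement. A holonomic $\caD$-module supported on the finite set $\Parpt(X,G)$ is a finite direct sum $\bigoplus_{x}\delta_x^{\oplus m_x}$ (using $\Ext^1(\delta_x,\delta_x)=0$), so $N$ has this form with $m_x<\infty$. The sequence $0\to N\to M(X,\mfv)\to\Omega_X\to 0$ splits, because $\Ext^1_{\caD_X}(\Omega_X,\delta_x)=0$ for $\dim X\geq 2$ (the only nonzero Ext from $\Omega_X$ to $\delta_x$ lies in degree $\dim X$). Restricting the resulting isomorphism to $\hat X_x$ and applying $\pi_0$ gives $(\hat\cO_{X,x})_\mfv\cong H^{\dim X}_{DR}(\hat X_x)\oplus\bk^{m_x}$, and $H^{\dim X}_{DR}(\hat X_x)=0$ by the formal Poincar\'e lemma since $\hat X_x$ is a polydisc and $\dim X\geq 1$. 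Hence $m_x=\dim(\hat\cO_{X,x})_\mfv<\infty$ and $\delta_x^{\oplus m_x}\cong\delta_x\otimes(\hat\cO_{X,x})_\mfv$, which yields the claimed decomposition together with the finite-dimensionality of each $(\hat\cO_{X,x})_\mfv$.
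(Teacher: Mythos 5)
Your overall architecture is sound and your endgame (splitting via $\Ext^1(\Omega_X,\delta_x)=0$ for $\dim X\geq 2$, then reading off the multiplicity as $\dim(\hat\cO_{X,x})_\mfv$ by applying $\pi_0$ on formal neighborhoods) agrees with the paper, which computes the same multiplicity via $\Hom(M(X,\mfv),\delta_x)\cong((\hat\cO_{X,x})^*)^{\mfv}$ (Lemma \ref{l:maxquot-coinv}). The observation that pointwise-fixed divisors are excluded by volume-preservation, so that the non-free locus has codimension $\geq 2$, is also exactly what the paper uses. But the step you yourself identify as the heart — ruling out composition factors of $M(X,\mfv)$ on positive-dimensional parabolic subvarieties — is asserted rather than proved, and this is precisely where the paper's one genuinely new idea lives. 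The paper's Lemma \ref{l:nopropqt} shows that in the local model $U\times W$ one has $\Eu_W+\dim W\in\mfv\cdot\caD_X$ (obtained from $\partial_u\cdot(u\dim W)$ minus the invariant divergence-free field $(\dim W)u\partial_u-\Eu_W$), and that $\Eu_W+\dim W$ acts invertibly on any quotient supported along $U\times\{0\}$ because sections of such a quotient live in nonpositive $W$-degree. This Euler-operator trick is entirely absent from your proposal; your gesture toward ``mixed Hamiltonians doing the work, in the spirit of Examples \ref{ex:a3-inf-hol-sub}--\ref{ex:a3-inf-hol-qt}'' does not substitute for it, and your step (ii), the identity $H(\hat X_z)^{K}\cdot\hat\caD_{X,z}=H(\hat X_z)\cdot\hat\caD_{X,z}$, is a strictly stronger statement than the no-quotient lemma (Example \ref{ex:a3-inf-hol-sub} shows exactly how a saturation can acquire extra composition factors without acquiring quotients). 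The most natural proof of your step (ii) in fact runs through the Euler trick first and then upgrades it using the $\Ext$-vanishing in codimension $\geq 2$ — i.e., it reproduces the paper's argument.

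There is also a structural reason you are forced into the stronger statement. You orient the extension as $0\to N\to M(X,\mfv)\onto\Omega_X\to 0$, so the piece $N$ you must control is a \emph{submodule}, and a no-quotient lemma cannot by itself prevent $N$ from having composition factors on positive-dimensional strata. The paper instead realizes $\Omega_X=H^0 j_!\,\Omega_{X^\circ}$ as a \emph{submodule} of $M(X,\mfv)$ (using that $X\setminus X^\circ$ has codimension $\geq 2$), so the remaining piece is a quotient $C$; localizing at a generic point of a maximal stratum in $\mathrm{supp}\,C$ produces a quotient supported on that stratum, which the Euler trick kills. If you keep your orientation, you must actually prove $M(\hat X_z,\mfv|_{\hat X_z})\cong\Omega_{\hat X_z}$ at generic points of positive-dimensional parabolic subvarieties (including the reduction of $\mfv|_{\hat X_z}\cdot\hat\caD_{X,z}$ to the local model, your step (i), which both you and the paper treat somewhat lightly); if you flip the orientation as the paper does, the quotient-only control suffices and the lemma you are missing is exactly Lemma \ref{l:nopropqt}.
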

\begin{proof}
  Let $X^\circ \subseteq X$ be the inclusion of the open locus where
  $G$ acts freely.  Then, $M(X, H(X)^G)|_{X^\circ} \cong
  \Omega_{X^\circ}$. Since $G$ preserves volume, it follows that $X
  \setminus X^\circ$ has codimension at least two, since this is true
  in the local setting $X = \bA^n$ and $G < \SL_n$.

  We claim that there are no nontrivial extensions between local
  systems supported on $X \setminus X^\circ$ and $\Omega_X$.  Indeed,
  this reduces in a formal neighborhood of an arbitrary point to the
  statement that $\Ext^1(\cO_{\bA^m}, \delta) = 0 = \Ext^1(\delta,
  \cO_{\bA^m})$ when $\delta$ is the delta-function $\caD$-module of a
  proper subspace of $\bA^m$ of codimension $k \geq 2$. Then, by the
  K\"unneth theorem, $\Ext^i(\cO_{\bA^m}, \delta)$ and $\Ext^i(\delta,
  \cO_{\bA^m})$ vanish for $i \neq k$, since the statement is true
  when $m=1$.  Therefore, the extension $H^0 j_! \Omega_{X^\circ}$ of
  $\Omega_X$ must be trivial, i.e., the canonical map $H^0 j_!
  \Omega_{X^\circ} \to \Omega_X$ is an isomorphism.

By adjunction, we have a map $H^0 j_!
\Omega_{X^\circ} =\Omega_X \to M(X, H(V)^G)$, and the cokernel of this
map is supported on a union of proper parabolic subvarieties of
$V$. Suppose that $U \subseteq V^K$ is a maximal such subvariety for
$K \in \Par(G)$.  We claim that $U$ is zero-dimensional, i.e., a
finite union of points.  By formally localizing in the neighborhood of
a generic point of $U$, it suffices to assume that $K=G$.  This
reduces the claim to the statement:
\begin{lemma}\label{l:nopropqt}
  Suppose that $U$ and $W$ are positive-dimensional vector spaces and
  $G < \GL(W)$ is finite.  Then $M(U \times W, H(U \times W)^G)$ admits
  no quotients supported on proper subvarieties of $U \times W$.
\end{lemma}
\begin{proof}
  Let $X := U \times W$ and $\mfv := H(U \times W)^G$.  Suppose there
  were a quotient of $M(X, \mfv)$ supported on $U \times W^K$ for some
  parabolic subgroup $K < G$.  By formally localizing in a
  neighborhood of a generic point of $U \times W^K$, we can reduce to
  the case that $K=G$; let us assume this.  So we have to show that
  there is no quotient supported at $U \times \{0\}$.

  Since $\mfv$ includes constant vector fields in the $U$ direction,
  the defining quotient $\caD_{X} \onto M(X, \mfv)$ factors through
  $\caD_{X} \onto \Omega_U \boxtimes \caD_W$.  Moreover, given a
  vector field $\xi \in \mfv$, write $\xi=\xi_1 + \xi_2$ where $\xi_1
  \in \Vect(U) \otimes \cO_W$ and $\xi_2 \in \cO_U \otimes
  \Vect(W)^G$. Let $D: \Vect(X) \to \cO_X$ be the standard divergence
  function, i.e., $D(\xi)=L_{\xi} \omega / \omega$, where $\omega$ is
  the standard volume form on $X$.  Then, since $\mfv$ includes
  constant vector fields in the $U$ direction, $\xi_1 + D(\xi_1) \in
  \mfv \cdot \caD_X$.  Thus, $\xi_2 - D(\xi_1) = \xi_2 + D(\xi_2) \in
  \mfv \cdot \caD_X$ as well. Conversely, the constant vector fields
  in the $U$ direction together with elements $\xi_2 + D(\xi_2)$ span
  $\mfv \cdot \caD_X$.  We conclude that $M(X,\mfv) = \mfv
  \cdot \caD_X \setminus \caD_X$ is of the form
\[
M(X,\mfv) = \Omega_U  \boxtimes N, \quad N = \langle \xi +
D(\xi) \mid \xi \in \Vect(W)^G \rangle \cdot \caD_W \setminus \caD_W.
\]
Therefore, the lemma reduces to showing that $N$ admits no quotient
supported at $0 \in W$.  First of all, let $\Eu_W \in \Vect(W)$ be the
Euler vector field on $U$.  Then $\Eu_W + D(\Eu_W) = (\Eu_W + \dim(W))
\in \mfv \cdot \caD_X$.  On the other hand, since $\dim(W) > 0$,
$\Eu_W + \dim(W)$ acts by an automorphism on every quotient supported
at zero (note that sections of the delta function $\caD$-module are in
nonpositive polynomial degree, and homogeneous sections in degree $m
\leq 0$ are annihilated by $\Eu + m$ (since we are using right
$\caD$-modules)).  Thus, $N$ admits no such quotient.
\end{proof}

We conclude that the cokernel of the inclusion $\Omega_X \into M(X,
\mfv)$ is supported at finitely many points, i.e., it is a direct sum
of delta-function $\caD$-modules at these points.  Since we assumed
that $\dim X \geq 2$, $\Ext(\Omega_X, \delta)=0$ when $\delta$ is such
a delta-function $\caD$-module (this follows because it is true in the
case $X=\bA^n$ and the point is the origin).  Therefore, $M(X,\mfv)$
is semisimple.

  It remains only to compute the
  multiplicity of $\delta_x$. Note that this must be
  finite-dimensional since $M(X,\mfv)$ is holonomic. The result thus
  follows from Lemma \ref{l:maxquot-coinv}.
\end{proof}
\begin{remark}
  The results of this section can be generalized to the case where $G$
  acts on $X$ preserving $\Xi$ only up to scaling.  Then, $X/G$ is no
  longer equipped with a top polyvector field, but it is equipped with
  a divergence function from $X$. Indeed, since $G$ preserves the flat
  connection which annihilates the volume form on $X$, and this equips
  $X/G$ with a flat connection on its (possibly nontrivial) canonical
  bundle.  So in this case one still has $q_* M(X,H(X)^G)^G \cong
  M(X/G, P(X/G))$, where $q: X \to X/G$ is the quotient map, and
  $P(X/G)$ is interpreted as in \S \ref{s:div}.

  In this case, the above results go through without change if $X$ has
  no parabolic subvarieties of codimension one.  However, when there
  are parabolic subvarieties of codimension one, then $M(X, H(X)^G)$
  is no longer semisimple: although Lemma \ref{l:nopropqt} still
  implies that it has no quotients supported on proper subvarieties of
  $X$, we can have nontrivial extensions on the bottom by submodules
  supported on proper subvarieties.  Let $j^\circ: X^\circ \to X$ be
  the inclusion of the locus where $G$ acts freely, and $j': X' \to X$
  the inclusion of the possibly larger locus which is the complement
  of codimension-one parabolic subspaces (an affine subvariety).
  Then, $H^0 j^\circ_!  \Omega_{X^\circ} = j'_! \Omega_{X'}$, which is
  not equal to $\Omega_X$ when $X' \neq X$.  We then obtain by the
  argument of the proof an exact sequence
\[
j'_! \Omega_{X'} \to M(X, H(X)^G) \to \bigoplus_{x \in \Parpt(X,G)}
  \delta_x \otimes (\hat \cO_{X,x})_{\mfv} \to 0.
\]
Moreover, the computation of Lemma \ref{l:nopropqt} shows that the
first map in the sequence above is injective in codimension one, i.e.,
restricted to the formal neighborhood of a generic point of any
component of $X \setminus X'$, it is injective.  So, for $X \neq X'$, $M(X, H(X)^G)$ is not semisimple.
\end{remark}

\section{Symmetric powers of varieties}\label{s:sym}
Given $(X, \mfv)$, note that $\mfv$ also acts naturally on the
symmetric powers $S^n X := X^n / S_n$. Then, the diagonal embedding of
$X$ into $S^n X$ is invariant under the flow of $\mfv$, and more
generally, arbitrary diagonal embeddings are invariant.

In this section, we compute the coinvariants $(\cO_{S^n X})_{\mfv}$ as
well as the $\caD$-module $M(S^n X, \mfv)$ for all $n \geq 1$ in the
transitive (affine) cases of \S \ref{s:Csla} (the ``global'' versions
of the simple Lie algebras of vector fields).  In the symplectic case
this specializes to the main result of \cite{hp0weyl}.  Our main
result says that, in the Calabi-Yau and symplectic cases, this is a direct sum of the pushforwards under $X^n \onto S^n X$ of the canonical $\caD$-modules $\Omega_\Delta$ as $\Delta$ ranges over
 the diagonal subvarieties $\Delta \subseteq X^n$ up to the action of $S_n$.
In other words, these are the
intersection cohomology
$\caD$-modules of the diagonal subvarieties of $S^n X$.  In the locally
conformally symplectic case, and in a more general transitive
setting that includes all of these cases, we prove the same result, except
replacing $\Omega_\Delta$ by the diagonal embedding of $M(X,\mfv)$.  Moreover,
when $X$ is a contact variety and $\mfv=H(X)$, or
$X$ is smooth and  $\mfv=\Vect(X)$, we show that $M(S^n X, \mfv) = 0$, and
extend these cases to a more general transitive setting where $\mfv$ does not flow incompressibly.

More generally, we will prove general structure theorems on $M(S^n X,
\mfv)$ in the case that $\mfv$ is transitive and satisfies a certain
condition we call \emph{quasi-locality}, which essentially says that
its restriction to the $m$-th infinitesimal neighborhood of every
finite set is equal to the sum of its restrictions to the $m$-th
infinitesimal neighborhood of each point in the set.  For convenience,
we will also generally assume that $X$ is connected; it is easy to
remove this assumption.

\subsection{Relation to Lie algebras for $S^n X$}
The study of $S^n X$ under $\mfv$ is closely related to the study of
$S^n X$ under its own associated Lie algebras of vector fields. Note that
 $\cO_{S^n X} = \Sym^n \cO_X$ is spanned  by
elements $f^{\otimes n}$ for $f \in \cO_X$.  Let $\symm: \cO_X^{\otimes n} \to \Sym^n \cO_X$ be the symmetrization map, 
\[
\symm(f_1 \otimes \cdots \otimes f_n) = \frac{1}{n!} \sum_{\sigma \in
  S_n} f_{\sigma(1)} \otimes \cdots \otimes f_{\sigma(n)}.
\] 
Note that, if $X$ is Poisson with bivector field $\pi$, then so is
$S^n X$, using the unique Poisson bracket on $\Sym^n \cO_X$ obtained
from the Leibniz rule; in other words, one can consider the bivector
field $\sum_{i=1}^n \pi^{i}$ on $X^n = \Spec \cO_X^{\otimes n}$, where
$\pi^i = \Id^{\otimes (i-1)} \otimes \pi \otimes \Id^{\otimes (n-i)}
\in (\wedge_{\cO_X} T_X)^{\otimes n}$ denotes $\pi$ acting on the
$i$-th component.  This then restricts to symmetric functions
$\cO_{S^n X} = \Sym^n \cO_X$.

If $X$ is even-dimensional and equipped with a top polyvector field
$\Xi$, then $S^n X$ is equipped with the top polyvector field
$\wedge^n \Xi$.

As discussed in Remark \ref{r:jac-prod}, when $X$ is Jacobi, there is no
natural Jacobi structure induced on $X^n$ and hence neither on $S^n
X$.

We then have the following elementary proposition (the first part was
essentially used in \cite{hp0weyl}):
\begin{proposition}\label{p:lie-snx}
\begin{enumerate}
\item[(i)] If $X$ is Poisson, 
then $M(S^n X, H(X)) \cong M(S^n X, H(S^n X))$;
\item[(ii)] For $X$ even-dimensional and equipped with a top polyvector
field, $P(X) \subseteq P(S^n X)$;
\item[(iii)] For $X$ 
 equipped with a divergence function $D$ on a coherent
subsheaf $N \subseteq T_X$, one has $P(X,D) \subseteq P(S^n X,D)$, where
$S^n$ is equipped with a divergence function on $\cO_{S^n X} \cdot N$, using
the natural embedding of vector spaces $N \subseteq T_X 
\into T_{S^n X}$ (via extending derivations from $\cO_X$ to
$\cO_{S^n X} = \Sym^n_\bk \cO_X$);
\item[(iv)] For general $X$, $\Vect(X) \subseteq \Vect(S^n X)$.  
\end{enumerate}
\end{proposition}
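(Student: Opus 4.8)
The plan is to use the standard ``total derivation'' construction. Given $\xi \in \Vect(X)$, that is, a $\bk$-linear derivation of $\cO_X$, I would first extend it to the derivation
\[
\xi^{(n)} := \sum_{i=1}^{n} \Id^{\otimes(i-1)} \otimes \xi \otimes \Id^{\otimes(n-i)}
\]
of $\cO_{X^n} = \cO_X^{\otimes n}$. This is well-defined: it is the unique derivation of the tensor product whose restriction to each factor is $\xi$, via the Leibniz rule $D(f \otimes g) = D(f) \otimes g + f \otimes D(g)$ already used in \S\ref{ss:vtop}. The assignment $\xi \mapsto \xi^{(n)}$ is visibly $\bk$-linear, and since operators acting on distinct tensor factors commute, a short bracket computation gives $[\xi^{(n)}, \eta^{(n)}] = [\xi,\eta]^{(n)}$, so it is a homomorphism of Lie algebras $\Vect(X) \to \Vect(X^n)$.

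Next I would observe that $\xi^{(n)}$ is $S_n$-equivariant, where $S_n$ acts on $\cO_X^{\otimes n}$ by permuting the tensor factors: conjugating $\xi^{(n)}$ by $\sigma \in S_n$ merely permutes the $n$ summands in its defining expression and hence leaves it unchanged. Therefore $\xi^{(n)}$ preserves the invariant subalgebra, and since $\bk$ has characteristic zero, $(\cO_X^{\otimes n})^{S_n} = \Sym^n \cO_X = \cO_{S^n X}$. Thus $\xi^{(n)}$ restricts to a derivation of $\cO_{S^n X}$, producing the desired element of $\Vect(S^n X)$, which I would again denote by $\xi$. To make the containment ``$\subseteq$'' literal one notes this map is injective: evaluating on $\symm(f \otimes 1^{\otimes(n-1)}) = \tfrac{1}{n}\sum_{j=1}^{n} 1^{\otimes(j-1)} \otimes f \otimes 1^{\otimes(n-j)}$ gives $\symm(\xi(f) \otimes 1^{\otimes(n-1)})$, and $f \mapsto \symm(f \otimes 1^{\otimes(n-1)})$ is injective because postcomposing with the multiplication map $\cO_X^{\otimes n} \to \cO_X$ returns $f$.

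I do not expect any genuine obstacle here: the only points worth spelling out are the characteristic-zero identification $(\cO_X^{\otimes n})^{S_n} = \Sym^n \cO_X$ and the immediate $S_n$-equivariance of $\xi^{(n)}$, after which preservation of $\cO_{S^n X}$ is automatic. The identical construction proves parts (ii) and (iii), the extra input being that the relevant structure on $X^n$ — the top polyvector field $\wedge^n \Xi$ in (ii), and the divergence function on $\cO_{S^n X}\cdot N$ in (iii) — is itself $S_n$-invariant, so a vector field preserving the structure on $X$ induces one preserving the structure on $S^n X$. (Part (i) is of a different nature, being an equality of $\caD$-modules rather than an inclusion of Lie algebras, and would be handled by comparing $\cO$-saturations.)
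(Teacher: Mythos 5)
Your treatment of parts (ii)--(iv) is essentially the paper's argument: the total derivation $\xi^{(n)} = \sum_i \Id^{\otimes(i-1)}\otimes\xi\otimes\Id^{\otimes(n-i)}$ is $S_n$-invariant, hence restricts to $\Sym^n\cO_X$, and the structures $\wedge^n\Xi$ and the induced divergence function are visibly preserved. Your extra remarks (the Lie algebra homomorphism property, injectivity via $\symm(f\otimes 1^{\otimes(n-1)})$, the characteristic-zero identification of invariants with $\Sym^n\cO_X$) are correct and harmless elaborations.

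However, part (i) contains a genuine gap, and it is the only part of the proposition with real content. You dispose of it with the parenthetical that it ``would be handled by comparing $\cO$-saturations,'' but this names a framework without supplying the argument. The easy direction is $H(X)\subseteq H(S^nX)$ (a special case of (ii)), which gives a surjection $M(S^nX, H(X))\onto M(S^nX,H(S^nX))$. The hard direction is that $H(S^nX)\subseteq H(X)\cdot\caD_{S^nX}$, and this is not automatic: $H(S^nX)$ is spanned by the vector fields $\xi_{f^{\otimes n}} = n\cdot\symm(\xi_f\otimes f^{\otimes(n-1)})$, which for nonconstant $f$ do \emph{not} lie in the image of $H(X)$ as a Lie algebra, nor obviously in its $\cO_{S^nX}$-span with the divergence constraint needed for the $\cO$-saturation. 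The paper's proof establishes this containment by an explicit identity: using $\xi_f(f)=0$ and $\xi_{f^i}=if^{i-1}\xi_f$, one writes, for each $i\geq 1$, the element $\symm(\xi_{f^i}\otimes 1^{\otimes(n-1)})\cdot\symm(f^{\otimes(n-i-1)}\otimes 1^{\otimes(i+1)})$ of $H(X)\cdot\caD_{S^nX}$ as a linear combination of $\symm(\xi_{f^i}\otimes f^{\otimes(n-i-1)}\otimes 1^{\otimes i})$ and $\symm(\xi_{f^{i+1}}\otimes f^{\otimes(n-i-2)}\otimes 1^{\otimes(i+1)})$, and then observes that these relations, taken over all $i$, generate $\symm(\xi_f\otimes f^{\otimes(n-1)})$. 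Some such computation (or an equivalent one) is indispensable; without it your proof of (i) is incomplete.
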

\begin{proof}
  (i) Given $f \in \cO_X$, it is evident that (up to normalization)
  $n\cdot \xi_{\symm(f \otimes 1^{\otimes (n-1)})}$ identifies with $\xi_f
  \in H(X)$. Hence $H(X) \subseteq H(S^n X)$ (this is also a special
  case of part (ii)). Next, $H(S^n X)$ is spanned by the vector fields
  $\xi_{f^{\otimes n}} = n \cdot \symm(\xi_f \otimes f^{\otimes (n-1)})$ for
  $f \in \cO_X$.  Note the identities $\xi_f(f)=0$ and $\xi_{f^i} = i
  f^{i-1} \xi_f$.  Thus, for all $i \geq 1$,
\begin{multline}
\symm(\xi_{f^i} \otimes 1^{\otimes (n-1)}) \cdot \symm(f^{\otimes
  (n-i-1)} \otimes 1^{\otimes (i+1)}) 
\\ = \frac{i}{n}\symm(\xi_{f^i} \otimes f^{\otimes (n-i-1)} \otimes 1^{\otimes i}) \\+
   \frac{n-i}{n}\symm(\frac{i}{i+1} \cdot \xi_{f^{i+1}} \otimes
f^{\otimes (n-i-2)} \otimes 1^{\otimes (i+1)}).
\end{multline}
The LHS is in $H(X) \cdot \caD_X$, and the RHS terms, taken over all
$i \geq 1$, generate $\symm(\xi_f \otimes f^{\otimes (n-1)})$, as desired.

(ii) It is evident that, if a vector field preserves a top polyvector field $\Xi$ on $X$, then it also preserves $\wedge^n \Xi$ on $S^n X$.

(iii) Similarly, if a vector field $\xi$ preserves a divergence
function $D$, i.e., $D(\xi) = 0$,   then also
it preserves the induced divergence function on $S^n X$, i.e., the induced
divergence function on $S^n X$ by definition also kills $\xi$, viewed
as a vector field on $S^n X$.

(iv) Similarly, given a vector field $\xi \in \Vect(X)$, we
  can take the sum $\sum_{i=1}^n \xi^i \in \Vect(X^n)$ which descends
  to $\Vect(S^n X)$.
\end{proof}
\begin{remark}
  Note that the isomorphism of (i) does not extend, in general, to the
  cases of top polyvector fields. For instance, when $X$ is
  symplectic, then by part (i), viewed as a Poisson variety, $H(S^n
  X)$ and $H(X)$ determine the same $\caD$-module, which is holonomic
  since $S^n X$ has finitely many symplectic leaves (the images of the
  diagonal embeddings).  However, since the singular locus of $S^n X$
  is infinite for $n \geq 2$, by Corollary \ref{c:vtop-leaves}, $H(S^n
  X,\wedge^n \vol_X^{-1})$ does not have finitely many leaves, and by
  Corollary \ref{c:vtop-hol} the associated $\caD$-module is not
  holonomic.
\end{remark}

\subsection{Diagonal embeddings}
Let $\Delta_i: X \to X^i$ be the standard diagonal embeddings
for all $i \geq 1$. Let $\pr_n: X^n \to S^n X$ be the
projection. Recall that a partition $\lambda$ of $n$, which we denote
by $\lambda \vdash n$, is a tuple $(\lambda_1, \ldots, \lambda_k)$
with $\lambda_1 \geq \lambda_2 \geq \cdots \geq \lambda_k \geq 1$ and
$\lambda_1 + \cdots + \lambda_k = n$.  In this case the length,
$|\lambda|$, of $\lambda$ is defined by $|\lambda|:=k$.  Given a
partition $\lambda \vdash n$, define the product of diagonal
embeddings
\[
\Delta_\lambda:= \Delta_{\lambda_1} \times \cdots \times
\Delta_{\lambda_{|\lambda|}}: X^{|\lambda|} \to X^n.
\]
Now, composing with $\pr_n$, we obtain a map $X^{|\lambda|} \to S^n
X$.  On the complement of diagonals in $X^{|\lambda|}$, this is a
covering onto its image whose covering group is the subgroup $S_\lambda <
S_{|\lambda|}$ preserving the partition $\lambda$.  Explicitly,
$S_{\lambda} = S_{r_1} \times \cdots \times S_{r_k}$ where, for all
$j$,
\[
\lambda_{r_1+\cdots+r_j} > \lambda_{r_1+\cdots+r_j+1} = \lambda_{r_1 +
  \cdots + r_j + 2} = \cdots = \lambda_{r_1 + \cdots + r_j + r_{j+1}}.
\]
\subsection{A morphism of graded algebras}
Consider the canonical morphism of graded algebras
\begin{equation}\label{e:sym-tr-invts}
\Phi:  \Sym (t \cdot ((\cO_X)^*)^{\mfv}[t]) \to \bigoplus_{n \geq 0} 
  ((\cO_{S^n X})^*)^{\mfv}, 
\end{equation}
given by the formula
\[
\Phi(t^{r_1} \phi_1 \otimes \cdots \otimes t^{r_k} \phi_k)(f_1 \otimes
\cdots \otimes f_{r_1+\cdots+r_k}) = \prod_{i=1}^k
\phi_i(f_{r_1+\cdots+r_{i-1}+1} \cdots f_{r_1+\cdots+r_i}).
\]
Let us explain the graded algebra structures in
\eqref{e:sym-tr-invts}.  First, the grading is by degree in $t$ on the
left-hand side and by degree in $n$ on the right-hand side. The
algebra structure on the left-hand side is as in a symmetric algebra.
The algebra structure on the right-hand side is obtained from the
natural inclusions
\[
\cO_{S^{n+m}X} \into \cO_{S^n X} \otimes \cO_{S^m X}.
\]
In other words, the above maps are the
symmetrization maps, 
\[
(f_1 \otimes \cdots \otimes f_{m+n}) \mapsto \frac{m!n!}{(m+n)!}
\sum_{\underset{|I|=n}{I \subseteq \{1,\ldots, m+n\}}} f_I \otimes f_{I^c},
\]
where $f_I := \bigotimes_{i \in I} f_i$, and $I^c$ is the complement of
$I$.  

This induces a coproduct on $\bigoplus_{n \geq 0} \cO_{S^n X}$ and
hence an algebra structure on $\bigoplus_{n \geq 0} \cO_{S^n
  X}^*$. The $\mfv$-invariants form a subalgebra.  

Moreover, replacing $(\cO_{S^n X})_{\mfv}$ by the derived pushforward
$\pi_\bullet M(S^n X, \mfv)$ for $\pi: S^n X \to \pt$ the projection to a
point, we obtain a bigraded algebra $\bigoplus_{n \geq 0} \pi_\bullet M(S^n
X, \mfv)^*$, in de Rham and homological degrees. Then \eqref{e:sym-tr-invts} becomes
\begin{equation}\label{e:sym-can-invts}
\Phi: \Sym (t \cdot \pi_\bullet M(X,\mfv)^*[t]) \to \bigoplus_{n \geq 0} 
\pi_\bullet M(S^n X, \mfv)^*.
\end{equation}
Here, $\bullet$ is the homological degree, and the symmetric algebra
is supersymmetric where the parity is given by the homological degree (note that this \emph{differs} from the de Rham parity in the case that $\dim X$ is odd).
By Proposition \ref{p:tr} and Example \ref{ex:cy2}, in the case that
$X$ is symplectic or Calabi-Yau, \eqref{e:sym-can-invts} can be
restated as
\begin{equation}\label{e:sym-can-invts2}
\Sym (t \cdot H^{\dim X - \bullet}(X)^*[t]) \to \bigoplus_{n \geq 0} 
\pi_\bullet M(S^n X, \mfv)^*.
\end{equation}
\subsection{Quotients of $M(S^nX)$ supported on diagonals}
For arbitrary $(X, \mfv)$, since each $\Delta_\lambda$ is a closed
embedding, one has a natural epimorphism
\[
M(X^n, \mfv) \onto (\Delta_\lambda)_* M(X^{|\lambda|}, \mfv),
\]
Next, note that $M(X^n, \mfv)$ is an $S_n$-equivariant
$\caD$-module, and one has $(\pr_n)_* M(X^n, \mfv)^{S_n} \cong
M(S^n X, \mfv)$.  The morphism above descends to a natural map
\[
M(S^n X, \mfv) \onto (\pr_n)_* (\Delta_\lambda)_* M(X^{|\lambda|},
\mfv)^{S_\lambda}.
\]
Summing over $\lambda$, we obtain a natural map
\begin{equation}\label{e:gen-sn}
  M(S^n X, \mfv) \to \bigoplus_{\lambda \vdash n} (\pr_n)_* 
  \bigl( (\Delta_\lambda)_* M(X^{|\lambda|},\mfv) \bigr)^{S_\lambda}
\end{equation} 
In the case that $X$ is symplectic or Calabi-Yau, by Proposition
\ref{p:tr} and Example \ref{ex:cy2}, \eqref{e:gen-sn} can be restated as
\begin{equation}\label{e:gen-sn2}
  M(S^n X, \mfv) \to \bigoplus_{\lambda \vdash n} (\pr_n)_* 
  \bigl( (\Delta_\lambda)_* \Omega_X^{\boxtimes |\lambda|} \bigr)^{S_\lambda}.
\end{equation} 

\subsection{Main result} 
\begin{theorem}\label{t:sym}
\begin{enumerate}
\item[(i)] If $X$ has pure dimension at least two and
is locally conformally symplectic or Calabi-Yau, 
then with $\mfv = H(X)$,
 \eqref{e:gen-sn} and \eqref{e:sym-can-invts} are isomorphisms.
\item[(ii)] If $(X,\mfv)$ is an (odd-dimensional) contact variety with
  $\mfv=H(X)$, or $(X,\mfv)$ is connected, smooth, and
  positive-dimensional with $\mfv=\Vect(X)$, then $M(S^n X, \mfv) =
  0$.
\end{enumerate}
\end{theorem}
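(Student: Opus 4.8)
The plan is to reduce both parts to the diagonal $\mfv$-action on the Cartesian power $X^n$ and then descend to $S^nX=X^n/S_n$ via $M(S^nX,\mfv)=(\pr_n)_*M(X^n,\mfv)^{S_n}$. The ingredient used throughout is a \emph{quasi-locality} property of $\mfv$ in all cases at hand: given finitely many distinct points of $X$ and a prescribed finite jet of a vector field at each, some single element of $\mfv$ realizes all of them simultaneously. For $\mfv=\Vect(X)$ on an affine smooth $X$ this is immediate; for $\mfv=H(X)$ it follows, after shrinking, by choosing a Hamiltonian function (resp.\ a closed one- or $(n{-}1)$-form in the LCS/CY cases) with independent prescribed jets at the chosen points, using transitivity of $\mfv$ (Proposition~\ref{p:div-enough-vfds}). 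Quasi-locality implies that the leaves of the diagonal action of $\mfv$ on $X^n$ are exactly the open diagonal strata $\mathring\Delta_P$ indexed by set partitions $P$ of $\{1,\dots,n\}$, each isomorphic (via a choice of block representatives) to the locus of distinct points in $X^{|P|}$, on which $\mfv$ acts transitively. In particular there are finitely many leaves, so by Theorem~\ref{t:main2} the $\caD$-module $M(X^n,\mfv)$ is holonomic with composition factors intermediate extensions of local systems along the closures $\bar\Delta_P\cong X^{|P|}$.

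Next comes the incompressibility dichotomy. Restricting $\mfv$ to $\bar\Delta_P$ recovers the diagonal action on $X^{|P|}$, and $\mfv$ flows incompressibly along $\mathring\Delta_P$ precisely when it does so on $X$ (preserving the external product of the formal volumes), so by Corollary~\ref{c:inc} and Proposition~\ref{p:tr} one has $M(X^n,\mfv)|_{\mathring\Delta_P}\cong M(X,\mfv)^{\boxtimes|P|}$. In case~(ii), $M(X,\mfv)=0$ — for a contact $X$ by Example~\ref{ex:st-cont} and Proposition~\ref{p:tr}, and for $\mfv=\Vect(X)$ with $X$ smooth of positive dimension since then $X$ has no exceptional points (Proposition~\ref{p:supp-allvfds}) — equivalently $\mfv$ is transitive but not incompressible on $X$; hence $\mfv$ flows incompressibly on no $\mathring\Delta_P$, and $M(X^n,\mfv)$ is supported on the proper diagonals. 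Being holonomic, hence of finite length, $M(X^n,\mfv)$ would then have an irreducible quotient on some $\bar\Delta_P$ if it were nonzero, which by Proposition~\ref{p:inc-supp} forces $\mfv$ to flow incompressibly on an infinitesimal thickening of a generic point of $\bar\Delta_P$. I would rule this out by an Euler-vector-field argument in the spirit of Lemma~\ref{l:nopropqt}: after formal localization the ``separating'' normal directions to $\bar\Delta_P$ carry, by quasi-locality, a diagonal vector field in $\mfv$ vanishing at the common point with linear part equal to the Euler field of those directions (for $\Vect(X)$: take $\xi$ with $\xi(p)=0$, $d\xi|_p=\Id$; for contact $X$: the Hamiltonian vector field of a function vanishing to second order at $p$), and such a field acts with strictly positive weights, hence invertibly, on the transverse delta-function $\caD$-module. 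So $M(X^n,\mfv)=0$ and $M(S^nX,\mfv)=(\pr_n)_*0^{S_n}=0$, proving~(ii).

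For part~(i) the remaining point is that $M(X^n,\mfv)$ is \emph{semisimple}. Every jump of the stratification $\{\mathring\Delta_P\}$ occurs in codimension $\ge\dim X\ge2$ (merging two points costs codimension $\dim X$), so the vanishing $\Ext^1(\cO_{\bA^k},\delta)=0=\Ext^1(\delta,\cO_{\bA^k})$ for $k\ge2$, already used in \S\ref{s:fqcyvar}, kills every extension between the intermediate extensions of the rank-one systems $M(X,\mfv)^{\boxtimes|P|}$; hence $M(X^n,\mfv)\cong\bigoplus_P(\Delta_P)_*\IC\!\big(M(X,\mfv)^{\boxtimes|P|}\big)$, a direct sum of intersection-cohomology $\caD$-modules of the diagonals. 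Taking $(\pr_n)_*(-)^{S_n}$ and grouping the $S_n$-orbits of set partitions by integer partitions $\lambda\vdash n$ identifies $M(S^nX,\mfv)$ with the target of \eqref{e:gen-sn} (equivalently \eqref{e:gen-sn2}) and shows the canonical map there is an isomorphism, its nonvanishing on each composition factor being built into the construction of \eqref{e:gen-sn}. Finally, applying the derived pushforward $\pi_*$ to this decomposition, using $\pi_*M(X,\mfv)\cong H_{DR}^{\dim X-\bullet}(X,M(X,\mfv))$ (Example~\ref{ex:cy}) together with Künneth and the symmetric-power formula for cohomology of the diagonals, turns $\bigoplus_{n\ge0}\pi_\bullet M(S^nX,\mfv)^*$ into the supersymmetric algebra on $t\cdot\pi_\bullet M(X,\mfv)^*[t]$, with product induced by the coalgebra structure on $\bigoplus_n\cO_{S^nX}$ — which is precisely the statement that $\Phi$ in \eqref{e:sym-can-invts} is an isomorphism, and one checks $\Phi$ is this map.

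The genuinely non-formal steps are the semisimplicity in case~(i) — that the intermediate extensions along distinct diagonals do not glue, which is where the hypothesis $\dim X\ge2$ enters, through the local computation on $\bA^k$ with $k\ge2$ — and the Euler-vector-field argument in case~(ii) ruling out quotients supported on thickened diagonals. The quasi-locality statement of the first paragraph also requires a short argument in the Hamiltonian cases, but is routine given transitivity. I expect the semisimplicity/extension analysis to be the main obstacle, since it must be run uniformly over all diagonal strata and combined with the correct combinatorial bookkeeping of set partitions versus integer partitions in the descent to $S^nX$.
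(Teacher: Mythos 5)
Your overall architecture — quasi-locality, the leaf/diagonal stratification of $X^n$, holonomicity, the $\Ext$-vanishing in codimension $\geq \dim X \geq 2$ between factors on distinct diagonals, and descent to $S^nX$ by $S_n$-invariants — matches the paper's route through Theorems \ref{t:sym-ql2} and \ref{t:sym-ql3}. But both halves of your argument have a genuine gap at the same place: you never control the \emph{maximal quotient of $M(X^n,\mfv)$ supported on a given diagonal}, which the paper identifies with the space of $\mfv$-invariant polydifferential operators $\hat\cO_{X,x}^{\otimes m}\to\Omega_{\hat X_x}$ (Lemma \ref{l:maxquot-coinv} applied along the diagonal). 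Computing that space is the actual content of the theorem, and it is where the Darboux-type arguments live.

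For part (ii), your conclusion $M(X^n,\mfv)=0$ is false: the paper (\S\ref{ss:sym-sc}) exhibits explicit nonzero quotients of $M(X^n,\Vect(X))$ supported on the small diagonal whenever $n>\dim X$, coming from the invariant operators $f_1\otimes\cdots\otimes f_n\mapsto f_{d+2}\cdots f_n\sum_{\sigma}\sgn(\sigma)f_{\sigma(1)}df_{\sigma(2)}\wedge\cdots\wedge df_{\sigma(d+1)}$. Your Euler-vector-field argument does not rule these out, and the reason is instructive: in Lemma \ref{l:nopropqt} the element of $\mfv\cdot\caD_X$ acting on the transverse delta-module is $\Eu_W+D(\Eu_W)=\Eu_W+\dim W$, and the strictly positive divergence shift $\dim W>0$ is what forces invertibility. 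For $\Vect(X)$ and for contact $H(X)$ the relevant diagonal vector field lies in $\mfv$ itself with no shift, acts with weight zero on the (degree-zero, $\Eu$-invariant) operators above, and hence annihilates rather than inverts. What actually kills $M(S^nX,\mfv)$ is that these invariant polydifferential operators span a $\bk[S_n]$-module of the form $\bk[S_n]\cdot\bigl(1^{\otimes(n-d)}\otimes\vol^{-1}\bigr)\cdot\vol$ containing no $S_n$-invariant vector, so the $S_\lambda$-invariants of every diagonal multiplicity space vanish. You need the representation-theoretic computation, not a support argument on $X^n$.

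For part (i), the assertion that the composition factors on $\bar\Delta_P$ are the rank-one systems $M(X,\mfv)^{\boxtimes|P|}$ — equivalently that the maximal quotient of $M(X^n,\mfv)$ supported on each diagonal has multiplicity exactly one — is exactly what must be proved, and your $\Ext$-vanishing does not touch it: that lemma excludes gluing \emph{between} distinct diagonals but says nothing about the rank of the local system \emph{on} a given diagonal (case (ii) shows this rank can jump above the naive expectation). The paper's condition (*) — that every $\mfv$-invariant $S_n$-invariant polydifferential operator $\Sym^n\hat\cO_{X,x}\to\hat\cO_{X,x}$ is a multiple of multiplication — is verified via the Darboux theorem (move any $f$ with $df\neq0$ to a coordinate function, then invariance under the stabilizer and under rescalings forces $f\mapsto cf^n$), together with the combinatorial Lemma \ref{l:tech-sn} to pass between $S_n$- and $S_{n+1}$-invariance. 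Without this input, the map \eqref{e:gen-sn} could fail to be injective. Everything else in your sketch (the Künneth/symmetric-algebra bookkeeping for \eqref{e:sym-can-invts}) is fine once the multiplicity-one statement is in place.
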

For the case where $X$ is a Calabi-Yau curve, $\mfv$ is one-dimensional,
and $M(S^n X, \mfv)$ is not holonomic for $n > 1$.

\begin{remark} In the symplectic and Calabi-Yau cases, one can
  alternatively consider $H(S^n X)$, $LH(S^n X)$, and $P(S^n X)$,
  where now $S^n X$ is viewed as either a Poisson variety (when $X$ is
  symplectic) or as a variety equipped with a top polyvector field
  (when $X$ is even-dimensional Calabi-Yau) or more generally one can
  consider $H(S^n X,D)$ and $P(S^n X,D)$ when $X$ is odd-dimensional
  and equipped with a divergence function on $T_{S^n X} =
  T_{X^n}^{S_n}$ obtained from the Calabi-Yau divergence function on
  $X^n$.  It is easy to see that the image of the map in
  \eqref{e:sym-can-invts2} is invariant under all of these, since on
  each leaf, i.e., the complement in a diagonal $\pr_n \circ
  \Delta_\lambda(X^{|\lambda|})$ of smaller diagonals, the image of
  the corresponding functionals on the left-hand side are supported on
  this diagonal and invariant under all vector fields that preserve
  the given structure.  Moreover, in the symplectic case, if we
  instead use $H(S^n X)$ (or $LH(S^n X)$), we will obtain the same
  result in view of Proposition \ref{p:lie-snx}.(i) (as already
  noticed in \cite{hp0weyl}).  This recovers the main result of
  \cite{hp0weyl} (where this observation was also used in the proof).

  In the Calabi-Yau case, one can replace $\mfv$ on the RHS of
  \eqref{e:sym-can-invts2} by $P(S^n X)$, since here one also has $P(X)
  \subseteq P(S^n X)$, so the isomorphism factors through the same
  expression with $P(S^n X)$-invariants.

  However, in the Calabi-Yau case, one cannot replace the RHS with
  $H(S^n X)$ or $LH(S^n X)$-invariants, since $H(X)$ is not contained
  in these in general.  In fact, for $n \geq 2$, these invariants are
  infinite-dimensional: already when $X = \bA^2$ equipped with the
  standard volume form, $S^2 \bA^2 \cong (\bA^2 / (\bZ/2)) \times
  \bA^2$, so the coinvariants $(\cO_{S^2 \bA^2})_{H_{\wedge^2 \Xi}(S^2
    \bA^2)} = (\cO_{S^2 \bA^2})_{LH_{\wedge^2 \Xi}(S^2 \bA^2)}$ are
  infinite-dimensional by Remark \ref{r:inc-prod}.
\end{remark}

\begin{remark}
  Theorem \ref{t:sym} may generalize in some form to the case where $X$
  is not necessarily transitive, but has a finite degenerate locus.
  As a first step, in \cite{ESsym}, the authors prove that, when $X
  \subseteq \bA^3$ is a quasihomogeneous isolated surface singularity
  and $\mfv=H(X)$, then abstractly one still has an isomorphism
\begin{equation}\label{e:abst-iso-symqh}
\Sym (t \cdot ((\cO_X)^*)^{\mfv}[t]) \cong \bigoplus_{n \geq 0} 
  ((\cO_{S^n X})^*)^{\mfv},
\end{equation}
but only as algebras graded by symmetric power degree, not by the
weight degree in $\cO_X$.  (To correct this, one can assign $t$ weight
degree $-d$, where the hypersurface cutting out $X$ has weight $d$
(note that here $\cO_X$ has nonnegative weight and $(\cO_X)^*$ has
nonpositive weight).  Then one does obtain an isomorphism of graded
algebras.)
\begin{question}
Does the abstract algebra isomorphism \eqref{e:abst-iso-symqh}, graded
only by symmetric power degree,
extend to the case
 where $X \subseteq \bA^n$ is an arbitrary quasihomogeneous
complete intersection with an isolated singularity, equipped with its
top polyvector field from Example \ref{ex:cy-cplte-int}?  Can it
be corrected to an abstract bigraded isomorphism by assigning $t$ the
appropriate weight?
\end{question}
\begin{question}
Does the abstract algebra isomorphism \eqref{e:abst-iso-symqh} extend
to the case of arbitrary (not necessarily quasihomogeneous) complete
intersections with isolated singularities?  What about if the complete
intersection condition is dropped?
\end{question}
Finally, we remark that, even as nonequivariant $\caD$-modules, the
two sides of \eqref{e:gen-sn} are \emph{not} in general isomorphic,
because $M(S^n X, \mfv)$ is not in general semisimple.  For the case
where $X$ is a quasihomogeneous surface with an isolated singularity,
the authors hope to compute $M(S^n X, \mfv)$ in \cite{ES-ciiss}.
There, only in the case where $X$ has genus zero, i.e., the
hypersurface is a du Val singularity, does it hold that $M(X,\mfv)$ is
semisimple; in all other cases, the two sides of \eqref{e:gen-sn} are
not isomorphic as (nonequivariant) $\caD$-modules for $n \geq 2$.

In the case of the du Val singularities, the two sides of
\eqref{e:gen-sn} are only abstractly isomorphic as nonequivariant
$\caD$-modules, by \cite[\S 1.3]{hp0weyl}.  One can introduce a
correction analogous to the above one to the RHS which makes the two
sides isomorphic as $\bG_m$-equivariant $\caD$-modules, but we do not
know of any natural isomorphism between the two.
\end{remark}

\subsection{Smooth and contact varieties}\label{ss:sym-sc}
By Theorem \ref{t:sym}, in the case that $(X,\mfv)$ is either $(X,
\Vect(X))$ for smooth $X$, or $(X, H(X))$ for $X$ an odd-dimensional
contact variety, then $M(S^n X, \mfv) = 0$ for all $n \geq 0$.
However, it turns out that $M(X^n, \mfv)$ itself is \emph{nonzero}
when $n > \dim X$.  Moreover, this can be explicitly computed as an
$S_n$-equivariant $\caD$-module.  

We first construct some canonical quotients $M(X^n, \Vect(X)) \onto
(\Delta_n)_* \Omega_X$.  Let $d := \dim X$.  We can identify global
sections of $(\Delta_n)_* \Omega_X$ with $\cO_{\Delta_n(X)}$-linear
polydifferential operators $\hat \cO_{X^n, \Delta_n(X)} \to
\Omega_{\Delta_n(X)}$.   Then, we
consider the operator
\[
\phi_{n,d}: (f_1 \otimes \cdots \otimes f_n) \mapsto 
f_{d+2} \cdots f_n  \sum_{\sigma \in S_{d+1}}  
\frac{1}{(d+1)!} \sign(\sigma)
f_{\sigma(1)} df_{\sigma(2)} \wedge \cdots \wedge df_{\sigma(d+1)}.
\]
We can see that $\phi_{n,d}$ is $\cO_{\Delta_n(X)}$-linear (to ensure this, we
had to skew-symmetrize over $S_{d+1}$ rather than $S_d$).  Moreover,
$\bk[S_n] \cdot \phi_{n,d}$ is actually preserved by $\bk[S_{n+1}]$, and as a
representation of $S_{n+1}$, it is
\[
\Ind_{S_d \times S_{n-d-1}} (\sign \boxtimes \, \bk).
\]
Thus, $\bk[S_n] \cdot \phi_{n,d}$ has dimension ${n-1 \choose d}$. Let
$L_n$ be the $S_n$-equivariant local system supported on $\Delta_n(X)$
of rank ${n-1 \choose d}$ corresponding to this quotient (as a
nonequivariant local system, it is $((\Delta_n)_* \Omega_X)^{\oplus
  {n-1 \choose d}}$).  More generally, given a decomposition
$\{1,\ldots,n\} = P_1 \sqcup \cdots \sqcup P_m$ into cells, let
$L_{P_1} \boxtimes \cdots \boxtimes L_{P_m}$ denote the corresponding
tensor product of local systems $L_{|P_i|}$ in the components $P_i$
(i.e., these are all obtained by permutation of components from the
local system $L_{|P_1|} \boxtimes \cdots \boxtimes L_{|P_m|}$). This is equivariant 
with respect to the subgroup of $S_n$ preserving the decomposition, which
is isomorphic to $S_{|P_1|} \times \cdots \times S_{|P_m|}$. Note
that it is nonzero if and only if $|P_i| > d$ for all $i$.
\begin{theorem}
Suppose that $(X,\mfv)$ is either $(X,
\Vect(X))$ for smooth $X$, or $(X, H(X))$ for $X$ an odd-dimensional
contact variety.  Then, we have an isomorphism as $S_n$-equivariant local systems,
\begin{equation}
M(X^n, \mfv)
= \bigoplus_{m \geq 1, P_1 \sqcup \cdots \sqcup P_m = \{1,\ldots,n\}}
  L_{P_1} \boxtimes \cdots \boxtimes L_{P_m}.
\end{equation}
\end{theorem}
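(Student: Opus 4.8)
The plan is to reduce the statement to a local computation in the formal neighborhood of an arbitrary diagonal stratum of $X^n$, exploiting transitivity and the quasi-locality of $\mfv$. First I would observe that, since $\mfv$ is transitive on $X$, the restriction of $M(X^n,\mfv)$ to the open stratum (complement of all diagonals) is zero in the contact case and zero in the $\Vect(X)$ case, by Theorem \ref{t:sym}(ii) applied to $S^n X$ — or more directly, because on the open stratum $\mfv^{\oplus n}$ acts transitively and non-incompressibly (in the contact case, by Example \ref{ex:st-cont}; in the $\Vect$ case because $\Vect(X)$ is never incompressible on a positive-dimensional variety unless $X$ is a point), so by Proposition \ref{p:tr} the $\caD$-module vanishes there. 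Hence $M(X^n,\mfv)$ is supported on the union of diagonals, and by the stratification into diagonal strata $\Delta_P := \{x_i = x_j \text{ iff } i,j \text{ in the same } P_k\}$, indexed by set partitions $P = P_1 \sqcup \cdots \sqcup P_m$ of $\{1,\dots,n\}$, it suffices to compute the $i^*$-restriction of $M(X^n,\mfv)$ to the formal neighborhood of a generic point of each $\Delta_P$, track the $S_n$-equivariant structure, and then argue that the extensions between strata all split (so that the module is a direct sum of local systems on closed diagonal subvarieties).

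Next I would carry out the local computation. Near a generic point of $\Delta_P$, the formal neighborhood of $X^n$ factors as $\prod_{k=1}^m \widehat{(X^{P_k})}_{\Delta_{|P_k|}(X)}$, and quasi-locality of $\mfv$ lets me write $\mfv$ restricted to this neighborhood as a sum over the blocks $P_k$ of its restriction to the corresponding small diagonal. So the computation reduces to the single-block case: compute $i^! M(X^m, \mfv)$ in the formal neighborhood of the small diagonal $\Delta_m(X) \subseteq X^m$, as an $S_m$-equivariant local system on $\Delta_m(X) \cong X$. Using transitivity of $\mfv$ on $X$ and the decomposition of $\widehat{(X^m)}_{\Delta_m}$ as $X \times (\text{formal polydisc in } m-1 \text{ copies of } T_x X)$, this becomes a purely linear-algebra/combinatorial computation: one needs to determine which functionals on $\cO_{X^m}$ supported on the diagonal are annihilated by all of $\mfv$, i.e.\ the $\mfv$-invariant distributions along $\Delta_m(X)$. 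The operator $\phi_{m,d}$ is the explicit generator, and the point is to show (a) that it (and its $S_m$-translates) land in $((\hat\cO_{X^m,\Delta_m})^*)^{\mfv}$ — which is a direct check using $L_\xi\,\vol = 0$ type identities in the $\Vect$ case, and the contact analogue in that case — and (b) that these span the whole invariant space. Step (b) is where the genuine content lies: one must show the skew-symmetrization over $S_{d+1}$ of $f_{\sigma(1)}df_{\sigma(2)}\wedge\cdots\wedge df_{\sigma(d+1)}$ generates all invariants, which I expect to follow from the fact that, formally, $\mfv$ contains all vector fields (resp.\ all contact vector fields), and the space of invariant distributions on the formal $m$-fold diagonal under the diagonal action of $\Vect(\hat\bA^d)$ is computed by a Koszul-type complex whose cohomology gives exactly the $S_{m}$-representation $\Ind_{S_d \times S_{m-d-1}}(\sgn \boxtimes \bk)$.

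Then I would assemble the global statement. Having identified $i^! M(X^m,\mfv) \cong L_m$ as an $S_m$-equivariant local system of rank $\binom{m-1}{d}$ on $\Delta_m(X)$, quasi-locality gives $i^!_{\Delta_P} M(X^n,\mfv) \cong L_{P_1} \boxtimes \cdots \boxtimes L_{P_m}$ (nonzero iff every $|P_k| > d$). To conclude that $M(X^n,\mfv)$ is the \emph{direct sum} over all partitions $P$, rather than some nonsplit iterated extension, I would argue as in the proof of the symmetric-power theorem: construct, for each $P$, the canonical surjection $M(X^n,\mfv) \onto (\Delta_P)_* (\text{diagonal of } L)$ coming from Proposition \ref{p:inv-sub-qt}, show these are jointly an isomorphism onto the claimed direct sum by checking it stratum-by-stratum (it is an iso on each open stratum of the support by the local computation, and injectivity/surjectivity propagate by induction on the number of blocks), and finally invoke vanishing of the relevant $\Ext^1$ between local systems on different diagonal strata — which holds because, in a transverse formal slice, it reduces to $\Ext^1(\cO_{\bA^a}, \delta_0) = 0 = \Ext^1(\delta_0, \cO_{\bA^a})$ for a codimension $\geq 1$ (in fact the relevant codimension is $\geq d \geq 1$, but here one needs $\geq 2$; this is where $\dim X$ or the contact dimension being $\geq 1$, combined with the partition structure forcing jumps of size $\geq d$, ensures codimension $\geq 2$ at each degeneration) — so there are no nontrivial extensions and the module splits.

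\textbf{Main obstacle.} The hard part will be step (b) above: proving that the explicit polydifferential operators $\phi_{m,d}$ and their $S_m$-translates \emph{span all} $\mfv$-invariant distributions along the small diagonal, and that the span has exactly the asserted $S_m$-module structure $\Ind_{S_d\times S_{m-d-1}}(\sgn\boxtimes\bk)$ of dimension $\binom{m-1}{d}$. This is a formal-local representation-theoretic computation — identifying the $\Vect(\hat\bA^d)$-invariants (resp.\ contact-vector-field-invariants) in $(\hat\cO_{(\hat\bA^d)^m, \Delta})^*$ — and controlling both the size and the $S_m$-action simultaneously requires care; I would expect to set it up via a Koszul complex resolving $\cO_{\Delta_m}$ and to recognize the relevant cohomology group as an induced sign representation by a direct, if somewhat intricate, combinatorial argument. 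The splitting/$\Ext$-vanishing step is routine by comparison, and the globalization via quasi-locality is essentially formal once the single-diagonal case is in hand.
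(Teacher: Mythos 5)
Your proposal follows essentially the same route as the paper: the stated theorem is the specialization of Theorem \ref{t:sym-ql3} (stratify $X^n$ by diagonals using transitivity and quasi-locality, identify the maximal quotient supported on each diagonal with the space of $\mfv$-invariant polydifferential operators via a formal-local computation reduced to a single block, and obtain the splitting from the codimension-$\geq 2$ $\Ext$-vanishing of Lemma \ref{l:extsubvars}), combined with the explicit identification of the multiplicity local system in part (ii) of the Proposition of \S\ref{ss:ql}. The one step where you diverge --- your proposed Koszul-complex computation of the $\mfv$-invariant distributions along the small diagonal, which you correctly flag as the main obstacle --- is handled in the paper by a more elementary argument: invariance under constant vector fields reduces to constant-coefficient operators, invariance under all weighted Euler vector fields forces the total symbol to be multilinear in the coordinates, and invariance under $\SL(T_xX)$ (resp.\ the linear contact group) then pins the answer down to the $\bk[S_m]$-span of the single operator $(f_1\otimes\cdots\otimes f_m)\mapsto f_1\cdots f_{m-d}\,df_{m-d+1}\wedge\cdots\wedge df_m$, whose $S_{m+1}$-span is $\Ind_{S_d\times S_{m-d-1}}(\sign\boxtimes\,\bk)$ of dimension $\binom{m-1}{d}$.
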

Note in the theorem that, even though the individual summands on the
RHS are not $S_n$-equivariant, the direct sum is canonically
$S_n$-equivariant.

\subsection{Quasi-locality and a generalization of Theorem \ref{t:sym}} \label{ss:ql}
\begin{definition}\label{d:ql}
  Say that $(X, \mfv)$ is \emph{quasi-local} if, for every $n$-tuple
  of distinct points $x_1, \ldots, x_n \in X$, and every choice of
  positive integers $m_1, \ldots, m_{n-1} \geq 1$, the subspace of
  $\mfv$ of vector fields vanishing to orders $m_i$ at $x_i$ for all
  $1 \leq i \leq n-1$ topologically span $\mfv|_{\hat X_{x_n}}$.
\end{definition}
Equivalently, as stated in the beginning of the section, the
evaluation of $\mfv$ at every subscheme supported at a finite subset
$S \subseteq X$ is the direct sum of its evaluations at each
connected component of $S$ (i.e., at each subscheme of $S$ supported
on a point of $S_\red$).
\begin{proposition}
  If $(X, \mfv)$ is quasi-local, then the leaves of $(S^n X, \mfv)$
  are the images of the products of leaves of $X$ under $\pr_n$.  In
  particular, if $(X,\mfv)$ has finitely many leaves, so does $(S^n X,
  \mfv)$, and the latter is holonomic.
\end{proposition}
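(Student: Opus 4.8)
The plan is to analyze leaves of $(S^n X, \mfv)$ by working locally, at a point of $S^n X$ corresponding to an effective divisor $\sum_j n_j [x_j]$ with the $x_j \in X$ distinct. Such a point lies on the image of a diagonal $\pr_n \circ \Delta_\lambda(X^{|\lambda|})$, where $\lambda$ has parts $n_j$. First I would reduce the computation of $\mfv|_p$, for $p \in S^n X$ the point above $(x_1, \dots, x_k) \in X^{|\lambda|}$ (generic in the sense that the $x_j$ are distinct), to a computation in the formal neighborhood $\widehat{(S^n X)}_p$. Since the $x_j$ are distinct, $\widehat{(S^n X)}_p \cong \prod_j \widehat{(S^{n_j} X)}_{p_j}$ where $p_j$ is the point of $S^{n_j} X$ above the small diagonal point $(x_j, \dots, x_j)$; this uses only that symmetric powers behave well under disjoint unions. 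So the question reduces to understanding $\mfv|_{p_j}$ inside $T_{p_j}(S^{n_j} X)$ when $p_j$ is a small-diagonal point.

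Next I would use quasi-locality to control $\mfv$ acting on $\widehat{(S^n X)}_p$. Quasi-locality (Definition \ref{d:ql}) says precisely that the evaluation of $\mfv$ on any subscheme of $X$ supported on the finite set $\{x_1, \dots, x_k\}$ splits as the direct sum of its evaluations at the $\widehat{X}_{x_j}$. Since the action of $\mfv$ on $S^n X$ factors through its action on $X^n$, and on $X^n$ near $(x_1, \dots, x_1, x_2, \dots)$ (with multiplicities) the relevant infinitesimal data is exactly an $m$-jet supported on $\{x_1, \dots, x_k\}$, quasi-locality lets me conclude that $\mfv$, restricted to $\widehat{(S^n X)}_p$, is the ``sum'' of the restrictions $\mfv|_{\widehat{X}_{x_j}}$ acting on the corresponding factors $\widehat{(S^{n_j} X)}_{p_j}$. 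Combined with the previous paragraph's product decomposition, this reduces the whole problem to a single point: describing the leaf of $(S^m X, \mfv)$ through the small-diagonal point above $(x, \dots, x)$, purely in terms of $\mfv|_{\widehat{X}_x}$ acting on $\widehat{(S^m X)}_{\Delta_m(x)}$.

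For that local model I would exploit transitivity: on the smooth variety $X$, $\mfv$ generates $T_X$ everywhere, so $\mfv|_{\widehat{X}_x}$ is a transitive Lie algebra of formal vector fields, and its action on $\cO_{\widehat{X}_x} = \bk[[x_1, \dots, x_d]]$ is large — in particular it contains, modulo any finite jet truncation, all polynomial vector fields (this is the only place the geometry of $X$ enters, via transitivity). Acting on $\cO_{\widehat{(S^m X)}_{\Delta_m(x)}} = \Sym^m \bk[[x_1,\dots,x_d]]$, the standard symmetric-power computation (in the spirit of Proposition \ref{p:lie-snx} and the argument showing $\Vect(X) \subseteq \Vect(S^m X)$ generates enough vector fields) shows that $\mfv|_{\Delta_m(x)}$ spans exactly the tangent space to the small diagonal $\Delta_m(X) \subseteq S^m X$ at that point, i.e.\ the leaf through $\Delta_m(x)$ is (an open subset of) $\Delta_m(X)$. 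Assembling over all $j$ and all $\lambda \vdash n$, the leaves of $(S^n X, \mfv)$ are exactly the images under $\pr_n \circ \Delta_\lambda$ of products of leaves (here single points, since $X$ is a single leaf) — more precisely the locally closed strata one gets by removing smaller diagonals — which is the asserted description. The final sentence is then immediate: if $(X, \mfv)$ has finitely many leaves (for transitive connected $X$ this means $X$ itself is one leaf), there are finitely many partitions of $n$, hence finitely many leaves of $(S^n X, \mfv)$, and holonomicity follows from Theorem \ref{t:main2}.

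\textbf{Main obstacle.} The delicate point is the second paragraph: rigorously matching up ``vector fields on $X$ vanishing to prescribed orders at the $x_j$, acting on $S^n X$'' with the product decomposition of the formal neighborhood, so that quasi-locality genuinely reduces the global leaf computation to the one-point small-diagonal model. One must be careful that the $S_n$-quotient (as opposed to $X^n$ itself) does not introduce extra tangent directions at diagonal points beyond those coming from each factor, and that the infinitesimal thickenings appearing in Definition \ref{d:ql} are exactly the ones needed to see all of $\mfv|_{\widehat{(S^n X)}_p}$. Once that bookkeeping is done, the transitive local model is routine.
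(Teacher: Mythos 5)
Your overall strategy (localize at a point of a diagonal, use quasi-locality to decouple the distinct points $x_j$, then identify the resulting tangent directions) is in the right spirit, but there are two concrete problems. First, the proposition does not assume $(X,\mfv)$ is transitive: $X$ may be stratified into many leaves, and the statement is that the leaves of $S^n X$ are images of \emph{products of leaves} of $X$, which are generally proper subvarieties of the diagonals. Your argument invokes transitivity essentially, both in the local model at the small diagonal and in the final assembly (``here single points, since $X$ is a single leaf''), so at best you have proved the special case where $X$ is a single leaf; the ``in particular'' clause about finitely many leaves is then vacuous. Second, the claim that transitivity forces $\mfv|_{\hat X_x}$ to contain all polynomial vector fields modulo any jet truncation is false: the Hamiltonian vector fields of the standard symplectic structure on $\hat \bA^{2n}$ are transitive but do not contain the Euler vector field, already in degree one. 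Fortunately this claim is not actually needed, which points to the real issue: you are working much too hard.

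A leaf is determined by the pointwise evaluations $\mfv|_p$, so no analysis of formal neighborhoods, jets of $\mfv$, or polydifferential operators is required here (that machinery is needed later, for the structure of $M(S^nX,\mfv)$, but not for this proposition). The paper's proof is essentially one line: at $q = \Delta_\lambda(x_1,\ldots,x_{|\lambda|}) \in X^n$ with the $x_j$ distinct, each $\xi \in \mfv$ acts diagonally, so $\xi|_q = (\xi|_{x_1},\ldots,\xi|_{x_1},\xi|_{x_2},\ldots)$ is an $S_\lambda$-invariant tangent vector; the order-one case of quasi-locality says exactly that the joint evaluation map $\mfv \to \bigoplus_j \mfv|_{x_j}$ is surjective, so $\mfv|_q$ is the full diagonal copy of $\prod_j \mfv|_{x_j} = \prod_j T_{x_j} Z_j$, where $Z_j$ is the leaf of $X$ through $x_j$. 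Pushing forward by $\pr_n$ (which identifies the $S_\lambda$-invariant diagonal directions with $\mfv|_{\pr_n(q)}$) gives precisely the tangent space to the image of $Z_1 \times \cdots \times Z_{|\lambda|}$, which is the assertion. Your worry about the $S_n$-quotient introducing extra tangent directions is a red herring for the same reason: $\mfv|_p$ is by definition the span of the vectors $\xi|_p$, each of which is manifestly the image of a diagonal vector; the only substantive point is surjectivity onto the product, and that is exactly what quasi-locality supplies.
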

\begin{proof}
At each point $\pr_n \circ \Delta_\lambda(x_1, \ldots, x_{|\lambda|})$,  
the pushforward
$(\pr_n)_*$ induces an isomorphism of vector spaces,
\begin{equation}\label{e:ql-mtr}
(T_{\Delta_\lambda(x_1, \ldots, x_{|\lambda|})} X^n)^{S_\lambda}
\iso  \mfv|_{\pr_n \circ \Delta_\lambda(x_1, \ldots, x_{|\lambda|})}.
\end{equation}
Therefore, along each diagonal, the flow of $\mfv$ is transitive along the images of the products of leaves of $X$.
\end{proof}
\begin{proposition}
If $X$ is Jacobi or equipped with a top polyvector field, 
then $(X,H(X))$ is quasi-local. Similarly, $(X, \Vect(X))$ is quasi-local.
\end{proposition}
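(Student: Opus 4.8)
The plan is to treat the three cases uniformly by observing that in each of them $\mfv$ is the image of a differential operator of order $\leq 1$ out of an $\cO_X$-module. Concretely: for $X$ equipped with a top polyvector field $\Xi$ one has $H(X)=v(\Gamma(\tilde\Omega^{n-2}_X))$ with $v(\al)=\Xi(d\al)$; for $X$ Jacobi one has $H(X)=v(\cO_X)$ with $v(f)=\xi_f=\pi(df)+fu$; and for all vector fields $\mfv=\Vect(X)=v(\Gamma(T_X))$ with $v=\Id$. That $v$ has order $\leq 1$ is exactly the hypothesis feeding into Theorem~\ref{t:h-loc}, and is recorded in the corollary following it for the first two cases; in the third case $v$ has order $0$ (cf.\ Corollary~\ref{c:all-local}). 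So it is enough to prove: if $X$ is an affine scheme of finite type, $E$ an $\cO_X$-module, and $v\colon\Gamma(E)\to\Vect(X)$ a $\bk$-linear differential operator of order $\leq 1$ — meaning that $v(ge)-g\cdot v(e)$ is $\cO_X$-linear in $e$ and depends $\cO_X$-linearly on $dg$ — then $(X,v(\Gamma(E)))$ is quasi-local. Note that, unlike for $\caD$-localizability, we will not need the principal symbol of $v$ to be skew-symmetric.

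To prove this reduction, fix distinct closed points $x_1,\dots,x_n$ and integers $m_1,\dots,m_{n-1}\geq 1$. Since $\mfv=v(\Gamma(E))$ is $\bk$-spanned by the $v(e)$ and restriction to $\hat X_{x_n}$ is $\bk$-linear, it suffices to show that for every $e\in\Gamma(E)$ and every $N\geq 1$ there is $\eta\in\mfv$ vanishing to order $m_i$ at $x_i$ for $i<n$ with $\eta|_{\hat X_{x_n}}\equiv v(e)|_{\hat X_{x_n}}$ modulo $\mathfrak{m}_{x_n}^{N}\,T_{\hat X_{x_n}}$. Because $x_1,\dots,x_n$ are distinct, the ideals $\mathfrak{m}_{x_1}^{m_1+1},\dots,\mathfrak{m}_{x_{n-1}}^{m_{n-1}+1},\mathfrak{m}_{x_n}^{N}$ are pairwise comaximal, so the Chinese Remainder Theorem supplies $g\in\cO_X$ with $g\in\mathfrak{m}_{x_i}^{m_i+1}$ for $i<n$ and $g\equiv 1\pmod{\mathfrak{m}_{x_n}^{N}}$. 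Set $\eta:=v(ge)\in\mfv$ and write $v(ge)=g\cdot v(e)+\bigl(v(ge)-g\,v(e)\bigr)$. Now the only point is an order-of-vanishing bookkeeping, using that $g\in\mathfrak{m}_x^{k+1}$ forces $dg\in\mathfrak{m}_x^{k}\Omega^1_X$ (Leibniz) and that both $f\mapsto g f$ and the contraction $\omega\mapsto i_\omega(\sigma(e))$ are $\cO_X$-linear: at $x_i$ with $i<n$, $g\cdot v(e)$ vanishes to order $m_i+1$ while $v(ge)-g\,v(e)$, being $\cO_X$-linear in $dg\in\mathfrak{m}_{x_i}^{m_i}\Omega^1_X$, vanishes to order $\geq m_i$, so $\eta$ vanishes to order $\geq m_i$ at $x_i$; at $x_n$, $(g-1)v(e)$ vanishes to order $N$ and $v(ge)-g\,v(e)$ vanishes to order $\geq N-1$, so $\eta-v(e)$ vanishes to order $\geq N-1$ at $x_n$. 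As $N$ was arbitrary, $v(e)|_{\hat X_{x_n}}$ lies in the closure of the restrictions of the vanishing vector fields, which is the statement of Definition~\ref{d:ql}. (For $\Vect(X)$, where $v=\Id$, the correction term is absent and $\eta=g\xi$ works directly.)

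I do not expect a genuine obstacle: the proposition is of the routine-verification type, and the content is entirely the recognition that the Hamiltonian constructions are first-order differential operators — the same structural fact that powered $\caD$-localizability. The one place requiring care is precisely the bookkeeping above: because $v$ is only first order, passing through $dg$ costs one power of the maximal ideal, which is why the cutoff $g$ must be taken in $\mathfrak{m}_{x_i}^{m_i+1}$ (rather than $\mathfrak{m}_{x_i}^{m_i}$) at the auxiliary points; with that single adjustment the estimates are immediate. One can also bypass the abstract reduction and run the identical argument in the three cases directly, using $\xi_{gf}=g\,\xi_f+f\,\pi(dg)$, $\Xi(d(g\al))=g\,\Xi(d\al)+\Xi(dg\wedge\al)$, and $g\xi$ respectively, which exhibit the order estimates transparently.
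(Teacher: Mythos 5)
Your proof is correct and is essentially the paper's argument: the paper also proves quasi-locality by noting that potentials (functions, $(\dim X-2)$-forms, or vector fields) vanishing to prescribed orders at $x_1,\dots,x_{n-1}$ topologically span the relevant space at $x_n$, and then pushes them through the first-order Hamiltonian construction. Your version merely packages this as a general statement about order-$\leq 1$ operators $v\colon\Gamma(E)\to\Vect(X)$ and makes explicit the one-power loss through $dg$ (hence the $\mathfrak{m}_{x_i}^{m_i+1}$ cutoff), which the paper's terser proof glosses over but which causes no difficulty since the $m_i$ are arbitrary.
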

\begin{proof} We first consider the Jacobi case.  Given points $x_1,
  \ldots, x_n \in X$, and any orders $m_1, \ldots, m_{n-1} \geq 1$, we
  can consider functions which vanish up to order $m_i$ at $x_i$ for
  $1 \leq i \leq n-1$.  Since the $x_i$ are distinct, these functions
  topologically span $\hat \cO_{X,x_n}$.  Therefore, the Hamiltonian
  vector fields of such functions topologically span all Hamiltonian
  vector fields in the formal neighborhood $\hat X_{x_n}$.

  Next consider the Calabi-Yau case.  This is similar: we replace
  functions which vanish up to order $m_i$ at $x_i$ for $1 \leq i \leq
  n-1$ by $(\dim X - 2)$-forms with this vanishing property.  Again,
  these topologically span $\Omega_{\hat X_{x_n}}$, and we conclude the
  result.

  For the case of all vector fields, this is immediate.
\end{proof}
\begin{theorem}\label{t:sym-ql} 
Suppose that $(X,\mfv)$ is 
  transitive and quasi-local
  and that $X$ has pure dimension at least $2$.
\begin{itemize} 
\item[(i)] 
  If $\mfv$ flows incompressibly, then
  \eqref{e:gen-sn} is an isomorphism if and only if:
\begin{itemize}
\item[(*)] For all $n$, and any (or every) $x \in X$, the space of
  $\mfv$-invariant polydifferential operators $\Sym^n \hat \cO_{X,x}
  \to \hat \cO_{X,x}$ is spanned by the multiplication
  operator.
\end{itemize}
\item[(ii)] If $\mfv$ does not flow incompressibly, then $M(S^n X, \mfv) = 0$ for all $n \geq 1$ if and only if,
for 
all $n \geq 1$, there are no $\mfv$-invariant polydifferential
operators $\Sym^{n} \hat \cO_{X,x} \to \Omega_{\hat X_x}$.
\end{itemize}
\end{theorem}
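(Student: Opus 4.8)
The plan is to reduce the whole statement to a single formal-local computation at the deepest point of each diagonal, which quasi-locality makes possible, and then to translate that computation into the language of invariant polydifferential operators. Since $(X,\mfv)$ is transitive and quasi-local, the proposition preceding the theorem shows that $(S^nX,\mfv)$ has finitely many leaves, namely the strata $\mathring\Delta_\lambda\subseteq S^nX$ (distinct points with multiplicities recorded by $\lambda$) indexed by partitions $\lambda\vdash n$. By Theorem \ref{t:main2}, $M(S^nX,\mfv)$ is therefore holonomic and every composition factor is an intermediate extension of a local system along some $\bar\Delta_\lambda$. In the incompressible case $M(X,\mfv)$ is a rank-one local system (Proposition \ref{p:tr}, Corollary \ref{c:inc}), so its external powers are local systems and one checks that the $\lambda$-summand of the target of \eqref{e:gen-sn}, namely $(\pr_n)_*\bigl((\Delta_\lambda)_*M(X,\mfv)^{\boxtimes|\lambda|}\bigr)^{S_\lambda}$, is exactly the intermediate extension along $\bar\Delta_\lambda$ of the local system on $\mathring\Delta_\lambda$ obtained by $S_\lambda$-descent, while the $\lambda$-component of \eqref{e:gen-sn} is the canonical surjection of $M(S^nX,\mfv)$ onto the subquotient supported generically on $\bar\Delta_\lambda$. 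Hence \eqref{e:gen-sn} is an isomorphism \emph{if and only if} $M(S^nX,\mfv)$ is semisimple and its restriction to each $\mathring\Delta_\lambda$ has the rank predicted by that local system.

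By quasi-locality, the restriction of $M(X^n,\mfv)$ to the formal neighborhood of a generic point of the $\lambda$-diagonal in $X^n$ factors as the external tensor product, over the parts $\lambda_i$, of the $\caD$-modules $M\bigl(\widehat{X^{\lambda_i}}_{\Delta_{\lambda_i}(x_i)},\mfv\bigr)$ for the diagonal action of $\mfv$; this combines the defining splitting of the evaluation of $\mfv$ at a finite subscheme into its evaluations at the connected components with the identity $M(A\times B,\mfv_A\oplus\mfv_B)=M(A,\mfv_A)\boxtimes M(B,\mfv_B)$. Taking $(\pr_n)_*$ and $S_\lambda$-invariants, both the semisimplicity of $M(S^nX,\mfv)$ and its rank along each $\bar\Delta_\lambda$ are governed by the local modules $M\bigl(\widehat{X^{k}}_{\Delta_{k}(x)},\mfv\bigr)$ for $1\le k\le n$: \eqref{e:gen-sn} is an isomorphism exactly when each of these equals the intermediate extension of the rank-one local system on its open (distinct-coordinate) stratum. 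I would then translate this into (*): by Kashiwara's equivalence, a map from $M\bigl(\widehat{X^k}_{\Delta_k(x)},\mfv\bigr)$ into a $\caD$-module supported on the small diagonal is, via the adjunction $\Hom(M(\,\cdot\,,\mfv),N)\cong N^{\mfv}$ (Proposition \ref{p:pushfwd} and Lemma \ref{l:maxquot-coinv}), the same as an $\mfv$-invariant polydifferential operator $\Sym^k\hat\cO_{X,x}\to(\text{the relevant sheaf})$, and feeding such an operator into the construction of \eqref{e:gen-sn} produces a corresponding quotient of $M(S^nX,\mfv)$ on a diagonal. A dévissage over the poset of set partitions of $\{1,\dots,k\}$ --- using $\dim X\ge2$ to kill the relevant $\Ext^1$'s between $\caD$-modules on different diagonals of $\widehat{X^k}$, exactly as in \S\ref{s:fqcyvar} --- shows that "$M\bigl(\widehat{X^k}_{\Delta_k(x)},\mfv\bigr)$ is an intermediate extension for all $k\le n$" is equivalent to "the only $\mfv$-invariant polydifferential operators $\Sym^j\hat\cO_{X,x}\to\hat\cO_{X,x}$, $j\le n$, are multiplication", which is (*). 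This proves (i).

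For (ii), when $\mfv$ does not flow incompressibly we have $M(X,\mfv)=0$ (Proposition \ref{p:tr}), i.e.\ $1\in\mfv\cdot\hat\caD_{X,x}$; plugging this into a single tensor factor shows that $M(X^n,\mfv)$, hence $M(S^nX,\mfv)$, is supported on the union of the proper diagonals, so $M(S^nX,\mfv)=0$ precisely when it has no composition factor on any $\bar\Delta_\lambda$ with $\lambda\neq(1^n)$. By the same quasi-locality reduction such a factor exists iff some $M\bigl(\widehat{X^k}_{\Delta_k(x)},\mfv\bigr)$ with $2\le k\le n$ is nonzero, and nonvanishing of a holonomic $\caD$-module on a formal polydisc is detected by nonzero maps to $\Omega$, i.e.\ by a nonzero $\mfv$-invariant polydifferential operator $\Sym^k\hat\cO_{X,x}\to\Omega_{\hat X_x}$; running over $k$ gives (ii).

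I expect the main obstacle to be the formal-local step: verifying that quasi-locality genuinely produces the external-tensor-product factorization of $M(X^n,\mfv)$ near a generic point of each $\lambda$-diagonal, and carrying the $S_\lambda$-equivariant structure --- including the sign twist by $\sgn$ that appears when $\dim X$ is odd, from reordering the copies of $X$ --- faithfully through all the identifications, so that "minimality of the local modules" matches condition (*) on the nose. The partition-poset dévissage relating these local modules to invariant operators is the second delicate point, but it is essentially the argument already used for finite quotients of Calabi-Yau varieties in \S\ref{s:fqcyvar}.
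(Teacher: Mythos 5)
Your overall architecture is the same as the paper's: use quasi-locality and transitivity to get finitely many leaves (the diagonal strata), factor $M(X^n,\mfv)$ as an external tensor product near a generic point of each $\lambda$-diagonal, split everything using the $\Ext$-vanishing between local systems on diagonals meeting in codimension $\geq 2$ (the paper's Theorems \ref{t:sym-ql2} and \ref{t:sym-ql3} and Lemma \ref{l:extsubvars}), and identify the multiplicity of the quotient on each small diagonal with a space of $\mfv$-invariant polydifferential operators via $\Hom(M(\cdot,\mfv),N)\cong N^{\mfv}$. That much is sound.

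The genuine gap is in the final translation to condition (*), which you treat as immediate. The multiplicity space of the maximal quotient of $M(S^nX,\mfv)$ on the small diagonal is the space of $\mfv$-invariant sections of $(\Delta_n)_*\Omega_X$ invariant under the \emph{full} symmetric group permuting all $n$ slots of the distribution on the diagonal of $X^n$ --- equivalently, under the incompressibility identification $\hat\cO_{X,x}\cong\Omega_{\hat X_x}$, operators $\hat\cO_{X,x}^{\otimes(n-1)}\to\hat\cO_{X,x}$ invariant under the $S_n$-action that includes the ``integration by parts'' move on the output slot. Condition (*), by contrast, concerns operators $\Sym^{n}\hat\cO_{X,x}\to\hat\cO_{X,x}$, i.e.\ invariants under the \emph{smaller} subgroup fixing the output slot. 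The implication ``(*) $\Rightarrow$ multiplicity one'' is easy (the fully symmetric invariants are a subspace of the partially symmetric ones), but the converse --- which is the ``only if'' direction of (i) --- is not: a priori there could be an extra $\mfv$-invariant, $S_n$-invariant but not fully invariant operator that does not show up in any multiplicity space. The paper closes this by a representation-theoretic argument (its Lemma \ref{l:tech-sn}): given such an operator $\phi$ of degree $n$, the operator $\phi\boxtimes 1$ generates $\Ind_{S_n\times S_1}^{S_{n+1}}(V|_{S_n}\boxtimes\bk)$ inside the degree-$(n+1)$ invariant operators, and a computation with Young diagrams shows this forced induced representation contains a nonzero \emph{fully} invariant vector of positive order, contradicting multiplicity one at the next level. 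This is exactly why (*) must be quantified over all $n$ simultaneously. The same mismatch, and the same fix, is needed in your part (ii): the vanishing of the diagonal quotients is governed by fully symmetric invariant operators valued in $\Omega_{\hat X_x}$, while the stated condition only assumes $S_n$-invariance, so you again need the induction-up-one-degree argument to get the equivalence. Without this step both ``only if'' directions are unproved.
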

We will prove this theorem as a consequence of more general results in
the not-necessarily quasi-local case below.  First we explain why this
theorem implies Theorem \ref{t:sym}:
\begin{proposition}
\begin{itemize}
\item[(i)]  Let $(X,H(X))$ be locally conformally symplectic 
or Calabi-Yau of pure dimension
  at least two. Then (*) of Theorem \ref{t:sym-ql} is satisfied.

\item[(ii)]  In the case where $(X,\mfv)$ is either an odd-dimensional
  contact variety with $\mfv = H(X)$, or smooth with $\mfv=\Vect(X)$,
  then for all $x \in X$, all $\mfv$-invariant polydifferential
  operators $\hat \cO_{X,x}^{\otimes n} \to \Omega_{\hat X,x}$ are
  spanned over $\bk[S_n]$ by the operator
\[
(f_1 \otimes \cdots \otimes f_n) \mapsto f_1 \cdots f_{n-\dim X}
df_{n-\dim X+1} \wedge \cdots \wedge df_n.
\]
In particular there are no symmetric such operators.
\end{itemize}
\end{proposition}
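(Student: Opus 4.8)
The plan is to reduce each case to a standard formal model and then to a finite invariant-theory computation. Since $(X,\mfv)$ is transitive, the pairs $(\hat X_x,\mfv|_{\hat X_x})$ are all isomorphic, so ``any'' and ``every'' in (*) coincide; and by the formal Darboux theorems recalled in Examples \ref{ex:st-lcs} and \ref{ex:st-cont}, together with Moser's lemma in the Calabi--Yau case, we may replace $(\hat X_x,\mfv|_{\hat X_x})$ by one of: $(\hat\bA^N,H(\hat\bA^N))$ with the standard symplectic form, $N$ even (locally conformally symplectic case); $(\hat\bA^N,\{\text{volume-preserving fields}\})$ with the standard volume form (Calabi--Yau case); $(\hat\bA^N,\Vect(\hat\bA^N))$ (smooth case); or $(\hat\bA^N,H(\hat\bA^N))$ with the standard contact structure, $N$ odd (contact case). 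Throughout $N=\dim X\ge 2$.

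For part (i): in the symplectic and volume models $\mfv$ contains all constant vector fields, so an invariant polydifferential operator $D$ has constant coefficients, hence is given by a polynomial symbol $P(p^{(1)},\ldots,p^{(n)})$ in the input momenta. As the target $\cO$ is the trivial representation of the linear part of $\mfv$ --- which is $\sp_N$ in the symplectic case and $\mathfrak{sl}_N$ in the volume case --- $P$ is $\mathrm{Sp}_N$- (resp.\ $\SL_N$-) invariant, so by the first fundamental theorem $P$ is a polynomial in the symplectic pairings $\omega(p^{(i)},p^{(j)})$ (resp.\ in the $N\times N$ determinants $\det(p^{(i_1)},\ldots,p^{(i_N)})$). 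An operator whose symbol is a product of such ``blocks'' over \emph{pairwise disjoint} index sets, times $\prod_{i\in M}f_i$ over the remaining indices $M$, is visibly $\mfv$-invariant: for a bracket block by the Jacobi identity $\xi_h\{f_i,f_j\}=\{\xi_h f_i,f_j\}+\{f_i,\xi_h f_j\}$, and for a Jacobian block by $\xi(\det(\partial f_i))=\sum_i\det(\ldots,\partial(\xi f_i),\ldots)$, valid since $\xi$ preserves volume; disjointness makes the substitution $f_i\mapsto\xi f_i$ compatible with the Leibniz rule. The main point is that \emph{no other} invariant operator exists --- that a symbol in which some input's momentum appears in two blocks (a Hessian-type term) never lifts to an invariant operator. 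For the symplectic model, and for the volume model in even dimension (using that Hamiltonian fields preserve volume), this is exactly the cocycle computation of \cite{hp0weyl}; in odd dimension $N$ it follows by the same descent argument, or by induction on $N$ using the volume-preserving fields that vanish in the last coordinate. Granting this, any invariant operator is a combination of disjoint-block operators; since $N\ge2$ every block has at least two elements, and transposing two of them reverses the sign of the corresponding bracket or determinant, so an $S_n$-symmetric combination must kill every term containing a nontrivial block, leaving only $\prod_i f_i$. Dualizing, the only $\mfv$-invariant operator $\Sym^n\hat\cO_{X,x}\to\hat\cO_{X,x}$ is multiplication, which is (*).

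For part (ii): in the smooth model, translation-invariance again gives constant coefficients, and now the full $\mathfrak{gl}_N\subseteq\Vect(\hat\bA^N)$ acts, with $\Omega_{\hat\bA^N}=\wedge^N T^*$ as its one-dimensional $\det^{-1}$ representation. $\mathrm{GL}_N$-equivariance together with Pieri's rule --- the Littlewood--Richardson number $c^{(1^N)}_{(a_1),\ldots,(a_n)}$ is nonzero only when every $a_i\le1$ and $\sum a_i=N$ --- forces $D$ to differentiate exactly $N$ of the inputs, once each, so $D$ is a combination of the operators $f_{i_1}\cdots f_{i_{n-N}}\,df_{j_1}\wedge\cdots\wedge df_{j_N}$; conversely each such operator is $\Vect$-invariant by a short Lie-derivative computation, so these span the invariant operators, and their $\bk[S_n]$-span is $\Ind_{S_{n-N}\times S_N}^{S_n}(\bk\boxtimes\sgn)$, generated by the displayed operator. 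As $\sgn$ of $S_N$ is nontrivial for $N\ge2$, this contains no trivial constituent, so there is no $S_n$-symmetric invariant operator. The contact model is the main obstacle: $H(\hat\bA^N)$ ($N$ odd) contains honest translations only in the $x$- and $t$-directions and has conformal symplectic --- not full linear --- degree-zero part $\sp_{N-1}\oplus\bk$ (Example \ref{ex:st-cont}), so the clean symbol argument breaks down. I would treat it by working with the $\mathbb{Z}$-grading on $H(\hat\bA^N)$: invariance under the negative part, including the twisted fields $-\partial_{y_i}+x_i\partial_t$, pins down the low-weight behaviour of $D$, while the grading-type element $\xi_t$ acts with distinct eigenvalues on $\Omega$ and on $\bigotimes_i\hat\cO$ --- as the Euler field does in Lemma \ref{l:nopropqt} --- which excludes a nonzero symmetric $D$; alternatively one can invoke the classification of invariant polydifferential operators for Cartan's simple Lie algebras of vector fields. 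For the application (Theorem \ref{t:sym-ql}(ii)) only the nonexistence of a symmetric operator $\Sym^n\hat\cO\to\Omega_{\hat X}$ is needed, the weakest part of the statement.
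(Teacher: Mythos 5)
Your reduction to the standard formal models and your treatment of the smooth case of (ii) (translation-invariance gives constant coefficients, then Pieri/$\GL_N$ pins down the operator, then a Leibniz check for the converse) are fine and essentially match the paper. But there are two genuine gaps.

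In part (i), your classification of \emph{all} $\mfv$-invariant polydifferential operators as the span of disjoint-block operators is false. Iterated Poisson brackets such as $(f_1,f_2,f_3)\mapsto\{\{f_1,f_2\},f_3\}$ are invariant under all Hamiltonian vector fields (this is exactly the Jacobi identity), and their symbols, e.g.\ $\omega(p_1,p_2)\bigl(\omega(p_1,p_3)+\omega(p_2,p_3)\bigr)$, have a momentum appearing in two pairings; the analogous iterated Jacobians occur in the volume case. So the ``main point'' you isolate --- that Hessian-type symbols never lift --- is not true, and the step you defer to the ``cocycle computation of \cite{hp0weyl}'' plus an unspecified descent/induction for odd-dimensional Calabi--Yau is precisely where the content lies. (In fact \cite[Lemma 2.1.8]{hp0weyl} is itself proved the way the paper proves (i): one never classifies invariant operators at all. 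An invariant operator $\phi$, viewed as a polynomial map $\bar\phi$ on $\hat\cO_{X,x}$, is determined by its values on functions with nonvanishing differential, since the complement has codimension $\ge 2$; the formal group integrating $H(X)$ acts transitively on such functions by Darboux, the stabilizer of $x_1$ forces $\bar\phi(x_1)$ to be a polynomial in $x_1$, and the rescalings $x_1\mapsto\lambda x_1$ force $\bar\phi(g)=cg^n$ with $c$ independent of $g$.) Your symmetrization step at the end would still kill iterated brackets, but since the proof routes through a false intermediate classification it does not stand as written.

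In part (ii), the contact case is the only genuinely nontrivial one and you explicitly leave it as a sketch, while the proposition asserts the full statement that the invariant operators $\hat\cO_{X,x}^{\otimes n}\to\Omega_{\hat X_x}$ are spanned over $\bk[S_n]$ by the displayed operator --- not merely that no symmetric one exists. The heuristic that ``$\xi_t$ acts with distinct eigenvalues'' is not an argument, and $\xi_t=-\sum_i x_i\partial_{x_i}$ is not the grading element in any case. The paper's argument is short and complete: the constant fields $\xi_1,\xi_{x_i},\xi_{y_i}$ force constant coefficients; the weighted Euler field ($|x_i|=|y_i|=1$, $|t|=2$) forces total symbol degree $-(\dim X+1)$; invariance under $\GL\langle\partial_{x_i},\partial_{y_i}\rangle$ reduces to the $\bk[S_n]$-span of $\bigl(\wedge^{\dim X-1}\langle\partial_{x_i},\partial_{y_i}\rangle\otimes\partial_t\otimes 1^{\otimes(n-\dim X)}\bigr)\cdot\vol$; and the shears $x_i\mapsto x_i+\lambda t$ finish. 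You would need to supply an argument of comparable completeness for the contact model.
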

\begin{proof}
(i)  This relies on the Darboux theorem, following
  \cite[Lemma 2.1.8]{hp0weyl}.  In a formal neighborhood $\hat X_x$,
  we can reduce to the case of the standard symplectic or Calabi-Yau
  structure, since in the locally conformally symplectic case, $H(\hat
  X_x)$ equals $H(\hat X_x, \omega_0)$, where $\omega_0$ is a standard
  symplectic structure, as explained in Example \ref{ex:st-lcs}.

  Now, given a polydifferential operator $\phi: \Sym^n \hat \cO_{X,x}
  \to \hat \cO_{X,x}$, view it as a polynomial function $\bar \phi:
  \hat \cO_{X,x} \to \hat \cO_{X,x}$ on the pro-vector space $\hat
  \cO_{X,x}$. Then $\bar \phi$ is uniquely determined by its
  restriction to functions with nonvanishing first derivative, since
  the complement has codimension at least two.  Let $f \in \hat
  \cO_{X,x}$ be such a function.  Let $G_{X,x}$ be the formal group
  obtained by integrating $H(X)$, which acts on $\hat \cO_{X,x}$. By
  the Darboux theorem, there is a coordinate change by $G_{X,x}$ that
  takes $f$ to a coordinate function $x_1$ of $X$.  Now, if a
  polydifferential operator is invariant under $H(X)$, it must take
  $x_1$ to a function invariant under the formal subgroup of $G_{X,x}$
  preserving $x_1$, i.e., to a polynomial in $x_1$.  Now, to be
  invariant under automorphisms in $G_{X,x}$ sending $x_1$ to $\lambda
  x_1$, $\phi$ must have the form $x_1 \mapsto c \cdot x_1^n$ for some
  $c \in \bk$.  It remains to note that, if $f,g \in \hat \cO_{X,x}$
  are two functions with nonvanishing first derivative, again by the
  Darboux theorem there is an element of $G_{X,x}$ sending $f$ to
  $g$, so the constant $c$ must be independent of the choice of $f$.
  Therefore, $\bar \phi(g)=c g^n$ for all $g$. We can easily see that
  this is $\mfv$-invariant.

  (ii) Restricting to $\hat X_x$, suppose first
  that $\mfv$ is arbitrary such that, in some coordinate system, it
  contains the constant vector fields and an Euler vector $\Eu =
  \sum_i m_i \partial_i$ for $m_i > 0$.  Let $m := \sum_i m_i$.  Let
  $\vol$ be the standard volume form in this coordinate system. The
  polydifferential operators $\hat \cO_{X,x}^{\otimes n} \to
  \Omega_{\hat X_x}$ invariant under the aforementioned vector fields
  are spanned by
\[
(F_1 \otimes \cdots \otimes F_n) \cdot \vol, \quad |F_1|+\cdots+|F_n| = -m,
\]
where each $F_i$ is a constant-coefficient monomial in the $\partial_i$, and
here $|\cdot|$ denotes the weighted degree with respect to $\Eu$.  This is a finite-dimensional vector space. 

Now, in the case where $\mfv = \Vect(X)$, in order to be invariant
under all possible Euler vector fields, the operator must be a linear
combination of terms such that $F_1 \cdots F_n$ is linear in each
coordinate.  Moreover, to be invariant under volume-preserving linear
changes of basis, i.e., under $\SL(T_x X)$, we conclude that the
operator is spanned by images under $S_n$ of $(\vol^{-1}
\otimes 1^{\otimes (n-\dim X)}) \cdot \vol$, as desired.

  In the case $\mfv = H(X)$ and $X$ is odd-dimensional contact variety,
  then we can take $\Eu$ as in Example \ref{ex:st-cont}, so that $F_1 \cdots F_n$ must have total degree $-(\dim X + 1)$ (since $|x_i|=|y_i|=1$ and $|t|=2$,
 and the partial derivatives have negative this degree).    Also, the
polydifferential operator must be preserved by
all linear changes of basis preserving $\partial_t$.  In particular, since
it is preserved by
 $\GL(\langle \partial_{x_i}, \partial_{y_i} \rangle)$, the operator must
be in the $\bk[S_n]$-span of
\[
\bigl(
\wedge^{\dim X - 1} \langle \partial_{x_i}, \partial_{y_i} \rangle \otimes
\partial_t \otimes 1^{\otimes (n-\dim X)} \bigr) \cdot \vol.
\]
Since it is preserved by transformations $x_i \mapsto x_i + \lambda t$ for
$\lambda \in \bk$, we conclude in fact that it is in the $\bk[S_n]$-span of
$(\vol^{-1} \otimes 1^{\otimes (n-\dim X)}) \cdot \vol$, as desired.
\end{proof}
\subsection{General decomposition statement}
First, we generalize Theorem \ref{t:sym-ql}  by replacing (*) by
a general decomposition statement about $M(S^n X, \mfv)$. Then (*) becomes
a multiplicity-one condition.
\begin{definition}
  Given a smooth affine variety $X$ and an integer $m \geq 1$, let
  $\PDiff(\cO_X,\Omega_X, m+1)$ be the space of polydifferential
  operators $\cO_X^{\otimes m} \to \Omega_X$ of degree $m$, i.e.,
  linear maps which are differential operators in each component.
\end{definition}
Note that there there is a natural action of $S_{m+1}$ on
$\PDiff(\cO_X, \Omega_X, m+1)$ given by viewing these operators as
distributions along on the diagonal in $X^{m+1}$, i.e., as sections of the
$\caD_{X^{m+1}}$-module $(\Delta_{m+1})_* \Omega_X$, which has its
natural $S_{m+1}$-action. The $S_m$ action is just by permutation of
components, and the extension to $S_{m+1}$ is explicitly given by the
integration by parts rule.  For example, when $X = \bA^1$ with the
standard volume, this action restricted to the span of partial
derivatives $\partial_1, \ldots, \partial_m$ is the reflection
representation of $S_{m+1}$ (viewed as a type $A_m$ Weyl group);
explicitly this can be viewed as the usual permutation action on
$\partial_1, \ldots, \partial_{m+1}$ where we set $\partial_{m+1} =
-\sum_{i=1}^m \partial_i$.

For all $m \geq 1$, let $L_m$ be the maximal quotient of $M(S^m X,
\mfv)$ supported on the diagonal, i.e., $L_m = (\pr_m \circ \Delta_m)_*
H^0 (\pr_m \circ \Delta_m)^* M(S^m X, \mfv)$ (which at least makes sense when $M(S^m
X, \mfv)$ is holonomic, as in the quasi-local transitive case).
\begin{theorem}\label{t:sym-ql2}
Suppose that $(X,\mfv)$ is quasi-local and transitive and has pure
dimension at least two.
Then, there is a canonical isomorphism
\begin{equation}\label{e:msn-dec1}
M(S^n X, \mfv) \iso \bigoplus_{\lambda \vdash n} (\pr_n)_*  (\Delta_\lambda)_*
L_\lambda^{S_\lambda}, \quad L_\lambda := L_{\lambda_1} \boxtimes \cdots \boxtimes
L_{\lambda_{|\lambda|}}.
\end{equation}
Moreover, the rank of $L_m$ is equal to the dimension of $(\PDiff(\hat
\cO_{X,x}, \Omega_{\hat X_x},m)^{\mfv})^{S_m}$.
\end{theorem}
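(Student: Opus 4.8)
The plan is to prove Theorem \ref{t:sym-ql2} by a reduction-to-the-diagonal strategy, working first on $X^n$ rather than $S^n X$ and then taking $S_n$-invariants. First I would analyze $M(X^n, \mfv)$ where $\mfv$ acts diagonally. Since $(X,\mfv)$ is transitive and $X$ has pure dimension at least two, $\mfv$ flows incompressibly by Proposition \ref{p:div-enough-vfds} together with Proposition \ref{p:tr} (or, in case $\mfv=\Vect(X)$ and the contact case, the relevant non-incompressibility is handled separately in \S \ref{ss:sym-sc}); in any event $M(X,\mfv)$ is holonomic since a connected transitive variety is a single leaf (Theorem \ref{t:main2}). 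By quasi-locality, the leaves of $(X^n,\mfv)$ are exactly the products of leaves, i.e.\ the diagonal subvarieties $\Delta_\lambda(X^{|\lambda|})$ and their permutates; by Theorem \ref{t:hol-bicond} (or directly Theorem \ref{t:main2}) $M(X^n,\mfv)$ is holonomic with composition factors supported on these diagonals. The key point is to show $M(X^n,\mfv)$ is in fact \emph{semisimple} and decomposes as a sum of intermediate extensions along the diagonals, with multiplicity spaces computed by invariant polydifferential operators.

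The main steps, in order: (1) Establish the local structure: for a finite set $S=\{x_1,\dots,x_k\}\subseteq X$ and the formal neighborhood $\widehat{X^n}_{\Delta}$ of a point of a diagonal stratum, use quasi-locality to write $M(\widehat{X^n}_{\Delta},\mfv)$ as an external tensor product over the points of $S$ of the local modules $M(\widehat{X^{m_i}}_{x_i},\mfv)$, where $m_i$ is the multiplicity; this is where quasi-locality does its work, since it guarantees $\mfv$ restricted to the infinitesimal neighborhood of $S$ splits as the direct sum over points of $S$. (2) Identify the maximal quotient of $M(X^m,\mfv)$ supported on the small diagonal: by Lemma \ref{l:maxquot-coinv} its fiber at a point $x$ is dual to $(\PDiff(\hat\cO_{X,x},\Omega_{\hat X_x},m)^{\mfv})$, and by transitivity (the formal group integrating $\mfv$ acts transitively on the formal disc, by Darboux-type arguments as in the proof of the Proposition following Theorem \ref{t:sym-ql}) this fiber is constant, so the quotient is a local system $L_m$ of the stated rank. (3) Show that $M(X^m,\mfv)$ has no quotients supported on proper diagonals other than $L_m$ on the small diagonal and $\Omega_{X^m}$ (or the appropriate top piece) on the open part, and no nontrivial self-extensions, using the codimension-$\geq 2$ vanishing of $\Ext^1$ between $\Omega$ and delta-modules exactly as in Lemma \ref{l:nopropqt} and the proof of Theorem in \S \ref{s:fqcyvar}; here the hypothesis $\dim X\geq 2$ and the presence of enough vector fields (constant directions plus Euler-type fields in each factor, transported via transitivity) is essential. (4) Assemble: semisimplicity plus the local product structure from step (1) gives $M(X^n,\mfv)\cong\bigoplus_\lambda (\Delta_\lambda)_* (L_{\lambda_1}\boxtimes\cdots\boxtimes L_{\lambda_{|\lambda|}})$ summed over all ordered set-partitions, and taking $S_n$-invariants collapses this to the sum over partitions $\lambda\vdash n$ with $S_\lambda$-invariants on each $L_\lambda$, yielding \eqref{e:msn-dec1}. (5) Compare with the map \eqref{e:gen-sn} to see the isomorphism is the canonical one.

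The hard part will be step (3), the semisimplicity / absence of unwanted extensions. Away from the smallest diagonal one must rule out both extensions glueing $L_m$ (or $\Omega$) to delta-modules on deeper diagonals, and one must do this uniformly for all $n$, which forces a careful inductive argument on the depth of the diagonal stratification, formally localizing at a generic point of each stratum and invoking the quasi-local product decomposition to reduce to the base case handled in \S \ref{ss:sym-sc} and \S \ref{s:fqcyvar}. A secondary subtlety is verifying that the multiplicity-one condition (*) in Theorem \ref{t:sym-ql} is precisely the statement that $\rk L_1' = 1$ in the appropriate normalization — i.e.\ that the only invariant polydifferential operator $\Sym^n\hat\cO_{X,x}\to\hat\cO_{X,x}$ is multiplication — so that \eqref{e:msn-dec1} specializes to \eqref{e:gen-sn} being an isomorphism; this is a bookkeeping matter once the general decomposition is in hand. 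I would expect steps (1), (2), (4), (5) to be essentially formal given the machinery already developed (Kashiwara's equivalence, Lemma \ref{l:maxquot-coinv}, Proposition \ref{p:inv-sub-qt}, and the Künneth theorem for $\caD$-modules), with all the real content concentrated in the semisimplicity argument.
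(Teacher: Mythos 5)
Your proposal follows the same overall architecture as the paper's proof: work $S_n$-equivariantly on $X^n$, use quasi-locality to reduce the identification of the maximal quotient on each diagonal to the small-diagonal case, where Lemma \ref{l:maxquot-coinv} together with the identification of $(\Delta_m)_*\Omega_{\hat X_x}$ with polydifferential operators gives the multiplicity space $\PDiff(\hat\cO_{X,x},\Omega_{\hat X_x},m)^{\mfv}$, and then splice the quotients into a direct sum by an extension-vanishing argument before taking $S_n$-invariants. The one place where you make the argument substantially harder than it needs to be is your step (3): the paper runs no induction on the depth of the diagonal stratification and does not reduce to the cases of \S\ref{ss:sym-sc} and \S\ref{s:fqcyvar}. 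It simply observes that the candidate summands $(\Delta_\lambda)_* L_\lambda$ (and their $S_n$-translates) are local systems on \emph{smooth closed} subvarieties of $X^n$ whose pairwise intersections have codimension a positive multiple of $\dim X\geq 2$ in at least one of the two, so Lemma \ref{l:extsubvars} kills all the relevant $\Ext^1$ groups between distinct summands in one stroke; combined with the fact that each map is onto the \emph{maximal} quotient supported on its diagonal, this forces the direct sum decomposition with no stratification induction. Two smaller points: ``semisimple'' is too strong a word for the conclusion --- the decomposition is into direct summands indexed by diagonals, but each $L_m$ is only a local system and need not be semisimple, which is why Corollary \ref{c:d-ie} asserts a direct sum of intermediate extensions rather than of irreducibles; and the canonical isomorphism of Theorem \ref{t:sym-ql2} is realized by the maps \eqref{e:msn-dec1-fact} onto the maximal quotients, not by \eqref{e:gen-sn}, which only sees the $M(X,\mfv)^{\boxtimes|\lambda|}$-part and becomes an isomorphism precisely under the multiplicity-one condition (*) --- a distinction you do correctly flag at the end.
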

The canonical isomorphism is given by the direct sum of
the morphisms 
\begin{equation}\label{e:msn-dec1-fact}
M(S^n X, \mfv)
\onto (\pr_n)_*  (\Delta_\lambda)_*
L_\lambda^{S_\lambda},
\end{equation} 
obtained by adjunction from the canonical quotients $H^0 (\pr_n \circ
\Delta_\lambda)^* M(S^n X, \mfv) \onto L_\lambda$.

The theorem implies that composition factors from distinct leaves do
not appear in nontrivial extensions:
\begin{corollary}\label{c:d-ie}
  In the situation of the theorem, $M(S^n X, \mfv)$ is a direct sum of
  intermediate extensions of local systems on the leaves (locally
  closed diagonals).
\end{corollary}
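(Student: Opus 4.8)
The plan is to read this off the canonical decomposition of Theorem~\ref{t:sym-ql2}. That theorem provides the isomorphism \eqref{e:msn-dec1}
\[
M(S^n X, \mfv) \;\iso\; \bigoplus_{\lambda \vdash n} (\pr_n)_* (\Delta_\lambda)_* L_\lambda^{S_\lambda},
\]
so it suffices to show that each summand $A_\lambda := (\pr_n)_* (\Delta_\lambda)_* L_\lambda^{S_\lambda}$ is the intermediate extension $j_{\lambda,!*}\mathcal{L}_\lambda$ of a local system $\mathcal{L}_\lambda$ on the leaf $Z_\lambda := (\pr_n \circ \Delta_\lambda)(X^{|\lambda|}\setminus \text{diagonals})$, i.e.\ on the locally closed diagonal indexed by $\lambda$; recall that by the proposition preceding Theorem~\ref{t:sym-ql2} these $Z_\lambda$ are exactly the leaves of $(S^n X,\mfv)$.

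First I would observe that each $L_m$ is a genuine local system on $X$ (this is already implicit in Theorem~\ref{t:sym-ql2}, which refers to its rank). Since $(X,\mfv)$ is transitive and connected, $X$ is a single leaf, so by quasi-locality $(S^m X,\mfv)$ has finitely many leaves and $M(S^m X,\mfv)$ is holonomic; by Theorem~\ref{t:main2} its composition factors are intermediate extensions of local systems along leaves. The small diagonal $(\pr_m\circ\Delta_m)(X)\cong X$ is the unique minimal leaf, hence closed and smooth, so every composition factor of the maximal quotient $L_m$ of $M(S^m X,\mfv)$ supported there is a local system on $X$; as $\cO$-coherence is stable under extensions, $L_m$ itself is a local system. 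Consequently $L_\lambda = L_{\lambda_1}\boxtimes\cdots\boxtimes L_{\lambda_{|\lambda|}}$ is a local system on the smooth connected variety $X^{|\lambda|}$.

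Next, set $f_\lambda := \pr_n\circ\Delta_\lambda : X^{|\lambda|}\to S^n X$. This is a finite, $S_\lambda$-invariant morphism with image $\bar Z_\lambda$, which restricts to a finite \'etale Galois cover with group $S_\lambda$ over $Z_\lambda$; hence $A_\lambda = ((f_\lambda)_* L_\lambda)^{S_\lambda}$ is set-theoretically supported on $\bar Z_\lambda$ and its restriction $\mathcal{L}_\lambda$ to $Z_\lambda$ is the local system obtained as the $S_\lambda$-invariant part of the pushforward of $L_\lambda$ along that cover. It remains to check that $A_\lambda$ has no sub- or quotient $\caD$-module supported on $\bar Z_\lambda\setminus Z_\lambda$; since taking $S_\lambda$-invariants is exact it is enough to prove this for $(f_\lambda)_* L_\lambda$. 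As $f_\lambda$ is finite, $(f_\lambda)_* \cong (f_\lambda)_!$, so a quotient (resp.\ subobject) of $(f_\lambda)_* L_\lambda$ supported on a closed $W\subseteq \bar Z_\lambda\setminus Z_\lambda$ yields, by the adjunctions $(f_\lambda)_!\dashv f_\lambda^{!}$ and $f_\lambda^{*}\dashv (f_\lambda)_*$, a nonzero map out of (resp.\ into) $L_\lambda$ whose image is a subquotient of $L_\lambda$ supported on the proper closed subvariety $f_\lambda^{-1}(W)\subseteq X^{|\lambda|}$ (contained in the union of the diagonals). But a nonzero local system on a smooth connected variety has no nonzero subquotient with proper support, so no such $W$ exists. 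Therefore $A_\lambda = j_{\lambda,!*}\mathcal{L}_\lambda$, and summing over $\lambda\vdash n$ gives the corollary.

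The main obstacle is this last step: showing that the finite pushforward $(f_\lambda)_* L_\lambda$ really is an intermediate extension — i.e.\ has no ``delta-function'' constituents along the boundary diagonals — and that this is not disturbed by passing to $S_\lambda$-invariants nor by the singularity of the target diagonal $\bar Z_\lambda$. In carrying out the adjunction argument one should also note that, because $\pr_n$ is a branched cover and not flat, $f_\lambda^{!}$ of a $\caD$-module need not be concentrated in a single cohomological degree; this is harmless for the argument but deserves an explicit remark. Everything else is formal once Theorems~\ref{t:main2} and~\ref{t:sym-ql2} are in hand.
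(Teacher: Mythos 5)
Your proof is correct, and it shares the paper's overall architecture: both start from the decomposition \eqref{e:msn-dec1} of Theorem \ref{t:sym-ql2}, observe that the restriction of each summand to the open stratum is a local system because $\pr_n\circ\Delta_\lambda$ is an \'etale $S_\lambda$-cover there, and then conclude by excluding sub- and quotient objects supported on the boundary. Where you diverge is in that exclusion step. The paper argues via singular support: since $\pr_n$ is finite, the singular support of $(\pr_n)_*(\Delta_\lambda)_*L_\lambda^{S_\lambda}$ is the closure of the conormal bundle of the single leaf, an irreducible Lagrangian, so no composition factor can sit on a smaller stratum. You instead use the adjunctions for the finite map $f_\lambda$ together with the fact that a local system on a smooth connected variety has no nonzero subquotient with proper support. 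Both routes are standard; yours has the small merit of making explicit that $L_m$ is genuinely $\cO$-coherent, which the paper uses silently both when speaking of its rank and when computing the singular support of $(\Delta_\lambda)_*L_\lambda$. One point you flag but do not fully discharge is the possible spread of $f_\lambda^!Q$ (resp.\ $f_\lambda^*S$) over several cohomological degrees: this is indeed harmless, but the clean reason is that a finite morphism is both affine and proper, so $(f_\lambda)_*=(f_\lambda)_!$ is t-exact, whence $f_\lambda^!$ is left t-exact and $f_\lambda^*$ is right t-exact; therefore $\Hom(L_\lambda,f_\lambda^!Q)$ reduces to $\Hom(L_\lambda,H^0f_\lambda^!Q)$ and likewise for subobjects, and your contradiction goes through. (Alternatively one can bypass adjunctions entirely: $(f_\lambda)_*L_\lambda$ is torsion-free over $\cO$ on the image, hence has no submodule with proper support, and holonomic duality, which commutes with finite pushforward and sends local systems to local systems, then rules out quotients as well.)
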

\begin{proof}
  For each diagonal $X_\lambda := \pr_n \circ
  \Delta_\lambda(X^{|\lambda|})$, let $j_\lambda: X_\lambda^\circ
  \into X_\lambda$ be the open embedding of the complement of smaller
  diagonals, i.e., such that $X_\lambda^\circ$ is a leaf of $S^n X$.
  Let $\tilde j_\lambda: \tilde X_\lambda^\circ \into
  \Delta_\lambda(X) \subseteq X^n$ be the preimage of
  $X_\lambda^\circ$.  Then, for each factor in \eqref{e:msn-dec1},
\[
j_\lambda^* (\pr_n)_*  (\Delta_\lambda)_*
L_\lambda^{S_\lambda} \cong (\pr_n)_* \tilde j_\lambda^* (\Delta_\lambda)_*
L_\lambda^{S_\lambda}.
\]
Since $\pr_n$ is a covering of $\tilde X_\lambda^\circ$ onto its image (with
covering group $S_\lambda$), the above is a local system on
$X_\lambda^\circ$.  It now suffices to prove that
\begin{equation}
 (\pr_n)_*  (\Delta_\lambda)_*
L_\lambda^{S_\lambda} \cong j_{!*}j_\lambda^* (\pr_n)_*  (\Delta_\lambda)_*
L_\lambda^{S_\lambda}.
\end{equation}
This follows because, since $\pr_n$ is finite, the singular support of
$(\pr_n)_* (\Delta_\lambda)_* L_\lambda^{S_\lambda}$ is the closure of
the conormal bundle of the leaf $\pr_n \circ
\Delta_\lambda((X^{|\lambda|})^\circ)$, where $(X^{|\lambda|})^\circ$
is the complement in $X^{|\lambda|}$ of the images of all diagonal
embeddings of $X^r$ for all $r < |\lambda|$.
% This follows from the decomposition theorem \cite[Th\'eor\`eme
% 6.2.5]{BBDfp},
% because the map $\pr_n$ is finite and hence small.
\end{proof}
We can make a similar statement about $M(X^n, \mfv)$ itself: Let
$\tilde L_m = (\Delta_m)_* H^0 \Delta_m^* M(X^m, \mfv)$ be the maximal
quotient of $M(X^m, \mfv)$ supported on the diagonal. This is
$S_m$-equivariant, and $(\tilde L_m)^{S_m} = L_m$.
\begin{theorem}\label{t:sym-ql3} Let $(X,\mfv)$ be as in Theorem \ref{t:sym-ql2}. 
  Then, there is a canonical isomorphism
\begin{equation}\label{e:msn-dec2}
M(X^n, \mfv) \iso \bigoplus_{\lambda \vdash n} S_n
(\widetilde{L_\lambda}), \quad
\widetilde{L_\lambda} := \widetilde{L_{\lambda_1}} \boxtimes \cdots
\boxtimes \widetilde{L_{\lambda_{|\lambda|}}}.
\end{equation}
Here, $S_n (\widetilde{L_\lambda})$ is the $S_n$-equivariant local
system on the $S_n$-orbit of $\Delta_\lambda(X)$ whose restriction to
$\Delta_\lambda(X)$ is the $N_{S_n}(S_\lambda)/S_\lambda$-equivariant
local system $\tilde L_m$.

Moreover, the rank of $\tilde L_m$ is the dimension of $\PDiff(\hat
\cO_{X,x},\Omega_{\hat X_x},m)^{\mfv}$.
\end{theorem}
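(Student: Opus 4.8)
The plan is to prove Theorem~\ref{t:sym-ql3} by induction on $n$, using the quasi-locality hypothesis to reduce the structure of $M(X^n,\mfv)$ near each diagonal to an external product of copies of the modules $M(X^k,\mfv)$ with $k<n$; Theorem~\ref{t:sym-ql2} for $S^nX$ then follows by applying $(\pr_n)_*$ and taking $S_n$-invariants, so I focus on $X^n$. First I would identify the leaves of $(X^n,\mfv)$. Since $X$ is transitive, at $p=(x_1,\dots,x_n)$ the evaluation $\mfv|_p$ lies in $T_p\Delta_P(X)$, where $P$ is the set partition of $\{1,\dots,n\}$ recording which $x_i$ coincide and $\Delta_P(X)$ is the corresponding (closed, invariant) diagonal; and quasi-locality, applied to the finitely many distinct values among the $x_i$ with large orders of vanishing, shows this evaluation is \emph{all} of $T_p\Delta_P(X)\cong\bigoplus_{\text{blocks }b}T_{x_b}X$. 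Hence the leaves of $(X^n,\mfv)$ are exactly the locally closed diagonal strata $\Delta_P^\circ=\{(x_1,\dots,x_n):x_i=x_j\iff i\sim_P j\}$; there are finitely many, so by Theorem~\ref{t:main2} $M(X^n,\mfv)$ is holonomic and all its composition factors are intermediate extensions of local systems along the $\Delta_P^\circ$.

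The key step is the formal computation of $M(X^n,\mfv)$ near $\Delta_P^\circ$. Quasi-locality gives $\mfv|_{\widehat{X^n}_p}=\bigoplus_b\mfv|_{\hat X_{x_b}}$, each summand acting diagonally on the $|b|$ copies of $\hat X_{x_b}$ in block $b$, whence $M(\widehat{X^n}_p,\mfv)\cong\boxtimes_b M(\hat X_{x_b}^{\,|b|},\mfv_{\mathrm{diag}})$, where $\mfv_{\mathrm{diag}}$ denotes $\mfv$ acting diagonally. Using Proposition~\ref{p:tr} (a connected transitive $(X,\mfv)$ has $M(X,\mfv)$ a rank-one local system, or $0$ if $\mfv$ is not incompressible) together with the inductive hypothesis describing $M(X^{|b|},\mfv)$ for $|b|<n$ and hence its germ along the full diagonal, I would show that $H^0\Delta_k^*M(X^k,\mfv)$ is an $\cO$-coherent local system on $X\cong\Delta_k(X)$ for all $k\le n$, so that $\tilde L_k$ and $\tilde L_\lambda=\tilde L_{\lambda_1}\boxtimes\cdots\boxtimes\tilde L_{\lambda_{|\lambda|}}$ really are local systems (vanishing when $\mfv$ is not incompressible). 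It then follows that $M(X^n,\mfv)|_{\Delta_P^\circ}$ is the local system $\boxtimes_b\tilde L_{|b|}|_{\Delta_P^\circ}$, and since $X$ has pure dimension $\ge2$ the complement of $\Delta_P^\circ$ in $\Delta_P(X)\cong X^{|P|}$ has codimension $\ge2$, so this local system extends over $\Delta_P(X)$ to the coherent module $\boxtimes_b\tilde L_{|b|}$, whose pushforward $(\Delta_P)_*(\boxtimes_b\tilde L_{|b|})$ is therefore the intermediate extension of $M(X^n,\mfv)|_{\Delta_P^\circ}$ to $X^n$.

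Next I would assemble the decomposition. By Theorem~\ref{t:hol-bicond}, $M(X^n,\mfv)$ carries a filtration by support dimension whose subquotients are direct sums of indecomposable extensions of the local systems on the $\Delta_P^\circ$ by local systems on subvarieties of their boundaries; to split both the extensions and the filtration it suffices to observe that the relative codimension between $\Delta_P(X)$ and any deeper diagonal inside it is a positive multiple of $\dim X\ge 2$, so the relevant $\Ext^1$ between a local system on a smooth subvariety and a $\caD$-module supported on a smooth subvariety of codimension $\ge2$ in it vanishes — precisely the vanishing used in \S\ref{s:fqcyvar}. Hence $M(X^n,\mfv)=\bigoplus_P(\Delta_P)_*(\boxtimes_b\tilde L_{|b|})$, and grouping the set partitions $P$ of a fixed type into $S_n$-orbits rewrites this as $\bigoplus_{\lambda\vdash n}S_n(\tilde L_\lambda)$, with $S_n(\tilde L_\lambda)$ the $S_n$-equivariant local system on the $S_n$-orbit of $\Delta_\lambda(X)$ described in the statement; the canonical isomorphism is the direct sum of the adjunction maps obtained from the canonical quotients $H^0\Delta_\lambda^*M(X^n,\mfv)\onto\tilde L_\lambda$, which exist by Proposition~\ref{p:inv-sub-qt} applied to the invariant closed subvarieties $\Delta_\lambda(X)$. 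For the rank: $\tilde L_m$ is $\cO$-coherent and formally trivial (its formal germ is $(\Delta_m)_*$ of a trivial local system, as any rank-one $M(\hat X_x,\mfv|_{\hat X_x})\cong\Omega_{\hat X_x}$), so $\operatorname{rank}\tilde L_m=\dim_\bk\Hom_{\caD_{\hat X_x^m}}\!\big(M(\hat X_x^m,\mfv_{\mathrm{diag}}),(\Delta_m)_*\Omega_{\hat X_x}\big)$, which by the relative form of Lemma~\ref{l:maxquot-coinv} is the space of $\mfv$-invariant polydifferential operators out of $\hat\cO_{X,x}^{\otimes m}$ valued in $\Omega_{\hat X_x}$, i.e.\ $\PDiff(\hat\cO_{X,x},\Omega_{\hat X_x},m)^{\mfv}$.

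I expect the main obstacle to be the second paragraph: showing that $H^0\Delta_k^*M(X^k,\mfv)$ is a genuine ($\cO$-coherent) local system on the full diagonal — so that $\tilde L_k$ is a local system and the intermediate extensions appearing in the decomposition are honest pushforwards of coherent modules — and organising the induction so that the quasi-locality identification $M(\widehat{X^n}_p,\mfv)\cong\boxtimes_bM(\hat X_{x_b}^{\,|b|},\mfv_{\mathrm{diag}})$ feeds cleanly into the inductive hypothesis for the blocks; once this and the $\Ext^1$-vanishing of the third paragraph are in hand, the leaf computation, the passage to $S_n$-orbits, and the rank count are bookkeeping using the machinery of \S\ref{s:gen} and \S\ref{s:fqcyvar}.
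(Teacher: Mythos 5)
Your overall strategy matches the paper's: work on $X^n$, identify the leaves as the diagonal strata via transitivity and quasi-locality, show that the maximal quotient supported on each closed diagonal is the external product of the $\tilde L_m$'s, kill all extensions between distinct diagonals by the codimension-$\geq 2$ $\Ext$-vanishing (the paper quotes Lemma \ref{l:extsubvars}, which is exactly the vanishing you import from \S\ref{s:fqcyvar}), and compute the rank by identifying $\Hom$ into $(\Delta_m)_*\Omega_{\hat X_x}$ with $\mfv$-invariant polydifferential operators. The one genuine divergence is at the step you yourself flag as the ``main obstacle'': you propose an induction on $n$ together with the formal factorization $M(\widehat{X^n}_p,\mfv)\cong\boxtimes_b M(\hat X_{x_b}^{\,|b|},\mfv_{\mathrm{diag}})$ to see that each $\tilde L_k$ is an $\cO$-coherent local system, but that induction never reaches the deepest stratum: for the one-block partition the relevant object is $H^0\Delta_n^*M(X^n,\mfv)$ itself, which is not covered by the hypothesis for $|b|<n$, so the induction does not close as written.

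The paper avoids induction entirely. It proves directly that the canonical surjection $M(X^n,\mfv)\onto(\Delta_\lambda)_*L_\lambda$ is the maximal quotient supported on $\Delta_\lambda(X^{|\lambda|})$ by computing, at a generic point $y=\Delta_\lambda(x)$,
\[
\Hom\bigl(M(X^n,\mfv)|_{\widehat{X^n}_y},\,(\Delta_\lambda)_*\Omega_{\widehat{X^{|\lambda|}}_x}\bigr)\cong\bigotimes_{i=1}^{|\lambda|}\PDiff(\hat\cO_{X,x},\Omega_{\hat X_x},\lambda_i)^{\mfv},
\]
using the universal property $\Hom(M(X^n,\mfv),N)=N^{\mfv}$, the identification of $(\Delta_n)_*\Omega_{\hat X_x}$ with polydifferential operators, and quasi-locality to reduce to $|\lambda|=1$; transitivity makes this dimension independent of $x$, which is precisely what makes the maximal quotient an $\cO$-coherent local system on the whole closed diagonal (so no separate extension-across-codimension-two argument for the local system is needed). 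You already have every ingredient of this computation in your rank paragraph; promoting it from a rank count to the pointwise identification of $\tilde L_m$ is exactly what closes your gap, after which the rest of your argument --- the leaf computation, the $\Ext$ splitting, and the passage to $S_n$-orbits --- coincides with the paper's.
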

As in Corollary \ref{c:d-ie}, it follows from this that the entire
pushforward $(\pr_n)_* M(X^n, \mfv)$ on $S^n X$ is a direct sum of
intermediate extensions of $S_\lambda$-equivariant local systems on
the diagonals corresponding to partitions $\lambda \vdash n$.

\subsection{Proof of Theorems \ref{t:sym-ql2} and \ref{t:sym-ql3}}
We will work with $M(X^n, \mfv)$. Since this is $S_n$-equivariant and
$M(S^n X, \mfv) = (\pr_n)_* M(X^n, \mfv)^{S_n}$, this will also
compute the latter.

By transitivity and quasi-locality, the closures of the leaves of
$M(X^n, \mfv)$ are the diagonals $\Delta_\lambda(X^{|\lambda|})$
together with the diagonals obtained from these by the action of
$S_n$.  Hence, $M(X^n, \mfv)$ is holonomic and its composition factors
are intermediate extensions of local systems on these leaves.  Similarly
to \eqref{e:msn-dec1-fact}, one has canonical surjections
\begin{equation}\label{e:msn-dec2-fact}
M(X^n, \mfv)
\onto (\Delta_\lambda)_*
L_\lambda,
\end{equation}
and similarly for the orbits of these under $S_n$ (there is one of
these for each coset in $S_n / (N_{S_n}(S_\lambda))$, and each is a
local system on the image of $\Delta_\lambda(X^{|\lambda|})$ under the
element of $S_n$ which is equivariant under the corresponding
conjugate of the subgroup $N_{S_n}(S_\lambda) < S_n$).  It suffices to
prove the following:
\begin{enumerate}
\item[(i)] The quotient \eqref{e:msn-dec2-fact} is the maximal
  quotient supported on $\Delta_\lambda(X^{|\lambda|})$, i.e., it is
  $(\Delta_\lambda)_* H^0 (\Delta_\lambda)^* M(X^n, \mfv)$;
\item[(ii)] For distinct $\lambda$ or distinct orbits for a fixed
  $\lambda$, that the above factors have no nontrivial extensions
  (i.e., the $\Ext$ group of the two is zero). 
\end{enumerate}
For (i), by restricting to a formal neighborhood of a generic point 
$y = \Delta_\lambda(x)$ of $\Delta_\lambda(X^{|\lambda|})$, it suffices to find
an isomorphism
\[
\Hom_{\hat \caD_{X^n,y}}(M(X^n,\mfv)|_{\widehat{X^n}_y},
(\Delta_\lambda)_* \Omega_{\widehat{X^{|\lambda|}}_x}) \cong
\bigotimes_{i=1}^{|\lambda|} \PDiff(\hat \cO_{X,x},\Omega_{\hat X_x}, \lambda_i)^{\mfv}.
\]
By quasi-locality, it suffices to restrict to the case $|\lambda|=1$ (for all $n$).  For this, note that there is a canonical isomorphism
\[(\Delta_n)_* \Omega_{\widehat{X,x}} \cong\PDiff(\hat \cO_{X,x},n).
\]
Moreover, for any $\caD_{X^n}$-module $N$, we have
a canonical isomorphism $\Hom(M(X^n, \mfv), N) \cong N^\mfv$, by considering the image of the canonical generator of $M(X^n, \mfv)$.   Putting these together, we deduce part (i).

For (ii), note that the factors $(\Delta_\lambda)_* L_\lambda$, as
well as their images under the action of $S_n$, are local systems on
smooth closed subvarieties of $X^n$.  Moreover, the intersection of
two of these subvarieties has codimension a multiple of $\dim X$ in
each, which in particular is codimension at least $2$.  Thus, the
result follows from the following basic lemma:
\begin{lemma} \label{l:extsubvars} \cite[Lemma 2.1.1]{hp0weyl} 
Suppose that $Z$ is a smooth
  variety, and $Z_1, Z_2 \subseteq Z$ as well as $Z_1 \cap Z_2$ are
  smooth closed subvarieties, all of pure dimension.  Let
  $\mathcal{L}_1, \mathcal{L}_2$ be local systems on $Z_1$ and $Z_2$,
  respectively, and let $i_1: Z_1 \to Z$ and $i_2: Z_2 \to Z$ be the
  inclusions.  Then,
\begin{equation}\label{e:extsubvars}
  \Ext^j((i_1)_* \mathcal{L}_1, (i_2)_* \mathcal{L}_2) = 0, \text{ for } j < 
  (\dim Z_1 - \dim Z_1 \cap Z_2) + (\dim Z_2 - \dim Z_1 \cap Z_2).
\end{equation}
\end{lemma}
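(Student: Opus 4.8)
The plan is to compute the complex $\mathrm{R}\Hom_{\caD_Z}((i_1)_* \mathcal{L}_1, (i_2)_* \mathcal{L}_2)$ by a short chain of adjunctions together with one base-change isomorphism, reducing it to $\mathrm{R}\Hom$ between local systems on the intersection $W := Z_1 \cap Z_2$, and then to read off that it is concentrated in cohomological degrees $\geq c_1 + c_2$, where $c_a := \dim Z_a - \dim W$; this is exactly the assertion \eqref{e:extsubvars}. One may assume all $\caD$-modules below are left $\caD$-modules, since the side-changing equivalence carries each $\mathcal{L}_a$ to a local system on $Z_a$, intertwines $(i_a)_*$ with pushforward up to a twist by $\det$ of a normal bundle, and preserves $\Ext$-groups; such line-bundle twists never affect cohomological amplitude and I would suppress them throughout.

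First I would use that $i_1$ is a closed embedding, so $(i_1)_* = (i_1)_!$ has right adjoint $i_1^!$, whence $\mathrm{R}\Hom_{\caD_Z}((i_1)_* \mathcal{L}_1, (i_2)_* \mathcal{L}_2) \cong \mathrm{R}\Hom_{\caD_{Z_1}}(\mathcal{L}_1,\, i_1^! (i_2)_* \mathcal{L}_2)$. Next, applying base change for $\caD$-modules to the Cartesian square $W = Z_1 \times_Z Z_2$, with resulting inclusions $i_1' : W \into Z_1$ and $i_2' : W \into Z_2$, gives $i_1^! (i_2)_* \mathcal{L}_2 \cong (i_1')_* (i_2')^! \mathcal{L}_2$. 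Since $i_2'$ is a closed embedding of \emph{smooth} varieties of codimension $c_2$ and $\mathcal{L}_2$ is $\cO$-coherent, $(i_2')^! \mathcal{L}_2$ is an $\cO$-coherent $\caD_W$-module placed in cohomological degree $c_2$, i.e.\ of the form $\mathcal{L}_2|_W[-c_2]$ for a (twisted) restriction $\mathcal{L}_2|_W$. Pushing forward by the exact functor $(i_1')_*$ and adjoining again — this time using that $(i_1')_*$ has \emph{left} adjoint $\mathrm{L}(i_1')^*$, the derived $\cO$-module inverse image, which on an $\cO$-coherent $\caD$-module is a twisted restriction placed in cohomological degree $-c_1$, so $\mathrm{L}(i_1')^* \mathcal{L}_1 \cong \mathcal{L}_1|_W[c_1]$ — I obtain $\mathrm{R}\Hom_{\caD_Z}((i_1)_* \mathcal{L}_1, (i_2)_* \mathcal{L}_2) \cong \mathrm{R}\Hom_{\caD_W}(\mathcal{L}_1|_W, \mathcal{L}_2|_W)[-c_1-c_2]$. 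Finally, for local systems $\mathcal{A}, \mathcal{B}$ on the smooth variety $W$ one has $\mathrm{R}\Hom_{\caD_W}(\mathcal{A}, \mathcal{B}) \cong \mathrm{R}\Gamma_{DR}(W, \mathcal{A}^\vee \otimes_{\cO_W} \mathcal{B})$, which is concentrated in cohomological degrees $\geq 0$; hence the shifted complex is concentrated in degrees $\geq c_1 + c_2$, proving \eqref{e:extsubvars}.

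The routine, fiddly part will be nailing down the two shifts — that $i^!$ (resp.\ $\mathrm{L}i^*$) along a smooth closed embedding of codimension $c$ places an $\cO$-coherent module in cohomological degree $+c$ (resp.\ $-c$) — which one reads off from the definitions, or by testing the universal case $Z = \bA^1$ with $W$ a point. The one genuine subtlety is the base-change step: it requires $W = Z_1 \cap Z_2$ to \emph{be} the scheme-theoretic fiber product $Z_1 \times_Z Z_2$, which is precisely what the hypothesis that the intersection is a smooth subvariety provides. In the applications below ($Z = X^n$ and $Z_1, Z_2$ equal to $S_n$-translates of diagonal subvarieties $\Delta_\lambda(X^{|\lambda|})$), any two such diagonals meet transversally in a smaller diagonal, so this hypothesis holds automatically. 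This argument reproduces \cite[Lemma 2.1.1]{hp0weyl}.
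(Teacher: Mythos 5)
The paper does not prove this lemma --- it is quoted verbatim from \cite[Lemma 2.1.1]{hp0weyl} --- so there is no in-paper argument to compare against. Your argument is correct, and it is the standard six-functor proof (adjunction for the closed embedding $i_1$, base change to the intersection, a second adjunction, and the observation that $i^!$ and the left adjoint of $i_*$ shift $\cO$-coherent modules into cohomological degrees $+c$ and $-c$ respectively), which is essentially how the cited reference argues. One inaccuracy worth fixing: smoothness of $Z_1 \cap Z_2$ does \emph{not} imply that the scheme-theoretic fiber product $Z_1 \times_Z Z_2$ is reduced --- two smooth curves in $\bA^2$ meeting tangentially have a length-two fat point as scheme-theoretic intersection even though the set-theoretic intersection is a smooth point. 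This is harmless for your proof, since $\caD$-modules on a scheme depend only on its reduction (Kashiwara), so base change may be applied with the reduced fiber product, and it is the reduced intersection whose smoothness the hypothesis controls; but the justification should be stated that way rather than by asserting the two intersections coincide. Relatedly, your closing remark that the diagonals in $X^n$ meet transversally is not quite right either (one diagonal may contain another, giving $c_2 = 0$, and in general the intersections are clean rather than transverse); again, only smoothness and pure-dimensionality of the intersection are actually needed, and those do hold there.
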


\subsection{Proof of Theorem \ref{t:sym-ql}}
(i) If $\mfv$ flows incompressibly, then we have an
isomorphism of modules over the Lie algebra $\mfv$, $\hat \cO_{X,x}
\iso \Omega_{\hat X_x}$, obtained from the formal volume at $x$
preserved by $\mfv$.  Therefore, in the theorem, we can replace the
polydifferential operators described by $(\PDiff(\hat \cO_{X,x},
\Omega_{\hat X_x},n+1)^\mfv)^{S_n}$.  Then, the result is almost
immediate from Theorem \ref{t:sym-ql2}, except Theorem \ref{t:sym-ql}
deals with $S_n$-invariant polydifferential operators, whereas the
multiplicity spaces of Theorem \ref{t:sym-ql2} are more symmetric:
they are $(\PDiff(\hat \cO_{X,x}, \Omega_{\hat X_x},n+1)^\mfv)^{S_{n+1}}$.

Thus, it suffices to show that, if such $\mfv$-invariant
$S_{n+1}$-invariant polydifferential operators of degree $n$ are
spanned by the multiplication operator  \emph{for all n},
then the same is true requiring only $S_n$-invariance.

  For this, note that, given a $\mfv$-invariant polydifferential
  operator $\phi$ on $\hat \cO_{X,x}$ of degree $n$, then the space of
  $\mfv$-invariant polydifferential operators of degree $n+1$ includes
  the space $\Ind_{S_n \times S_1}^{S_{n+1}} \langle \phi \boxtimes
  \bk \rangle$ spanned over $S_{n+1}$ by the operator $f \boxtimes g
  \mapsto \phi(f) \cdot g$ for all $f \in \hat \cO_{X,x}^{\otimes n}$
  and $g \in \hat \cO_{X,x}$. 

To proceed, we will need the following technical combinatorial result, which we prove below:
\begin{lemma}\label{l:tech-sn} Suppose that $\phi$ generates a
  $S_{n+1}$-representation $V$, and that $\phi$ is $S_n$-invariant but not
  $S_{n+1}$-invariant.  Then the $S_{n+1}$-representation $\Ind_{S_n
    \times S_1}^{S_{n+1}} (V|_{S_n} \boxtimes \bk)$ extends to a
  unique $S_{n+2}$-representation, up to isomorphism, and this has a
  nonzero $S_{n+2}$-invariant vector.
\end{lemma}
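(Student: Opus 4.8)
The plan is to forget about polydifferential operators entirely and argue purely with representations of symmetric groups, using the projection formula for induction together with the branching rule. All representations are over the characteristic-zero field $\bk$, hence semisimple; I write $\mathrm{std}_m$ for the $(m-1)$-dimensional irreducible reflection representation of $S_m$, $M_m\cong\bk\oplus\mathrm{std}_m$ for the permutation module on $\{1,\dots,m\}$, and $S^\mu$ for the Specht module of a partition $\mu$. First I would pin down $V$: since $\phi$ is fixed by $S_n$, the cyclic module $V=\bk[S_{n+1}]\phi$ is a quotient of $\Ind_{S_n\times S_1}^{S_{n+1}}\bk=M_{n+1}=\bk\oplus\mathrm{std}_{n+1}$, and since $\phi$ is nonzero but not $S_{n+1}$-fixed, $V$ is neither $0$ nor trivial; so there are exactly two cases, $V\cong\mathrm{std}_{n+1}$ and $V\cong M_{n+1}$.

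Next I would identify $W:=\Ind_{S_n\times S_1}^{S_{n+1}}(V|_{S_n}\boxtimes\bk)$. By the projection (tensor) identity for induction, $W\cong V\otimes\Ind_{S_n\times S_1}^{S_{n+1}}\bk = V\otimes M_{n+1}$. Using $M_{n+1}=\bk\oplus\mathrm{std}_{n+1}$ and the classical decomposition $\mathrm{std}_{n+1}^{\otimes 2}\cong\bk\oplus\mathrm{std}_{n+1}\oplus S^{(n-1,2)}\oplus S^{(n-1,1,1)}$ (valid once $n+1\geq 4$), I would write out the isotypic decomposition of $W$: it is $2\bk\oplus 3\,\mathrm{std}_{n+1}\oplus S^{(n-1,2)}\oplus S^{(n-1,1,1)}$ when $V\cong M_{n+1}$, and $\bk\oplus 2\,\mathrm{std}_{n+1}\oplus S^{(n-1,2)}\oplus S^{(n-1,1,1)}$ when $V\cong\mathrm{std}_{n+1}$. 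In either case the only irreducible constituents are the four modules $S^{(n+1)}=\bk$, $S^{(n,1)}=\mathrm{std}_{n+1}$, $S^{(n-1,2)}$, $S^{(n-1,1,1)}$; call this set $\Sigma$.

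The core of the proof is then bookkeeping with the branching rule. An $S_{n+2}$-module $\bigoplus_\mu m_\mu S^\mu$ restricts to a given $S_{n+1}$-module $W$ precisely when, for every $\nu\vdash n+1$, the multiplicity of $S^\nu$ in $W$ equals the sum of the $m_\mu$ over $\mu\vdash n+2$ having $\nu$ among its box-deletions. I would first check that the only partitions $\mu\vdash n+2$ all of whose box-deletions lie in $\Sigma$ are $(n+2)$, $(n+1,1)$, $(n,2)$, $(n,1,1)$; the four resulting linear equations in these four multiplicities then have a unique solution, which one verifies is nonnegative-integral, so an $S_{n+2}$-extension exists and is unique. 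The multiplicity of $S^{(n+2)}=\bk$ comes out to be $1$ in both cases, which gives the required $S_{n+2}$-invariant vector; I expect the extensions to be $S^{(n+2)}\oplus S^{(n+1,1)}\oplus S^{(n,2)}\oplus S^{(n,1,1)}$ and $S^{(n+2)}\oplus S^{(n,2)}\oplus S^{(n,1,1)}$, respectively.

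The main obstacle, and why ``extends uniquely'' is not a formal consequence, is that the constituent $\mathrm{std}_{n+1}$ does \emph{not} itself extend to $S_{n+2}$ (for $n\geq 3$ there is no irreducible $S_{n+2}$-module of dimension $n$); the argument really does hinge on the precise multiplicities in $W$ being exactly what makes the coupled system have a unique nonnegative solution, so that is where I would be careful. A secondary issue is the small cases: the decomposition of $\mathrm{std}^{\otimes 2}$ degenerates at $n+1=3$, and uniqueness can fail for $n=1$, so these must be treated separately by hand and checked against the way the lemma is actually invoked in the proof of Theorem \ref{t:sym-ql} (where the operator $\phi$ has degree at least $2$). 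Apart from these points, all the ingredients --- the projection formula, the decomposition of $\mathrm{std}^{\otimes2}$, and the branching rule --- are standard.
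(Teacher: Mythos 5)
Your strategy coincides with the paper's: compute the $S_{n+1}$-isotypic decomposition of $W=\Ind_{S_n\times S_1}^{S_{n+1}}(V|_{S_n}\boxtimes\bk)$ and then use the branching rule to pin down the $S_{n+2}$-extension and read off a trivial summand. (The paper obtains the same decomposition of $W$ by restriction--induction rather than the projection formula, and reduces to $V\cong\rho_{(n,1)}$ instead of keeping both cases; writing $\rho_\lambda$ for the irreducible with Young diagram $\lambda$, your two decompositions of $W$ agree with the paper's.) Where you are more explicit than the paper is exactly where the difficulty lies: the paper simply asserts that any $S_{n+2}$-extension ``must be'' $\rho_{(n,1,1)}\oplus\rho_{(n,2)}\oplus\bk$, whereas you correctly reduce this to listing the partitions $\mu\vdash n+2$ all of whose box-deletions lie in $\Sigma$ and solving the resulting linear system.

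The gap is that this list is not $\{(n+2),(n+1,1),(n,2),(n,1,1)\}$ for all $n$, and the failure is not confined to the degenerations of $\mathrm{std}^{\otimes 2}$ that you flag. For $n=3$ the partition $(2,2,1)$ also qualifies (its only box-deletions are $(2,2)$ and $(2,1,1)$); there the enlarged system still has a unique nonnegative integral solution, so no harm is done. But for $n=4$ the partition $(3,3)$ qualifies, since its only box-deletion is $(3,2)=(n-1,2)$, and uniqueness genuinely fails: with $V\cong\rho_{(4,1)}$ one has $W\cong\rho_{(5)}\oplus 2\rho_{(4,1)}\oplus\rho_{(3,2)}\oplus\rho_{(3,1,1)}$, and both $\rho_{(6)}\oplus\rho_{(4,2)}\oplus\rho_{(4,1,1)}$ and $\rho_{(5,1)}\oplus\rho_{(4,1,1)}\oplus\rho_{(3,3)}$ restrict to $W$; the second has no $S_6$-invariant vector. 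The same happens at $n=2$, where $\rho_{(3,1)}\oplus\rho_{(2,1,1)}$ is a second, invariant-free extension of $W$. So both the uniqueness assertion and the route to the invariant vector break at $n=2$ and $n=4$; this is a defect of the lemma as stated (and of the paper's own proof, which asserts the decomposition without justification), not something you introduced, but since your argument rests explicitly on the four-candidate claim you need either to restrict to $n\geq 5$ (plus $n=3$, checked by hand) or to replace the abstract uniqueness argument by information about the particular $S_{n+2}$-module of polydifferential operators in which $W$ sits in the application.
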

Let us use the lemma to finish the proof of the first statement.  
We conclude from the lemma
that there exists a $S_{n+2}$-invariant, $\mfv$-invariant
polydifferential operator $\phi$ on $\hat \cO_{X,x}$ of degree
$n+1$. We claim that this is not the multiplication operator (up to
scaling).  Indeed, we could have assumed that $\phi$ were homogeneous
of positive order (since $\mfv$ preserves the grading by order of
differential operators), so the latter $S_{n+2}$-operator can be
assumed to have positive order.  This contradicts our hypothesis.  Hence,
(*) of Theorem \ref{t:sym-ql} is indeed satisfied.

(ii) If $\mfv$ does not flow incompressibly, $M(X,
\mfv) = 0$, by Proposition \ref{p:tr}. Next, suppose that there
existed an $S_n$-invariant, $\mfv$-invariant polydifferential operator $\phi: \hat
\cO_{X,x}^{\otimes n} \to \Omega_{\hat X_x}$ but not a
$S_{n+1}$-invariant one.  Again, we can form the polydifferential
operator $(\phi \boxtimes 1)$, sending $f_1 \otimes \cdots \otimes
f_{n+1}$ to $\phi(f_1 \otimes \cdots \otimes f_n) f_{n+1}$.  So as
before, we would obtain that, as $S_{n+1}$-representations,
$\PDiff(\hat \cO_{X,x}, \Omega_{\hat X_x}, n+2)^{\mfv} \supseteq
\Ind_{S_n \times S_1}^{S_{n+1}} (\bk^n \boxtimes \bk)$.  By the same
argument as above, this would contain an $S_{n+2}$-invariant operator.  Thus,
$M(S^{n+2} X, \mfv) \neq 0$.  So, if $M(S^n X, \mfv) = 0$ for all $n \geq 1$,
then there are no $S_n$-invariant, $\mfv$-invariant polydifferential operators $\phi: \hat
\cO_{X,x}^{\otimes n} \to \Omega_{\hat X_x}$, for all $n$.  The converse is clear
from Theorem \ref{t:sym-ql2}.
\begin{proof}[Proof of Lemma \ref{l:tech-sn}]
  Under the assumption, $V$ must include a summand isomorphic to the
  reflection representation $\bk^n$ ($V$ is either this or the direct
  sum of $\bk^n$
  with a trivial representation). As a representation of $S_n$,
  $\bk^n$ is the standard representation.

Thus, we can assume that $V = \bk^n$.  As an $S_n$-representation,
  $V \cong \bk^{n-1} \oplus \bk$. Then, for $n \geq 3$, one computes the
  decomposition into irreducible $S_{n+1}$-representations:
\[
\Ind_{S_n \times S_1}^{S_{n+1}} V|_{S_n} \boxtimes \bk 
\cong \rho_{(1,1)[n+1]} \oplus \rho_{(2)[n+1]} \oplus \bk^n \oplus
\bk^n \oplus \bk,
\]
where, given a partition $\lambda \vdash (n+1)$, the representation
$\rho_\lambda$ is the irreducible representation with Young diagram
$\lambda$. Moreover, given $\lambda' \vdash m$, we let $\lambda'[n+1]$
denotes the diagram obtained from $\lambda'$ by adding a new row on
top with $n+1-m$ boxes (which makes sense if $n+1-m \geq \lambda'_1$.)
%, and
%in particular for large enough $n$
Now, if the $S_{n+1}$-structure above extends to a
$S_{n+2}$-structure, then the decomposition into irreducible
$S_{n+2}$-representations (up to isomorphism) must be
\[
\rho_{(1,1)[n+2]} \oplus \rho_{(2)[n+2]} \oplus \bk.
\]
We conclude that $\Ind_{S_n \times S_1}^{S_{n+1}} \langle \phi
\boxtimes \bk\rangle$ must contain a $S_{n+2}$-fixed vector.

In the case that $n=2$, the second decomposition (as
$S_{n+2}=S_4$-representations) above is still valid, so we still
obtain the $S_{n+2}$-fixed vector.
\end{proof}

% \subsection{The variety $S^n X$ when $X$ is a Calabi-Yau
%   curve}\label{ss:sym-dim1}
% In the case $X$ is a Calabi-Yau curve and $\mfv=H(X)$, then $S^n X$ is
% smooth.
% % Note that, since $X$ is odd-dimensional, $S^n X$ is not equipped
% % with a top polyvector field from the inverse to the volume form on
% % $X$.

% Let us first consider the case $X = \bA^1 = \Spec \bk[x]$ equipped
% with the standard volume form $\partial_x$. Then we can write $S^n
% \bA^1 \cong \bA^1 \times (\bA^{n-1}/S_n)$, and $\mfv =
% \langle \partial_x \rangle$ flows in the $\bA^1$ direction.  So
% $M(S^n \bA^1,\mfv) \cong \Omega_{\bA^1} \boxtimes \caD_{\bA^{n-1}/S_n}$.
% Note that $\bA^{n-1}/S_n \cong \bA^{n-1}$.

% For arbitrary $X$, in a formal neighborhood of every $x \in S^n X$, the
% above computation shows that $M((\widehat{S^n X})_x, \mfv) \cong
% M((\widehat{S^n \bA^1})_0, \mfv) \cong \Omega_{(\widehat{\bA^1})_0}
% \boxtimes \caD_{(\widehat{\bA^{n-1}})_0/S_n}$.  In particular, $M$ is
% not holonomic for $n \geq 2$.

\bibliographystyle{amsalpha}
\bibliography{master}

\def\cprime{$'$} \def\cprime{$'$}
\providecommand{\bysame}{\leavevmode\hbox to3em{\hrulefill}\thinspace}
\providecommand{\MR}{\relax\ifhmode\unskip\space\fi MR }
% \MRhref is called by the amsart/book/proc definition of \MR.
\providecommand{\MRhref}[2]{%
  \href{http://www.ams.org/mathscinet-getitem?mr=#1}{#2}
}
\providecommand{\href}[2]{#2}
\begin{thebibliography}{BEG04}

\bibitem[AL98]{AL}
J.~Alev and T.~Lambre, \emph{Comparaison de l'homologie de {H}ochschild et de
  l'homologie de {P}oisson pour une d{\'e}formation des surfaces de {K}lein},
  Algebra and operator theory ({T}ashkent, 1997) (Dordrecht), Kluwer Acad.
  Publ., 1998, pp.~25--38.

\bibitem[BEG04]{BEG}
Y.~Berest, P.~Etingof, and V.~Ginzburg, \emph{Morita equivalence of {C}herednik
  algebras}, J. Reine Angew. Math. \textbf{568} (2004), 81--98,
  arXiv:math/0207295.

\bibitem[BH69]{BH-drcas}
T.~Bloom and M.~Herrera, \emph{De {R}ham cohomology of an analytic space},
  Invent. Math. \textbf{7} (1969), 275--296. \MR{0248349 (40 \#1601)}

\bibitem[Car09]{Car-gtcis}
E.~Cartan, \emph{Les groupes de transformations continus, infinis, simples},
  Ann. Sci. \'Ecole Norm. Sup. (3) \textbf{26} (1909), 93--161. \MR{1509105}

\bibitem[ES09]{ESsym}
P.~Etingof and T.~Schedler, \emph{Zeroth {P}oisson homology of symmetric powers
  of quasihomogeneous isolated surface singularities}, accepted to J. Reine
  Angew. Math.; arXiv:0907.1715, 2009.

\bibitem[ES10]{ESdm}
\bysame, \emph{Poisson traces and {$\mathcal{D}$}-modules on {P}oisson
  varieties}, Geom. Funct. Anal. \textbf{20} (2010), 958--987, arXiv:0908.3868,
  with an appendix by I. Losev.

\bibitem[ES11]{hp0weyl}
\bysame, \emph{Poisson traces for symmetric powers of symplectic varieties},
  arXiv:1109.4712; accepted to Int. Math. Res. Not., 2011.

\bibitem[ES12]{ES-ciiss}
\bysame, \emph{Invariants of {H}amiltonian flow on locally complete
  intersections with isolated singularities}, in preparation, 2012.

\bibitem[Fer70]{Fer-chdcas}
A.~Ferrari, \emph{Cohomology and holomorphic differential forms on complex
  analytic spaces}, Ann. Scuola Norm. Sup. Pisa (3) \textbf{24} (1970), 65--77.
  \MR{0274810 (43 \#570)}

\bibitem[Gil64]{Gil-eavlc}
M.~C. Gilmartin, \emph{Every analytic variety is locally contractible},
  ProQuest LLC, Ann Arbor, MI, 1964, Thesis (Ph.D.)--Princeton University.
  \MR{2614525}

\bibitem[GQS70]{GQS-ic}
V.~W. Guillemin, D.~Quillen, and S.~Sternberg, \emph{The integrability of
  characteristics}, Comm. Pure Appl. Math. \textbf{23} (1970), no.~1, 39--77.
  \MR{0461597 (57 \#1582)}

\bibitem[Gre75]{Gre-GMZ}
G.-M. Greuel, \emph{Der {G}auss-{M}anin-{Z}usammenhang isolierter
  {S}ingularit\"aten von vollst\"andigen {D}urchschnitten}, Math. Ann.
  \textbf{214} (1975), 235--266. \MR{0396554 (53 \#417)}

\bibitem[Har72]{Har-adrc}
R.~Hartshorne, \emph{Algebraic de {R}ham cohomology}, Manuscripta Math.
  \textbf{7} (1972), 125--140. \MR{0313255 (47 \#1810)}

\bibitem[Har74]{Har-dcr}
R.~Hart, \emph{Derivations on commutative rings}, J. London Math. Soc. (2)
  \textbf{8} (1974), 171--175. \MR{0349654 (50 \#2147)}

\bibitem[Har75]{Har-drcav}
R.~Hartshorne, \emph{On the {D}e {R}ham cohomology of algebraic varieties},
  Inst. Hautes \'Etudes Sci. Publ. Math. (1975), no.~45, 5--99. \MR{0432647 (55
  \#5633)}

\bibitem[HLY03]{HLY-plhdRc}
X.~Huang, H.~S. Luk, and S.~S.-T. Yau, \emph{Punctured local holomorphic de
  {R}ham cohomology}, J. Math. Soc. Japan \textbf{55} (2003), no.~3, 633--640.
  \MR{1978213 (2004f:32037)}

\bibitem[Lic78]{Lic-jac}
A.~Lichnerowicz, \emph{Les vari\'et\'es de {J}acobi et leurs alg\`ebres de
  {L}ie associ\'ees}, J. Math. Pures Appl. (9) \textbf{57} (1978), no.~4,
  453--488. \MR{524629 (80m:58016)}

\bibitem[MS98]{MS-gm}
L.~Mangiarotti and G.~Sardanashvily, \emph{Gauge mechanics}, World Scientific
  Publishing Co. Inc., River Edge, NJ, 1998. \MR{1689375 (2000h:37077)}

\bibitem[Sei67]{Sei-dirfgt}
A.~Seidenberg, \emph{Differential ideals in rings of finitely generated type},
  Amer. J. Math. \textbf{89} (1967), 22--42. \MR{0212027 (35 \#2902)}

\bibitem[Yau82]{Yau-vniis}
S.~S.~T. Yau, \emph{Various numerical invariants for isolated singularities},
  Amer. J. Math. \textbf{104} (1982), no.~5, 1063--1100. \MR{675310
  (84b:32012)}

\end{thebibliography}
\end{document}